\def\Z{\mathbb{Z}}
\def\R{\mathbf{R}}
\def\C{\mathbf{C}}
\def\H{\mathbb{H}}
\def\Isom{\textup{Isom}}
\def\PSL{\textup{PSL}}
\def\del{\partial}
\def\la{\langle}
\def\ra{\rangle}
\def\Int{\textup{Int}}
\def\al{\alpha}
\def\|{\; | \;}
\def\id{\text{id}}
\def\es{\varnothing}
\def\Homeo{\textup{Homeo}}
\newcommand{\join}{\mathrm{join}}
\newtheorem{theorem}{Theorem}[section]
\newtheorem{corollary}[theorem]{Corollary}
\newtheorem{lemma}[theorem]{Lemma}
\newtheorem{proposition}[theorem]{Proposition}
\newtheorem*{claim*}{Claim}
\newtheorem*{theorem A}{Theorem A}
\newtheorem*{theorem B}{Theorem B}
\theoremstyle{definition}
\newtheorem{definition}[theorem]{Definition}
\newtheorem{example}[theorem]{Example}
\newtheorem*{definition A}{Definition A}
\newtheorem*{definition B}{Definition B}
\theoremstyle{remark}
\newtheorem{remark}[theorem]{Remark}
\numberwithin{equation}{section}
\numberwithin{figure}{section}
\setlist[itemize]{topsep=0pt}
\title{Combination theorems for geometrically finite convergence
  groups}
\author{Alec Traaseth}
\address{Department of Mathematics, University of Virginia, Charlottesville, VA 22902, USA
}
\email{at3kk@virginia.edu}
\author{Theodore Weisman}
\address{Department of Mathematics, University of Michigan, Ann Arbor,
  MI 48109, USA
}
\email{tjwei@umich.edu}
\begin{document}
\maketitle

\begin{abstract}
  We prove combination theorems in the spirit of Klein and Maskit in
  the context of discrete convergence groups acting geometrically
  finitely on their limit sets. As special cases, we obtain
  combination theorems for geometrically finite groups of isometries
  of Hadamard manifolds with pinched negative curvature, and for
  relatively quasi-convex subgroups of relatively hyperbolic groups.
\end{abstract}

\tableofcontents

\section{Introduction}

When a group $G$ acts by homeomorphisms on a compact metrizable space
$M$, we say $G$ is a \textit{discrete convergence group} if the
induced action on the space of distinct triples in $M$ is properly
discontinuous. Despite their simple definition, convergence groups
carry a considerable amount of structure. They arise naturally when
considering actions of isometry groups of negatively curved metric
spaces on their ideal boundaries, and provide a way to study discrete
subgroups of isometries of these spaces from the perspective of
topological dynamics.

The action of a convergence group $G$ on a space $M$ is
\textit{geometrically finite} if every point of the limit set of $G$
in $M$ is either a \textit{conical limit point} or \textit{bounded
  parabolic point} for the action. Work of Bowditch \cite{Bowditch12},
\cite{Bowditch95} implies that, if $M$ is the boundary of a proper
geodesic negatively curved metric space $X$, and the action of $G$ on
$M = \del X$ is induced by an isometric action on $X$, then this
definition essentially agrees with the usual definitions of geometrical
finiteness in this geometric context. 

In this paper, we prove Klein-Maskit-type combination theorems for
convergence groups acting geometrically finitely on their limit
sets. Our theorems give sufficient conditions for geometrically finite
subgroups of $\Homeo(M)$ to generate a geometrically finite
convergence group isomorphic to an amalgamated free product or HNN
extension of the original groups. Precisely, we prove the following
two theorems:

\begin{restatable}{theorem A}{theoremA}
  \label{thm:theorem_a}
  Let $G_1$ and $G_2$ be discrete convergence groups acting on a
  compact metrizable space $M$. Suppose that $J = G_1 \cap G_2$ is
  geometrically finite, and $G_1$ and $G_2$ are in AFP ping-pong
  position with respect to $J$. Let $G = \la G_1, G_2 \ra < \Homeo(M)$, and suppose $G$ acts as a convergence group. Then the following hold:
    \begin{enumerate}[label=(\roman*)]
        \item \label{item:AFP_amalgamation} $G = G_1 *_J G_2$.
        \item \label{item:AFP_discrete} $G$ is discrete.
        \item \label{item:AFP_parabolics} Elements of $G$ not
          conjugate into $G_1$ nor $G_2$ are loxodromic.
        \item \label{item:AFP_gf} $G$ is geometrically finite if and
          only if both $G_1$ and $G_2$ are geometrically finite.
    \end{enumerate}
\end{restatable}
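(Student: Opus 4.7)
The plan is to run a classical Klein--Maskit ping-pong argument adapted to the convergence group setting. The hypothesis that $G_1$ and $G_2$ are in AFP ping-pong position with respect to $J$ should supply disjoint ``ping-pong tables'' $X_1, X_2 \subset M$ with the property that elements of $G_i \setminus J$ send $X_{3-i}$ into $X_i$. All four conclusions will follow from these tables together with the standing hypothesis that $G$ already acts as a convergence group.

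I will first dispatch (i)--(iii). For (i), I take any word $w = g_1 \cdots g_n$ in reduced alternating normal form (each $g_i \in G_{\ep_i} \setminus J$ with $\ep_i \in \{1,2\}$ alternating) and apply the ping-pong dynamics to a basepoint in $M$ to show that $w$ cannot act as the identity; this is the standard normal form proof for $G_1 *_J G_2$. Conclusion (ii) should then be immediate, since any convergence group is discrete in the appropriate sense (properly discontinuous action on the space of distinct triples). For (iii), if $g \in G$ is not conjugate into $G_1$ or $G_2$, then after conjugation $g$ has cyclically reduced normal form of length at least two; iterating produces a nested sequence of ping-pong tables whose intersections shrink to single points in $X_1$ and $X_2$, yielding a source/sink pair with North--South dynamics, so $g$ is loxodromic.

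Turning to (iv), the forward direction should follow from general subgroup results for geometrically finite convergence groups, once one verifies that each $\Lambda(G_i)$ sits in $\Lambda(G)$ in a controlled way via the ping-pong dynamics. The substantive direction is the converse: assuming both $G_1$ and $G_2$ are geometrically finite, I must show $G$ is. I plan to take any $\xi \in \Lambda(G)$ as the limit of a sequence $(h_n x_0)$ for $h_n \in G$ and some $x_0 \in M$, write each $h_n$ in reduced normal form, and split into cases according to whether the word lengths of the $h_n$ are bounded along a subsequence. In the bounded case, $\xi$ will lie in a $G$-translate of $\Lambda(G_1) \cup \Lambda(G_2)$, and the classification of $\xi$ as conical or bounded parabolic should transfer from the appropriate $G_i$. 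In the unbounded case, the nested images of a fixed ping-pong table under longer and longer common prefixes of the $h_n$ contract to a single point, which must equal $\xi$, so $\xi$ is conical.

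The main obstacle will be the bounded parabolic subcase of (iv): if $p$ is a bounded parabolic point of $G_1$, I must verify that its $G$-stabilizer still acts cocompactly on $\Lambda(G) \setminus \{p\}$, rather than just on $\Lambda(G_1) \setminus \{p\}$. A priori, new $G$-translates of limit points could accumulate on $p$ from outside $\Lambda(G_1)$, and the $G$-stabilizer of $p$ could pick up extra elements produced from $G_2$ via ping-pong. The AFP ping-pong hypothesis should be designed precisely to prevent this: parabolic subgroups of $G_1$ and $G_2$ whose fixed points lie near a ping-pong boundary should be forced into $J$, and geometric finiteness of $J$ should then guarantee that the relevant horoball-type neighborhoods of $p$ have a cocompact $J$-quotient. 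Matching the parabolic stabilizer filtration against the ping-pong filtration near $p$ is where I expect the bulk of the technical work to go.
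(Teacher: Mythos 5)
Your outline of (i) and the general shape of (iv) follows the paper's strategy, but there are concrete gaps. First, (ii) is not ``immediate'': the hypothesis is only that $G$ acts as a convergence group, not as a \emph{discrete} convergence group, and proper discontinuity on distinct triples is precisely the characterization of the discrete case, so invoking it assumes the conclusion. Discreteness has to be proved, e.g.\ as in the paper, by showing via the ping-pong sets that no sequence of distinct normal forms can converge to the identity in $\Homeo(M)$. Similarly, in (iii) the claim that the nested tables ``shrink to single points'' with North--South dynamics is unjustified at that stage; what the ping-pong actually gives is that $g$ has infinite order with a fixed point in each of the two (disjoint) tables, and loxodromy then follows from the classification of elements of a convergence group.

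The more serious gap is in (iv), where your sketch is missing the one genuinely new ingredient the paper needs. Nested translates of a ping-pong table do \emph{not} automatically contract to a point, and even granting contraction, conicality of the coded limit point does not follow: to apply the conical-limit criterion you need the sets $h_k^{-1}B_1$ to stay in a fixed compact subset of $\Int(B_1)$ \emph{disjoint} from $B_2$, whereas a priori they can accumulate on $\Lambda(J)\subset \del B_1\cap\del B_2$. The paper resolves this with \Cref{prop:quasiconvex_subgroups_nest} (``strong nesting''): using Yaman's theorem to realize the geometrically finite factors as relatively hyperbolic groups with a cusped space, it shows $J$ (and any parabolic stabilizer $P$) is fully quasi-convex and that translates can be corrected by elements of $J$ (resp.\ $P$) so as to land in a fixed compact set. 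This single tool drives the contraction lemma, the construction of conical limit sequences, and --- in the bounded parabolic case you correctly flag as the crux --- the construction of the compact sets on which $P$ acts cocompactly in $\Lambda(G)\setminus\{p\}$; your proposal offers only the hope that geometric finiteness of $J$ will control ``horoball-type neighborhoods,'' with no mechanism. Finally, the direction ($G$ geometrically finite $\Rightarrow$ $G_1,G_2$ geometrically finite) does not follow from ``general subgroup results'' --- arbitrary subgroups of geometrically finite groups need not be geometrically finite --- and in the paper it again requires the contraction lemma, to show a conical limit sequence in $G$ for a point of $\Lambda(G_i)$ can be taken inside a single coset $hG_i$.
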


\begin{restatable}{theorem B}{theoremB}
  \label{thm:theorem_b}
  Let $G_0$ be a discrete convergence group acting on a compact
  metrizable space $M$, and suppose that $J_1, J_{-1} < G_0$ are both
  geometrically finite. Let $G_1 = \la f \ra$ be an infinite
  cyclic discrete convergence group also acting on $M$, where $fJ_{-1}f^{-1} = J_1$ in $\Homeo(M)$. Suppose $G_0$ is in HNN ping-pong
  position with respect to $f, J_1$ and $J_{-1}$. Let $G = \la G_0, G_1 \ra < \Homeo(M)$, and suppose $G$ acts as a convergence group. Then the following hold:

  \begin{enumerate}[label=(\roman*)]
  \item\label{item:HNN_extension} $G = G_0 *_f$.
  \item\label{item:HNN_discrete} $G$ is discrete.
  \item\label{item:HNN_parabolics} Elements of $G$ not conjugate into
    $G_0$ are loxodromic.
  \item\label{item:HNN_gf} $G$ is geometrically finite if and only if
    $G_0$ is geometrically finite.
  \end{enumerate}
\end{restatable}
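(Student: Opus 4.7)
The plan is to parallel the argument for Theorem A, substituting HNN ping-pong dynamics for the AFP dynamics. The HNN ping-pong hypothesis should provide disjoint ping-pong sets $X_1, X_{-1} \subset M$ associated to $J_1, J_{-1}$, with $f$ sending (most of) $M \setminus X_{-1}$ into $X_1$ and $f^{-1}$ sending (most of) $M \setminus X_1$ into $X_{-1}$, while elements of $G_0 \setminus J_{\ep}$ move points of $X_{\ep}$ off of $X_1 \cup X_{-1}$. The core combinatorial lemma is then that an HNN-reduced word
\[
  w = g_0 f^{\ep_1} g_1 f^{\ep_2} \cdots f^{\ep_n} g_n
\]
(with $g_i \notin J_{\ep_i}$ whenever $\ep_i = -\ep_{i+1}$) carries a suitable test point successively into $X_{\ep_1}, X_{\ep_2}, \ldots, X_{\ep_n}$ and hence acts nontrivially on $M$. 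This immediately gives (i). For (ii), a sequence $h_m \to \id$ cannot have HNN-syllable length tending to infinity without displacing test points into a ping-pong set, so discreteness reduces to that of $G_0$ and $\la f \ra$. For (iii), an element $h \in G$ not conjugate into $G_0$ admits a cyclically reduced normal form of positive $f$-length; iterating $h$ contracts $M$, minus a negligible set, into a ping-pong set, yielding source-sink dynamics and hence loxodromicity.

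For (iv), first assume $G_0$ is geometrically finite. I would classify each $p \in \Lambda(G)$ by the address of its approximating sequence in the Bass-Serre tree of $G = G_0 *_f$. If the address is eventually constant, $p$ lies in a $G$-translate of $\Lambda(G_0)$ and inherits conical or bounded-parabolic status from $G_0$; bounded parabolicity is preserved along Bass-Serre edges because $J_1$ and $J_{-1}$ are themselves geometrically finite, contributing only finitely many further orbits of limit points in $\Lambda(G) \setminus \{p\}$. If the address is not eventually constant, $p$ is the unique intersection of an infinite descending chain of translates of ping-pong sets, and the ping-pong contractions produce a sequence in $G$ witnessing that $p$ is a conical limit point. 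For the converse, if $G$ is geometrically finite, the embedding $\Lambda(G_0) \hookrightarrow \Lambda(G)$ respects the conical/parabolic dichotomy: each conical limit point of $G_0$ remains conical when approximated by elements of $G_0 \leq G$, and each parabolic fixed point of $G_0$ retains bounded parabolicity with its $G_0$-stabilizer, so $G_0$ is geometrically finite.

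The principal obstacle is ruling out accidental parabolics, i.e., verifying that the maximal parabolic subgroups of $G$ are exactly $G$-conjugates of maximal parabolic subgroups of $G_0$. Elements of $G \setminus G_0$ with positive $f$-syllable length are loxodromic by (iii), so the remaining concern is that a point $p \in \Lambda(G_0)$ could acquire extra stabilizers in $G$, or that a brand-new parabolic fixed point could arise. A careful ping-pong argument rules this out: any element of $G \setminus G_0$ fixing a limit point $p$ would, via its reduced form and the dynamics above, force $p$ into a ping-pong set $X_{\ep}$, contradicting the HNN ping-pong position constraints on the locations of the parabolic fixed points of $J_{\pm 1}$ relative to $X_{\pm 1}$. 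Once this is settled, bounded parabolicity at every parabolic point of $G$ follows by combining cocompactness of $J_{\pm 1}$ on the appropriate boundary complements with the tree structure of the HNN extension.
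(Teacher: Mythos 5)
Your outline of parts (i)--(iii) matches the paper's ping-pong arguments and is fine. The genuine gap is in part (iv), forward direction, where you assert that ``the ping-pong contractions produce a sequence in $G$ witnessing that $p$ is a conical limit point'' and that bounded parabolicity follows from ``cocompactness of $J_{\pm 1}$ \ldots with the tree structure.'' Neither of these is automatic from the ping-pong configuration, and this is precisely where all the technical work in the paper lives. The difficulty is that the ping-pong sets meet the limit sets of $J_{\pm 1}$ along their boundaries ($\Lambda(J_{\pm 1}) \subset \del B_{\pm 1}$), so if $h_k$ is a sequence of normal forms with $x \in h_kB_i$, the naive nesting only tells you that $h_k^{-1}x \in B_i$ while $h_k^{-1}$ of the complementary test set lies in $M \setminus B_i$ --- two sets that are \emph{not} separated by disjoint compacts. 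The definition of a conical limit point (and \Cref{lem:conical_characterization}) requires exactly such a uniform separation, and without it one cannot even conclude that a nested chain of translates $h_kB_i$ shrinks to a single point. The paper's remedy is the ``strong nesting'' statement \Cref{prop:quasiconvex_subgroups_nest}: one may correct each $h_k$ by an element $j_k$ of the edge group so that $j_kh_k^{-1}$ carries the relevant set into a \emph{fixed} compact $K \subset M \setminus B_i$. Proving this uses that $J_1, J_{-1}$ are \emph{fully} quasi-convex (which itself comes from the hypothesis $B_1 \cap B_{-1} = \es$ via \Cref{lem:HNN_boundary_invariance}), Yaman's theorem to realize the geometrically finite group on a cusped hyperbolic space, and a coarse-geometric argument (\Cref{lem:quasiconvex_accumulation}) about coset representatives minimizing distance to $Hx$. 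None of this appears in your proposal, and mere geometric finiteness of $J_{\pm 1}$, without this input, does not yield the contraction lemma on which your ``non-eventually-constant address'' case rests.

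The same omission undermines your treatment of bounded parabolic points. For $p \in \Lambda(J_1)$, one must show the stabilizer $P < G$ of $p$ acts cocompactly on $\Lambda(G) \setminus \{p\}$, which now contains infinitely many new limit points spread through the translates $gB_{\pm 1}$ accumulating at $\del B_1 \ni p$; saying that $J_{\pm 1}$ ``contribute only finitely many further orbits'' is not an argument and is not what happens. The paper handles this by decomposing $\Lambda(G)\setminus\{p\}$ into explicit pieces ($\Lambda(G_0)$, $T_0 \cap A$, their $f$-images, etc.) and again invoking \Cref{prop:quasiconvex_subgroups_nest} --- this time with the parabolic stabilizer $P$ playing the role of one of the fully quasi-convex subgroups --- to push each piece into a fixed compact avoiding $p$. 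Your backward direction and the ``no accidental parabolics'' discussion are essentially correct in outline (they match \Cref{lem:HNN_conical_limit_seq_bounded} and the fact that positive-length elements are loxodromic), but as it stands the forward direction of (iv) is missing its key ingredient rather than merely leaving routine details to the reader.
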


The exact definitions of ``AFP ping-pong position'' and ``HNN
ping-pong position'' are given at the beginning of
\Cref{sec:theorem_a} and \Cref{sec:theorem_b}, respectively. They are
versions of the ``ping-pong'' configuration of limit sets required by
Maskit's combination theorems for Kleinian groups (see
\cite{Maskit88}).

\subsection{Special case: $M$ is the boundary of a negatively curved
  metric space}

If $X$ is a proper geodesic metric space which is hyperbolic in the
sense of Gromov, then any discrete subgroup of $\Isom(X)$ acts on both
the Gromov boundary $\del X$ of $X$ and the compactification
$X \sqcup \del X$ as a discrete convergence group. In particular this
holds if $X$ is a Hadamard manifold with pinched negative curvature,
e.g. a rank-1 symmetric space of noncompact type.

Theorem A and Theorem B both apply in this situation, meaning they
imply combination theorems for geometrically finite subgroups of
rank-1 semisimple Lie groups. In particular, in the special case
$M = \partial \H^n_\R$, Theorem A recovers a result of Li-Ohshika-Wang
\cite{LOW09}, who proved a version of Maskit's combination theorem for
amalgamated free products of geometrically finite groups acting on
real hyperbolic space of any dimension.

In \cite{LOW15}, Li-Ohshika-Wang also proved a version of Maskit's HNN
extension theorem in arbitrary-dimensional real hyperbolic
space. Theorem B is \emph{not} strong enough to fully recover this
result, and both Theorem A and Theorem B fail to fully recover
Maskit's analogous combination theorems for Kleinian groups. The
reason is that we impose some additional hypotheses on the relative
positions of the limit sets of the subgroups we are combining. See
\Cref{rem:maskit_differences} for more detail.

\subsection{Combination theorems for relatively hyperbolic groups}

Any relatively hyperbolic group acts as a convergence group on its
Bowditch boundary, which means that Theorem A and Theorem B also
directly imply combination theorems for \emph{relatively quasi-convex}
subgroups of relatively hyperbolic groups.

A number of combination theorems along these lines can be found in the
literature; see for instance \cite{Gitik}, \cite{MartinezPedroza},
\cite{MPS12}, \cite{Yang12}, \cite{MM22}. The combination theorems
given by these papers are all \emph{virtual}: they provide conditions
guaranteeing that certain finite-index subgroups $G_1'$, $G_2'$ of
relatively quasi-convex subgroups $G_1$, $G_2$ generate a relatively
quasi-convex subgroup, isomorphic to an amalgam of $G_1'$ and
$G_2'$. In contrast, the results in this paper give explicit
conditions which can be used to verify that a particular pair of
relatively quasi-convex subgroups generates a relatively quasi-convex
amalgam.

We can additionally contrast the combination theorems in this paper
with \emph{abstract} combination theorems for relatively hyperbolic
groups, which provide conditions that guarantee that the fundamental
group of some graph of relatively hyperbolic groups is also relatively
hyperbolic. Combination theorems of this type were originally proved
for hyperbolic groups by Bestvina-Feighn \cite{BF92}, and various
generalizations have been given for relatively hyperbolic groups (see
\cite{Dahmani03}, \cite{Alibegovic05}, \cite{MR08}, \cite{Gautero16},
\cite{Tomar2022a}, \cite{Tomar2022b}).

We give special mention to the work of Dahmani \cite{Dahmani03} (later
generalized by Tomar \cite{Tomar2022a}, \cite{Tomar2022b}), because
the method of proof is particularly relevant to the situation we
encounter in this paper. Dahmani and Tomar prove that an amalgam of
relatively hyperbolic groups is relatively hyperbolic by giving an
abstract construction of an appropriate ``limit set'' for the amalgam
to act on. That is, their strategy is to directly construct the
Bowditch boundary of the amalgam, and then prove that the amalgam acts
geometrically finitely on this space. Indeed, an alternative approach
to the proof of the combination theorems given in this paper would be
to show that the ``limit sets'' constructed in \cite{Dahmani03},
\cite{Tomar2022a}, \cite{Tomar2022b} appear embedded in the space $M$
on which our subgroups all act. However, in this paper we prefer a
direct approach, which we feel is more self-contained and
straightforward.

\subsection{Possible generalizations: higher rank combination theorems}

Dey-Kapovich-Leeb \cite{DKL19} have previously proved combination
theorems for \emph{Anosov} subgroups of higher-rank Lie groups, along
the lines of the combination theorems for hyperbolic groups proved by
Gitik in \cite{Gitik}. Anosov subgroups are discrete subgroups of
higher-rank Lie groups which generalize the dynamical behavior of
convex cocompact groups in rank one.

More recently, Dey-Kapovich \cite{DK22}, \cite{DK23} have proved
sharper combination theorems for Anosov subgroups, giving Maskit-type
ping-pong criteria (analogous to the ones in this paper) which
guarantee that a pair of Anosov subgroups generates a larger Anosov
subgroup, isomorphic to an amalgam of the original groups. The
Dey-Kapovich results also apply in rank one, which means that they
imply Theorem A and Theorem B in the special case where $M$ is the
visual boundary of a rank-one symmetric space $X$ and the subgroups
generating the amalgam $G$ are all convex cocompact in $\Isom(X)$.

Unlike the Dey-Kapovich results, the theorems in the present paper do
not apply directly in the higher rank setting. However, at their core,
our arguments only involve the topological dynamics of an action by
homeomorphisms on some space $M$, and do not rely directly on
geometric properties of any metric space bounded by $M$. Consequently,
it is reasonable to believe that our methods could be adapted to
extend the work of Dey-Kapovich, and prove combination theorems for
discrete subgroups of higher-rank Lie groups with ``geometrically
finite'' dynamical behavior---for instance \textit{relative} Anosov
subgroups, or the \textit{extended geometrically finite} subgroups
considered by the second author in \cite{Weisman22}.

\subsection{Tools used in the proof}

Although almost all of the arguments in this paper are purely
topological, we do use some metric geometry to prove a key technical
result at the end of \Cref{sec:background}. The proof of this
proposition (\Cref{prop:quasiconvex_subgroups_nest}) uses the
following fact, due to Yaman \cite{yaman2004topological}: if $G$ is a
geometrically finite convergence group acting on $M$, then there is a
proper $\delta$-hyperbolic metric space $X$ where $G$ acts by
isometries, such that $\del X$ is equivariantly homeomorphic to the
limit set of $G$ in $M$.

We use the coarse geometry of the space $X$, together with the
convergence action of $G$ on $M$, to establish some further dynamical
properties of the action of $G$ on $M$. Specifically, we prove that,
when a subgroup $J < G$ is \textit{fully quasi-convex}, there is a way
to modify sequences $(g_k)$ in $G \setminus J$ by elements of $J$, so
that certain sequences of the form $(g_kx)$ for $x \in M$ do not
accumulate on the limit set of $J$. Once we have established this
dynamical fact, we apply the conclusion repeatedly while working in
$M$, and no longer need to reference the geometry of any metric space.

\subsection{Acknowledgements}

Both authors would like to pay special thanks to Sara Maloni, who
initially suggested this problem and provided valuable feedback at
every stage of the project. The first author was partially supported
by the NSF Graduate Research Fellowship under Grant No. 1842490, as
well as NSF grants DMS-1848346 and DMS-1839968. The second author was
supported by NSF grant DMS-2202770.

\section{Convergence group actions and geometrical finiteness}
\label{sec:background}

This section is mostly devoted to background related to convergence
group actions. We start by defining convergence groups and geometrical
finiteness in \Cref{sec:convergence_group_background} and
\Cref{sec:geometrically_finite_background}. In
\Cref{sec:relatively_hyperbolic_background} we recall some background
on relatively hyperbolic groups. At the end of this section, we state and prove a key proposition
(\Cref{prop:quasiconvex_subgroups_nest}) about relatively quasi-convex
subgroups of geometrically finite convergence groups.

\subsection{Convergence groups}
\label{sec:convergence_group_background}

We refer to \cite{Tukia94}, \cite{Tukia98}, \cite{Bowditch99} for
further background on the material in this section.

\begin{definition}
  \label{defn:convergence_group}
  Let $G$ be a group acting on a compact metrizable space $M$. We say
  the action is a \textit{convergence action} and call $G$ a
  \textit{convergence group} if, whenever $(g_k)$ is a sequence of
  pairwise distinct elements in $G$, we can take a subsequence so that
  one of the following two conditions is satisfied:
  \begin{enumerate}
  \item The sequence $(g_k)$ converges to a homeomorphism $g$ in the compact-open
    topology on $\Homeo(M)$.
  \item There are points $z_+, z_- \in M$ (not necessarily distinct)
    so that the maps $g_k|_{M \setminus \{z_-\}}$ converge to the
    constant map $z \mapsto z_+$ uniformly on compacts.
  \end{enumerate}
  If $G$ is a convergence group such that only the second condition occurs, we
  call $G$ a \textit{discrete convergence group} and the action a
  \textit{discrete convergence action}.
\end{definition}

\begin{remark}\label{rmk:discrete_char}
  Note that when $G < \Homeo(M)$ is a convergence group, $G$ is a
  discrete convergence group if and only if $G$ is a discrete subgroup
  of $\Homeo(M)$ with respect to the compact-open topology on
  $\Homeo(M)$. So, if $(g_k)$ is a divergent sequence (that is, a
  sequence which leaves every compact subset of $\Homeo(M)$) in a
  discrete convergence group $G$, then we can extract a subsequence so
  that the second condition above holds.
\end{remark}

When $M$ is a topological $n$-sphere, the definition of a convergence
group is due to Gehring and Martin \cite{GM87}, who observed that the
isometry group of $\H_\R^n$ always acts as a convergence group on
$\del \H_\R^n$. Gehring and Martin also showed (again when $M$ is an
$n$-sphere) that a group $G$ is a discrete convergence group if and
only if the induced action of $G$ on the space of distinct triples in
$M$ is properly discontinuous; later Bowditch \cite{Bowditch99}
observed that the same holds when $M$ is an arbitrary compact
Hausdorff space.

In the setting where $M$ is compact metrizable, convergence groups
were studied systematically by Tukia \cite{Tukia94}. In particular
Tukia showed that any group of isometries acting properly
discontinuously on a proper geodesic Gromov-hyperbolic metric space
$X$ acts as a discrete convergence group on both the boundary $\del X$
and the compactification $\overline{X} = X \sqcup \del X$ (see also
Freden \cite{Freden1995}).

\begin{definition}
  Following \cite{Tukia94}, if $(g_k)$ is a sequence in
  $G < \Homeo(M)$ such that the second condition of
  \Cref{defn:convergence_group} holds \emph{without} extracting a
  subsequence, then we say that $(g_k)$ is a \textit{convergence
    sequence}.

  In this case, the (uniquely defined) points $z_+$ and $z_-$ are
  respectively called the \textit{attracting point} and
  \textit{repelling point} of the sequence $(g_k)$. It follows
  immediately that if $(g_k)$ is a convergence sequence with
  attracting and repelling points $z_+$, $z_-$, then $(g_k^{-1})$ is
  also a convergence sequence, with attracting and repelling points
  $z_-, z_+$.
\end{definition}

When $G$ is a discrete convergence group, an \emph{arbitrary} sequence
of pairwise distinct elements in $G$ is not necessarily a convergence
sequence, but it always has a subsequence which is.


One easy consequence of the definitions above is the following:
\begin{proposition}\label{prop:repeller_complement}
  Let $(g_k)$ be a convergence sequence in a discrete convergence
  group $G$ acting on a compact metrizable space $M$ containing at
  least 3 points. If $U$ is any open neighborhood of the repelling
  point $z_-$ of $(g_k)$, then $(g_k\overline{U})$ converges to $M$ in
  the topology on closed subsets of $M$ induced by Hausdorff distance.
\end{proposition}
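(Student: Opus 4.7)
The plan is to translate Hausdorff convergence into an $\varepsilon$-net condition and then exploit the symmetric convergence behavior of the inverse sequence. Since $g_k\overline{U} \subseteq M$, one side of the Hausdorff distance is automatically zero, so showing $g_k\overline{U} \to M$ reduces to proving that for every $\varepsilon > 0$, the set $g_k\overline{U}$ is $\varepsilon$-dense in $M$ for all sufficiently large $k$.

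The main input is the observation, already recorded in the paragraph immediately before the proposition, that $(g_k^{-1})$ is itself a convergence sequence with attracting point $z_-$ and repelling point $z_+$. Consequently, for every $v \in M \setminus \{z_+\}$ we have $g_k^{-1}(v) \to z_-$, and since $U$ is an open neighborhood of $z_-$, eventually $g_k^{-1}(v) \in U$; equivalently, $v \in g_k(U) \subseteq g_k\overline{U}$. To upgrade this pointwise statement to $\varepsilon$-density, I would cover $M$ by finitely many balls $B(m_1, \varepsilon/2), \ldots, B(m_n, \varepsilon/2)$ using compactness, choose for each $i$ a witness $v_i \in B(m_i, \varepsilon/2) \setminus \{z_+\}$, and then take a single $k_0$ beyond which all of the $v_i$ simultaneously lie in $g_k\overline{U}$. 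Every $m \in M$ lies within $\varepsilon/2$ of some $m_i$ and hence within $\varepsilon$ of the point $v_i \in g_k\overline{U}$.

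The delicate step, and the one I expect to require the most care, is producing the witness $v_i$ in the corner case where $m_i = z_+$ and $z_+$ happens to be isolated in $M$, since then $B(m_i, \varepsilon/2) = \{z_+\}$ leaves no candidate $v_i \neq z_+$. This is precisely where the hypothesis that $M$ contains at least three points enters. I would rule the case out by a short auxiliary argument: if $\{z_+\}$ were open in $M$, pick any two-point compact set $K \subseteq M \setminus \{z_-\}$, which exists because $|M \setminus \{z_-\}| \geq 2$; uniform convergence of $g_k|_K$ to the constant map $z_+$ would then force $g_k(K) \subseteq \{z_+\}$ for large $k$, contradicting that each $g_k$ is a homeomorphism. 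Hence $z_+$ is never isolated, the witnesses always exist, and the compactness packaging from the previous paragraph completes the proof.
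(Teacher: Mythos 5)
Your proof is correct. It differs from the paper's argument mainly in packaging: the paper works with the forward maps, noting that $M \setminus U$ is a compact subset of $M \setminus \{z_-\}$, so $g_k(M\setminus U)$ collapses uniformly to $\{z_+\}$; hence $g_k\overline{U}$ eventually contains every compact set missing $z_+$ and therefore converges to the closure of $M \setminus \{z_+\}$. You instead pass to the inverse convergence sequence $(g_k^{-1})$, get the pointwise statement that every $v \neq z_+$ eventually lies in $g_kU$, and upgrade to uniform ($\varepsilon$-dense) control by a finite $\varepsilon/2$-net of $M$; this is a dual formulation of the same dynamical fact, trading the paper's one-line appeal to uniform convergence on the compact $M\setminus U$ for a compactness/net argument, and it has the minor advantage of not needing the separate trivial case $\overline{U}=M$. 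The final step is identical in both proofs: the hypothesis $|M|\ge 3$ is used only to show $z_+$ is not isolated (two distinct points of $M\setminus\{z_-\}$ would be sent to the single point $z_+$ by the injective maps $g_k$), which in the paper guarantees that the closure of $M\setminus\{z_+\}$ is $M$, and in your version guarantees the existence of witnesses near $z_+$.
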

\begin{proof}
  If $\overline{U} = M$ the result is immediate, so assume that
  $M \setminus U$ is nonempty. Then, since $M \setminus U$ is a
  nonempty compact subset of $M \setminus \{z_-\}$, the set
  $g_k(M \setminus U)$ converges to a singleton $\{z_+\}$. So
  $g_k\overline{U}$ eventually contains every compact in the
  complement of $\{z_+\}$, and must converge to the closure of
  $M \setminus \{z_+\}$. In addition, since $M$ contains at least 3
  points, there are distinct points $x,y \in M$ so that $(g_kx)$ and
  $(g_ky)$ both converge to $z_+$. This implies $z_+$ is not an isolated
  point of $M$ and so the closure of $M \setminus \{z_+\}$ is $M$.
\end{proof}

The set of attracting points (or equivalently, the set of repelling
points) of sequences in a discrete convergence group $G$ acting on a
space $M$ is called the \textit{limit set} of $G$ in $M$, and is
denoted $\Lambda(G)$. The limit set is always a closed $G$-invariant
subset of $M$. In fact, if $G$ is neither finite nor virtually cyclic,
then the limit set of $G$ is the unique minimal nonempty closed
$G$-invariant subset of $M$.

The complement of $\Lambda(G)$ in $M$ is denoted $\Omega(G)$, and is
called the \textit{domain of discontinuity} for $G$ since (as in the
setting of Kleinian groups) it is the maximal open subset of $M$ on
which $G$ acts properly discontinuously. Recall that a group $G$ acts properly discontinuously on a space $X$ if for any compact $K \subset X$, the set $\{g \in G \; | \; gK \cap K \neq \es\}$ is finite.

We say that a discrete convergence group $G$ is \textit{elementary} if
$|\Lambda(G)| \le 2$; this turns out to be equivalent to asking for
$|\Lambda(G)|$ to be finite. When $G$ is non-elementary, one can also
view $\Lambda(G)$ as the set of accumulation points of any $G$-orbit
in $M$.

The classification of isometries in hyperbolic space also generalizes
to a classification of the elements of a group $G$ acting as a
convergence group on $M$:
\begin{proposition}[\cite{Tukia94}]
  \label{prop:convergence_classification_of_isometries}
  Let $G$ act as a convergence group on a compact metrizable space
  $M$. Every $g \in G$ satisfies exactly one of the following:
  \begin{itemize}
  \item The closure of the cyclic group $\la g \ra$ is compact in
    $\Homeo(M)$, in which case we say $g$ is elliptic.
  \item $g$ is not elliptic and $g$ fixes exactly one point in $M$, in
    which case we say $g$ is parabolic.
  \item $g$ is not elliptic and $g$ fixes exactly two points in $M$,
    in which case we say $g$ is loxodromic.
  \end{itemize}
  Moreover, if $g$ is parabolic or loxodromic, then $(g^n)$ is a
  convergence sequence, and the set of attracting and repelling points
  $\{z_\pm\}$ of $(g^n)$ is precisely the set of fixed points of $g$.
\end{proposition}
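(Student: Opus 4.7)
The plan is to analyze the dynamics of the powers of $g$ directly. The elliptic case holds by definition, so I may assume $g$ is non-elliptic, i.e., $\la g \ra$ has non-compact closure in $\Homeo(M)$. In particular $g$ has infinite order, and there is a sequence of distinct powers $(g^{n_k})$ with no subsequence convergent in $\Homeo(M)$. By the convergence group property, such a sequence has a further subsequence which is a convergence sequence; after extracting, let $z_+$ and $z_-$ denote its attracting and repelling points.

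I first observe that any fixed point of $g$ lies in $\{z_+, z_-\}$: if $g(y) = y$ with $y \neq z_-$, then $g^{n_k}(y) = y \to z_+$ forces $y = z_+$, so there are at most two fixed points. To see that $z_+$ and $z_-$ are themselves fixed by $g$, I compute the attracting and repelling points of the shifted convergence sequence $(g^{n_k+1})$ in two ways. Writing $g^{n_k+1} = g \circ g^{n_k}$ and using continuity of $g$, for any compact $K \subset M \setminus \{z_-\}$ one has $g^{n_k+1}|_K \to g(z_+)$ uniformly, so the attracting and repelling points are $g(z_+)$ and $z_-$. Writing instead $g^{n_k+1} = g^{n_k} \circ g$ and noting that $g$ maps any compact $K \subset M \setminus \{g^{-1}(z_-)\}$ into a compact of $M \setminus \{z_-\}$, the attracting and repelling points come out to be $z_+$ and $g^{-1}(z_-)$. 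Since $M$ has more than one point (otherwise $\Homeo(M)$ is trivial), the attracting and repelling points of a convergence sequence are uniquely determined: if two distinct repelling points were possible, the union of the two corresponding domains would be all of $M$, giving uniform convergence on $M$ to a constant, which is impossible for a sequence of homeomorphisms of $M$. Matching up the two descriptions, $g(z_+) = z_+$ and $g(z_-) = z_-$.

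These two steps show $\textup{Fix}(g) = \{z_+, z_-\}$, of cardinality $1$ (parabolic) or $2$ (loxodromic); combined with the elliptic case, this gives the claimed trichotomy. It remains to verify that $(g^n)_{n \geq 1}$ itself is a convergence sequence with fixed points of $g$ as its attracting and repelling points. Every convergence subsequence has attracting and repelling points in $\textup{Fix}(g)$ by the arguments above, and in the parabolic case the single fixed point forces the desired uniform convergence. The main obstacle is the loxodromic case, which requires ruling out the simultaneous occurrence of convergence subsequences with opposite ``orientations'' (i.e., the roles of $z_+$ and $z_-$ swapped) and showing that $\la g \ra$ has a well-defined direction at infinity. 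This is a standard fact in the theory of elementary convergence groups going back to Tukia \cite{Tukia94}.
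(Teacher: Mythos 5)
The paper never proves this proposition --- it is imported wholesale from Tukia \cite{Tukia94} --- so there is no internal argument to compare with, and your attempt has to be judged as a standalone proof. The first half is correct and is the standard argument: extracting a convergence subsequence $(g^{n_k})$ from the non-relatively-compact group $\la g \ra$, observing that any fixed point other than $z_-$ must equal $z_+$, and computing the attracting/repelling points of the shifted sequence $(g^{n_k+1})$ in two ways (together with their uniqueness, which does require the small case analysis you gesture at when the two candidate constants differ) to get $g(z_\pm)=z_\pm$ and hence $\textup{Fix}(g)=\{z_+,z_-\}$. That yields the trichotomy.

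The gap is in the ``moreover'' clause, which is the actual dynamical content, and it is twofold. First, the loxodromic coherence step you defer to Tukia can in fact be closed elementarily, so the deferral is unnecessary: if positive powers exhibited both orientations, choose disjoint closed neighborhoods $U$ of $a$ and $V$ of $b$, a point $x \notin U \cup V$, and exponents $N, N' > 0$ with $g^{N}(M \setminus \Int(V)) \subset U$ and $g^{N'}(M \setminus \Int(U)) \subset V$; since then $g^N(U) \subset U$ and $g^{N'}(V)\subset V$, iterating gives $g^{NN'}(x) = (g^N)^{N'}(x) \in U$ and $g^{NN'}(x) = (g^{N'})^{N}(x) \in V$, contradicting $U \cap V = \es$. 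Second, and more seriously, your parabolic paragraph (``the single fixed point forces the desired uniform convergence'') silently assumes that every subsequence of $(g^n)_{n\ge 1}$ contains a \emph{collapsing} sub-subsequence, i.e.\ that no subsequence of distinct positive powers converges to a homeomorphism of $M$. The proposition is stated for general convergence groups, not discrete ones, so alternative (1) of \Cref{defn:convergence_group} is available and must be excluded; for discrete convergence groups this is automatic, and in the loxodromic case the nesting $g^{jN}(x) \in U$ for all $j\ge 1$ rules it out, but in the parabolic case it requires a genuine argument that your sketch does not supply --- this, together with the orientation step, is precisely what Tukia's proof provides. So as a self-contained proof the proposal stops short at the heart of the ``moreover'' statement, although relying on \cite{Tukia94} for that step is consistent with how the paper itself treats the entire proposition.
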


If $G$ is a discrete convergence group, then the elliptic elements of
$G$ are precisely those with finite order. The classification also
implies that if $G$ is a virtually cyclic discrete convergence group,
then $G$ is elementary (but note that the converse need not hold).

\subsection{Geometrical finiteness}
\label{sec:geometrically_finite_background}

When $X$ is a Hadamard manifold with pinched negative curvature, the
geometrically finite subgroups of $\Isom(X)$ are the subgroups $G$
such that the quotient $X/G$ is topologically tame in a precise
sense. Geometrical finiteness was originally defined in real
hyperbolic spaces of dimension 2 and 3, where the definition concerned
the existence of a well-behaved fundamental domain for the action of
$G$ on $X$. This definition proved to be unsatisfactory in hyperbolic
spaces of higher dimension and in other negatively curved Hadamard manifolds,
however.

In \cite{Bowditch95}, Bowditch gave several different definitions of
geometrical finiteness for groups of isometries of a Hadamard manifold
$X$ with pinched negative curvature, and proved that they are all
equivalent. One of Bowditch's definitions (definition GF5), based on
work of Beardon and Maskit \cite{BM74}, can be expressed entirely in
terms of the convergence action of $G$ on its limit set in $\del X$,
and therefore generalizes readily to the situation where $G$ is a
convergence group acting on an arbitrary compact metrizable space $M$.

Before giving the definition we recall some essential terminology:
\begin{definition}\label{def:conical_limit_points}
  Let $G$ be a discrete convergence group acting on a compact
  metrizable space $M$.
  \begin{enumerate}[label=\roman*)]
  \item A point $x \in \Lambda(G)$ is a \textit{conical limit point}
    if there is a sequence $(g_k)$ in $G$ of distinct elements such
    that for every $z \in M \setminus \{x\}$, the pair $(g_kx, g_kz)$
    stays inside a compact subset of $(M \times M) \setminus \Delta$,
    where $\Delta \subset M \times M$ is the diagonal subspace. We
    will call the sequence $(g_k)$ a \textit{conical limiting
      sequence} for the point $x$.
  \item A point $x \in \Lambda(G)$ is a \textit{parabolic point} if it
    is the fixed point of a parabolic isometry in $G$. A
    \emph{parabolic subgroup} of $G$ is the stabilizer in $G$ of a
    parabolic point in $\Lambda(G)$. A parabolic point $x$ is
    \textit{bounded} if
    $(\Lambda(G) \setminus \{x\}) / \text{Stab}_G(x)$ is compact.
  \end{enumerate}
\end{definition}

\begin{remark}\label{rmk:conical_limit_seq_char}
  Tukia \cite{Tukia98} showed that no point in $M$ can be both a
  parabolic point and a conical limit point. By using the convergence
  group condition and extracting subsequences, one can also see that a
  point $x \in M$ is a conical limit point if and only if there are
  distinct points $a, b \in M$ and a conical limiting sequence $(g_k)$
  in $G$ such that $(g_kx)$ converges to $a$ and $(g_ky)$ converges to
  $b$ for all $y \ne x$. The sequence $(g_k)$ is then a convergence
  sequence, with $z_+ = b$ and $z_- = x$.

  Furthermore, we could just as well ask that the defining condition
  for a conical limiting sequence holds only for
  $z \in \Lambda(G) \setminus \{x\}$, and then the discrete
  convergence dynamics imply this also holds in $\Omega(G)$.
\end{remark}

\begin{definition}
  \label{defn:geometrically_finite}
  Let $G$ be a discrete convergence group acting on a compact
  metrizable space $M$. We say that $G$ is \textit{geometrically
    finite} if every point of $\Lambda(G)$ is either a conical limit
  point or a bounded parabolic point.
\end{definition}

\begin{remark}\label{rmk:gf_discrepancy}
  Unfortunately, the standard definitions of ``geometrically finite''
  in the geometric and dynamical contexts do not exactly
  agree. According to the definitions in e.g. \cite{Bowditch12},
  \cite{Dahmani03}, a convergence group $G$ acting on $M$ is
  ``geometrically finite'' if every point of $M$ (not just of
  $\Lambda(G)$) is a conical limit point or bounded parabolic
  point. With this convention, if $X$ is a Hadamard manifold with
  pinched negative curvature, and $G$ is a geometrically finite
  subgroup of $\Isom(X)$ (according to the definitions in
  \cite{Bowditch93}, \cite{Bowditch95}), then the action of $G$ on
  $\del X$ is not a ``geometrically finite convergence action'' if
  $\Lambda(G)$ is a proper subset of $\del X$.

  In this paper, we adopt the convention that a convergence group
  acting on $M$ is geometrically finite if and only if it acts
  geometrically finitely (in the sense of \cite{Bowditch12},
  \cite{Dahmani03}) on its limit set in $M$. So for us, when $G$ acts
  by isometries on a hyperbolic space $X$, ``geometrically finite''
  means the same thing regardless of whether we consider the isometric
  action on $X$ or the induced action by homeomorphisms on $\del X$.
\end{remark}

We conclude this subsection with another simple but useful criterion
which can be used to guarantee that a point $x \in M$ is a conical
limit point.
\begin{lemma}
  \label{lem:conical_characterization}
  Let $G$ be a discrete convergence group acting on a compact
  metrizable space $M$. Let $Y$ be a subset of $M$ containing at least
  two points, let $K_1, K_2$ be disjoint compact subsets of $M$, and
  let $x \in M$. If there exists a sequence $(g_k)$ of pairwise
  distinct elements of $G$ such that for all $k$ we have
  $g_kx \in K_2$ and $g_kY \subset K_1$, then $x$ is a conical limit
  point for $G$.
\end{lemma}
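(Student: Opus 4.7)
The plan is to verify conicality directly using the convergence-sequence criterion stated in \Cref{rmk:conical_limit_seq_char}. Since $G$ is a discrete convergence group, after passing to a subsequence I can assume that $(g_k)$ is a convergence sequence with attracting point $z_+$ and repelling point $z_-$, so that $g_k|_{M\setminus\{z_-\}}$ converges uniformly on compacts to the constant map $z_+$.

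The first step is to locate $z_+$ and $z_-$. Because $|Y|\geq 2$, I can pick $y\in Y$ with $y\neq z_-$; then $g_k y \to z_+$, and since $g_k y\in K_1$ for all $k$ and $K_1$ is closed, $z_+\in K_1$. Now suppose for contradiction that $x\neq z_-$. Then $g_k x\to z_+$, but also $g_k x\in K_2$, so $z_+\in K_2$, contradicting $K_1\cap K_2=\emptyset$. Hence $x=z_-$, and consequently $g_k z\to z_+\in K_1$ for every $z\in M\setminus\{x\}$.

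To finish, I pass to one more subsequence so that $g_k x$ converges to some point $a\in K_2$ (possible by compactness of $K_2$). Set $b=z_+\in K_1$; then $a\neq b$ because $K_1\cap K_2=\emptyset$. Thus $(g_k x)\to a$ and $(g_k y)\to b$ for every $y\neq x$, with $a\neq b$, which is exactly the condition in \Cref{rmk:conical_limit_seq_char} certifying that $x$ is a conical limit point.

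There is really no serious obstacle here: the only place the hypotheses are used nontrivially is in combining the disjointness of $K_1,K_2$ with the fact that $|Y|\geq 2$ to force $x$ to coincide with the repelling point of some convergent subsequence of $(g_k)$. The rest is a direct appeal to the characterization of conical limit points in terms of convergence sequences.
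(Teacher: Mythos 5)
Your proof is correct and follows essentially the same route as the paper's: extract a convergence subsequence, use a point of $Y$ away from the repelling point to force $z_+\in K_1$, use disjointness of $K_1,K_2$ to force $x=z_-$, and conclude via the characterization in \Cref{rmk:conical_limit_seq_char}. The only difference is your extra (harmless, slightly more careful) subsequence extraction to make $(g_kx)$ converge before invoking the remark.
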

\begin{proof}
  Since $G$ is a discrete convergence group we can extract a
  subsequence so that, for points $z_\pm \in M$, the sequence $(g_k)$
  converges in $\Homeo(M)$ to the constant map $z_+$ uniformly on
  compacts. In particular, for any $y \ne z_-$, $(g_ky)$ converges to
  $z_+$. Since $Y$ contains at least two points, it contains at least
  one point $y$ not equal to $z_-$. Then since
  $g_ky \in g_kY \subset K_1$ we must have $z_+ \in K_1$. Since
  $g_kx \in K_2$, $(g_kx)$ cannot converge to $z_+$, hence $x =
  z_-$. Then for any $y \in M$ with $y \ne x$, $(g_ky)$ converges to
  $z_+$. The characterization of conical limit points described in
  \Cref{rmk:conical_limit_seq_char} implies that $x$ is a conical
  limit point.
\end{proof}

\subsection{Relatively hyperbolic groups}
\label{sec:relatively_hyperbolic_background}

For most of this paper, we will only ever need to work with the
dynamical definition of geometrical finiteness given above. However,
our proof of one key technical lemma
(\Cref{prop:quasiconvex_subgroups_nest}) does rely on a geometric
interpretation of the definition, which is best understood via the
connection between geometrically finite groups and \emph{relative
  hyperbolicity}. We refer to \cite{Bowditch12}, \cite{Hruska10} for
further background on relatively hyperbolic groups.

The definition of geometrical finiteness we will use is given in
\Cref{prop:geometrical_finite_definitions} below. As in the classical
(Kleinian) case, the definition says that, if $G$ is a geometrically
finite convergence group acting on a compact metrizable space $M$,
then an appropriately defined ``convex core'' for the $G$-action has a
``thick-thin'' decomposition into a compact piece and some standard
``cusps.'' When $M$ is the boundary of a $\delta$-hyperbolic metric
space $X$, this ``convex core'' can be defined via the following. For
any closed subset $Z$ of $\del X$, we let $\join(Z)$ denote the union
of all bi-infinite geodesics in $X$ joining distinct points in $Z$.

\begin{proposition}[see e.g. \cite{Bowditch12}, section 5]
  \label{prop:convex_hulls}
  Suppose that $X$ is a proper geodesic $\delta$-hyperbolic metric
  space, and $Z \subset \del X$ is a closed subset containing at least
  two points. Then $\join(Z)$ (with the metric induced by $X$) is the
  image of a quasi-isometrically embedded proper geodesic metric
  space, and its ideal boundary is precisely $Z$.
\end{proposition}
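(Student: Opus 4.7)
The plan is to carry out three steps in sequence: first establish that $\join(Z)$ is a quasi-convex subset of $X$; then use quasi-convexity to construct a proper geodesic metric space $Y$ with a quasi-isometric embedding $f \colon Y \to X$ satisfying $f(Y) = \join(Z)$; and finally identify the ideal boundary of $Y$ with $Z$.

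For quasi-convexity, I would show $\join(Z)$ is $C$-quasi-convex for some $C = C(\delta)$. Given $a, b \in \join(Z)$ lying on bi-infinite geodesics $\gamma_1 = [p_1 q_1]$ and $\gamma_2 = [p_2 q_2]$ with endpoints in $Z$, the auxiliary bi-infinite geodesic $\gamma_3 = [p_1 p_2]$ also lies in $\join(Z)$. The union $\gamma_1 \cup \gamma_3 \cup \gamma_2$ forms an ideal geodesic quadrilateral (or triangle, if some vertices coincide), which is $O(\delta)$-slim in $X$; hence any geodesic $[ab]$ lies within an $O(\delta)$-neighborhood of $\join(Z)$.

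For the second step, I would build $Y$ by a standard net-and-graph approximation. Choose a maximal $\delta$-separated net $N \subset \join(Z)$ (locally finite because $X$ is proper), and form a graph with vertex set $N$ by joining pairs within distance $O(\delta)$. Connectedness follows from quasi-convexity of $\join(Z)$ together with the net property; the geometric realization is a proper geodesic metric space, and the vertex inclusion extends to a quasi-isometric embedding into $X$. A supporting claim is that $\join(Z)$ is closed in $X$: if $x_n \in [p_n q_n] \subset \join(Z)$ converges to some $x \in X$, compactness of $X \sqcup \partial X$ lets us extract a subsequence with $p_n \to p$ and $q_n \to q$ in $X \sqcup \partial X$, and since $Z$ is closed while $d_X(x, o)$ is finite, $p \ne q$ both lie in $Z$, with $x$ on a limiting bi-infinite geodesic from $p$ to $q$ by Arzel\`a--Ascoli. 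With a minor modification (replacing each abstract edge by a path in $\join(Z)$ of comparable length, obtained by fellow-travelling along adjacent bi-infinite geodesics), the image of $Y$ equals $\join(Z)$ exactly rather than merely lying within bounded Hausdorff distance.

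Finally, I would invoke the fact that a quasi-isometric embedding between proper $\delta$-hyperbolic spaces extends to a topological embedding of ideal boundaries whose image is the accumulation set of the range in $\partial X$, and verify that this accumulation set is $Z$. The inclusion $Z \subseteq \partial Y$ is immediate: for any $p \in Z$, choosing $q \in Z \setminus \{p\}$ gives a geodesic $[qp] \subset \join(Z)$ converging to $p$. For the reverse, given $x_n \in \join(Z)$ with $x_n \to \xi \in \partial X$ and $x_n \in [p_n q_n]$, $\delta$-thinness of the geodesic triangle on $(o, p_n, q_n)$ places $x_n$ within $\delta$ of $[o p_n]$ or $[o q_n]$; assuming the former after extracting a subsequence, $(p_n | x_n)_o \geq d_X(o, x_n) - \delta \to \infty$, so $p_n \to \xi$, and closedness of $Z$ gives $\xi \in Z$. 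The main obstacle is the quasi-convexity argument in the first step, which requires handling slim ideal polygons with potentially coincident vertices at infinity; a secondary issue is cosmetically arranging the image of $Y$ to equal $\join(Z)$ exactly.
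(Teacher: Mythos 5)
The paper gives no proof of this proposition; it is quoted from \cite{Bowditch12}, Section 5, so there is no in-house argument to compare against. Your outline follows the same standard route as the cited source, and its main steps are sound: quasi-convexity of $\join(Z)$ via slimness of the ideal triangle/quadrilateral with vertices $a, p_1, p_2, b$ (including the degenerate case $p_1 = p_2$), the net-and-graph approximation producing a proper geodesic space quasi-isometrically embedded in $X$ with image at bounded Hausdorff distance from $\join(Z)$, and the identification of the ideal boundary with $Z$ via the Gromov-product estimate $(p_n \mid x_n)_o \ge d_X(o, x_n) - O(\delta)$ together with closedness of $Z$ (your closedness argument for $\join(Z)$ is also fine).

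The one detail that fails as stated is your proposed modification for making the image exactly $\join(Z)$: there need not exist \emph{any} path in $\join(Z)$ joining two nearby points, because $\join(Z)$ can be disconnected. For example, if $X = \mathbb{H}^2$ and $Z$ consists of three ideal points, then $\join(Z)$ is the union of the three sides of an ideal triangle, which are pairwise disjoint complete geodesics; so ``replacing each abstract edge by a fellow-travelling path in $\join(Z)$'' is impossible, and even where such paths exist their union would be only countably many arcs, not all of $\join(Z)$. The repair is cheap, however: a quasi-isometric embedding need not be continuous, so after fixing a nearest-net-point assignment $n \colon \join(Z) \to N$ you can redistribute the points of the half-edges at each vertex $v$ onto the fiber $n^{-1}(v) \subset B(v,\delta)$ by an arbitrary surjection fixing $v$; this moves the map a bounded amount, preserves the quasi-isometric embedding, and makes the image exactly $\join(Z)$. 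Equivalently, the substance of the proposition is that $\join(Z)$ with the induced metric is quasi-isometric to a proper geodesic space whose Gromov boundary is $Z$, which your first and third steps already deliver.
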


When $G$ is a Kleinian group, $\join(\Lambda(G))$ is within uniformly
bounded Hausdorff distance of the convex hull of the limit set of $G$,
i.e. the minimal closed $G$-invariant convex subset of $\H^3_\R$ whose
closure in $\overline{\H^3_\R}$ contains $\Lambda(G)$. So in the
general setting, we can think of the quotient $\join(\Lambda(G))/G$ as
a ``convex core'' for $X/G$.

\begin{proposition}[see \cite{Bowditch12}, section 6]
  \label{prop:geometrical_finite_definitions}
  Let $X$ be a proper geodesic $\delta$-hyperbolic metric space and
  let $G$ be an infinite discrete subgroup of $\Isom(X)$. Then the
  following are equivalent:
  \begin{itemize}
  \item The induced action of $G$ on $\del X$ is geometrically finite
    in the sense of \Cref{defn:geometrically_finite}.
  \item There exists a $G$-invariant system of pairwise disjoint
    horoballs $\mathcal{B}$ in $X$, such that the stabilizer in $G$ of
    each $B \in \mathcal{B}$ is a parabolic subgroup, and $G$ acts
    cocompactly on the set
    \[
      C(G, \mathcal{B}) := \join(\Lambda(G)) \setminus \bigcup_{B \in
        \mathcal{B}} B.
    \]
  \end{itemize}
  Moreover, if $|\Lambda(G)| > 1$, then for \emph{any} $G$-invariant
  system of pairwise disjoint horoballs $\mathcal{B}$ in $X$, the
  action of $G$ on $C(G, \mathcal{B})$ is cocompact if and only if the
  set of centers of horoballs in $\mathcal{B}$ is precisely the set of
  parabolic points in $\Lambda(G)$.
\end{proposition}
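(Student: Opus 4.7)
The plan is to prove the two implications of the main equivalence separately, and then derive the ``moreover'' statement as a corollary of the reverse implication.

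For the forward implication, I would assume $G$ acts geometrically finitely on $\del X$ and construct the horoball system $\mathcal{B}$ by hand. The main ingredient is that each bounded parabolic point $p \in \Lambda(G)$ has stabilizer $P = \textup{Stab}_G(p)$ acting cocompactly on $\Lambda(G) \setminus \{p\}$. A standard convergence-group argument, together with the fact that the parabolic fixed points of distinct maximal parabolic subgroups are distinct, shows that there are only finitely many $G$-conjugacy classes of maximal parabolic subgroups. For each class, pick a representative parabolic point $p$ and a horoball $B_p$ centered at $p$ deep enough that its $G$-translates are pairwise disjoint; this uses the product-like geometry of horoballs in $\delta$-hyperbolic spaces together with cocompactness of $P$ on $\Lambda(G) \setminus \{p\}$. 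Let $\mathcal{B}$ be the $G$-orbit of these horoballs. To show $G$ acts cocompactly on $C(G, \mathcal{B})$, argue by contradiction: if $x_k \in C(G, \mathcal{B})$ escapes every $G$-compact, then each $x_k$ lies on a biinfinite geodesic between two points in $\Lambda(G)$, and after translating $x_k$ into a bounded set and extracting subsequences the geodesic endpoints converge to some $a^\pm \in \Lambda(G)$. By hypothesis each $a^\pm$ is conical or bounded parabolic; in the conical case a thin-triangle argument contradicts escape, and in the parabolic case the tail of the geodesic must enter a horoball of $\mathcal{B}$, contradicting $x_k \in C(G, \mathcal{B})$.

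For the reverse implication, take $z \in \Lambda(G)$ and a geodesic ray $\gamma$ from a fixed basepoint $x_0 \in C(G, \mathcal{B})$ to $z$. Either $\gamma$ meets $C(G, \mathcal{B})$ on an unbounded set of parameter values, or $\gamma$ eventually remains inside a single horoball $B \in \mathcal{B}$ (since the horoballs are disjoint with distinct centers, $\gamma$ can only be asymptotic to one of them). In the first case, the cocompactness hypothesis produces $g_k \in G$ and $t_k \to \infty$ with $g_k \gamma(t_k)$ lying in a fixed compact subset of $C(G, \mathcal{B})$, and a thin-triangle computation combined with \Cref{lem:conical_characterization} shows $(g_k)$ is a conical limiting sequence for $z$. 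In the second case $z$ is the center of $B$, so by assumption $\textup{Stab}_G(B) = \textup{Stab}_G(z)$ is parabolic. To see $z$ is bounded parabolic, apply cocompactness of $G$ on a collar of $\del B$ inside $C(G, \mathcal{B})$; transporting this cocompactness via the nearest-point projection from $\Lambda(G) \setminus \{z\}$ onto $\del B$ yields cocompactness of $\textup{Stab}_G(z)$ on $\Lambda(G) \setminus \{z\}$.

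The ``moreover'' statement reduces to the dichotomy above. If $\mathcal{B}$ is any $G$-invariant pairwise disjoint horoball system for which $G$ acts cocompactly on $C(G, \mathcal{B})$, then the reverse-implication analysis shows every $z \in \Lambda(G)$ is conical or the center of some $B \in \mathcal{B}$; in the latter case the subgroup of $\textup{Stab}_G(B)$ preserving the horosphere must act with infinite orbits (using $|\Lambda(G)| > 1$, so $z$ is not isolated), forcing $\textup{Stab}_G(z)$ to contain a parabolic element and hence $z$ to be a parabolic point. Conversely, if some parabolic $p \in \Lambda(G)$ were not the center of any $B \in \mathcal{B}$, then a geodesic ray to $p$ cannot eventually stay in any single horoball of $\mathcal{B}$, so the first branch of the dichotomy would make $p$ conical, contradicting Tukia's exclusivity recalled in \Cref{rmk:conical_limit_seq_char}.

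I expect the main obstacle to be the forward direction, particularly the careful construction of a $G$-invariant pairwise disjoint horoball system indexed by the parabolic orbits. Horoballs must be chosen deep enough for translates to be disjoint, but shallow enough that $G$ still acts cocompactly on what remains of $\join(\Lambda(G))$; this balancing act is the technical core of Bowditch's analysis in \cite{Bowditch12} and relies on a Margulis-lemma-style understanding of how distinct maximal parabolic subgroups interact through their dynamics on $\del X$.
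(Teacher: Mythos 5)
Your route is genuinely different from the paper's. The paper does not reprove Bowditch's results: its entire proof is a reduction. It first disposes of the degenerate case $|\Lambda(G)| = 1$ (where $\join(\Lambda(G))$ is empty and both bullet points hold trivially), and then replaces $X$ by the taut space $Y = \join(\Lambda(G))$, which by \Cref{prop:convex_hulls} is itself a proper geodesic hyperbolic space with $\del Y = \Lambda(G)$, notes that horoballs of $Y$ agree up to bounded Hausdorff distance with horoballs of $X$ intersected with $Y$, and then quotes Propositions 6.12 and 6.13 of \cite{Bowditch12} verbatim. You instead sketch a from-scratch proof, which is essentially a reconstruction of Bowditch's own argument. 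That is legitimate in principle, and your outline of the reverse implication and of the ``moreover'' statement follows the standard lines; but it is much heavier, and you yourself concede that the construction of the invariant disjoint horoball system and the cocompactness of the thick part are ``the technical core of Bowditch's analysis'' --- at which point the efficient move is the paper's: cite those propositions and only verify the (nontrivial) hypotheses needed to transport them from the case $\Lambda(G)=\del X$ to the general limit set, which is exactly what the passage to $Y$ accomplishes. Note also that the tool behind finiteness of cusp orbits and disjointness of horoballs in this generality is Tukia-style convergence dynamics, not anything resembling a Margulis lemma, which is unavailable in a bare $\delta$-hyperbolic space.

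Two concrete gaps in the sketch. First, your dichotomy in the reverse direction (``either $\gamma$ meets $C(G,\mathcal{B})$ on an unbounded set of parameters, or $\gamma$ eventually remains in a single horoball'') is false for a geodesic ray of $X$: since $C(G,\mathcal{B}) = \join(\Lambda(G)) \setminus \bigcup_{B} B$, a point of $\gamma$ can fail to lie in $C(G,\mathcal{B})$ simply by lying outside $\join(\Lambda(G))$, and a ray from $x_0$ to $z$ need not lie in the join at all. One must first replace $\gamma$ by a (quasi-)geodesic in the join, or work with nearby points of the join and control the bounded error; this bookkeeping is precisely what the paper sidesteps by working intrinsically in $Y$ and invoking the horoball comparison. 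Second, you never address the degenerate situations: $|\Lambda(G)| = 1$ (the join is empty, yet the equivalence must still be checked against \Cref{defn:geometrically_finite}), and in the ``moreover'' part the case of horoballs whose centers are not limit points, where your claim that the stabilizer of such a horoball acts with infinite orbits has no support; making the ``moreover'' statement precise in these edge cases again requires the exact formulation in \cite{Bowditch12} rather than the heuristic dichotomy.
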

\begin{proof}
  Since $G$ is infinite and discrete, $\Lambda(G)$ cannot be empty. If
  $|\Lambda(G)| = 1$, then the first bullet point is trivial because
  the unique point in $\Lambda(G)$ is trivially bounded parabolic, and
  the second bullet point is trivial because $\join(\Lambda(G))$ is
  empty. So assume $|\Lambda(G)| > 1$.

  The space $Y = \join(\Lambda(G))$ is a \textit{taut} hyperbolic
  metric space (i.e. every point in $Y$ lies within uniformly bounded
  distance of a bi-infinite geodesic in $Y$). Furthermore, horoballs in $Y$
  (which can be viewed as a proper geodesic hyperbolic metric space
  via \Cref{prop:convex_hulls}) are at a uniformly bounded Hausdorff
  distance away from horoballs in $X$ intersected with $Y$. The result now
  follows from Proposition 6.12 and Proposition 6.13 in
  \cite{Bowditch12} after replacing $X$ with $Y$.
\end{proof}

If $|\Lambda(G)| > 1$ in the situation above, then we say $G$ is a
\textit{relatively hyperbolic group}, and the stabilizers of horoballs
in $\mathcal{B}$ are called the \textit{peripheral subgroups}. We say
$G$ is \textit{hyperbolic relative to} the collection $\mathcal{P}$ of
peripheral subgroups. We also say that any countably infinite group
$G$ is hyperbolic relative to $\{G\}$, and that any finite group is
hyperbolic relative to an empty collection of peripheral subgroups.

In the special case where $X$ is taut and $\Lambda(G) = \del X$, we
say that $X$ is a \textit{cusped space} for the data of the relatively
hyperbolic group $G$ and the peripheral subgroups $\mathcal{P}$. If
$|\Lambda(G)| > 1$ we can always find a cusped space by replacing $X$
with $\join(\Lambda(G))$.

The cusped space is in general \emph{not} uniquely determined, even up
to quasi-isometry. However, its ideal boundary is a well-defined
$G$-space once the peripheral subgroups of $G$ have been specified
(see section 9 in \cite{Bowditch12}). This space is called the
\textit{Bowditch boundary} of $G$ and we denote it $\del G$ (the
notation ignores the dependence on $\mathcal{P}$). When
$\mathcal{P} = \{G\}$, then the Bowditch boundary of $G$ is defined to
be a singleton, and when $G$ is finite its Bowditch boundary is empty.

When $|\del G| \le 2$, then we say $G$ is \textit{elementary}. The
Bowditch boundary of a non-elementary relatively hyperbolic group is
always \textit{perfect}, i.e. it contains no isolated points. In
particular if $|\del G| \ge 3$, then $\del G$ is infinite.

A result of Yaman shows that the action of $G$ on its Bowditch
boundary can actually be used to completely recover the definition of
$G$ as a relatively hyperbolic group:
\begin{theorem}[\cite{yaman2004topological}]
  \label{thm:yaman_relhyp_theorem}
  Let $G$ be a discrete convergence group acting on a perfect compact
  metrizable space $M$. If every point of $M$ is either a conical
  limit point or a bounded parabolic point (equivalently, if $G$ is
  geometrically finite and $\Lambda(G) = M$), then there is a proper
  geodesic $\delta$-hyperbolic metric space $X$, an embedding
  $G \to \Isom(X)$, and a $G$-equivariant homeomorphism from $M$ to
  $\del X$.
\end{theorem}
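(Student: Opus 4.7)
My plan is to construct $X$ as a cusped space in the Groves--Manning sense, built by attaching combinatorial horoballs to a Cayley graph of $G$ at cosets of the maximal parabolic subgroups. The first preparatory step is to establish algebraic finiteness: there are only finitely many $G$-conjugacy classes of maximal parabolic subgroups, each such subgroup $P$ is finitely generated, and $G$ itself is finitely generated. For finiteness of conjugacy classes, I assume for contradiction that parabolic points $p_n$ lie in pairwise distinct orbits; by compactness of $M$ I pass to a convergent subsequence $p_n \to z$ and use the bounded parabolic condition near $z$ together with the convergence dynamics to contradict the fact (due to Tukia) that no point is simultaneously parabolic and conical. For finite generation of $P = \text{Stab}_G(p)$, the bounded parabolic hypothesis supplies a compact fundamental domain for $P$ acting on $\Lambda(G) \setminus \{p\}$, from which a standard argument extracts a finite generating set; finite generation of $G$ follows by a similar compactness argument.

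Choosing finite generating sets of representatives $P_1, \dots, P_k$ of the parabolic conjugacy classes and extending to a finite generating set $S$ of $G$, I form the Cayley graph $\Gamma(G,S)$ and, for each coset $gP_i$, attach a combinatorial horoball $\mathcal{H}(gP_i)$ along the coset. The resulting space $X$ is a proper geodesic metric space carrying a proper isometric $G$-action; faithfulness of the action is automatic since $G$ acts freely on the vertex set of $\Gamma(G,S)$, which gives the required embedding $G \hookrightarrow \Isom(X)$.

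The crux is showing $X$ is $\delta$-hyperbolic. The strategy I would follow is to first prove that the coned-off Cayley graph $\hat\Gamma$ (obtained by adding a cone-point for each parabolic coset joined to each of its elements by an edge) is hyperbolic, and then invoke the Groves--Manning hyperbolicity theorem for the combinatorial horoball construction to lift this to hyperbolicity of $X$. To prove hyperbolicity of $\hat\Gamma$, I would extract from the convergence action on $M$ an annulus system in the sense of Bowditch and verify his axiomatic criterion for a group action on a perfect compact metrizable space to be that of a hyperbolic group on its boundary---with the parabolic points here handled by the coning-off. This step is the main obstacle: translating the purely dynamical hypothesis that every point of $M$ is conical or bounded parabolic into a quantitative thin-triangle condition on $\hat\Gamma$ requires the full force of Bowditch's machinery and is where essentially all the real work lies.

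Finally I would produce a $G$-equivariant homeomorphism $\del X \to M$. A sequence tending to infinity in $X$ either stays boundedly close in $\Gamma$ to a geodesic ray and thereby selects a conical limit point of $G$ on $M$, or else eventually enters some horoball $\mathcal{H}(gP_i)$ and accumulates only on its basepoint, which corresponds to the parabolic fixed point of $gP_ig^{-1}$ in $M$. This prescription defines a continuous $G$-equivariant injection $\del X \to M$; geometrical finiteness of the action on $M$ gives surjectivity, and compactness of $\del X$ and $M$ upgrades the bijection to a homeomorphism.
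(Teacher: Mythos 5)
The paper does not prove this statement at all---it is imported wholesale from Yaman \cite{yaman2004topological}---so your outline has to stand on its own, and as written it does not. The decisive gap is your first ``preparatory step'': finite generation of the maximal parabolic subgroups and of $G$ does not follow from the hypotheses, and is false in general. Compactness of $(\Lambda(G)\setminus\{p\})/\mathrm{Stab}_G(p)$ is cocompactness of a non-proper action on a space carrying no geodesic or length structure, so there is no Milnor--\v{S}varc-type argument that ``extracts a finite generating set.'' Indeed, a countable group such as $G=A*\Z$ with $A$ infinitely generated is hyperbolic relative to $\{A\}$ in the sense used here and acts geometrically finitely on its perfect Bowditch boundary, with the stabilizer of the bounded parabolic point equal to $A$; neither $A$ nor $G$ is finitely generated, yet the theorem applies to it. The paper leans on exactly this generality: the remark immediately following the theorem stresses that no finite-generation assumptions are made, precisely because Yaman's result does not need them. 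Since your entire construction is a Cayley graph for a finite generating set with Groves--Manning combinatorial horoballs attached along cosets of finitely generated peripherals, the scaffolding fails at the first step; to salvage it you would need a cusped-space construction for countable, possibly infinitely generated groups, which you neither formulate nor prove hyperbolic.

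Even setting that aside, the proposal defers the actual mathematical content. The passage from the purely dynamical hypothesis to hyperbolicity of the coned-off graph (equivalently, to an annulus system satisfying Bowditch's axioms) \emph{is} the theorem; you acknowledge that ``essentially all the real work lies'' there but attempt none of it, and the finiteness of the number of orbits of parabolic points is itself a nontrivial theorem of Tukia whose proof is considerably more delicate than the compactness-plus-subsequence sketch you give. For comparison, Yaman's actual argument does not pass through a Cayley graph at all: following Bowditch's topological characterisation of hyperbolic groups, she builds a hyperbolic space directly from the convergence dynamics on $M$ (an annulus system together with horoball-like sets over the parabolic points) and identifies its ideal boundary with $M$, which is exactly why no finite-generation hypotheses enter. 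As it stands, your text is a plausible roadmap for the finitely generated case, to be completed by citing substantial external machinery, but it is not a proof of the statement in the generality in which this paper uses it.
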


The theorem implies in particular that a geometrically finite convergence group is exactly the same thing as a relatively
hyperbolic group.

\begin{remark}
  Some definitions of relative hyperbolicity explicitly require either
  the group $G$ or the peripheral subgroups in $\mathcal{P}$ to be
  finitely generated. We do not make this assumption in this paper,
  since both \Cref{prop:geometrical_finite_definitions} and
  \Cref{thm:yaman_relhyp_theorem} hold without it. Our setup does
  always force the groups in $\mathcal{P}$ to be infinite, since they
  are parabolic subgroups of a convergence group.
\end{remark}

\subsubsection{Accumulation in geometrically finite subgroups}

Yaman's theorem means that we can always understand a non-elementary
discrete convergence group $G$ which is geometrically finite in the sense of
\Cref{defn:geometrically_finite} using its isometric action on a
cusped space $X$. In the case $|\del G| = 0$ or $|\del G| = 2$, we can
also find a cusped space by taking $X$ to be either a point or a line;
if $|\del G| = 1$ and $G$ is finitely generated, then we can take the
cusped space to be a ``horoball'' modeled on $G$ (see \cite{GM08},
\cite{Hruska10}).

We take advantage of the existence of the cusped space to prove some
properties of subgroups of $G$ which act geometrically finitely on
$\Lambda(G)$. A convenient notation we will use here and many times
later whenever we have a group $G$ acting on $M$ is
\[
    H(U) = \bigcup_{g \in H} gU
\]
for some $U \subset M, H \subset G$. For the orbit of a point, we will just write $Hx$.

\begin{definition}
  \label{defn:quasiconvex}
  Let $G$ be a relatively hyperbolic group, with Bowditch boundary
  $\del G$. A subgroup $H \le G$ is \textit{relatively quasi-convex}
  if $H$ acts geometrically finitely on $\del G$ (i.e. if every point
  of $\Lambda(H) \subseteq \del G$ is either a conical limit point or
  a bounded parabolic point for the $H$-action).

  Following \cite{Dahmani03}, we say that a relatively quasi-convex
  subgroup $H$ is \textit{fully quasi-convex} if for all but finitely
  many left cosets $gH$, we have
  $gH(\Lambda(H)) \cap \Lambda(H) = \es$.
\end{definition}

Observe that, if $G$ is elementary, then any fully quasi-convex
subgroup of $G$ is either finite or has finite index in $G$.

\begin{lemma}
  \label{lem:quasiconvex_accumulation}
  Let $G$ be a non-elementary relatively hyperbolic group with
  associated cusped space $X = X(G)$, and let $H \le G$ be a fully
  quasi-convex subgroup of $G$.

  Fix $x \in X$, and suppose that $(g_k)$ is an infinite sequence in
  $G \setminus H$ such that
  \begin{equation}
    \label{eq:g_k_distance}
    d_X(g_k x, x) = d_X(g_k x, H x)
  \end{equation}
  for all $k$. Then no attracting point of $g_k$ in $\del X$ lies in
  $\Lambda(H)$.
\end{lemma}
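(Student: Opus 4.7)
I will argue by contradiction. Suppose that, after passing to a subsequence, $(g_k)$ is a convergence sequence whose attracting point $z_+$ lies in $\Lambda(H)$. Then $g_k x \to z_+$ in $\overline{X}$, and properness of the $G$-action on $X$ forces $d_X(g_k x, x) \to \infty$. Since $H$ is geometrically finite on $\Lambda(H)$, the point $z_+$ is either a conical limit point or a bounded parabolic point for $H$, and I handle these cases separately.

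In the conical case, the standard geometric characterization of conical limit points produces a sequence $h_n \in H$ and a geodesic ray $\gamma$ from $x$ to $z_+$ such that each $h_n x$ lies within a uniform distance $D$ of $\gamma$ and $d_X(x, h_n x) \to \infty$. Because $g_k x \to z_+$, the Gromov product $(g_k x \mid z_+)_x =: S_k$ tends to infinity, and in a $\delta$-hyperbolic space this means $[x, g_k x]$ $\delta$-fellow-travels with $\gamma$ up to parameter $S_k$. Picking $h_k \in H$ with $h_k x$ within $D$ of $\gamma$ at parameter approximately $S_k$ and triangulating yields $d_X(g_k x, h_k x) \le d_X(x, g_k x) - S_k + C$ for a uniform constant $C$. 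For $k$ large this is strictly less than $d_X(x, g_k x)$, contradicting the hypothesis $d_X(g_k x, x) = d_X(g_k x, Hx)$.

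In the parabolic case, $z_+$ is also bounded parabolic for $G$ (some parabolic element of $H \le G$ fixes it), so by \Cref{prop:geometrical_finite_definitions} there is a $G$-invariant system $\mathcal{B}$ of pairwise disjoint horoballs in $X$ containing a horoball $B = B_{z_+}$ centered at $z_+$. For $k$ large, $g_k x \in B$, so $x \in g_k^{-1}(B) = B_{g_k^{-1} z_+}$; by pairwise disjointness of $\mathcal{B}$, the parabolic points $g_k^{-1} z_+$ must be eventually constant, forcing $g_k \in P_G \cdot g_{k_0}$ for large $k$, where $P_G := \textup{Stab}_G(z_+)$. Full quasi-convexity of $H$ implies that the parabolic subgroup $P := H \cap P_G$ has finite index in $P_G$: any $g \in P_G$ satisfies $z_+ \in g\Lambda(H) \cap \Lambda(H)$, so $gH$ is one of the finitely many ``bad'' left cosets singled out by the definition of full quasi-convexity. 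Decomposing $P_G$ into finitely many right $P$-cosets and applying pigeonhole, I extract a further subsequence of the form $g_k = p_k g_*$ with $p_k \in P$ pairwise distinct and $g_* \in G$ fixed. Since $p_k \in H$,
\[
    d_X(g_k x, Hx) = d_X(p_k g_* x, Hx) = d_X(g_* x, Hx)
\]
is independent of $k$, while $d_X(g_k x, x) = d_X(p_k g_* x, x) \to \infty$ by properness of the infinite $P$-orbit, again contradicting the hypothesis.

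The main obstacle is the parabolic case. Because $Hx$ sits on a horosphere while $g_k x$ plunges into $B_{z_+}$, no element of $H$ is ever closer to $g_k x$ than $x$ is, and the ``choose a nearby orbit point'' strategy that works for conical limit points breaks down. The key idea that overcomes this is to notice that $x$ lies in all of the translated horoballs $B_{g_k^{-1} z_+}$, so these must coincide; combined with full quasi-convexity (which ensures $P$ has finite index in $P_G$), this confines $g_k$ to a single $P_G$-coset, and the required contradiction then follows from properness of the parabolic action.
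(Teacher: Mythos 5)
Your conical case is essentially fine (modulo citing the standard equivalence between the dynamical and geometric/radial definitions of conical limit points, and the small fix that the $H$-orbit point near $\gamma$ need only be taken at some fixed large parameter $t_0\le S_k$, not ``approximately $S_k$''). The genuine gap is in the parabolic case, at the step ``for $k$ large, $g_kx\in B$.'' Convergence $g_kx\to z_+$ in $X\sqcup\del X$ does \emph{not} imply that $g_kx$ eventually enters a fixed horoball $B$ centered at $z_+$: a sequence can converge to a parabolic point while travelling ``around the cusp'' at bounded height. Concretely, in $\H^2_\R$ the points $k+\tfrac{i}{2}$ converge to $\infty$ but never meet the horoball $\{\Im z>1\}$; more relevantly, if $x\notin B$ and $p_k$ ranges over the maximal parabolic $P_G=\mathrm{Stab}_G(z_+)$, then $p_kx\to z_+$ while $p_kx$ stays outside $B$ for all $k$, since $P_G$ preserves $B$ and its complement. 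So the pivotal claim $x\in g_k^{-1}B$, on which the whole horoball-rigidity/finite-index/pigeonhole argument rests, is unjustified, and the scenario it silently excludes (orbit points approaching $z_+$ from outside every horoball, which is exactly what happens for sequences in the thick part) is precisely the hard case of the lemma. Symptomatically, your parabolic argument only invokes the hypothesis \eqref{eq:g_k_distance} in its final line; the hypothesis has to enter earlier, because it is what rules out this outside-the-horoball behavior.

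The paper's proof handles this by working with the geodesics $c_k$ from $x$ to $g_kx$ rather than with the endpoints $g_kx$: after normalizing the horoball system from \Cref{prop:geometrical_finite_definitions} so that $x$ (hence every $g_kx$) lies in the complement $C(G,\mathcal{B}_G)$, the $c_k$ converge to a ray $c_z$ ending at $z$, and \eqref{eq:g_k_distance} is used (via cocompactness of $H$ on $C(H,\mathcal{B}_H)$) to show $c_z$ eventually stays inside a horoball $B$ centered at $z$; a geodesic ray, unlike an arbitrary sequence, does eventually remain in such a horoball. Since $g_kx\notin B$, each $c_k$ must exit $B$ at a point $w_k$ with $d_X(x,w_k)\to\infty$, and \eqref{eq:g_k_distance} then forces $d_X(Hx,w_k)\to\infty$; cocompactness of $\mathrm{Stab}_G(B)$ on $\del B$ produces elements $s_k$ fixing $z\in\Lambda(H)$ lying in infinitely many distinct left cosets of $H$, contradicting full quasi-convexity. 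Your finite-index observation ($[P_G:P_G\cap H]<\infty$ from full quasi-convexity) and the pigeonhole/properness ending are correct as far as they go, but without a replacement for the false containment $g_kx\in B$ the parabolic case does not close; some version of the entry--exit argument along $[x,g_kx]$ (or an equivalent quasi-convexity-of-horoball-penetration argument) is needed.
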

\begin{proof}
  Suppose for a contradiction that $g_k$ has an attracting point
  $z \in \Lambda(H) \subset \del X$. It follows that $H$ is infinite,
  since $\Lambda(H)$ is nonempty. Since $G$ acts as a convergence
  group on both $\del X$ and $X \sqcup \del X$, we see that $(g_kx)$
  converges to $z$ in $X \sqcup \del X$.
  
  For each $k$, we let $c_k:[0, r_k] \to X$ be a geodesic ray in $X$
  from $x$ to $g_kx$; since $(g_k)$ is divergent we have
  $r_k \to \infty$. We may extend each $c_k$ to a map
  $[0, \infty) \to X$ by setting $c_k(t) = c_k(r_k)$ for all
  $t \ge r_k$. Up to subsequence, these maps converge uniformly on
  compacts to a geodesic ray $c_z:[0, \infty) \to X$, whose ideal
  endpoint must be $z$.

  By \Cref{prop:geometrical_finite_definitions}, there is a
  $G$-invariant family $\mathcal{B}_G$ of pairwise disjoint horoballs
  in $X$ such that the parabolic subgroups of $G$ are precisely the
  stabilizers of the horoballs in $\mathcal{B}_G$, and the quotient of
  \begin{equation}
    \label{eq:core_complement}
    C(G, \mathcal{B}_G) = X \setminus \bigcup_{B \in \mathcal{B}_G} B
  \end{equation}
  by the action of $G$ is compact. By shrinking the horoballs in
  $\mathcal{B}_G$ if necessary, we can also assume that
  $x \in C(G, \mathcal{B}_G)$.

  We claim that $z$ is the center of some horoball
  $B \in \mathcal{B}_G$. If $H$ is an infinite subgroup of a parabolic
  subgroup $P$ in $G$, this is immediate, because then the unique
  point in $\Lambda(H)$ is the center of the unique horoball in
  $\mathcal{B}_G$ fixed by $P$. Otherwise, $\Lambda(H)$ contains at
  least two points, and we can consider the space
  $\join(\Lambda(H)) \subset X$.

  Let $\mathcal{B}_H$ be the horoballs in $\mathcal{B}_G$ whose
  centers are parabolic points in $\Lambda(H)$. By
  \Cref{prop:geometrical_finite_definitions} again, $H$ acts
  cocompactly on the set
  \[
    C(H, \mathcal{B}_H) := \join(\Lambda(H)) \setminus \bigcup_{B \in
      \mathcal{B}_H}B.
  \]
  Since the endpoint of the geodesic $c_z$ lies in $\Lambda(H)$, there
  is some uniform $R > 0$ so that every point in the image of $c_z$
  lies within distance $R$ of $\join(\Lambda(H))$.

  Now, suppose that for arbitrarily large $t$, the point $c_z(t)$ lies
  in an open $R$-neighborhood of the set $C(H, \mathcal{B}_H)$. But
  then for some $k = k(t)$, the point $c_k(t)$ also lies in an
  $R$-neighborhood of $C(H, \mathcal{B}_H)$. Since $H$ acts
  cocompactly on $C(H, \mathcal{B}_H)$, this means that $c_k(t)$ is
  within uniform distance of $hx$ for some $h \in H$. But this
  contradicts assumption \eqref{eq:g_k_distance}.

  So, for all sufficiently large times $t$, $c_z(t)$ must lie in some
  horoball in $\mathcal{B}_H$. Since the horoballs in $\mathcal{B}_H$
  are pairwise disjoint, there is in fact a single horoball
  $B \in \mathcal{B}_H$ so that $c_z(t)$ is in the interior of $B$ for
  all large enough $t$. The center of this horoball must be $z$.

  Since $(c_k)$ converges to $c_z$, for all sufficiently large $k$,
  the geodesic $c_k$ enters $B$. However, since we have assumed
  $x \in C(G, \mathcal{B}_G)$, we know that
  $g_kx \in C(G, \mathcal{B}_G)$, and thus $c_k$ must also leave the
  horoball $B$ after it enters it. So, let $w_k$ denote the last point
  where $c_k$ leaves $B$. The distances $d_X(x, w_k)$ must tend to
  infinity as $k \to \infty$, since $c_z$ never leaves $B$. See
  \Cref{fig:horoball_geodesic}.

  \begin{figure}[h]
    \centering
\begingroup%
  \makeatletter%
  \providecommand\color[2][]{%
    \errmessage{(Inkscape) Color is used for the text in Inkscape, but the package 'color.sty' is not loaded}%
    \renewcommand\color[2][]{}%
  }%
  \providecommand\transparent[1]{%
    \errmessage{(Inkscape) Transparency is used (non-zero) for the text in Inkscape, but the package 'transparent.sty' is not loaded}%
    \renewcommand\transparent[1]{}%
  }%
  \providecommand\rotatebox[2]{#2}%
  \newcommand*\fsize{\dimexpr\f@size pt\relax}%
  \newcommand*\lineheight[1]{\fontsize{\fsize}{#1\fsize}\selectfont}%
  \ifx\svgwidth\undefined%
    \setlength{\unitlength}{160.41712921bp}%
    \ifx\svgscale\undefined%
      \relax%
    \else%
      \setlength{\unitlength}{\unitlength * \real{\svgscale}}%
    \fi%
  \else%
    \setlength{\unitlength}{\svgwidth}%
  \fi%
  \global\let\svgwidth\undefined%
  \global\let\svgscale\undefined%
  \makeatother%
  \begin{picture}(1,1.06873445)%
    \lineheight{1}%
    \setlength\tabcolsep{0pt}%
    \put(0,0){\includegraphics[width=\unitlength,page=1]{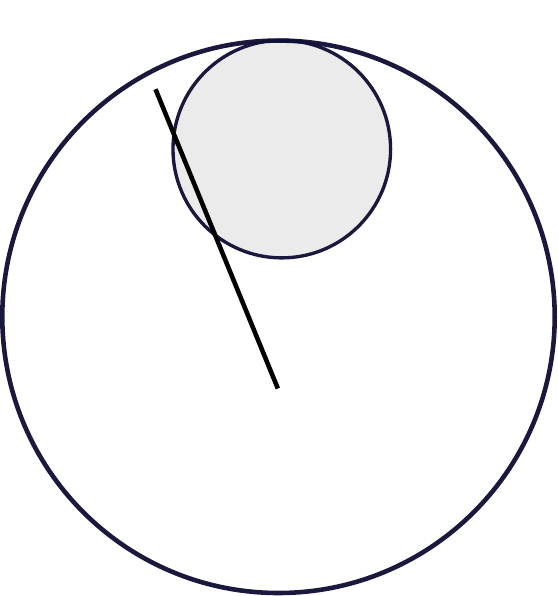}}%
    \put(0.55204828,0.78797583){\color[rgb]{0,0,0}\makebox(0,0)[lt]{\lineheight{1.25}\smash{\begin{tabular}[t]{l}$B$\end{tabular}}}}%
    \put(0,0){\includegraphics[width=\unitlength,page=2]{horoball.pdf}}%
    \put(0.22489698,0.78984949){\color[rgb]{0,0,0}\makebox(0,0)[lt]{\lineheight{1.25}\smash{\begin{tabular}[t]{l}$w_k$\end{tabular}}}}%
    \put(0.17729934,0.85014629){\color[rgb]{0,0,0}\makebox(0,0)[lt]{\lineheight{1.25}\smash{\begin{tabular}[t]{l}$g_kx$\end{tabular}}}}%
    \put(0.4718176,0.30715716){\color[rgb]{0,0,0}\makebox(0,0)[lt]{\lineheight{1.25}\smash{\begin{tabular}[t]{l}$x$\end{tabular}}}}%
    \put(0.49242326,1.02130009){\color[rgb]{0,0,0}\makebox(0,0)[lt]{\lineheight{1.25}\smash{\begin{tabular}[t]{l}$z$\end{tabular}}}}%
    \put(0.36370355,0.50835421){\color[rgb]{0,0,0}\makebox(0,0)[lt]{\lineheight{1.25}\smash{\begin{tabular}[t]{l}$c_k$\end{tabular}}}}%
  \end{picture}%
\endgroup%

    \caption{Illustration for the proof of
      \Cref{lem:quasiconvex_accumulation}. The geodesic $c_k$ from $x$
      to $g_kx$ must enter $B$, and leave $B$ far from $x$.}
    \label{fig:horoball_geodesic}
  \end{figure}

  Since $c_k$ is a geodesic we know that
  $d_X(x, g_kx) = d_X(x, w_k) + d_X(w_k, g_kx)$. Then, because
  $d_X(x, w_k)$ tends to infinity, our assumption
  \eqref{eq:g_k_distance} implies that $d_X(Hx, w_k)$ tends to
  infinity as well. But, we also know that the stabilizer of $B$ in
  $G$ acts cocompactly on $\del B$. Then since $w_k \in \del B$, there
  is some constant $D > 0$ so that for every $k$, we have $s_k \in G$
  preserving $B$ such that $d_X(x, s_k^{-1}w_k) < D$, hence
  $d_X(s_kx, w_k) < D$. It follows that the elements in the sequence
  $(s_k)$ cannot lie in finitely many left cosets of $H$. However,
  since $s_k$ preserves $B$, each $s_k$ also fixes the point
  $z \in \Lambda(H)$, which contradicts the full quasi-convexity of
  $H$.
\end{proof}

The geometric statement of the lemma above has the following
(completely dynamical) consequence:
\begin{lemma}\label{lem:upgraded_technical_lemma}
  Let $G$ be a relatively hyperbolic group with Bowditch boundary
  $\del G$, and let $J_1, J_2$ be fully quasi-convex subgroups of $G$.

  For any sequence $(g_k)$ in $G$, there exists
  $j_k \in J_1, j_k' \in J_2$ such that the sequence $(j_kg_kj_k')$
  has no attracting points in $\Lambda(J_1) \subset \del G$ and no
  repelling points in $\Lambda(J_2) \subset \del G$.
\end{lemma}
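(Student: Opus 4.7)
Plan. The plan is to apply \Cref{lem:quasiconvex_accumulation} twice---once with $H = J_1$ to control the attracting points of the modified sequence, and once applied to the inverse sequence with $H = J_2$ to control the repelling points. The main difficulty is coordinating the two applications: naively optimizing the left factor $j_k$ and the right factor $j_k'$ separately could spoil the distance condition required by the other application, so the challenge is to arrange both hypotheses simultaneously.

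The key device is a \emph{joint} minimization. Assume first that $G$ is non-elementary, let $X$ be a cusped space for $G$, and fix a basepoint $x \in X$. For each $k$, choose $(j_k, j_k') \in J_1 \times J_2$ realizing the (discretely attained) minimum
\[
  d_X(x,\, j_k g_k j_k' x) \;=\; \min_{j \in J_1,\, j' \in J_2}\; d_X(x,\, j g_k j' x),
\]
and set $b_k = j_k g_k j_k'$. Varying $j_k$ alone through $J_1$ (with $j_k'$ fixed) replaces $b_k$ by an arbitrary element of $J_1 b_k$; the minimality forces $d_X(b_k x, x) = d_X(b_k x, J_1 x)$. Symmetrically, varying $j_k'$ alone through $J_2$ yields $d_X(b_k^{-1} x, x) = d_X(b_k^{-1} x, J_2 x)$. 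So a single choice of $(j_k, j_k')$ produces the distance conditions needed for two separate applications of \Cref{lem:quasiconvex_accumulation}.

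To apply that lemma we must also check that $b_k \notin J_1$ and $b_k \notin J_2$ along a suitable subsequence. But if $b_k \in J_1$, then $J_1 b_k x = J_1 x$, so the first identity forces $b_k x = x$ and hence $b_k \in \text{Stab}_G(x)$, a finite set; the analogous argument handles $J_2$. Thus either $(b_k)$ takes only finitely many distinct values---in which case the conclusion is vacuous---or we may pass to a subsequence with $b_k \in G \setminus (J_1 \cup J_2)$ for every $k$. Now \Cref{lem:quasiconvex_accumulation} applied to $(b_k)$ with $H = J_1$ shows that no attracting point of $(b_k)$ lies in $\Lambda(J_1)$, and the same lemma applied to $(b_k^{-1})$ with $H = J_2$ shows that no attracting point of $(b_k^{-1})$---equivalently, no repelling point of $(b_k)$---lies in $\Lambda(J_2)$. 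The elementary case ($|\del G| \leq 2$) is handled analogously using the explicit cusped models from \Cref{sec:relatively_hyperbolic_background}, or directly since $\Lambda(J_1)$ and $\Lambda(J_2)$ are then finite sets of parabolic or loxodromic fixed points that can be avoided by multiplying by an appropriate power.

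The main obstacle is conceptual rather than computational: one must recognize that the two closest-point requirements---for the left factor and for the right factor---can be met by a single joint minimum over $J_1 \times J_2$, rather than by two successive minimizations whose mutual interaction would be awkward to untangle.
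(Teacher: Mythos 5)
Your proof is correct and takes essentially the same route as the paper: the paper's choice of $j_k, j_k'$ realizing $d_X(g_k(J_2x), J_1x)$ is exactly your joint minimization of $d_X(x, jg_kj'x)$ over $J_1 \times J_2$, followed by the same two applications of \Cref{lem:quasiconvex_accumulation} to the modified sequence and its inverse. The only cosmetic difference is that the paper disposes of terms with $g_k \in J_1 \cup J_2$ up front by choosing $j_k, j_k'$ to make the product the identity, whereas you handle such terms afterwards via finiteness of $\mathrm{Stab}_G(x)$.
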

\begin{proof}
  If $g_k \in J_1 \cup J_2$, then we can choose $j_k$ and $j_k'$ so
  that $j_kg_kj_k'$ is the the identity. A bounded sequence has no
  attracting or repelling points. So, we may assume
  $g_k \in G \setminus (J_1 \cup J_2)$ for all $k$.
  
  If $G$ is elementary, then $J_1$ and $J_2$ are both either finite or
  finite-index subgroups of $G$. In this case the result is immediate,
  so we can assume $G$ is non-elementary and let $X$ be a cusped space
  for $G$. Fix $x \in X$. For each $k$, we choose $j_k \in J_1$,
  $j_k' \in J_2$ so that
  \[
    d_X(g_k (J_2x), J_1x) = d_X(g_kj_k'x, j_k^{-1}x).
  \]
  We know such $j_k, j_k'$ exist because $J_i x$ are discrete
  subsets of $X$ for $i = 1, 2$. Let $g_k' = j_kg_kj_k'$. We will show
  that $g_k'$ has no repelling points in $\Lambda(J_2)$; the argument
  that $g_k'$ has no attracting points in $\Lambda(J_1)$ is completely
  symmetric, after replacing $g_k'$ with its inverse.

  Since $jJ_ix = J_ix$ for any $j \in J_i$, we know that for all $k$
  we have
  \[
    d_X(g_k'x, x) = d_X(g_kJ_2x, J_1x) = d_X(g_k'J_2x, J_1x).
  \]
  By definition, we know that
  \[
    d_X(g_k'J_2x, J_1x) \le d_X(g_k'J_2x, x) = d_X(J_2x, (g_k')^{-1}x),
  \]
  so combining this with the previous equality we conclude
  \[
    d_X(x, (g_k')^{-1}x) = d_X(g_k'x, x) \le d_X(J_2x, (g_k')^{-1}x)
  \]
  so in fact $d_X(x, (g_k')^{-1}x) = d_X(J_2x, (g_k')^{-1}x)$ for
  every $k$. Then \Cref{lem:quasiconvex_accumulation} implies that
  $((g_k')^{-1})$ has no attracting points in $\Lambda(J_2)$, or
  equivalently $(g_k')$ has no repelling points in $\Lambda(J_2)$.
\end{proof}

Our main application of these lemmas is the technical proposition
below. Roughly, this proposition tells us that in certain circumstances, it is
possible to strengthen the ``ping-pong'' combinatorics of
geometrically finite convergence groups. That is, the proposition
gives us a way to modify a ``ping-pong'' element $g \in \Homeo(M)$, so
that instead of nesting the closure of an open subset $U \subset M$
inside of another open subset $V \subset M$, $g$ takes the closure of
$U$ inside of a fixed compact subset $K \subset V$. This ``strong
nesting'' property will be useful throughout the paper.

\begin{proposition}
  \label{prop:quasiconvex_subgroups_nest}
  Let $G$ be a geometrically finite convergence group acting on a
  compact metrizable space $M$, let $H$ be a subgroup of $G$, and let
  $J_1, J_2 \le H$ be fully quasi-convex subgroups of $G$.

  Let $U_1, U_2$ be open subsets of $M$ such that, for $i \in \{1,2\}$,
  we have $J_i(U_i) = U_i$ and
  $\Lambda(H) \setminus \Lambda(J_i) \subset U_i$. Suppose that for
  every $g \in H \setminus J_2$, we have
  $g(M \setminus U_2) \subset U_1$.

  Then, there exists a compact set $K \subset U_1$ such that for all
  $g \in H \setminus J_2$, we can find $j \in J_1$ such that
  $jg(M \setminus U_2) \subset K$.
\end{proposition}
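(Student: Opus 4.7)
The plan is to argue by contradiction, exhausting $U_1$ by compact sets and using \Cref{lem:upgraded_technical_lemma} to modify a hypothetical ``bad'' sequence into one whose convergence dynamics produce a contradiction.

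\textbf{Setup.} Write $U_1 = \bigcup_{n \ge 1} K_n$ as an increasing exhaustion by compact subsets with $K_n \subset \Int(K_{n+1})$ --- for instance $K_n = \{x \in M : d(x, M \setminus U_1) \ge 1/n\}$ --- so that every compact subset of $U_1$ eventually lies in some $K_n$. Suppose for contradiction that no compact $K \subset U_1$ satisfies the conclusion. Applying this to each $K_n$ produces $g_n \in H \setminus J_2$ such that for every $j \in J_1$, $jg_n(M \setminus U_2) \not\subset K_n$.

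\textbf{Modification via the upgraded technical lemma.} Since $G$ is geometrically finite, \Cref{thm:yaman_relhyp_theorem} lets us regard it as a relatively hyperbolic group and invoke \Cref{lem:upgraded_technical_lemma}, producing $j_n \in J_1$ and $j_n' \in J_2$ such that $\tilde g_n := j_n g_n j_n'$ has no attracting point in $\Lambda(J_1)$ and no repelling point in $\Lambda(J_2)$ (along any convergence subsequence). Because $J_2(U_2) = U_2$, $j_n'$ preserves $M \setminus U_2$, hence $\tilde g_n(M \setminus U_2) = j_n g_n(M \setminus U_2)$; taking $j = j_n$ in the failure of the conclusion then gives
\[ \tilde g_n(M \setminus U_2) \not\subset K_n \qquad \text{for all } n. \qquad (\star) \]

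\textbf{Contradiction via the dynamics.} Pass to a subsequence where $(\tilde g_n)$ realizes its limiting dynamics. If $(\tilde g_n)$ takes only finitely many values, then along a constant subsequence $\tilde g_n = \tilde g \in H$, and the compact set $\tilde g(M \setminus U_2) = j_n g_n(M \setminus U_2)$ lies in $U_1$ by $J_1$-invariance of $U_1$; it therefore lies in some $K_m$, contradicting $(\star)$ when $n \ge m$. Otherwise, by discreteness (\Cref{rmk:discrete_char}), some subsequence of $(\tilde g_n)$ is a convergence sequence with attracting point $z_+$ and repelling point $z_-$. Since $\tilde g_n \in H$, both $z_\pm \in \Lambda(H)$, and by the lemma $z_+ \in \Lambda(H) \setminus \Lambda(J_1) \subset U_1$ and $z_- \in \Lambda(H) \setminus \Lambda(J_2) \subset U_2$. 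Then $M \setminus U_2$ is a compact subset of $M \setminus \{z_-\}$, so the images $\tilde g_n(M \setminus U_2)$ shrink uniformly into any prescribed neighborhood of $z_+$. Choosing an open neighborhood $V$ of $z_+$ with $\overline V \subset U_1$, and $m$ with $\overline V \subset K_m$, we obtain $\tilde g_n(M \setminus U_2) \subset \overline V \subset K_n$ for all sufficiently large $n \ge m$, again contradicting $(\star)$.

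The main obstacle is coordinating the two hypotheses on $U_1$ and $U_2$ with the dynamical modification: to push $\tilde g_n(M \setminus U_2)$ into a compact subset of $U_1$ via convergence dynamics, the repelling point must avoid $M \setminus U_2$ (hence must avoid $\Lambda(J_2)$) while the attracting point must avoid $M \setminus U_1$ (hence must avoid $\Lambda(J_1)$), and \Cref{lem:upgraded_technical_lemma} is precisely the tool that delivers both conditions simultaneously, from a single pair of modifying sequences $(j_n), (j_n')$.
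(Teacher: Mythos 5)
Your proposal is correct and follows essentially the same route as the paper's own proof: argue by contradiction against an exhaustion of $U_1$ by compacts, use \Cref{lem:upgraded_technical_lemma} together with the $J_1$- and $J_2$-invariance of $U_1$, $U_2$ to replace the bad sequence by $\tilde g_n = j_n g_n j_n'$ without changing the sets $j_n g_n(M\setminus U_2)$, split into the cases of finitely many values versus a convergence subsequence, and then derive the contradiction from $z_+ \in \Lambda(H)\setminus\Lambda(J_1) \subset U_1$ and $z_- \in \Lambda(H)\setminus\Lambda(J_2) \subset U_2$. The only difference is cosmetic (the paper tracks witness points $x_k \in M\setminus U_2$ accumulating in $M\setminus U_1$, while you track the failure of containment in $K_n$ directly), so the argument is sound as written.
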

\begin{proof}
  Suppose that the claim does not hold. This means that we can find a
  sequence of group elements $(g_k)$ in $H \setminus J_2$ such that
  for any sequence $(j_k)$ in $J_1$, there is a sequence $(x_k)$ in
  $M \setminus U_2$ such that the sequence $(j_kg_kx_k)$ accumulates
  in $M \setminus U_1$.

  Fix this sequence $(g_k)$. \Cref{lem:upgraded_technical_lemma} gives
  a pair of sequences $(j_k)$ in $J_1$ and $(j_k')$ in $J_2$ so that
  any attracting points of the sequence $(g_k') = (j_kg_kj_k')$ do not
  lie in $\Lambda(J_2)$, and any repelling points do not lie in
  $\Lambda(J_1)$. Then, since $U_2$ is $J_2$-invariant, there is a
  sequence $(x_k)$ in $M \setminus U_2$ so that $(j_kg_kj_k'x_k)$
  accumulates in $M \setminus U_1$. After taking a subsequence, we may
  assume that $(j_kg_kj_k'x_k)$ has a unique limit
  $z \in M \setminus U_1$.

  Again using the fact that $U_1$ and $U_2$ are invariant under $J_1$
  and $J_2$ respectively, we know that for every $k$, we have
  $g_k'(M \setminus U_2) \subset U_1$. So, if only finitely many
  different elements appear in the sequence $(g_k')$, we can find a
  fixed compact set $K \subset U_1$ so that
  $g_k'(M \setminus U_2) \subset K$ for every $k$, hence
  $g_k'x_k \in K$ for every $k$. This is impossible if
  $g_k'x_k \to z \in M \setminus U_1$.

  So, we may extract a subsequence so that the elements in $(g_k')$
  are pairwise distinct. After taking a further subsequence, we can
  find a pair of points $z_+ \in M \setminus \Lambda(J_1)$,
  $z_- \in M \setminus \Lambda(J_2)$ so that $(g_k')$ converges
  uniformly to the constant map $z_+$, uniformly on compacts in
  $M \setminus \{z_-\}$. Both of $z_\pm$ lie in $\Lambda(H)$, so in
  fact $z_+ \in U_1$ and $z_- \in U_2$.

  Since $M \setminus U_2$ is closed, $x_k$ cannot accumulate on $z_-$,
  which means $(g_k'x_k)$ converges to $z_+ \in U_1$, which
  contradicts the fact that $g_k'x_k \to z$.
\end{proof}

\section{Combinatorial group theory: amalgamated free products}
\label{sec:combinatorial_group_theory_afp}

Our first main result deals with \textit{amalgamated free products},
so we set up the notation and basic facts here. Our reference
throughout is \cite{Maskit88}. As before, $M$ will continue to denote
a compact metrizable space, although the results in this section are purely set-theoretic. We further assume throughout this section
that $G_1, G_2$ are subgroups of $\Homeo(M)$, and $G_1 \cap G_2 = J$,
where $J$ is a proper subgroup of both $G_1$ and $G_2$. We let $G$
denote $\la G_1, G_2 \ra$, the subgroup generated by $G_1$ and $G_2$.

The following definition will be convenient
in this section as well as later in the paper:

\begin{definition}\label{def:precisely_invariant}
    We say a subset $U \subset M$ is \textit{precisely invariant} under $J$ in $G$ if $U$ is $J$-invariant, and for every $g \in G \setminus J$, we have $gU \cap U = \es$.

    More generally, given subgroups $J_1,\cdots, J_n < G$, we say a tuple of subsets $(U_1, \cdots, U_n)$ is \textit{precisely invariant} under $(J_1, \cdots, J_n)$ in $G$ if each $U_i$ is precisely invariant under $J_i$ in $G$, and if for $i \neq j$ and for every $g \in G$, we have $gU_i \cap U_j = \es$.
\end{definition}

Given a word $g = g_1 \cdots g_n$ in the elements of $G_1$ and $G_2$,
we call $g$ a \textit{normal form} when the elements $g_i$ alternate
between $G_1 \setminus J$ and $G_2 \setminus J$. We say two normal
forms $g = g_1 \cdots g_n$ and $h = h_1 \cdots h_n$ are
\textit{equivalent} if $g$ can be obtained from $h$ by inserting
finitely many words of the form $jj^{-1}$ for $j \in J$. We set
\[
  G_1 *_J G_2 = J \cup \{\text{equivalence classes of normal forms}\}.
\]
We have a group operation on $G_1 *_J G_2$ given by concatenation,
which is well-defined on normal forms up to equivalence. The abstract
group $G_1 *_J G_2$ is called the \textit{free product of $G_1$ and
  $G_2$ amalgamated over $J$}.

The normal form $g = g_1 \cdots g_n$ is called an
\textit{$(i,j)$-form} if $g_1 \in G_i$ and $g_n \in G_j$. The
\textit{length} of the normal form is defined as $|g| = n$. By
convention, we will say that elements of $J$ have length 0. Note that
if $g$ is an $(i,j)$-form, then its formal inverse $g^{-1}$ is a
$(j,i)$-form.

There is group homomorphism
\begin{align*}
    \varphi : G_1 *_J G_2 &\to G\\
    g_1 \cdots g_n &\mapsto g_1 \circ \cdots \circ g_n,
\end{align*}
where on the right we are just composing the corresponding elements in
$\Homeo(M)$. This map is always surjective, but its kernel need not be
trivial. When $\varphi$ is an isomorphism, we will abuse notation and
leave it implicit, writing $G = G_1 *_J G_2$; then we can view
elements of the subgroup $G$ as (equivalence classes of) normal forms
in the abstract amalgamated free product $G_1 *_J G_2$.

Using a ping-pong technique (\Cref{prop:interactive_pair} below), we
can give a sufficient condition which guarantees that $\varphi$ is
actually an isomorphism.

\begin{definition}\label{def:interactive_pair}
  A pair of disjoint nonempty $J$-invariant sets $U_1, U_2 \subset M$
  is called an \textit{interactive pair} for $G_1$ and $G_2$ if for
  every $g \in G_i \setminus J$, we have $gU_i \subset U_{3-i}$.
    
  If, in addition, $gU_i \subset U_{3-i}$ is a proper inclusion for
  every $g \in G_i \setminus J$ for at least one of $i \in \{1, 2\}$,
  then we call $(U_1,U_2)$ a \textit{proper interactive pair}.
\end{definition}

\begin{remark}
  Maskit's convention is to call an interactive pair $U_1, U_2$
  \textit{proper} if the $G_i$-translates of $U_i$ do not cover
  $U_{3-i}$ for at least one $i \in \{1, 2\}$. Our assumption is
  slightly weaker, but does not change any of the standard arguments.
\end{remark}

It is immediate that if $(U_1, U_2)$ is an interactive pair, then
$U_i$ is precisely invariant under $J$ in $G_i$ for $i = 1,2$.

We observe the following:
\begin{proposition}\label{prop:interactive_pair_infinite}
  If $(U_1, U_2)$ is a proper interactive pair for $G_1$ and $G_2$,
  then both $U_1$ and $U_2$ are infinite sets.
\end{proposition}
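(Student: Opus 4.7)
The plan is to argue by contradiction using a simple cardinality/bijection argument, exploiting the fact that the homeomorphisms in $G_1$ and $G_2$ act on $M$ by bijections and so preserve cardinality. Since the hypothesis of properness is only assumed for at least one $i \in \{1,2\}$, I would first reduce to the case $i = 1$ by relabeling, so that $gU_1 \subsetneq U_2$ is a proper inclusion for every $g \in G_1 \setminus J$, while we still only know $hU_2 \subseteq U_1$ for every $h \in G_2 \setminus J$. Note that since $J$ is a proper subgroup of each of $G_1$ and $G_2$, both $G_1 \setminus J$ and $G_2 \setminus J$ are nonempty, so such $g$ and $h$ exist.

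Next I would suppose for contradiction that one of $U_1, U_2$ is finite, and handle the two cases essentially the same way. If $U_1$ is finite, I pick any $h \in G_2 \setminus J$; then $hU_2 \subseteq U_1$ forces $U_2$ to be finite with $|U_2| \le |U_1|$ (since $h$ is a bijection). But for any $g \in G_1 \setminus J$, the proper inclusion $gU_1 \subsetneq U_2$ and the equality $|gU_1| = |U_1|$ give $|U_1| < |U_2|$, contradicting $|U_2| \le |U_1|$. If instead $U_2$ is finite, then picking $g \in G_1 \setminus J$ and using $gU_1 \subsetneq U_2$ shows $U_1$ is finite with $|U_1| < |U_2|$, and then choosing $h \in G_2 \setminus J$ and using $hU_2 \subseteq U_1$ gives $|U_2| \le |U_1|$, again a contradiction.

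There is no serious obstacle here: the only conceptual point is that the strict inclusion on the $G_1$ side forces $|U_1| < |U_2|$, while the (possibly non-strict) inclusion on the $G_2$ side forces $|U_2| \le |U_1|$, and these are incompatible under the finiteness hypothesis. The compact metrizable structure of $M$ and any dynamical content are not used; the statement is purely combinatorial.
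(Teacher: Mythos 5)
Your argument is correct and is essentially the same elementary cardinality argument as the paper's: both use only that elements of $G_i$ act by bijections together with the strict inclusion from properness. The paper phrases it slightly more compactly by composing, noting that $g_2 g_1 U_1 \subsetneq U_1$ gives a proper self-inclusion of $U_1$ under a bijection (impossible for a finite set), and then $g_1U_1 \subset U_2$ forces $U_2$ to be infinite too, whereas you run a two-case contradiction with cardinality inequalities; the content is the same.
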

\begin{proof}
  Since $J$ is a proper subgroup of $G_i$ for $i = 1, 2$, there is at
  least one element $g_1 \in G_1 \setminus J$ and at least one element
  $g_2 \in G_2 \setminus J$. We know that at least one inclusion
  $g_1U_1 \subset U_2$ or $g_2U_2 \subset U_1$ is proper, so
  $g_2g_1U_1$ is a proper subset of $U_1$. Therefore $U_1$ is
  infinite, and since $g_1U_1 \subset U_2$, so is $U_2$.
\end{proof}

Via the map $\varphi$, normal forms in $G_1 *_J G_2$ act in a
``ping-pong'' manner on the sets in an interactive pair.

\begin{lemma}[\cite{Maskit88} VII.A.9]\label{lem:interactive_pair_dynamics}
  Suppose we have an
  interactive pair $(U_1,U_2)$. Then if $g \in G_1 *_J G_2$ is an
  $(i,j)$-form, we have $\varphi(g)U_j \subset U_{3-i}$. Further, this
  inclusion is proper if $(U_1,U_2)$ is proper and $|g| \geq 2$.
\end{lemma}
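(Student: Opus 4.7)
The plan is to induct on the length $n = |g|$ of the normal form. For the base case $n = 1$, we have $g = g_1 \in G_i \setminus J$, which is an $(i,i)$-form, and the inclusion $\varphi(g) U_i = g_1 U_i \subset U_{3-i}$ is immediate from the definition of an interactive pair.

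For the inductive step with $n \geq 2$, given an $(i,j)$-form $g = g_1 g_2 \cdots g_n$, I would split off the first letter and consider the tail $h = g_2 \cdots g_n$. Because the letters alternate between $G_1 \setminus J$ and $G_2 \setminus J$, we have $g_2 \in G_{3-i} \setminus J$, so $h$ is a $(3-i, j)$-form of length $n-1$. By the inductive hypothesis, $\varphi(h) U_j \subset U_{3-(3-i)} = U_i$. Applying $g_1 \in G_i \setminus J$ and using the interactive pair condition one more time yields
\[
\varphi(g) U_j = g_1 \bigl(\varphi(h) U_j\bigr) \subset g_1 U_i \subset U_{3-i},
\]
which is what we want.

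For the properness claim when $(U_1, U_2)$ is a proper interactive pair and $|g| \geq 2$, I would pick the side on which properness is assumed to hold: without loss of generality say $i^\ast = 1$, so that $g' U_1 \subsetneq U_2$ strictly for every $g' \in G_1 \setminus J$. Since $|g| \geq 2$ and the letters alternate, at least one letter $g_k$ lies in $G_1 \setminus J$. Running the chain of inclusions from the inside out, the step at $g_k$ has the form $g_k(A) \subset g_k U_1 \subsetneq U_2$ where $A \subset U_1$ is the subset produced by the preceding steps; this inclusion is strict. All remaining letters $g_1, \dots, g_{k-1}$ are homeomorphisms of $M$, hence injective, so applying $\varphi(g_1 \cdots g_{k-1})$ preserves the strict inclusion; the remaining (non-strict) inclusions from the interactive pair condition then deliver $\varphi(g) U_j \subsetneq U_{3-i}$.

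I do not anticipate a real obstacle here; the main thing is bookkeeping to keep track of which $U_\ell$ one lands in at each ping-pong step, and to locate the strict inclusion once in the chain so that the injectivity of all remaining $g_\ell$ can propagate it through to the final inclusion. The conceptual point, such as it is, is just that injective maps carry strict inclusions to strict inclusions, which lets the proper side of the interactive pair assumption do its work even when it only occurs at a single step of the word.
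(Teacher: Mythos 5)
Your proof is correct and follows essentially the same route as the paper's sketch (and Maskit's argument): induct on the length of the normal form for the containment, then locate a letter on the ``proper'' side of the interactive pair and propagate the resulting strict inclusion through the remaining letters using injectivity. No gaps; the bookkeeping (the tail $g_{k+1}\cdots g_n$ being a $(3-i_k,j)$-form landing in $U_{i_k}$, and $g_1\cdots g_{k-1}$ being an $(i,2)$-form) is exactly what is needed.
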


The lemma can be proved via a straightforward combinatorial argument;
see the reference for details. To illustrate the idea, suppose the
$G_1$-translates of $U_1$ are all properly contained in $U_2$, and
that $g$ has length 2. If $g = g_1g_2$ is a $(2,1)$-form, then
$g_2(U_1) \subset U_2$ is already proper, and hence
$\varphi(g)U_1 \subset U_1$ is also a proper inclusion. If $g$ is a
$(1,2)$-form, then $g_2U_2 \subset U_1$ need not be a proper
inclusion, but then applying $g_1$ will cause the next inclusion
$\varphi(g)U_2 = g_1 g_2U_2 \subset U_2$ to be proper.

\begin{proposition}[Ping-pong for amalgamated free products; see \cite{Maskit88} VII.A.10]\label{prop:interactive_pair}
  Suppose $(U_1, U_2)$ is a proper interactive pair for $G_1$ and $G_2$. Set
  $G = \la G_1, G_2 \ra$. Then $G = G_1 *_J G_2$.
\end{proposition}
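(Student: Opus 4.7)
The plan is to verify that the surjective homomorphism $\varphi : G_1 *_J G_2 \to G$ constructed above is injective, which is all that remains for $G \cong G_1 *_J G_2$. Surjectivity is automatic from $G = \langle G_1, G_2 \rangle$. For injectivity I will take any $g \in G_1 *_J G_2$ with $\varphi(g) = \id$ and show that $g$ must already be the identity element of the abstract amalgam.

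If $g \in J$, then $\varphi$ restricts on $J$ to the inclusion $J \hookrightarrow \Homeo(M)$, so $\varphi(g) = \id$ immediately yields $g = \id$. Otherwise $g$ admits a normal form $g_1 \cdots g_n$ with $n \geq 1$, and I split by length. When $n = 1$, we have $g = g_1 \in G_i \setminus J$ for some $i \in \{1,2\}$, and the interactive pair hypothesis gives $g_1 U_i \subset U_{3-i}$. If $\varphi(g) = \id$, then $U_i \subset U_{3-i}$, contradicting the disjointness of $U_1, U_2$ together with the nonemptiness of $U_i$ (guaranteed by \Cref{prop:interactive_pair_infinite}).

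When $n \geq 2$ and $g$ is an $(i,j)$-form, all of the real work is already packaged in \Cref{lem:interactive_pair_dynamics}: invoking that lemma with the properness hypothesis on $(U_1, U_2)$ yields the proper inclusion $\varphi(g) U_j \subsetneq U_{3-i}$. Setting $\varphi(g) = \id$ turns this into $U_j \subsetneq U_{3-i}$, which is impossible in either subcase: if $j = 3-i$ we get $U_j \subsetneq U_j$, and if $j = i$ we contradict $U_i \cap U_{3-i} = \varnothing$ using the nonemptiness of $U_i$. Hence no nontrivial normal form lies in $\ker \varphi$, and $\varphi$ is an isomorphism.

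The entire argument is a short bookkeeping exercise on the $(i,j)$-type and length of a putative element of $\ker \varphi$. There is no substantive obstacle, since \Cref{lem:interactive_pair_dynamics} and \Cref{prop:interactive_pair_infinite} together already encapsulate the ping-pong dynamics and the nonemptiness content that drive the contradiction.
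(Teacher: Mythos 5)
Your proposal is correct and follows essentially the same route as the paper: both show $\ker\varphi$ is trivial by using the proper inclusion $\varphi(g)U_j \subsetneq U_{3-i}$ from \Cref{lem:interactive_pair_dynamics} for $|g| \geq 2$, with lengths $0$ and $1$ handled directly (the paper disposes of length $1$ even more quickly, noting such elements are already nontrivial homeomorphisms since $G_i < \Homeo(M)$, whereas you rerun the ping-pong argument, which also works). Your explicit case split on $j = i$ versus $j = 3-i$ just spells out what the paper leaves implicit.
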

\begin{proof}
  We will show the surjective group homomorphism
  $\varphi : G_1 *_J G_2 \to G$ has trivial kernel. The only length 0
  element sent to the identity is the identity, and length 1 elements
  are all nontrivial in $G_1$ or $G_2$, so it suffices to show
  $\varphi(g) \neq 1$ when $|g| \geq 2$. Suppose $g$ is an
  $(i,j)$-form. We now note that because we have a proper interactive
  pair, $\varphi(g)U_j \subset U_{3-i}$ is a proper inclusion by
  \Cref{lem:interactive_pair_dynamics}, and so $\varphi(g)$ cannot be
  the identity. The result follows.
\end{proof}

\section{Theorem A}
\label{sec:theorem_a}

We now introduce the main definition for Theorem A.

\begin{definition A}[AFP ping-pong position]
  Let $G_1$ and $G_2$ act as discrete convergence groups on a compact
  metrizable space $M$, and suppose that $G_1 \cap G_2 = J$ is a
  geometrically finite group distinct from both $G_1$ and $G_2$. We
  say $G_1$ and $G_2$ are in \textit{AFP ping-pong position} (with
  respect to $J$) if there exist closed sets $B_1, B_2 \subset M$ with
  nonempty disjoint interiors satisfying the following:
  \begin{enumerate}
  \item \label{item:AFP_J-invariance} For $i \in \{1,2\}$, $B_i$ is $J$-invariant.
  \item \label{item:AFP_pingpong} For $i \in \{1, 2\}$, and for each $g \in G_i \setminus J$, $gB_i \subset \Int(B_{3-i})$.
  \item \label{item:AFP_technical} For $i \in \{1, 2\}$, $\Lambda(G_i) \setminus \Lambda(J) \subset \Int(B_{3-i})$.
  \end{enumerate}
\end{definition A}

The definition above for the most part mimics the setup in Maskit's
original combination theorem for amalgamated free products of Kleinian
groups. It may be helpful to consider the following concrete example.

\begin{example}

\begin{figure*}[ht!]
    \centering
    \def\svgwidth{14.7 cm}
    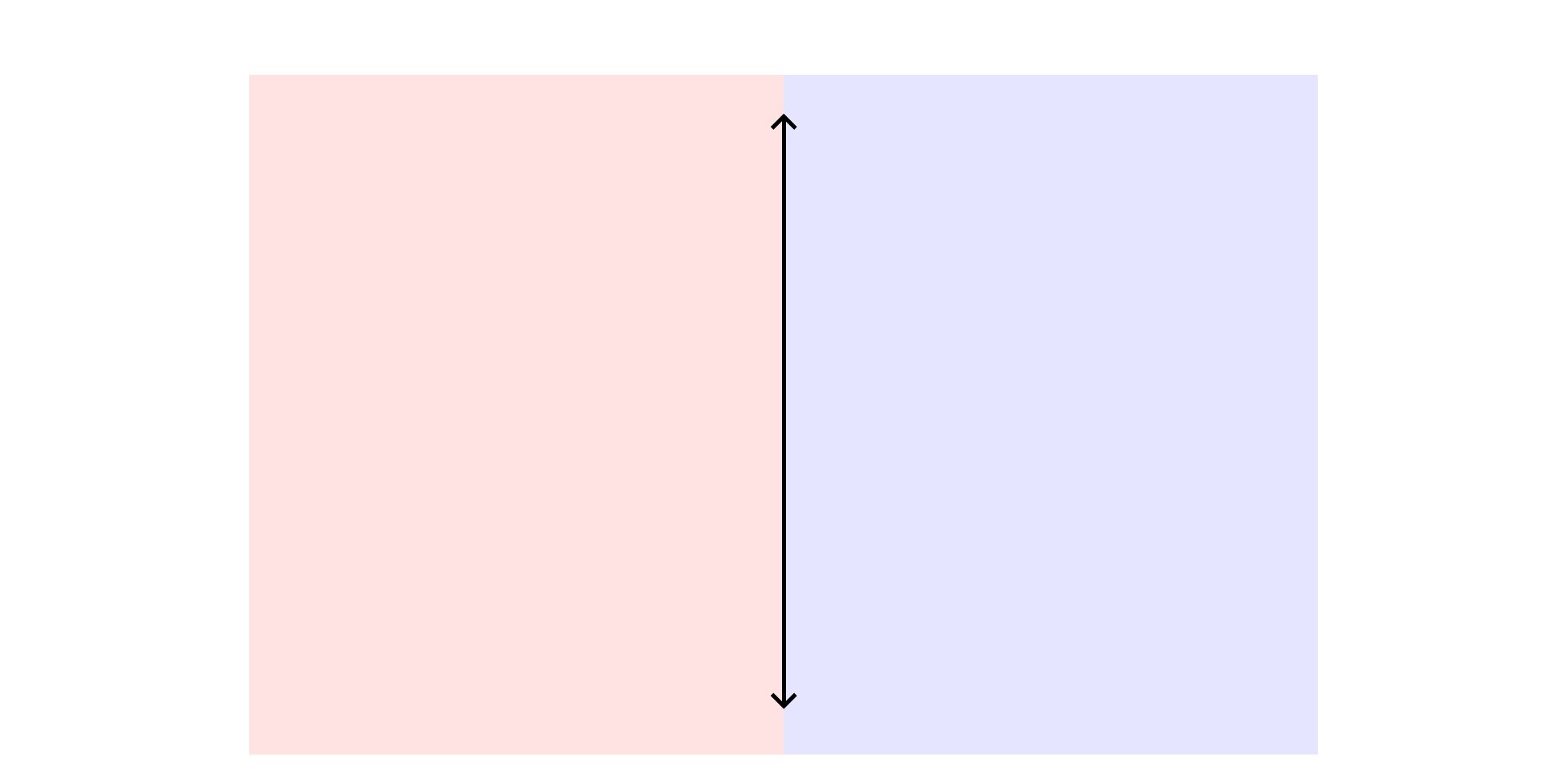
    \caption{Illustration for the example. The limit sets $\Lambda(G_i)$ are Cantor sets.}
    \label{fig:AFP_example}
\end{figure*}

Let $X = \H^3_\R$, and let $M$ be the visual boundary $\del \H^3_\R$,
viewed as the one-point compactification $\widehat{\C}$ of $\C$. We
let $G$ be a Fuchsian genus 2 surface group, embedded in
$\PSL_2(\C) \cong \Isom(\H^3)$ via the inclusion
$\PSL_2(\R) < \PSL_2(\C)$.

Then $G$ has the presentation
$\la a, b, c, d \; | \; [a,b][c,d] = 1 \ra$. Set
$j = [a,b] = [c,d]^{-1}$. We then take $G_1 = \la a, b \ra \cong F_2$,
and $G_2 = \la c, d \ra \cong F_2$, the free group on 2 letters, and
$J = G_1 \cap G_2 = \la j \ra \cong \Z$. We can arrange our generators
so $\Lambda(G_1) \subset \R_{\geq 0} \cup \{\infty\}$ and
$\Lambda(G_2) \subset \R_{\leq 0} \cup \{\infty\}$, where
$\Lambda(J) = \{0, \infty\}$. These will be the fixed points of
$j = (x \mapsto \lambda x)$ where $\lambda > 1$. See
\Cref{fig:AFP_example}.
    
This is precisely the picture one gets when gluing the sides of an
octagon in $\H^2_\R$ to form a surface of genus 2, and then
isometrically embedding this picture into the standard $\H^2_\R$
sitting inside $\H^3_\R$ whose boundary is $\R \cup \{\infty\}$. The
limit set of the surface group coincides with $\del \H^2_\R$, and then
the octagon with sides identified appears once in each connected
component of $\del \H^3_\R \setminus \del \H^2_\R$. Our $J$-invariant
sets $B_1$ and $B_2$ are then the closed left and right closed
half-planes respectively, including $\infty$, and their common
intersection is $i\R \cup \infty$. If we identify $\widehat{C}$ with
$S^2$, then $B_1$ and $B_2$ are complementary hemispheres.
\end{example}

We recall the main result of this section:

\theoremA*

\begin{remark}
  \label{rem:maskit_differences}
  The hypotheses for Theorem A are different from the hypotheses for
  Maskit's original combination theorems in $\H^3_\R$ in two
  respects. First, Maskit insists that the sets $B_1, B_2$ in
  Definition A are topological balls in $M = \del \H^3_\R$, satisfying
  $\del B_1 = \del B_2$ and $B_1 \cup B_2 = M$. This requirement is
  unnatural in our setting, since $M$ may not even be a manifold, and
  it is not needed in any of our arguments.

  Second, and more significantly, Maskit's version of condition
  \eqref{item:AFP_pingpong} in Definition A is weaker than what we
  have given here. Our condition implies in particular that if
  $g \in G_i \setminus J$, then $gB_i \cap B_i = \es$. This means that
  if $P < J$ is a maximal parabolic subgroup in $J$, then $P$ must
  also be a maximal parabolic subgroup in $G_i$.

  Maskit's original statement in $\H^3_\R$ allows $gB_i$ to
  intersect $B_i$ in limit points of $J$, which means his theorem
  allows for amalgamations along subgroups $J < G_i$ whose parabolic
  subgroups are \emph{not} maximal in $G_i$. This means our theorem is
  not strong enough to recover Maskit's original result in the case
  $M = \del \H_\R^3$. However, most of the examples constructed in Maskit's
  book satisfy the stronger hypothesis we have given above.
\end{remark}

Below, we give a quick proof of the first three parts of Theorem
A. The arguments are standard, but we provide them for convenience.

\begin{proof}[Proof of \ref{item:AFP_amalgamation} -
  \ref{item:AFP_parabolics} in Theorem A]
  \ref{item:AFP_amalgamation} Let $B_1, B_2$ be the closed subsets of
  $M$ from Definition A. We note that since $gB_i \subset \Int(B_{3-i})$ for any $g \in G_i \setminus J$, it follows that $g\Int(B_i) \subset \Int(B_i)$ is a proper inclusion for every $g \in G_i \setminus J$. Hence $(\Int(B_1), \Int(B_2))$
  form a proper interactive pair for $G_1$ and $G_2$ by conditions
  \eqref{item:AFP_J-invariance} and \eqref{item:AFP_pingpong}, so we
  are done by \Cref{prop:interactive_pair}.
    
  \ref{item:AFP_discrete} It suffices to show no sequence in $G$
  accumulates at the identity. Let $(g_k)$ be a sequence of
  distinct elements in $G$. Since $G_1, G_2$ are discrete we can assume
  $|g_k| > 1$. If the length of $g_k$ is odd, then $g_k$ maps one of
  the sets $B_1, B_2$ into the interior of the other and hence is far
  from the identity, so assume the lengths are all even. Without loss
  of generality, we may assume every $g_k$ is a $(2,1)$-form. We have
  $g_kB_1 \subset \Int(B_1)$ for every $k$.

  Suppose for a contradiction that $(g_k)$ converges to the
  identity. Then $g_kB_1$ converges to $B_1$. Write $g_k = h_k g_k'$
  where $|g_k'| = |g_k| - 1$ and $h_k \in G_2 \setminus J$. Then
  $g_kB_1 \subset h_kB_2 \subset \Int(B_1)$ for every $k$ since
  $g_k'B_1 \subset B_2$. It now also follows that $(h_kB_2)$ converges
  to $B_1$. Now, in general, when $g,h \in G_2 \setminus J$, we will
  have $gB_2$ and $hB_2$ either disjoint or equal. Indeed, if
  $gB_2 \cap hB_2 \neq \es$, then $h^{-1}g$ sends a point in $B_2$
  back into $B_2$, hence $h^{-1}g = j \in J$. Then
  $gB_2 = hjB_2 = hB_2$ as desired.

  Since $h_1B_2 \subset \Int(B_1)$ has nonempty interior and
  $h_kB_2 \subset \Int(B_1)$ converges to $B_1$, it follows that for
  some fixed large $k$, we will have $h_kB_2 \cap h_1B_2 \neq \es$, and
  also $h_kB_2 \neq h_1B_2$. This gives our contradiction, so we
  conclude $G$ is discrete.

  \ref{item:AFP_parabolics} Assume $g \in G$ is not conjugate into
  $G_1$ nor $G_2$. Take $g$ to have minimal length in its conjugacy
  class. If $g$ is an $(i,i)$-form (that is, $|g|$ is odd) then we can
  conjugate by an element of $G_i$ to reduce its length, hence $g$ has
  even length. Without loss of generality suppose $g$ is a
  $(2,1)$-form. Since $g^nB_1 \subset \Int(B_1)$ is a proper inclusion
  for every $n$, we see that $g$ has infinite order, hence is
  parabolic or loxodromic since $G$ is discrete. At least one fixed
  point of $g$ is an attracting point $z_+$ for the convergence
  sequence $(g^n)$ (see
  \Cref{prop:convergence_classification_of_isometries}). Since $B_1$
  has nonempty interior, there is some $w \in B_1$ so that
  $g^nw \to z_+$. But for every $n \ge 1$, the set $g^nB_1$ is a
  subset of the fixed compact $gB_1 \subset \Int(B_1)$, so we must
  have $z_+ \in \Int(B_1)$. An identical argument applied to $g^{-1}$
  (a $(1,2)$-form) gives a fixed point for $g$ in $\Int(B_2)$, hence
  $g$ is loxodromic by
  \Cref{prop:convergence_classification_of_isometries}.
\end{proof}

\subsection{Limit sets of amalgamated free products}

The rest of the section is devoted to the proof of part
\ref{item:AFP_gf} of Theorem A, so for the rest of the section, we fix
groups $G_1, G_2, J, G$ and sets $B_1, B_2 \subset M$ satisfying the
conditions of Definition A. We will prove each direction of the
theorem separately, but we start by making some general observations
about the positioning of the limit sets of subgroups of $G$.

\begin{proposition}\label{prop:AFP_basic_properties}
  Each of the following holds.
    \begin{enumerate}[label=(\roman*)]
        \item\label{item:AFP_prop_i} $\Lambda(J) \subset \del B_1 \cap
          \del B_2$. In particular, if $J$ is infinite, then $\del B_1
          \cap \del B_2$ is nonempty.
        \item\label{item:AFP_prop_ii} For $i \in \{1, 2\}$, $\Lambda(G_i)
          \subset B_{3-i}$.
        \item\label{item:AFP_prop_iii} For $i \in \{1,2\}$, and any
          $g \in G \setminus G_i$, we have
          $g(\Lambda(G_i)) \cap \Lambda(G_i) = \es$.
        \end{enumerate}
\end{proposition}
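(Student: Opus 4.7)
My plan is to prove the three parts in order, with (i) doing the real work and (ii), (iii) following by assembly from (i) and the defining conditions of AFP ping-pong position.

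For part (i), I fix $x \in \Lambda(J)$ together with a convergence sequence $(j_k)$ in $J$ having attracting point $x$ and repelling point $z_-$. The key input is that $\Int(B_i)$, being the interior of the $J$-invariant closed set $B_i$, is itself $J$-invariant. To see $x \in B_i$, I would pick any $y \in \Int(B_i) \setminus \{z_-\}$: then $j_k y \in \Int(B_i)$ while $j_k y \to x$, forcing $x \in \overline{\Int(B_i)} \subset B_i$. To upgrade this to $x \in \del B_i$, I argue by contradiction: if $x \in \Int(B_1)$, then choosing $y \in \Int(B_2) \setminus \{z_-\}$ yields $j_k y \in \Int(B_2)$ converging to a point in the open neighborhood $\Int(B_1)$ of $x$, contradicting $\Int(B_1) \cap \Int(B_2) = \es$; the symmetric argument handles $B_2$. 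A minor subtlety is that $\Int(B_i) \setminus \{z_-\}$ might be empty, which occurs only in the pathological situation where $\Int(B_i) = \{z_-\}$ is a single isolated point of $M$; this edge case I would dispatch separately.

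Part (ii) falls out of the decomposition $\Lambda(G_i) = (\Lambda(G_i) \setminus \Lambda(J)) \cup (\Lambda(G_i) \cap \Lambda(J))$: by condition (3) of Definition A the first piece lies in $\Int(B_{3-i}) \subset B_{3-i}$, and by part (i) the second lies in $\del B_{3-i} \subset B_{3-i}$. Combining (i) and (ii), I would also record the consequence $\Lambda(G_i) \cap \Int(B_i) = \es$, which is the engine for part (iii).

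For part (iii), I write $g \in G \setminus G_i$ in its normal form $g_1 \cdots g_n$ and split on the leftmost letter. If $g_1 \in G_{3-i} \setminus J$, I would apply the letters of $g$ from right to left to $\Lambda(G_i) \subset B_{3-i}$: the trailing letter $g_n$ either preserves $\Lambda(G_i) \subset B_{3-i}$ (when $g_n \in G_i$, by $G_i$-invariance) or sends $B_{3-i}$ into $\Int(B_i)$ (when $g_n \in G_{3-i}$, by condition (2) of Definition A), and each subsequent letter then flips us between $\Int(B_i)$ and $\Int(B_{3-i})$. Checking both parities of $n$ shows the final image always lies in $\Int(B_i)$, so $g\Lambda(G_i) \cap \Lambda(G_i) \subset \Int(B_i) \cap \Lambda(G_i) = \es$. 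If instead $g_1 \in G_i \setminus J$, then $g \notin G_i$ forces $n \geq 2$, and I would factor $g = g_1 g'$ with $g'$ starting in $G_{3-i}$; the previous case gives $g'\Lambda(G_i) \subset \Int(B_i)$, whence $g\Lambda(G_i) \subset g_1 \Int(B_i)$, and $G_i$-invariance of $\Lambda(G_i)$ yields $g\Lambda(G_i) \cap \Lambda(G_i) \subset g_1(\Int(B_i) \cap \Lambda(G_i)) = \es$. The most delicate step will be verifying in both parity subcases of (iii) that the alternation truly lands in $\Int(B_i)$ rather than $\Int(B_{3-i})$; everything else is a direct consequence of part (i) and the definitions.
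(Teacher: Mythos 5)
Your proof is correct and follows essentially the same route as the paper's: part (i) by pushing points of the $J$-invariant sets $\Int(B_i)$ with a convergence sequence in $J$ (the paper phrases this as $\Lambda(J)$ lying in the closed, $J$-invariant, infinite sets $B_i$, and your version in fact pins down the boundary containment a little more directly), part (ii) straight from condition (3) of Definition A, and part (iii) by the same ping-pong trace on a normal form together with $G_i$-invariance of $\Lambda(G_i)$, which is just a case-by-case rephrasing of the paper's reduction to a form whose leading letter lies in $G_{3-i}$ followed by undoing the leading $G_i$-letter. The one point you defer, the possibility that $\Int(B_i)\setminus\{z_-\}=\es$, is genuinely harmless: $(\Int(B_1),\Int(B_2))$ is a proper interactive pair, so by \Cref{prop:interactive_pair_infinite} both interiors are infinite, which is exactly the fact the paper invokes at this step.
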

\begin{proof}
  \ref{item:AFP_prop_i} Note that since $J$ preserves the closed set
  $B_1$ and $\Int(B_1)$ is an infinite set by
  \Cref{prop:interactive_pair_infinite}, we have
  $\Lambda(J) \subset B_1$. Similarly, $\Lambda(J) \subset B_2$, hence
  $\Lambda(J) \subset B_1 \cap B_2 = \del B_1 \cap \del B_2$ since
  these sets have disjoint interiors.

  \ref{item:AFP_prop_ii} This is an immediate consequence of condition
  \eqref{item:AFP_technical} in Definition A along with
  \ref{item:AFP_prop_i} above.

  \ref{item:AFP_prop_iii} For concreteness, take $i = 1$, and let
  $g \in G \setminus G_1$. In particular $g \notin J$, so $g$ has a
  normal form with positive length. We can always find some
  $h, h' \in G_1$ so that $g' = hgh'$ is a $(1,2)$-form. Then,
  applying \ref{item:AFP_prop_ii}, we know that
  $g'\Lambda(G_1) \subset \Int(B_1)$ and so
  \[
    g'\Lambda(G_1) \cap \Lambda(G_1) = \es.
  \]
  Now, since $\Lambda(G_1)$ is invariant under $G_1$, we see that
  $g'\Lambda(G_1) = hg\Lambda(G_1)$, and therefore
  \[
    hg\Lambda(G_1) \cap \Lambda(G_1) = \es.
  \]
  But then
  $h^{-1}(hg\Lambda(G_1) \cap \Lambda(G_1)) = g\Lambda(G_1) \cap
  h^{-1}\Lambda(G_1) = g\Lambda(G_1) \cap \Lambda(G_1)$ is empty as
  well.
\end{proof}

\subsection{AFP ping-pong and contraction}

Both directions of the proof of Theorem A rely crucially on a key
contraction property of the ping-pong action of $G$ on the sets $B_1$ and $B_2$, stated as \Cref{lem:AFP_contraction} below. This contraction
lemma gives a sufficient condition for a sequence of sets $(g_kB_i)$
to converge to a singleton in $M$.

The proof of the contraction lemma relies on an application of
\Cref{prop:quasiconvex_subgroups_nest} to the subgroups we are
currently considering. Recall that this proposition gives us control
over the topological behavior of the action of fully quasi-convex
subgroups on certain subsets of $M$. So, in order to apply the
proposition, we first need to check:
\begin{lemma}
  \label{lem:fully_quasiconvex}
  Let $H$ be one of $G, G_1$, or $G_2$. If $H$ is geometrically
  finite, then $J$ is a fully quasi-convex subgroup of $H$.
\end{lemma}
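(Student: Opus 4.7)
The plan is to verify the two defining conditions of full quasi-convexity from \Cref{defn:quasiconvex}: that $J$ acts geometrically finitely on $\partial H = \Lambda(H)$, and that $gJ(\Lambda(J)) \cap \Lambda(J) = \es$ for all but finitely many left cosets $gJ$ in $H$. If $J$ is finite both conditions are vacuous, so I assume throughout that $\Lambda(J) \neq \es$.

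For the first condition, the limit set of $J$ as a convergence group acting on $\Lambda(H)$ is still $\Lambda(J)$, and geometric finiteness is intrinsic to this limit set. The bounded-parabolic condition involves only $(\Lambda(J) \setminus \{x\})/\textup{Stab}_J(x)$, which does not reference the ambient space and so transfers verbatim from the action on $M$ to the action on $\Lambda(H)$. For the conical condition, \Cref{rmk:conical_limit_seq_char} says that a conical limit point for $J$ is witnessed by a sequence $(j_k)$ whose convergence behavior only needs to be checked on $\Lambda(J) \setminus \{x\}$, which again does not depend on the ambient compact space. So $J$ being geometrically finite on $M$ immediately gives $J$ geometrically finite on $\Lambda(H)$.

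For the second condition, I claim the stronger statement that $g\Lambda(J) \cap \Lambda(J) = \es$ for every $g \in H \setminus J$; since $\Lambda(J)$ is $J$-invariant we have $gJ(\Lambda(J)) = g\Lambda(J)$, so this suffices. The two ingredients are \Cref{prop:AFP_basic_properties}\ref{item:AFP_prop_i}, which gives $\Lambda(J) \subset \partial B_1 \cap \partial B_2$, together with the ping-pong fact, proved by induction on the length $n$ of a normal form, that if $g$ is an $(i,j)$-normal form of positive length then $gB_j \subset \Int(B_{3-i})$. The base case $n = 1$ is condition \eqref{item:AFP_pingpong} of Definition A. For $n \geq 2$, write $g = g_1 g'$ with $g_1 \in G_i \setminus J$ and $g'$ a $(3-i,j)$-form of length $n - 1$; by induction $g' B_j \subset \Int(B_i) \subset B_i$, and then $g_1 B_i \subset \Int(B_{3-i})$ by the same ping-pong hypothesis, so $gB_j \subset \Int(B_{3-i})$.

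Granting the claim, for any $g \in H \setminus J$ of type $(i,j)$ we get $g\Lambda(J) \subset gB_j \subset \Int(B_{3-i})$, which is disjoint from $\Lambda(J) \subset \partial B_{3-i}$. When $H = G_i$, every element of $H \setminus J$ is automatically a length-one normal form in $G_1 *_J G_2$, so the argument specializes without modification. I do not anticipate a genuine obstacle: both parts reduce to standard facts about the convergence dynamics of a subgroup on its own limit set and to the interactive-pair combinatorics already packaged in \Cref{lem:interactive_pair_dynamics}.
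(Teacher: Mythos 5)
Your proposal is correct and follows essentially the same route as the paper: the paper likewise notes that geometric finiteness of $J$ is intrinsic to $\Lambda(J)$ (so $J$ is relatively quasi-convex in $H$), and then shows the stronger statement that $h\Lambda(J)\cap\Lambda(J)=\es$ for \emph{every} $h\in H\setminus J$ by combining $\Lambda(J)\subset\del B_1\cap\del B_2$ with the ping-pong inclusion $hB_j\subset\Int(B_{3-i})$ for an $(i,j)$-form $h$. Your explicit induction just spells out what the paper cites via the interactive-pair dynamics.
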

\begin{proof}
  We know $J$ is relatively quasi-convex since it is a geometrically
  finite subgroup of $M$, so we just need to prove that for all but
  finitely many $h \in H \setminus J$ we have
  $h\Lambda(J) \cap \Lambda(J) = \es$. In fact, we will see that this
  is true for \emph{all} $h \in H \setminus J$.

  First, if $H = G_i$ for $i = 1$ or $2$, by assumption we know that
  for any $h \in H \setminus J$ we have
  $h\Lambda(J) \subset hB_i \subset \Int(B_{3-i})$, hence
  $h\Lambda(J) \cap \Lambda(J) = \es$ by part \ref{item:AFP_prop_i} of
  \Cref{prop:AFP_basic_properties}. If $H = G$, then any
  $h \in H \setminus J$ is an $(i,j)$-form, so that
  $h\Lambda(J) \subset hB_j \subset \Int(B_{3-i})$ and again
  $h\Lambda(J) \cap \Lambda(J) = \es$.
\end{proof}

Now, we can specialize \Cref{prop:quasiconvex_subgroups_nest} to the
current setting.
\begin{lemma}\label{lem:AFP_technical}
  Suppose that either $G$ is geometrically finite, or both $G_1$ and
  $G_2$ are geometrically finite. For $i \in \{1,2\}$, there exists a
  compact $K_i \subset \Int(B_{3-i})$ so that for any
  $g \in G_i \setminus J$, there is $j \in J$ so that
  $jgB_i \subset K_i$.
\end{lemma}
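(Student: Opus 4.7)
The plan is to obtain \Cref{lem:AFP_technical} as a direct application of \Cref{prop:quasiconvex_subgroups_nest}. Fix $i \in \{1,2\}$ and let $\Gamma$ denote whichever of $G$ or $G_i$ is assumed geometrically finite; this will play the role of the ambient geometrically finite convergence group. By \Cref{lem:fully_quasiconvex}, $J$ is a fully quasi-convex subgroup of $\Gamma$. In the notation of \Cref{prop:quasiconvex_subgroups_nest} we then take $H = G_i$ and $J_1 = J_2 = J$.

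For the open sets I would pick $U_1 = \Int(B_{3-i})$ and $U_2 = M \setminus B_i$. Both are open subsets of $M$, and both are $J$-invariant because $J$ preserves each of $B_i$ and $B_{3-i}$. The ping-pong hypothesis $g(M \setminus U_2) = gB_i \subset \Int(B_{3-i}) = U_1$ for $g \in G_i \setminus J$ is exactly the ping-pong condition of Definition A, and the technical condition on limit sets gives $\Lambda(G_i) \setminus \Lambda(J) \subset U_1$ directly. \Cref{prop:quasiconvex_subgroups_nest} then produces a compact set $K \subset U_1$ such that for every $g \in G_i \setminus J$ there is $j \in J$ with $jg(M \setminus U_2) = jgB_i \subset K$, and we set $K_i := K$.

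The one containment I expect to require real work is the remaining hypothesis $\Lambda(G_i) \setminus \Lambda(J) \subset U_2 = M \setminus B_i$, which is where the main obstacle lies. Combining the technical condition of Definition A with part (ii) of \Cref{prop:AFP_basic_properties} and the disjointness of $\Int(B_1)$ and $\Int(B_2)$, any hypothetical point of $\Lambda(G_i) \setminus \Lambda(J)$ lying in $B_i$ would be forced to sit in the boundary intersection $\del B_i \cap \Int(B_{3-i})$. To handle this I would apply the ping-pong condition to a convergence sequence from $G_i \setminus J$ realizing the offending point as its attracting point: the images of $B_i$ stay inside $\Int(B_{3-i})$ and collapse towards that point, and combining this with \Cref{prop:convergence_classification_of_isometries} and \Cref{prop:repeller_complement} should rule out the point lying on $\del B_i$. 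If this boundary analysis proves delicate, a safer fallback is to enlarge $U_2$ to an open $J$-invariant set of the form $(M \setminus B_i) \cup W$ containing any problematic boundary points, apply \Cref{prop:quasiconvex_subgroups_nest} to this enlargement, and absorb the resulting slack into a larger compact $K_i$ at the end.
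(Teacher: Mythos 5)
Your core argument is exactly the paper's proof: the paper applies \Cref{prop:quasiconvex_subgroups_nest} with the ambient geometrically finite group taken to be $G$ or $G_i$, with $H = G_i$, $J_1 = J_2 = J$ (fully quasi-convex by \Cref{lem:fully_quasiconvex}), $U_1 = \Int(B_{3-i})$, $U_2 = M \setminus B_i$, and takes $K_i$ to be the compact set the proposition produces. So the choice of proposition and of all the input data matches.

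Where you diverge is the verification of $\Lambda(G_i) \setminus \Lambda(J) \subset U_2$. The paper dispatches this in one line, via $\Lambda(G_i)\setminus\Lambda(J) \subset \Int(B_{3-i}) \subset M \setminus B_i$; in other words it reads the disjointness of $B_1, B_2$ as giving $B_i \cap \Int(B_{3-i}) = \es$, a reading it relies on elsewhere too (e.g.\ $B_1 \cap B_2 = \del B_1 \cap \del B_2$ in \Cref{prop:AFP_basic_properties}, and the ``disjoint or equal'' step in the discreteness argument). Under that reading there is nothing left to prove, and your proof is complete. Your proposed way of closing the case of a point of $\Lambda(G_i)\setminus\Lambda(J)$ on $\del B_i \cap \Int(B_{3-i})$, however, cannot work as sketched: if one only assumes the interiors are disjoint, such a point is genuinely compatible with all the conditions of Definition~A. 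For instance, for a classical Schottky pair with the standard four disks, enlarging $B_1$ by adding the attracting fixed point of the generator of $G_1$ (a point of $\Int(B_2)$) preserves every hypothesis, yet puts a point of $\Lambda(G_1)\setminus\Lambda(J)$ on $\del B_1$; so no convergence-dynamics argument (collapsing translates of $B_i$, \Cref{prop:repeller_complement}, etc.) can rule the configuration out. Your fallback of enlarging $U_2$ to $(M\setminus B_i)\cup W$ also does not yield the stated lemma: the proposition then only controls $jg(M\setminus U_2)$, which omits precisely the problematic piece $B_i \cap \Int(B_{3-i})$, and the images of that piece land in $\Int(B_{3-i})$ but not a priori in any fixed compact, so the ``slack'' cannot simply be absorbed. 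The short resolution is to use the disjointness in the form the paper intends ($B_i \cap \Int(B_{3-i}) = \es$), at which point your argument coincides with the paper's; handling the literal weaker hypothesis would require a genuinely different fix from the two you outline.
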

\begin{proof}
  This follows directly from \Cref{prop:quasiconvex_subgroups_nest},
  taking the ambient geometrically finite group $G$ to be either $G$
  or $G_i$ for $i \in \{1,2\}$, $H$ to be $G_i$, $J_1 = J_2 = J$,
  $U_1$ to be $\Int(B_{3-i})$, and $U_2$ to be $M \setminus B_i$. By
  assumption we know that
  $\Lambda(G_i) \setminus \Lambda(J) \subset \Int(B_{3-i}) \subset M
  \setminus B_i$, so in fact
  $\Lambda(G_i) \setminus \Lambda(J) \subset U_1 \cap U_2$ and the
  hypotheses of the proposition are satisfied.
\end{proof}

Finally, we can establish the contraction property for sequences in
$G$.

\begin{lemma}[Contraction for amalgamated free products]\label{lem:AFP_contraction}
  Suppose that either $G$ is geometrically finite, or both $G_1$ and
  $G_2$ are geometrically finite. If $(h_k)$ is a sequence of
  $(i,j)$-forms (for fixed $i$ and $j$) lying in distinct left cosets
  of $J$, then, up to subsequence, $(h_kB_j)$ converges to a singleton
  $\{x\}$.
\end{lemma}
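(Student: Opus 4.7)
The plan is to extract a convergence subsequence of $(h_k)$ from the discrete convergence dynamics of $G$, and combine this with the strong nesting from \Cref{lem:AFP_technical} and the $J$-action on $M$ to force the sets $h_k B_j$ to collapse to a single point. Since elements in distinct left $J$-cosets are pairwise distinct in $G$, we may pass to a subsequence so that $(h_k)$ is a convergence sequence in $G$ with attracting point $z_+$ and repelling point $z_-$; it then suffices to show that $h_k B_j$ converges in Hausdorff distance to some singleton.

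To apply contraction, I will write each $h_k = g_{k,1}\tilde{h}_k$ with $g_{k,1}\in G_i\setminus J$ the first letter of the normal form (allowing $\tilde h_k$ to be trivial if $n_k=1$, in which case $i=j$). By \Cref{lem:interactive_pair_dynamics}, $\tilde h_k B_j\subseteq B_i$, so $h_k B_j\subseteq g_{k,1}B_i$. Then \Cref{lem:AFP_technical} furnishes $j_k\in J$ and a fixed compact $K_i\subseteq \Int(B_{3-i})$ with $j_k g_{k,1}B_i\subseteq K_i$, giving the crucial inclusion $h_k B_j\subseteq j_k^{-1}K_i$.

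The argument now splits on whether $(j_k)$ has bounded image in $J$. In the unbounded case, after a further subsequence, $(j_k^{-1})$ is a convergence sequence in the discrete convergence group $J$, with attracting point $z_-^J$ and repelling point $z_+^J$, both in $\Lambda(J)$. Since $\Lambda(J)\subseteq \del B_{3-i}$ by part (i) of \Cref{prop:AFP_basic_properties}, and $K_i\subseteq \Int(B_{3-i})$, we have $z_+^J\notin K_i$; thus $j_k^{-1}$ converges uniformly to $z_-^J$ on the compact $K_i$, giving $j_k^{-1}K_i\to \{z_-^J\}$ in Hausdorff distance, and therefore $h_k B_j\to \{z_-^J\}$, a singleton. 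In the bounded case we may assume $j_k=j$ is constant, so $h_k B_j$ is contained in the fixed compact $j^{-1}K_i\subseteq \Int(B_{3-i})$; I then apply the same contraction argument symmetrically to the inverse sequence $(h_k^{-1})$, whose first letter $g_{k,n_k}^{-1}$ lies in $G_j\setminus J$, obtaining modifiers $j_k'\in J$ and a fixed compact $K_j\subseteq \Int(B_{3-j})$ with $h_k^{-1}B_i\subseteq (j_k')^{-1}K_j$. If $(j_k')$ is unbounded, the analysis of the first case applied to the inverse sequence gives a singleton Hausdorff limit for $h_k^{-1}B_i$, from which $h_k B_j$ also collapses to a singleton. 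If both $(j_k)$ and $(j_k')$ are bounded, the convergence dynamics of $(h_k)$ localize $z_+\in \Int(B_{3-i})$ and $z_-\in \Int(B_{3-j})$, so in particular $z_-\notin \Int(B_j)$.

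The main obstacle is finishing the fully bounded subcase by ruling out the pathological possibility that $z_-\in \del B_j\cap \Int(B_{3-j})$, which is precisely where the hypothesis of distinct left $J$-cosets must be used beyond mere distinctness of the $h_k$. I expect to handle this either by stripping letters off the normal forms iteratively---reducing $(h_k)$ to a shorter-length sequence still lying in distinct left $J$-cosets, to which the same dichotomy can be applied---or by a refined application of \Cref{prop:quasiconvex_subgroups_nest} that selects the compact $K_j$ inside $\Int(B_{3-j})\setminus B_j$. Once $z_-\notin B_j$ is established, uniform convergence of $h_k$ to $z_+$ on the compact $B_j\subseteq M\setminus\{z_-\}$ completes the argument.
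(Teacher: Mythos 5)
Your reduction $h_kB_j\subset g_{k,1}B_i\subset j_k^{-1}K_i$ is a legitimate use of \Cref{lem:AFP_technical}, and your unbounded case works: since $\Lambda(J)\subset\del B_1\cap\del B_2$ while $K_i\subset\Int(B_{3-i})$, the compact $K_i$ misses the repelling point of $(j_k^{-1})$, so $j_k^{-1}K_i$ collapses to a point of $\Lambda(J)$. The genuine gaps are in the bounded cases. In the sub-case where $(j_k)$ is bounded but $(j_k')$ is unbounded, you assert that a singleton Hausdorff limit for $(h_k^{-1}B_i)$ forces $(h_kB_j)$ to collapse, with no argument; and the natural argument is unavailable there. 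Indeed, in that sub-case the repelling point $z_-$ of $(h_k)$ is forced to equal the attracting point of $((j_k')^{-1})$ (take $y\in B_i\setminus\{z_+\}$: then $h_k^{-1}y\to z_-$ while $h_k^{-1}y\in(j_k')^{-1}K_j$, which collapses to that attracting point), and that point lies in $\Lambda(J)\subset\del B_1\cap\del B_2\subset B_j$. So you cannot invoke uniform convergence of $h_k$ on $B_j$, and points of $B_j$ tending to $z_-$ may a priori be sent anywhere in the fixed compact $j^{-1}K_i$; a genuinely new argument is needed, not a routine detail. The fully bounded sub-case is left open by your own admission, and neither proposed repair is developed; choosing $K_j\subset\Int(B_{3-j})\setminus B_j$ via \Cref{prop:quasiconvex_subgroups_nest} would require knowing in advance that the translates $gB_j$ avoid $B_j$, which is the same kind of statement you are trying to establish (though the residual worry there, $z_-\in\del B_j\cap\Int(B_{3-j})$, is one the paper itself treats as vacuous, cf.\ the identity $B_1\cap B_2=\del B_1\cap\del B_2$ in \Cref{prop:AFP_basic_properties}).

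The paper avoids your dichotomy entirely by modifying the sequence instead of analyzing $(j_k)$: it applies \Cref{lem:AFP_technical} to the inverse word, i.e.\ to the last letter $g_{k,n}^{-1}\in G_j\setminus J$ (exactly your $(j_k')$), to get $j_kh_k^{-1}B_i\subset K\subset\Int(B_{3-j})$ for a fixed compact $K$, and then runs the convergence-group dynamics on the modified sequence $(h_kj_k^{-1})$. This sequence is divergent because the cosets $h_kJ$ are distinct, and it has the same translates, $h_kj_k^{-1}B_j=h_kB_j$, by $J$-invariance of $B_j$. Applying $j_kh_k^{-1}$ to a point of $B_i$ traps its repelling point in $K$, hence uniformly inside $\Int(B_{3-j})$ and off $B_j$ (in particular away from $\Lambda(J)$, which is where your unmodified $z_-$ can land), and uniform convergence on the compact $B_j$ then gives $h_kB_j\to\{z_+\}$ with no case analysis. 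This right-multiplication by elements of $J$ is precisely the missing idea in your outline: run the dynamics on $(h_k(j_k')^{-1})$ rather than on $(h_k)$, and the bounded/unbounded split and both unresolved sub-cases disappear.
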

It is not hard to verify directly that the subgroup
$\{g \in G : gB_j = B_j\}$ is exactly $J$. So, asking for the sequence
of cosets $(h_kJ)$ to be pairwise distinct is equivalent to asking for
the sequence of translates $(h_kB_j)$ to be pairwise distinct.
\begin{proof}
  We first prove the following:
  \begin{claim*}
    There exists a compact subset $K \subset \Int(B_{3-j})$ and a
    sequence $(j_k)$ in $J$ such that $j_kh_k^{-1}B_i \subset K$ for
    all $k$.
  \end{claim*}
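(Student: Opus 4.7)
The plan is to isolate the leading letter $a_k$ of $h_k^{-1}$ in normal form and reduce the claim, by a short combinatorial check, to \Cref{lem:AFP_technical} applied to that single letter.

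Concretely, I would write $h_k = g_1 \cdots g_n$ in normal form, so that $g_1 \in G_i \setminus J$ and $g_n \in G_j \setminus J$, and then set $a_k := g_n^{-1} \in G_j \setminus J$ and $w_k := g_{n-1}^{-1} \cdots g_1^{-1}$ (with $w_k$ the identity when $n = 1$, which forces $i = j$), so that $h_k^{-1} = a_k w_k$.

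The key auxiliary observation is that $w_k B_i \subset B_j$. This is immediate when $n = 1$. When $n \geq 2$, applying the letters of $w_k$ from rightmost to leftmost and iterating condition (2) of Definition A, the image alternates between closed subsets of $B_{3-i}$ and $B_i$; a parity check using the fact that $g_n \in G_j \setminus J$ (so $j = i$ when $n$ is odd and $j = 3-i$ when $n$ is even) shows that after all $n-1$ letters have been applied the image lies inside $B_j$. Equivalently, $w_k$ is a $(3-j, i)$-form and one can directly invoke \Cref{lem:interactive_pair_dynamics} for the interactive pair $(\Int B_1, \Int B_2)$ and pass to closures.

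Finally, I would apply \Cref{lem:AFP_technical} with the roles of the indices swapped (i.e.\ replacing $i$ by $j$ in its statement), producing a \emph{fixed} compact set $K \subset \Int(B_{3-j})$ such that every $g \in G_j \setminus J$ admits some $j' \in J$ with $j'gB_j \subset K$. Choosing $j_k \in J$ so that $j_k a_k B_j \subset K$, the previous step gives
\[
  j_k h_k^{-1} B_i \;=\; j_k a_k (w_k B_i) \;\subset\; j_k a_k B_j \;\subset\; K,
\]
which is the desired conclusion. All the real content is already packaged into \Cref{lem:AFP_technical} (and ultimately \Cref{prop:quasiconvex_subgroups_nest}); the only thing that needs any care is the index/parity bookkeeping in the decomposition of $h_k^{-1}$, which is the sole potential source of trouble.
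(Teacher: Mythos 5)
Your proof is correct and follows essentially the same route as the paper's: both isolate the last letter $g_n$ of the normal form, note that $g_nh_k^{-1} = w_k$ is a $(3-j,i)$-form and hence $w_kB_i \subset B_j$, and then apply \Cref{lem:AFP_technical} (with the index $j$) to $g_n^{-1} \in G_j \setminus J$ to produce the fixed compact $K \subset \Int(B_{3-j})$ and elements $j_k \in J$ with $j_kh_k^{-1}B_i \subset j_kg_n^{-1}B_j \subset K$. One small note: rely on your iteration of condition (2) of Definition A (which is stated for the closed sets $B_i$) rather than the ``pass to closures'' aside, since $B_i$ need not coincide with the closure of $\Int(B_i)$; the iteration is exactly the justification the paper uses for $w_kB_i \subset B_j$.
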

  To prove the claim, first observe that if $|h_k| = 1$ for every $k$,
  then $i = j$ and $h_k \in G_i \setminus J$ for all $k$. Then the
  claim follows directly from \Cref{lem:AFP_technical}. Otherwise,
  suppose that $|h_k| > 1$, and write a normal form for $h_k$:
  \[
    h_k = g_{k,1} \cdots g_{k,n}.
  \]
  Although $n$ can depend on $k$, we ignore this in the notation. The
  word $h_k^{-1} = g_{k,n}^{-1} \cdots g_{k,1}^{-1}$ is a
  $(j,i)$-form, and the word
  \[
    g_{k,n}h_k^{-1} = g_{k,n-1}^{-1} \cdots g_{k,1}^{-1}
  \]
  is a $(3-j, i)$-form. This means that
  $g_{k,n}h_k^{-1}B_i \subset B_j$. Then, we can apply
  \Cref{lem:AFP_technical} again to find a fixed compact
  $K \subset \Int(B_{3-j})$ and $j_k \in J$ so that
  $j_kg_{k,n}^{-1}B_j \subset K$ for every $k$, and therefore
  \[
    j_kh_k^{-1}B_i = j_kg_{k,n}^{-1}g_{k,n}h_k^{-1}B_i \subset
    j_kg_{k,n}^{-1}B_j \subset K.
  \]
  This proves the claim, so now we consider the sequence
  $(h_kj_k^{-1})$. Since the left cosets $h_kJ$ are all distinct, it
  follows that the sequence of group elements $(h_kj_k^{-1})$ is
  divergent in $G$, and therefore we can extract a convergence
  subsequence: we can find attracting and repelling points
  $z_+, z_- \in M$ so that $(h_kj_k^{-1}y)$ converges to $z_+$
  whenever $y \ne z_-$. Equivalently, $(j_kh_k^{-1}y)$ converges to
  $z_-$ whenever $y \ne z_+$.

  By \Cref{prop:interactive_pair_infinite}, the set $B_j$ is infinite,
  so there is at least one point $y \in B_j \setminus \{z_+\}$. Since
  $j_kh_k^{-1}B_j \subset K$, we must have $z_- \in K$. In particular,
  $z_-$ must lie in $\Int(B_{3-j})$, which means that $B_j$ is a
  compact subset of $M \setminus \{z_-\}$. Thus,
  $(h_kj_k^{-1}B_j) = (h_kB_j)$ converges to the singleton $\{z_+\}$ as
  desired.
\end{proof}

\subsection{Geometrical finiteness of the product}

We now turn to the proof of the implication ($G_1$ and $G_2$
geometrically finite) $\implies$ ($G$ geometrically finite), which is
one of the directions of Theorem A \ref{item:AFP_gf}.

The proof of this direction of the theorem relies on the fact that
limit points of $G$ fall into one of two classes: either they are
$G$-translates of limit points of $G_1$ or $G_2$, or else they are
limit points of sequences of $(i,j)$-forms in $G$ whose length tends
to infinity. The essential step in the proof is to show that any limit
point $x$ of the latter form can be ``coded'' by a sequence of nested
translates of $B_1$ or $B_2$.

Precisely, we prove the following:
\begin{proposition}[AFP coding for $G$-limit points]
  \label{prop:AFP_limit_points_have_codings}
  Suppose that $G_1$ and $G_2$ are geometrically finite, and let $x$
  be a point in
  $\Lambda(G) \setminus G(\Lambda(G_1) \cup \Lambda(G_2))$. Then there
  exists a sequence $(g_k)$ in $(G_1 \cup G_2) \setminus J$ so that
  for every $k$,
  \[
    h_k = g_1 \cdots g_k
  \]
  has length $k$, and if $g_k \in G_j$, then $x \in h_kB_j$.
\end{proposition}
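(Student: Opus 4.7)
Since $x \in \Lambda(G)$, choose a convergence sequence $(\gamma_n)$ of pairwise distinct elements of $G$ with attracting point $x$, and write each $\gamma_n$ in normal form $\gamma_n = g_{n,1}\cdots g_{n,\ell_n}$. The plan is to construct $(g_k)$ inductively, maintaining the invariant that every $\gamma_n$ in a chosen subsequence admits a normal form beginning with $g_1\cdots g_k$, and that $x \in h_k B_{j_k}$ (where $h_k = g_1 \cdots g_k$ and $g_k \in G_{j_k}$); a diagonal extraction at the end produces the desired infinite sequence.

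The inductive step reduces to a base case by considering the shifted sequence $\sigma_n = h_k^{-1}\gamma_n$, a convergence sequence with attracting point $y = h_k^{-1}x$; since $G(\Lambda(G_1)\cup\Lambda(G_2))$ is $G$-invariant, $y$ still lies outside it. So it suffices to show: for any convergence sequence $(\sigma_n)$ in $G$ with attracting point $y \notin G(\Lambda(G_1)\cup\Lambda(G_2))$, there exist, after passing to a subsequence, an index $j \in \{1,2\}$ and an element $g \in G_j\setminus J$ so that each $\sigma_n$ admits a normal form starting with $g$ and $y \in g B_j$. Discarding finitely many $n$, I may assume $\ell_n \geq 1$ (otherwise $y \in \Lambda(J) \subset \Lambda(G_1)$), and I may further pass to subsequence so that the first- and last-letter indices $j$ and $j^*$ of $\sigma_n$ are constant, and so that the cosets $(\sigma_n J)$ are pairwise distinct (in the opposite case, $\sigma_n = \sigma j_n$ for fixed $\sigma \in G$ and $j_n \in J$, forcing $y \in \sigma \Lambda(J) \subset G\Lambda(G_1)$). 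Then \Cref{lem:AFP_contraction} gives $\sigma_n B_{j^*} \to \{y\}$ in Hausdorff distance, and \Cref{lem:interactive_pair_dynamics} supplies the nested inclusions $\sigma_n B_{j^*} \subset g_{n,1} B_j$.

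The key remaining point is to control the first letters $g_{n,1}$. If the cosets $(g_{n,1} J)$ in $G_j/J$ take only finitely many values, I pass to a constant coset, write $g_{n,1} = g j_n$ with $j_n \in J$, and absorb $j_n$ into $g_{n,2}$ so that $\sigma_n$ starts with the fixed letter $g \in G_j\setminus J$; since $g B_j$ is closed and contains every $\sigma_n B_{j^*}$, taking the Hausdorff limit gives $y \in g B_j$. The main obstacle is ruling out the infinite-coset case, and this is where the hypothesis $y \notin G\Lambda(G_j)$ is decisive. In that case, \Cref{lem:AFP_contraction} applied to the length-$1$ sequence $(g_{n,1})$ (consisting of pairwise distinct elements of the discrete convergence group $G_j$, in pairwise distinct $J$-cosets) yields $g_{n,1} B_j \to \{y'\}$. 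Since $B_j$ is infinite by \Cref{prop:interactive_pair_infinite}, picking $z \in B_j$ away from the repelling point of the divergent sequence $(g_{n,1})$ in $G_j$ identifies $y' = \lim g_{n,1} z$ as the attracting point of $(g_{n,1})$ in $\Lambda(G_j)$. Combining $\sigma_n B_{j^*} \subset g_{n,1} B_j$ with $\sigma_n B_{j^*} \to \{y\}$ then forces $y = y' \in \Lambda(G_j)$, contradicting the hypothesis on $y$. Hence the finite-coset case always occurs, completing the induction.
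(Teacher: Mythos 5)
Your argument is correct, but it is organized quite differently from the paper's. The paper first builds the nested ping-pong sets $T_{n}$ and $T=\bigcap_n T_n$, proves via \Cref{lem:AFP_first_pingpong_inclusion} (whose proof contains exactly your contraction-plus-coset dichotomy) that $\Lambda(G)\setminus G(\Lambda(G_1)\cup\Lambda(G_2))\subset T$, and then reads the coding off purely combinatorially from membership in each $T_n$; you instead bypass the $T_n$ entirely, fixing a convergence sequence $(\gamma_n)$ with attracting point $x$ and peeling off one letter of its normal forms at each inductive step, invoking \Cref{lem:AFP_contraction} inside the induction: finitely many first-letter cosets let you freeze the next letter $g$ and conclude $y\in gB_j$, while infinitely many cosets would force $y\in\Lambda(G_j)$, contradicting the hypothesis. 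Both proofs thus rest on the same engine (\Cref{lem:AFP_contraction} together with the finite/infinite coset dichotomy), but the paper's route produces the auxiliary sets $T_1$, $T$ and \Cref{lem:AFP_first_pingpong_inclusion}, which are reused later (e.g.\ in \Cref{lem:AFP_bounded_parabolic_G}), whereas your route is more local and self-contained at the cost of repeating the dichotomy at every step and of some normal-form bookkeeping that you leave implicit: identifying the Hausdorff limit of $(\sigma_nB_{j^*})$ with the attracting point $y$ (done by choosing a point of the infinite set $B_{j^*}$ off the repelling point, exactly as you do for $(g_{n,1})$, using \Cref{prop:interactive_pair_infinite}); the fact that the new letter automatically lies in $G_{3-j_k}\setminus J$, so that $|h_{k+1}|=k+1$; and the fact that after absorbing $j_n$ into the next letter, those $\sigma_n$ of length one become elements of $J$ at the next stage, so the invariant should really be stated as ``$h_k^{-1}\gamma_n$ lies in $J$ or has a normal form whose first letter lies in $G_{3-j_k}\setminus J$'' rather than literally beginning with $g_1\cdots g_k$. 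None of these affects the validity of the argument.
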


To prove this proposition, we follow Maskit's strategy, and consider a
sequence of ``ping-pong'' sets in $M$, defined inductively as
follows. We let $T_0 = B_1 \cup B_2$. Then, for every $n > 0$, and
$i \in \{1,2\}$, we define
\[
  T_{n,i} = \bigcup_{g \in G_i \setminus J} g(B_i \cap T_{n-1}).
\]
Then we define
\[
  T_n = T_{n,1} \cup T_{n,2}.
\]
The set $T_1$ is just the union of the $G_1 \setminus J$ translates of
$B_1$ and the $G_2 \setminus J$ translates of $B_2$. More generally,
$T_n$ is the union of translates of $B_1$ by $(i,1)$-forms of length
$n$ and the translates of $B_2$ by $(i,2)$-forms of length $n$. See \Cref{fig:AFP_T} for a depiction of $T_1$ and $T_2$. We see that
these sets are decreasing, so let
\[
    T = \bigcap_{n=0}^\infty T_n.
\]We
will see that limit points of $G$ which are not translates of limit
points of $G_1$ nor $G_2$ are in $T$, which allows us to construct the
sequence given by the conclusion of the proposition above.

\begin{figure*}[ht!]
    \centering
    \def\svgwidth{14.7 cm}
\begingroup%
  \makeatletter%
  \providecommand\color[2][]{%
    \errmessage{(Inkscape) Color is used for the text in Inkscape, but the package 'color.sty' is not loaded}%
    \renewcommand\color[2][]{}%
  }%
  \providecommand\transparent[1]{%
    \errmessage{(Inkscape) Transparency is used (non-zero) for the text in Inkscape, but the package 'transparent.sty' is not loaded}%
    \renewcommand\transparent[1]{}%
  }%
  \providecommand\rotatebox[2]{#2}%
  \newcommand*\fsize{\dimexpr\f@size pt\relax}%
  \newcommand*\lineheight[1]{\fontsize{\fsize}{#1\fsize}\selectfont}%
  \ifx\svgwidth\undefined%
    \setlength{\unitlength}{566.92913386bp}%
    \ifx\svgscale\undefined%
      \relax%
    \else%
      \setlength{\unitlength}{\unitlength * \real{\svgscale}}%
    \fi%
  \else%
    \setlength{\unitlength}{\svgwidth}%
  \fi%
  \global\let\svgwidth\undefined%
  \global\let\svgscale\undefined%
  \makeatother%
  \begin{picture}(1,0.5)%
    \lineheight{1}%
    \setlength\tabcolsep{0pt}%
    \put(0,0){\includegraphics[width=\unitlength,page=1]{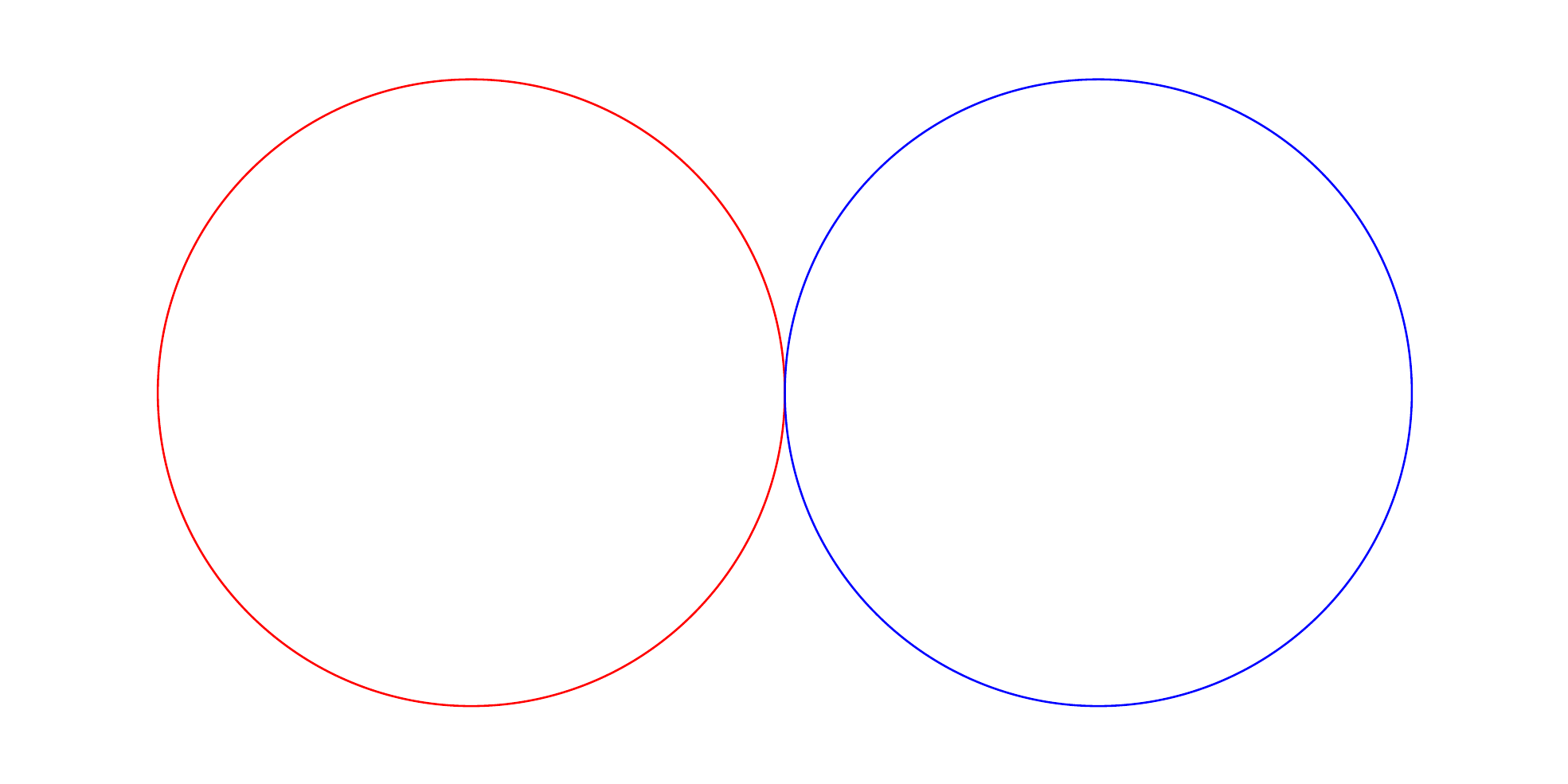}}%
    \put(0.14945998,0.44712224){\color[rgb]{1,0,0}\makebox(0,0)[lt]{\lineheight{1.25}\smash{\begin{tabular}[t]{l}$B_1$\end{tabular}}}}%
    \put(0.81183327,0.44774935){\color[rgb]{0,0,1}\makebox(0,0)[lt]{\lineheight{1.25}\smash{\begin{tabular}[t]{l}$B_2$\end{tabular}}}}%
    \put(0,0){\includegraphics[width=\unitlength,page=2]{AFP_T.pdf}}%
    \put(0.47008148,0.40657345){\color[rgb]{0,0,0}\makebox(0,0)[lt]{\lineheight{1.25}\smash{\begin{tabular}[t]{l}$\Lambda(J)$\end{tabular}}}}%
    \put(0,0){\includegraphics[width=\unitlength,page=3]{AFP_T.pdf}}%
    \put(0.25719213,0.40288869){\color[rgb]{0,0,1}\makebox(0,0)[lt]{\lineheight{1.25}\smash{\begin{tabular}[t]{l}$T_{1,2}$\end{tabular}}}}%
    \put(0.68599365,0.40288778){\color[rgb]{1,0,0}\makebox(0,0)[lt]{\lineheight{1.25}\smash{\begin{tabular}[t]{l}$T_{1,1}$\end{tabular}}}}%
  \end{picture}%
\endgroup%

    \caption{Part of the sets $T_1$ and $T_2$.}
    \label{fig:AFP_T}
\end{figure*}

We observe:
\begin{lemma}
  \label{lem:AFP_T_G_invariant}
  The set $T$ is $G$-invariant and nonempty. In particular, since $G$
  is non-elementary, we have
  $\Lambda(G) \subset \overline{T} \subset B_1 \cup B_2$.
\end{lemma}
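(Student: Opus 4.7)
My plan is to establish three facts about $T$: (a) $T \subset B_1 \cup B_2$, (b) $T$ is nonempty, and (c) $T$ is $G$-invariant. The inclusion (a) is immediate from $T \subset T_0 = B_1 \cup B_2$, and since $T_0$ is closed this gives $\overline{T} \subset B_1 \cup B_2$. Combined with (b) and (c), $\overline{T}$ is a nonempty closed $G$-invariant subset of $M$, and since parts \ref{item:AFP_amalgamation}--\ref{item:AFP_parabolics} of Theorem A (already proved above) exhibit many loxodromic elements in $G$ and so force $G$ to be non-elementary, $\Lambda(G)$ is the unique minimal nonempty closed $G$-invariant subset of $M$ and must therefore lie inside $\overline{T}$.

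For (b) I build a point of $T$ by a standard nested ping-pong argument. Because $J$ is a proper subgroup of both $G_1$ and $G_2$, I may choose an infinite sequence $(g_k)_{k \ge 1}$ with $g_k \in G_{i_k} \setminus J$ and alternating indices $i_k \in \{1,2\}$, say with $i_1 = 1$. Set $h_n = g_1 \cdots g_n$ and $C_n = h_n B_{i_n}$. Condition \eqref{item:AFP_pingpong} of Definition A gives $g_{n+1} B_{i_{n+1}} \subset \Int(B_{3-i_{n+1}}) = \Int(B_{i_n})$, so $C_{n+1} \subset C_n$. Compactness of $M$ then yields $x \in \bigcap_n C_n$, and a short induction on $n$ using the recursion $T_{n,i} = \bigcup_{g \in G_i \setminus J} g(B_i \cap T_{n-1})$ shows that any length-$n$ $(i,j)$-form translate $hB_j$ lies in $T_{n,i} \subset T_n$ (the inductive step observes that the suffix of $h$ of length $n-1$ sends $B_j$ inside $B_i$, so that the corresponding point of $T_{n-1}$ lies in $B_i \cap T_{n-1}$). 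In particular each $C_n \subset T_n$, so $x \in T$.

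For (c) I first observe that each $T_n$ is $J$-invariant: $T_0$ is $J$-invariant by condition \eqref{item:AFP_J-invariance}, and inductively, for $j \in J$ the equalities $j(G_i \setminus J) = G_i \setminus J$ and $j(B_i \cap T_{n-1}) = B_i \cap T_{n-1}$ give $jT_{n,i} = T_{n,i}$; hence $JT = T$. Since $G = \langle G_1, G_2 \rangle$, it then suffices by symmetry to show $gT \subset T$ for $g \in G_1 \setminus J$. Given $x \in T$ and a target level $n$, I use that $x \in T_N$ for arbitrarily large $N$ to unwind a decomposition $x = h_1 h_2 \cdots h_N y_N$ coming from the recursion, and then analyze by cases whether successive letters alternate between $G_1$ and $G_2$. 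In the alternating cases, the product of $g$ with the leading letters assembles into a longer ping-pong translate that lies inside $T_{n',i'}$ for some $n' > n$; in the non-alternating cases, adjacent same-factor letters either multiply into $J$ (whereupon the result reduces to $JT_m = T_m$ for an appropriately chosen $m$) or combine into a non-identity letter of $G_i \setminus J$, again keeping the image inside some sufficiently deep $T_{n'}$.

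The main obstacle is organizing this case analysis without circular reasoning. The cleanest route I see is to prove by induction on free-product length the uniform claim that $g T_{n+|g|} \subset T_n$ for every $g \in G$ and every $n \ge 0$. The inductive step reduces at once to the single-letter case $|g|=1$, which is precisely where the alternating/non-alternating case analysis above is carried out; the general case then follows by composing single letters along a normal form for $g$. Once the uniform statement is proved, $gT = g\bigcap_n T_n \subset \bigcap_n gT_n \subset \bigcap_n T_{n-|g|} = T$ completes the verification of (c).
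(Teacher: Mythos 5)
Your argument is correct and follows essentially the same route as the paper: nonemptiness via compactness of a nested chain of translates inside the decreasing sets $T_n$, and $G$-invariance via the one-letter estimate $gT_{n+1} \subset T_n$ for $g \in (G_1 \cup G_2)\setminus J$ (together with $J$-invariance of each $T_n$), iterated along normal forms to get $gT_{n+|g|} \subset T_n$. Your explicit nested compact translates are in fact a slightly more careful version of the paper's one-line appeal to a decreasing intersection in a compact space, and note that the non-elementarity of $G$ is taken as given in the statement, so you need not (and the mere existence of loxodromic elements would not suffice to) derive it.
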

\begin{proof}
  We know $T$ is nonempty because it is the intersection of a
  decreasing sequence of nonempty subsets of the compact space
  $M$. The definition of $T_n$ implies that if $x \in T_n$ and
  $g \in G_1 \cup G_2$, then $g x \in T_{n-1}$. Inductively, we see
  that if $g \in G$ has $|g| = k$, and $x \in T_{n+k}$, then
  $gx \in T_n$. It follows that if $x \in T$ then $g x \in T$ for any
  $g \in G$.
\end{proof}

\begin{lemma}
  \label{lem:AFP_first_pingpong_inclusion}
  If $G_1$ and $G_2$ are geometrically finite, we have
  $\Lambda(G) \setminus (\Lambda(G_1) \cup \Lambda(G_2)) \subset T_1$.
\end{lemma}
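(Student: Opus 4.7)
Let $x \in \Lambda(G) \setminus (\Lambda(G_1) \cup \Lambda(G_2))$ and fix a convergence sequence $(g_k)$ of pairwise distinct elements of $G$ with attracting point $x$ and repelling point $z_-$. If infinitely many $g_k$ lay in $J$, $G_1$, or $G_2$, then $x$ would be a limit point of one of those subgroups and would hence lie in $\Lambda(G_1) \cup \Lambda(G_2)$, contrary to hypothesis. So we may extract a subsequence on which every $g_k$ has length at least $2$, and by pigeonholing on $(i, j) \in \{1, 2\}^2$ we may further assume each $g_k$ is an $(i, j)$-form for fixed $i, j$. Fix $y \in B_j \setminus \{z_-\}$, possible since $B_j$ is infinite by \Cref{prop:interactive_pair_infinite}; then $g_k y \to x$, and by \Cref{lem:interactive_pair_dynamics} we have $g_k y \in g_k B_j \subset a_k B_i$, where $a_k \in G_i \setminus J$ is the first letter of the normal form of $g_k$.

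The argument splits on the behavior of the first letters $(a_k)$ modulo $J$ inside $G_i$. If the $(a_k)$ lie in only finitely many distinct $J$-cosets of $G_i$, extract a subsequence on which $a_k = a j_k$ for a fixed $a \in G_i \setminus J$ and $j_k \in J$. Since $J$ preserves $B_i$, we have $a_k B_i = a B_i$, and combining $g_k y \in a B_i$ with the closedness of $a B_i$ gives $x \in a B_i \subset T_{1, i}$, as desired.

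In the remaining case where the $(a_k)$ pass through infinitely many distinct $J$-cosets of $G_i$, I derive a contradiction with the hypothesis $x \notin \Lambda(G_i)$. By \Cref{lem:AFP_technical}, there exist $j_k \in J$ with $j_k a_k B_i \subset K_i$ for some fixed compact $K_i \subset \Int(B_{3-i})$. The sequence $(j_k)$ must be bounded in $J$: otherwise $(j_k g_k y)$ has a subsequential limit in $\Lambda(J) \subset \del B_{3-i}$, which is disjoint from the compact $K_i \subset \Int(B_{3-i})$ also containing $j_k g_k y$. Passing to a subsequence with $j_k = j_0$ constant, the translates $a_k B_i$ all lie in the fixed compact $j_0^{-1} K_i \subset \Int(B_{3-i})$. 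Extract a convergence subsequence of $(a_k)$ in the geometrically finite group $G_i$ with attracting and repelling points $x_a, z_a \in \Lambda(G_i)$, and apply \Cref{lem:upgraded_technical_lemma} (to the fully quasi-convex subgroup $J$ of $G_i$ given by \Cref{lem:fully_quasiconvex}) to further modify $(a_k)$ by $J$-elements so that $x_a, z_a \in \Lambda(G_i) \setminus \Lambda(J)$, which lies in $\Int(B_{3-i})$ by condition \eqref{item:AFP_technical} of Definition~A; the modification keeps $a_k B_i$ in a fixed compact of $\Int(B_{3-i})$. If $z_a \notin B_i$, then $a_k B_i$ converges in Hausdorff distance to $\{x_a\}$, and the containment $g_k y \in a_k B_i$ forces $g_k y \to x_a$, giving $x = x_a \in \Lambda(G_i)$, a contradiction.

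The main obstacle is the remaining case $z_a \in \del B_i \cap \Int(B_{3-i})$, which the disjoint-interiors hypothesis alone does not rule out. Here one iterates the analysis on the convergence sequence $(g_k') := (a_k^{-1} g_k)$, which consists of $(3-i, j)$-forms of length at least $1$ with attracting point $z_a$. At each subsequent level of the iteration, the analogue of the easy case forces a new point that must simultaneously lie in $\Int(B_i)$ and $\Int(B_{3-i})$ (by the same ping-pong and modification arguments with the roles of $i$ and $3-i$ interchanged), which is impossible; while the analogue of the hard case forces the next attracting point into $\Lambda(G_{\epsilon})$ for the other factor group. Combined with the fact that $\Lambda(G_i) \cap \Lambda(G_{3-i}) = \Lambda(J)$ (which follows from condition \eqref{item:AFP_technical} together with the disjoint-interiors hypothesis), one eventually arrives at $z_a \in \Lambda(J)$, contradicting the modification supplied by \Cref{lem:upgraded_technical_lemma} and completing the proof.
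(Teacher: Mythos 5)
Your reduction and your ``finitely many $J$-cosets'' case are fine, and up to that point your argument parallels the paper's (the paper argues the contrapositive, but the dichotomy on the first letters $a_k\in G_i\setminus J$ is the same). The genuine gap is in the case where the $a_k$ meet infinitely many $J$-cosets. Two steps there do not hold up. First, after invoking \Cref{lem:upgraded_technical_lemma} you assert that ``the modification keeps $a_kB_i$ in a fixed compact of $\Int(B_{3-i})$'': but that lemma modifies $a_k$ by \emph{left} multiplication by elements of $J$ which need not be bounded, so the sets $\tilde\jmath_k a_kB_i$ need not stay in a fixed compact, and, worse, the containment $g_ky\in a_kB_i$ with $g_ky\to x$ — the only link between the sequence $(a_k)$ and the point $x$ — is destroyed by the left modification; you never restore it. Second, and decisively, the case you call ``the main obstacle'' ($z_a\in B_i$ not excluded) is dispatched by an iteration described only as ``one eventually arrives at $z_a\in\Lambda(J)$'': there is no induction parameter, no termination argument, and each ``level'' involves fresh subsequence extractions and $J$-modifications whose interaction with the previous level is not controlled. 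As written this is a sketch of a hope, not a proof.

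The paper closes exactly this case with its contraction lemma (\Cref{lem:AFP_contraction}): since the $a_k$ are length-one forms in pairwise distinct left $J$-cosets, up to subsequence $(a_kB_i)$ converges to a singleton, which must be $\{x\}$ because $g_ky\in a_kB_i$ and $g_ky\to x$; as $B_i$ is infinite this gives $x\in\Lambda(G_i)$, contradicting your hypothesis. The device you are missing is inside the proof of that lemma: one applies \Cref{lem:AFP_technical} to $a_k^{-1}$, not to $a_k$, producing $j_k\in J$ with $j_ka_k^{-1}B_i$ in a fixed compact of $\Int(B_{3-i})$; the corresponding modification of $a_k$ is on the \emph{right} by $j_k^{-1}$, which changes neither $a_kB_i$ nor the coset $a_kJ$, and it forces the repelling point of $(a_kj_k^{-1})$ into $\Int(B_{3-i})$, hence off $B_i$ — precisely the conclusion your attempted left-modification via \Cref{lem:upgraded_technical_lemma} cannot deliver. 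Replacing everything after your ``bounded $(j_k)$'' discussion by a direct appeal to \Cref{lem:AFP_contraction} (or by reproducing its right-modification argument) repairs the proof; the iteration paragraph should be discarded.
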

\begin{proof}
  We will prove that if $y \in \Lambda(G) \setminus T_1$, then
  $y \in \Lambda(G_1) \cup \Lambda(G_2)$, so suppose that
  $y \in \Lambda(G)$ does not lie in $T_1$. Using the above lemma, we
  know $y \in B_1 \cup B_2$, so without loss of generality assume
  $y \in B_2$. Since $y$ is in the limit set of $G$, we can find a
  sequence $(g_k)$ in $G$ so that $(g_kw)$ converges to $y$ for all
  but a single point in $M$. If $y \in \Lambda(J)$ we are done, so we
  can assume that $g_k \notin J$ for infinitely many $k$.

  Then, after extracting a subsequence, we can assume that for every
  $k$, $g_k$ is an $(i,j)$-form for $i, j$ fixed, and then find
  $w \in B_j$ so that $g_kw \to y$.

  Since $g_k$ is an $(i,j)$-form and $w \in B_j$, we have
  $g_kw \in G_i(B_i)$ for every $k$. So, we may write $g_kw = g_k'z_k$
  for $g_k' \in G_i \setminus J$ and $z_k \in B_i$. Note that $g_k'$
  is just the first letter in the $(i,j)$-form $g_k$. In particular,
  we know $g_k'z_k \in T_1$ for every $k$, so $g_k'z_k$ is never equal
  to $y$. If, up to subsequence, there are only finitely many distinct
  translates $g_k'B_i$, then we would have
  $g_k'z_k \in \bigcup g_k'B_i$, a compact set in the complement of
  $T_1$, which contradicts the fact that $g_k'z_k \to y \in
  T_1$. Hence we may assume that the translates $g_k'B_i$ are all
  distinct, which means that the left cosets $g_k'J$ are all distinct.

  Now, \Cref{lem:AFP_contraction} implies that $(g_k'B_i)$ converges
  to a singleton. This singleton must be $y$ since $g_k'z_k \to
  y$. It follows that $g_k'z \to y$ for any $z \in B_i$, and since
  $B_i$ is an infinite set it follows that $y \in \Lambda(G_i)$ as
  desired.
\end{proof}

\begin{proof}[Proof of \Cref{prop:AFP_limit_points_have_codings}]
  We first claim that
  $\Lambda(G) \setminus G(\Lambda(G_1) \cup \Lambda(G_2))$ is a subset
  of $T$. So, fix $z \in \Lambda(G)$, and suppose $z \notin T$. We
  will show $z \in G(\Lambda(G_1) \cup \Lambda(G_2))$.

  By \Cref{lem:AFP_T_G_invariant} we know that
  $\Lambda(G) \subset B_1 \cup B_2 = T_0$, so there is some $n > 0$
  such that $z \in T_{n-1} \setminus T_n$. In particular, because
  $z \in T_{n-1}$, there is an $(i,j)$-form $g \in G$, with
  $|g| = n-1$, such that $gy = z$ for $y \in B_j$. We must have
  $y \notin T_1$, since otherwise we would have $y = hw$ for
  $w \in B_{3-j}$ and $h \in G_{3-j} \setminus J$, and then $z = ghw$
  would lie in $T_n$. Then, since $\Lambda(G)$ is $G$-invariant we see
  that $y \in \Lambda(G)$ but $y \notin T_1$, so by the previous lemma
  we have $y \in \Lambda(G_1) \cup \Lambda(G_2)$, hence
  $z \in G(\Lambda(G_1) \cup \Lambda(G_2))$.

  We have now seen that
  $\Lambda(G) \setminus G(\Lambda(G_1) \cup \Lambda(G_2))$ is a subset
  of $T$, so we just need to show that for any $x \in T$, there is a
  sequence of $(i,j)$-forms $(h_k)$ satisfying the conclusions of the
  proposition. We construct this sequence inductively. Take $h_0$ to
  be the identity. For $k > 0$, assume that $x \in h_{k-1}B_j$ for
  an $(i,j)$-form
  \[
    h_{k-1} = g_1 \cdots g_{k-1}.
  \]
  By \Cref{lem:AFP_T_G_invariant}, $T$ is $G$-invariant, so
  $h_{k-1}^{-1}x \in B_j \cap T$. In particular, $h_{k-1}^{-1}x$
  lies in $T_1 \cap B_j = T_{1,j}$, so there is some
  $g_k \in G_{3-j} \setminus J$ so that
  $h_{k-1}^{-1}x \in g_kB_{3-j}$. Then if $h_k$ is the
  $(i, 3-j)$-form
  \[
    g_1 \cdots g_k,
  \]
  we have $x \in h_k(B_{3-j})$ and $|h_k| = |h_{k-1}| + 1$, as
  required.
\end{proof}

The next step is to use the ``coding'' of limit points given by
\Cref{prop:AFP_limit_points_have_codings} to prove that there is a
conical limit sequence for every point in
$\Lambda(G) \setminus G(\Lambda(G_1) \cup \Lambda(G_2))$.

\begin{lemma}\label{lem:AFP_conical}
  If $G_1$ and $G_2$ are geometrically finite, every point in
  $\Lambda(G) \setminus G(\Lambda(G_1) \cup \Lambda(G_2))$ is a
  conical limit point for $G$.
\end{lemma}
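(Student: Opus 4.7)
The plan is to combine the ``coding'' from \Cref{prop:AFP_limit_points_have_codings} with the uniform nesting machinery built inside the proof of \Cref{lem:AFP_contraction} to exhibit an explicit conical limiting sequence for every $x \in \Lambda(G) \setminus G(\Lambda(G_1) \cup \Lambda(G_2))$.

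First, I would fix such an $x$ and apply \Cref{prop:AFP_limit_points_have_codings} to obtain $(g_k)$ in $(G_1 \cup G_2) \setminus J$ with $h_k := g_1 \cdots g_k$ a normal form of length $k$ and $x \in h_k B_{c_k}$, where $c_k \in \{1,2\}$ is the type of $g_k$. Passing to a subsequence, I can assume $c_k = c$ is constant, so each $h_k$ (along the subsequence) is an $(i_0, c)$-form, where $i_0$ denotes the type of $g_1$. The $J$-invariance of $B_c$ then immediately gives $h_k^{-1} x \in B_c$ for every $k$.

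Next, I would invoke the ``Claim'' established inside the proof of \Cref{lem:AFP_contraction} (which is in turn built on \Cref{lem:AFP_technical} and ultimately on \Cref{prop:quasiconvex_subgroups_nest}): for a fixed compact $K \subset \Int(B_{3-c})$ and a sequence $(j_k)$ in $J$, one has $j_k h_k^{-1} B_{i_0} \subset K$ for all $k$. Setting $\gamma_k := j_k h_k^{-1}$, we obtain
\[
\gamma_k B_{i_0} \subset K \subset \Int(B_{3-c}), \qquad \gamma_k x \in j_k B_c = B_c.
\]
The $\gamma_k$ are pairwise distinct in $G$: if $\gamma_k = \gamma_\ell$ for $k < \ell$, then $h_\ell^{-1} h_k = g_\ell^{-1} \cdots g_{k+1}^{-1}$ would lie in $J$, but this word is a nontrivial normal form of length $\ell - k \geq 1$, contradicting the isomorphism $G \cong G_1 *_J G_2$ from \Cref{prop:interactive_pair}.

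Finally, I would apply \Cref{lem:conical_characterization} with $Y = B_{i_0}$ (which contains at least two points by \Cref{prop:interactive_pair_infinite}), $K_1 = K$, and $K_2 = B_c$. The disjointness $K_1 \cap K_2 = \es$ follows from $K \subset \Int(B_{3-c})$ and $\Int(B_{3-c}) \cap B_c = \es$ --- the same disjointness implicitly used inside the proof of \Cref{lem:AFP_contraction}. This identifies $x$ as a conical limit point for $G$.

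The main obstacle is the step producing a \emph{single} compact $K \subset \Int(B_{3-c})$ which absorbs every translate $\gamma_k B_{i_0}$, rather than merely obtaining that the sets $h_k^{-1} B_{i_0}$ nest into the open set $\Int(B_{3-c})$. This uniform compact nesting is precisely what the modification by the elements $j_k \in J$ is designed to achieve, and it is the reason the hypotheses force us to go through the full quasi-convexity of $J$ inside $G_1$ and $G_2$ (via \Cref{lem:fully_quasiconvex} and \Cref{prop:quasiconvex_subgroups_nest}). Everything else in the argument is a direct combinatorial unwinding of the AFP ping-pong combinatorics.
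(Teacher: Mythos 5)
Your proposal is correct and follows essentially the same route as the paper: extract from the coding of \Cref{prop:AFP_limit_points_have_codings} a sequence of $(i_0,c)$-forms with $h_k^{-1}x \in B_c$, use the uniform compact nesting coming from \Cref{lem:AFP_technical} (you cite it via the Claim inside \Cref{lem:AFP_contraction}, which is exactly how the paper peels off the last letter) to get $j_k \in J$ with $j_kh_k^{-1}B_{i_0} \subset K \subset \Int(B_{3-c})$, and conclude with \Cref{lem:conical_characterization}. The only differences are cosmetic: the paper fixes notation by taking even-length prefixes (so $(1,2)$-forms with $x \in B_2$) and uses $Y = \Int(B_1)$, while you pass to a subsequence with constant terminal type and take $Y = B_{i_0}$; your distinctness check for the $\gamma_k$ via normal forms is a valid replacement for the paper's length argument.
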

\begin{proof}
  Let $x \in \Lambda(G) \setminus G(\Lambda(G_1) \cup
  \Lambda(G_2))$. We know $x \in B_1 \cup B_2$ from
  \Cref{lem:AFP_T_G_invariant}, so to simplify notation assume
  $x \in B_2$. We let $(g_k)$ be the sequence in
  $(G_1 \cup G_2) \setminus J$ from
  \Cref{prop:AFP_limit_points_have_codings}, so that, for every $k$,
  we have $|g_1 \cdots g_k| = k$ and if $g_k \in G_j$, then
  $x \in g_1 \cdots g_kB_j$.
  
  For each $k$, we let $h_k = g_1 \cdots g_{2k}$, so that $h_k$ is an
  $(i, j)$-form for fixed $i \ne j$. Since
  $h_kB_j \subset \Int(B_{3-i})$, and $x \in B_2$, we have $i = 1$ and
  thus $h_k$ is a $(1,2)$-form for every $k$. This means that
  $(g_{2k})$ is a sequence in $G_2 \setminus J$. So, using
  \Cref{lem:AFP_technical}, we find a fixed compact subset
  $K \subset \Int(B_1)$ and a sequence $(j_k)$ in $J$ so that
  $j_kg_{2k}^{-1}B_2 \subset K$.

  Consider the sequence $(f_k)$ given by $f_k = h_kj_k^{-1}$. Since
  $|f_k| \to \infty$, a subsequence of $(f_k^{-1})$ consists of
  pairwise distinct elements of $G$. Since $f_k^{-1}$ is a
  $(2,1)$-form, we know that
  \[
    f_k^{-1}B_1 = j_kg_{2k}^{-1} \cdots g_1^{-1}B_1 \subset K.
  \]
  On the other hand, by construction, we know that
  \[
    h_k^{-1}x = g_{2k}^{-1} \cdots g_1^{-1}x \in B_2.
  \]
  Since $B_2$ is $J$-invariant, we also see that
  $f_k^{-1}x = j_kh_k^{-1}x \in B_2$ for every $k$. By
  \Cref{prop:interactive_pair_infinite}, $\Int(B_1)$ is an infinite
  set. Then, since $B_2$ and $K$ are disjoint compact subsets of $M$,
  we can apply \Cref{lem:conical_characterization} (with
  $Y = \Int(B_1)$, $K_1 = K$, and $K_2 = B_2$) to complete the proof.
\end{proof}

Next we deal with parabolic points.

\begin{lemma}\label{lem:AFP_bounded_parabolic_G}
  If both $G_1$ and $G_2$ are geometrically finite, then every
  parabolic point of $G$ in $\Lambda(G_1) \cup \Lambda(G_2)$ is a
  bounded parabolic point for the action of $G$ on $\Lambda(G)$.
\end{lemma}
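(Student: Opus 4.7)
Without loss of generality assume $p \in \Lambda(G_1)$, and set $P = \text{Stab}_G(p)$. The first step in my plan is to identify $P$ with $\text{Stab}_{G_1}(p)$: if some $g \in P$ lay outside $G_1$, then $p = gp$ would lie in $g\Lambda(G_1) \cap \Lambda(G_1)$, contradicting part (iii) of \Cref{prop:AFP_basic_properties}. Next, I claim $p$ is bounded parabolic for $G_1$. Since $G_1$ is geometrically finite, $p \in \Lambda(G_1)$ is either conical or bounded parabolic for $G_1$; it cannot be conical for $G_1$, for otherwise a conical limiting sequence in $G_1$ would also witness conicality for $G$, contradicting the disjointness of parabolic and conical limit points noted in \Cref{rmk:conical_limit_seq_char}. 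So $p$ is bounded parabolic for $G_1$, giving a compact $K_1 \subset \Lambda(G_1) \setminus \{p\}$ with $PK_1 = \Lambda(G_1) \setminus \{p\}$.

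The core of the argument is to enlarge $K_1$ to a compact $K \subset \Lambda(G) \setminus \{p\}$ whose $P$-saturation covers all of $\Lambda(G) \setminus \{p\}$. Since $\Lambda(G) \subset B_1 \cup B_2$ by \Cref{lem:AFP_T_G_invariant}, every point of $\Lambda(G) \setminus \{p\}$ lies in $B_1$ or in $B_2 \setminus \{p\}$. When $p \in \Lambda(G_1) \setminus \Lambda(J)$, condition (3) of Definition A places $p$ in $\Int(B_2)$ and in particular $p \notin B_1$, so I propose $K := K_1 \cup (\Lambda(G) \cap B_1)$, which is compact and disjoint from $p$. When instead $p \in \Lambda(J) \subset \del B_1 \cap \del B_2$, I modify the construction by removing a small $(P \cap J)$-invariant neighborhood of $p$ from the $B_1$ piece; this is legitimate because $P \cap J = \text{Stab}_J(p)$ is itself bounded parabolic in $J$ by the same argument applied with $J$ in place of $G_1$, so $J$ supplies a compact fundamental set near $p$ on the $B_1$ side.

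The main obstacle will be verifying that $PK = \Lambda(G) \setminus \{p\}$. Points of $\Lambda(G_1) \setminus \{p\}$ are handled by the bounded parabolic property of $K_1$, and points of $\Lambda(G) \cap B_1$ lie in $K$ directly. The nontrivial case is a point $x \in \Lambda(G) \cap \Int(B_2) \setminus (\Lambda(G_1) \cup \{p\})$, for which I need to produce $\gamma \in P$ with $\gamma x \in K$. For this I plan to invoke the AFP coding of \Cref{prop:AFP_limit_points_have_codings}: such an $x$ admits a descending sequence of translates $h_k B_{j_k}$ with $|h_k| = k$ and $x \in h_k B_{j_k}$, and since $x \in \Int(B_2)$ the first letter of $h_k$ must lie in $G_1 \setminus J$. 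Left-multiplying by an appropriately chosen $\gamma \in P$ should either strictly reduce the length of the normal form of $h_k$ (when the leading $G_1$-factor is absorbed into $\gamma$) or push the nested translate into $B_1$; applying the contraction lemma (\Cref{lem:AFP_contraction}) to control the sizes of the translates $h_k B_{j_k}$ should ensure this reduction procedure terminates with $\gamma x \in B_1 \cup K_1 \subset K$.
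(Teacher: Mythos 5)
Your overall frame (reduce to producing a compact $K \subset \Lambda(G) \setminus \{p\}$ with $P(K) = \Lambda(G)\setminus\{p\}$, handle $\Lambda(G_1)$ by bounded parabolicity of $p$ for $G_1$, and absorb the $B_1$-side directly when $p \in \Int(B_2)$) is the same as the paper's, and your observations that $P \le G_1$ and that $p$ must be bounded parabolic for $G_1$ are fine. The genuine gap is in the step you yourself flag as the main obstacle: points $x \in \Lambda(G) \cap \Int(B_2)$ lying in translates $gB_1$ with $g \in G_1 \setminus J$ (the set $T_1 \cap B_2$). Your plan to left-multiply by $\gamma \in P$ so as to ``absorb the leading $G_1$-factor'' or ``push the nested translate into $B_1$'' does not work: absorbing $g_1$ would require $g_1 \in P J$ (since $\gamma g_1 B_1 = B_1$ forces $\gamma g_1 \in J$), which fails for general $g_1$, and \Cref{lem:AFP_contraction} cannot rescue this, because the translates $g B_1$ may perfectly well shrink down onto the point $p$ itself --- that is exactly the danger one must rule out, and contraction gives no uniform compact set avoiding $p$. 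The paper's proof of this step is \Cref{prop:quasiconvex_subgroups_nest}, applied with $J_1 = P$, $J_2 = J$, $U_1 = M \setminus \{p\}$, $U_2 = M \setminus B_1$: it produces a single compact $K' \subset M\setminus\{p\}$ and, for each $g \in G_1 \setminus J$, some $h \in P$ with $hgB_1 \subset K'$. This is a genuinely geometric input (full quasi-convexity of $P$ and $J$, Yaman's cusped space via \Cref{lem:quasiconvex_accumulation}), and no purely combinatorial ping-pong or coding argument in the paper substitutes for it; your proposal never invokes it. Note also that \Cref{prop:AFP_limit_points_have_codings} only codes points outside $G(\Lambda(G_1)\cup\Lambda(G_2))$, so it does not even apply to all the points you are trying to treat in this case.

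Your second case is also underpowered. When $p \in \Lambda(J)$ (and likewise you silently lump $p \in G_1(\Lambda(J))\setminus \Lambda(J)$ into your first case, where $p$ actually lies \emph{inside} some translate $gB_1$, breaking your compactness-plus-covering scheme; the paper avoids this by translating such $p$ back into $\Lambda(J)$), removing a small $\mathrm{Stab}_J(p)$-invariant neighborhood of $p$ from the $B_1$-piece and using bounded parabolicity of $p$ in $J$ only controls points of $\Lambda(J)$ near $p$. It says nothing about points of $\Lambda(G_2)\setminus\Lambda(J)$, nor about the translates $hB_2$ with $h \in G_2\setminus J$ (the part of $T_1$ inside $B_1$), both of which can accumulate at $p$ from the $B_1$ side. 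The paper handles these by using that $p$ is bounded parabolic for the full $G_2$-action on $\Lambda(G_2)$ and by a second application of \Cref{prop:quasiconvex_subgroups_nest} with $H = G_2$. As written, your argument would not yield the required compact $K$ in this case.
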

\begin{proof}
  Fix a parabolic point $p \in \Lambda(G_1)$, and let $P < G$ be the
  parabolic subgroup stabilizing $p$. We will show that there is a
  compact set $K \subset \Lambda(G) \setminus \{p\}$ so that
  $P(K) = \Lambda(G) \setminus \{p\}$, which implies the action is
  cocompact. The main idea here is to apply
  \Cref{prop:quasiconvex_subgroups_nest} to the parabolic subgroup
  $P$, which gives us a way to use elements of $P$ to position certain
  points in $M$ far away from $\Lambda(P) = \{p\}$. Our strategy is to
  decompose the set $\Lambda(G) \setminus \{p\}$ into pieces. We will
  show that every point in $\Lambda(G) \setminus \{p\}$ is either far
  away from $p$ to begin with, or else it is in a piece of
  $\Lambda(G) \setminus \{p\}$ which can be translated far away from
  $p$ using either \Cref{prop:quasiconvex_subgroups_nest} or the
  boundedness of $p$ in $\Lambda(G_1)$.
  
  We consider two cases. For both cases, in order to apply
  \Cref{prop:quasiconvex_subgroups_nest}, we need to know that $J$ and
  $P$ are fully quasi-convex subgroups of $G_1$; for $J$ this follows
  from \Cref{lem:fully_quasiconvex}, and for $P$ this is true because
  $P$ is exactly the stabilizer of its limit set
  $\{p\} \subset \Lambda(G_1)$ in $G_1$.
  \subsubsection*{Case 1:
    $p \in \Lambda(G_1) \setminus G_1(\Lambda(J))$}
  Using \Cref{lem:AFP_first_pingpong_inclusion}, we can see that every
  point in $\Lambda(G) \setminus \{p\}$ lies in one of the sets
  $\Lambda(G_1), \Lambda(G_2)$, or $T_1$. Since
  $\Lambda(G_2) \subset B_1$, and $T_1 \subset B_1 \cup B_2$, this
  means that every point in $\Lambda(G)$ lies in one of the sets
  \[
    L_1 = \Lambda(G_1), \quad L_2 = B_1, \quad L_3 = T_1 \cap B_2.
  \]
  Now, for each $i$, we will find a compact set
  $K_i \subset M \setminus \{p\}$ so that $P(K_i)$ contains
  $(\Lambda(G) \setminus \{p\}) \cap L_i$. Then we can define
  $K = (K_1 \cup K_2 \cup K_3) \cap \Lambda(G)$, so that
  $P(K) = \Lambda(G) \setminus \{p\}$.

  Since $p$ is a bounded parabolic point for the action of $G_1$ on
  $\Lambda(G_1)$, and $\Lambda(G_1)$ is locally compact, we already
  know that there is a compact
  $K_1 \subset \Lambda(G_1) \setminus \{p\}$ so that
  $P(K_1) = \Lambda(G_1) - \{p\}$. And, by part
  \ref{item:AFP_technical} of Definition A, we know $p \in \Int(B_2)$,
  so $B_1$ is already a compact subset of $M \setminus \{p\}$ and we
  can take $K_2 = B_1$. So, we just need to construct the compact set
  $K_3$.

  For this, we apply \Cref{prop:quasiconvex_subgroups_nest}, with
  $G = H = G_1$, $J_1 = P$, $J_2 = J$, $U_1 = M \setminus \{p\}$, and
  $U_2 = M \setminus B_1$. To verify that the hypotheses of the
  proposition are satisfied, we need to check that
  $gB_1 \subset M \setminus \{p\}$ for every $g \in G_1 \setminus
  J$. But, since $\Lambda(G_1)$ is $G_1$-invariant we can only have
  $p \in gB_1$ if $g^{-1}p \in B_1 \cap \Lambda(G_1) = \Lambda(J)$,
  which is impossible since we assume
  $p \in \Lambda(G_1) \setminus G_1(\Lambda(J))$.

  So, we know there is a compact subset $K' \subset M \setminus \{p\}$
  so that for any $g \in G_1 \setminus J$, we can find $h \in P$ so
  that $hgB_1 \subset K'$. But by definition, any $y \in T_1 \cap B_2$
  lies in $(G_1 \setminus J)(B_1)$, so we can take $K_3 = K'$ and we
  are done.

  \begin{figure}[h]
    \centering
\begingroup%
  \makeatletter%
  \providecommand\color[2][]{%
    \errmessage{(Inkscape) Color is used for the text in Inkscape, but the package 'color.sty' is not loaded}%
    \renewcommand\color[2][]{}%
  }%
  \providecommand\transparent[1]{%
    \errmessage{(Inkscape) Transparency is used (non-zero) for the text in Inkscape, but the package 'transparent.sty' is not loaded}%
    \renewcommand\transparent[1]{}%
  }%
  \providecommand\rotatebox[2]{#2}%
  \newcommand*\fsize{\dimexpr\f@size pt\relax}%
  \newcommand*\lineheight[1]{\fontsize{\fsize}{#1\fsize}\selectfont}%
  \ifx\svgwidth\undefined%
    \setlength{\unitlength}{350.62456822bp}%
    \ifx\svgscale\undefined%
      \relax%
    \else%
      \setlength{\unitlength}{\unitlength * \real{\svgscale}}%
    \fi%
  \else%
    \setlength{\unitlength}{\svgwidth}%
  \fi%
  \global\let\svgwidth\undefined%
  \global\let\svgscale\undefined%
  \makeatother%
  \begin{picture}(1,0.53197532)%
    \lineheight{1}%
    \setlength\tabcolsep{0pt}%
    \put(0,0){\includegraphics[width=\unitlength,page=1]{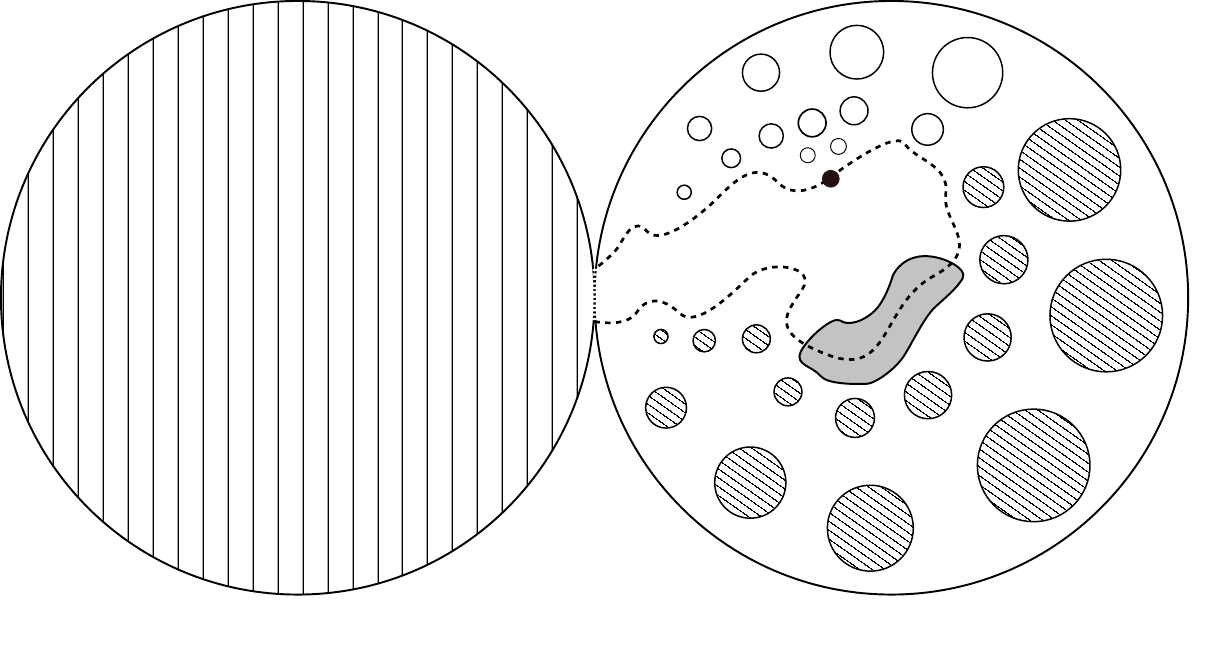}}%
    \put(0.68181877,0.3566848){\color[rgb]{0,0,0}\makebox(0,0)[lt]{\lineheight{1.25}\smash{\begin{tabular}[t]{l}$p$\end{tabular}}}}%
    \put(0.54401744,0.3137244){\color[rgb]{0,0,0}\makebox(0,0)[lt]{\lineheight{1.25}\smash{\begin{tabular}[t]{l}$\Lambda(G_1)$\end{tabular}}}}%
    \put(0.89875235,0.16436968){\color[rgb]{0,0,0}\makebox(0,0)[lt]{\lineheight{1.25}\smash{\begin{tabular}[t]{l}$K_3$\end{tabular}}}}%
    \put(0.73543487,0.33196719){\color[rgb]{0,0,0}\makebox(0,0)[lt]{\lineheight{1.25}\smash{\begin{tabular}[t]{l}$K_1$\end{tabular}}}}%
    \put(0.23405417,0.00553759){\color[rgb]{0,0,0}\makebox(0,0)[lt]{\lineheight{1.25}\smash{\begin{tabular}[t]{l}$K_2$\end{tabular}}}}%
  \end{picture}%
\endgroup%

    \caption{The sets $K_1, K_2$, and $K_3$ proving that
      $p \in \Lambda(G_1)$ is a bounded parabolic point (Case 1).}
    \label{fig:afp_parabolic}
  \end{figure}
  
  \subsubsection*{Case 2:
    $p \in G_1(\Lambda(J))$}
  Since $G$ acts by homeomorphisms on $\Lambda(G)$ it suffices to
  consider the case $p \in \Lambda(J)$. For this case, we again use
  \Cref{lem:AFP_first_pingpong_inclusion} to see that every point in
  $\Lambda(G)$ lies in one of the three sets
  \[
    L_1 = \Lambda(G_1), \quad L_2 = \Lambda(G_2), \quad L_3 = T_1.
  \]
  As in the previous case, for each of these sets, we will find a
  compact set $K_i \subset M \setminus \{p\}$ so that $P(K_i)$
  contains $(\Lambda(G) \setminus \{p\}) \cap L_i$.

  For $i = 1,2$, as in Case 1, we can use the fact that $p$ is a
  bounded parabolic point for the $G_i$-action on $\Lambda(G_i)$, to
  find compact sets $K_i \subset \Lambda(G_i) \setminus \{p\}$ such
  that $P(K_i) = \Lambda(G_i) \setminus \{p\}$.

  To find $K_3$, we apply \Cref{prop:quasiconvex_subgroups_nest}
  twice: for $i = 1,2$, we take $G = H = G_i$, $J_1 = P$, $J_2 = J$,
  $U_1 = M \setminus \{p\}$, and $U_2 = M \setminus B_i$. As in the
  previous case we need to verify that
  $gB_i \subset M \setminus \{p\}$ for every $g \in G_i \setminus J$,
  but this follows because $gB_i \subset \Int(B_{3-i})$, which is
  disjoint from $\Lambda(J)$ and hence does not contain $p$.

  This gives us a pair of compact set $K_{3,1}$ and $K_{3,2}$, such
  that for any $g \in G_i \setminus J$, we can find $h \in P$ so that
  $hgB_i \subset K_{3,i}$. Then, since any $y \in T_1$ lies in
  $(G_1 \setminus J)(B_1) \cup (G_2 \setminus J)(B_2)$ by definition,
  we can take $K_3 = K_{3,1} \cup K_{3,2}$ and we are done.
\end{proof}

Finally, we can complete the proof of this direction of Theorem A part
\ref{item:AFP_gf}.

\begin{proposition}
  If $G_1$ and $G_2$ are geometrically finite, then $G$ is
  geometrically finite.
\end{proposition}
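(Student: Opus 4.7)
The plan is to combine the results already established in the subsection into the statement that every point of $\Lambda(G)$ is either a conical limit point or a bounded parabolic point for $G$. I would split $\Lambda(G)$ into two pieces using the decomposition
\[
\Lambda(G) = \bigl(\Lambda(G) \setminus G(\Lambda(G_1) \cup \Lambda(G_2))\bigr) \,\cup\, \bigl(\Lambda(G) \cap G(\Lambda(G_1) \cup \Lambda(G_2))\bigr),
\]
and handle each piece in turn.

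For the first piece, \Cref{lem:AFP_conical} already gives that every such point is a conical limit point for $G$, so nothing more is needed. For the second piece, since the class of conical limit points and the class of bounded parabolic points are both $G$-invariant subsets of $\Lambda(G)$, it suffices to prove the claim for points $y \in \Lambda(G_i)$, $i \in \{1,2\}$. Here I would invoke the hypothesis that $G_i$ is geometrically finite and split into subcases according to whether $y$ is a conical limit point or a bounded parabolic point for $G_i$.

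If $y$ is a conical limit point for $G_i$, then any conical limiting sequence for $y$ in $G_i$ is also one in $G$ (the defining condition from \Cref{def:conical_limit_points} makes no reference to the ambient group), so $y$ is a conical limit point for $G$ as well. If instead $y$ is a bounded parabolic point for $G_i$, then $y$ is fixed by some parabolic element of $G_i \subset G$, and hence is a parabolic point for $G$ lying in $\Lambda(G_1) \cup \Lambda(G_2)$; then \Cref{lem:AFP_bounded_parabolic_G} finishes the job, showing that $y$ is a bounded parabolic point for $G$.

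No step should present serious difficulty at this stage, as the real work has already been done: the conical behavior of ``new'' limit points was handled by \Cref{lem:AFP_conical} via the coding from \Cref{prop:AFP_limit_points_have_codings}, and the boundedness of parabolic points ``inherited'' from $G_1$ or $G_2$ was handled by \Cref{lem:AFP_bounded_parabolic_G} via \Cref{prop:quasiconvex_subgroups_nest}. The only thing to verify carefully is that an element which is parabolic as an element of $G_i$ remains parabolic (and not loxodromic or elliptic) when viewed as an element of $G$, which is immediate from \Cref{prop:convergence_classification_of_isometries} since the classification depends only on the element itself and its action on $M$, not on the ambient convergence group.
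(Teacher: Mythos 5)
Your proposal is correct and follows essentially the same route as the paper: new limit points are handled by \Cref{lem:AFP_conical}, the rest is reduced by $G$-equivariance to points of $\Lambda(G_1) \cup \Lambda(G_2)$, and then \Cref{lem:AFP_bounded_parabolic_G} together with the transfer of conical limiting sequences from $G_i$ to $G$ finishes the argument. The only difference is cosmetic: the paper splits cases according to whether the point is parabolic for $G$, while you split according to its type for $G_i$, which amounts to the same thing since the parabolic/conical classification of an element and of a point depends only on the action on $M$.
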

\begin{proof}
    Let $x \in \Lambda(G)$. We must show $x$ is either a conical limit point or a bounded parabolic point for $G$. First, if $x$ is not a translate of a limit point of $G_1$ nor $G_2$, then $x$ is a conical limit point by \Cref{lem:AFP_conical}. So, assume $x \in G(\Lambda(G_1) \cup \Lambda(G_2))$. Acting by elements of $G$ preserves the properties we are trying to show, so in fact we may assume $x \in \Lambda(G_1) \cup \Lambda(G_2)$. If $x$ is a parabolic point of $G$, we are done by \Cref{lem:AFP_bounded_parabolic_G}. Otherwise, $x$ is necessarily a conical limit point for $G_1$ or $G_2$ since these are geometrically finite, and again we are done since $x$ will also be a conical limit point for $G$.
\end{proof}

\subsection{Geometrical finiteness of the factors}

The last thing to do in this section is prove the other direction of
Theorem A part \ref{item:AFP_gf}, and show that $G_1$ and $G_2$ are
geometrically finite if $G$ is geometrically finite. The first step is
the following lemma, which makes use of the contraction property
proved earlier in this section.

\begin{lemma}
  \label{lem:AFP_conical_limit_seq_bounded}
  Assume that $G$ is geometrically finite. Let $x \in \Lambda(G_i)$
  for $i \in \{1,2\}$, and suppose that $(h_k)$ is a conical limit
  sequence in $G$ for $x$. Then, after extracting a subsequence, we
  can find some $h \in G$ so that $h_k \in hG_i$ for every $k$.
\end{lemma}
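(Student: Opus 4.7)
The plan is to argue by contradiction: suppose that after passing to any subsequence, the cosets $h_kG_i$ remain pairwise distinct. Using the normal form in the amalgamated free product $G = G_1 *_J G_2$, I would factor each $h_k = p_k g_k$ where $g_k \in G_i$ and $p_k$ is the canonical representative of $h_k G_i$; that is, $p_k$ is either trivial or a normal form ending in $G_{3-i}\setminus J$. Since the cosets $p_kG_i$ are pairwise distinct, so are the cosets $p_kJ$, and after a further extraction I may assume each $p_k$ is nontrivial and is an $(i',3-i)$-form for a fixed $i'$.

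The next step is to apply the AFP contraction lemma (\Cref{lem:AFP_contraction}) to $(p_k)$: after extraction, $(p_kB_{3-i})$ converges in the Hausdorff topology to a singleton $\{z\}$. To identify $z$, I would note that $g_kx\in\Lambda(G_i)\subset B_{3-i}$ by $G_i$-invariance of the limit set combined with part \ref{item:AFP_prop_ii} of \Cref{prop:AFP_basic_properties}; hence $h_kx = p_kg_kx\in p_kB_{3-i}$, and $h_kx\to a$ forces $z = a$.

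The crux is then to produce a point $w\neq x$ such that $g_kw\in B_{3-i}$ for every $k$. After one more extraction, I may assume either $g_k\in J$ for all $k$ or $g_k\in G_i\setminus J$ for all $k$. In the first case, \Cref{prop:interactive_pair_infinite} applied to the proper interactive pair $(\Int(B_1),\Int(B_2))$ shows that $\Int(B_{3-i})$ is infinite, so I can pick $w\in\Int(B_{3-i})\setminus\{x\}$; then $g_kw\in B_{3-i}$ by $J$-invariance. In the second case, I would take any $w\in\Int(B_i)$; since $B_i$ and $B_{3-i}$ have disjoint interiors while $x\in B_{3-i}$, the point $x$ cannot lie in $\Int(B_i)$, so $w\neq x$ automatically, and $g_kw\in g_k\Int(B_i)\subset\Int(B_{3-i})$ by the AFP ping-pong condition. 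In either case, $h_kw = p_kg_kw\in p_kB_{3-i}$, and the Hausdorff convergence of $(p_k B_{3-i})$ to $\{a\}$ yields $h_kw\to a$. But $(h_k)$ is a conical limit sequence for $x$ with attracting point $b\neq a$, forcing $h_kw\to b$ for every $w\neq x$. This gives $a = b$, a contradiction.

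The main obstacle is identifying the right canonical representative $p_k$ of $h_kG_i$ so that the AFP contraction lemma applies, and then producing a uniform witness $w\neq x$ whose $g_k$-orbit stays inside $B_{3-i}$; the case split on $g_k\in J$ versus $g_k\in G_i\setminus J$ is the delicate combinatorial ingredient that makes the witness argument go through regardless of whether $|\Lambda(G_i)|\ge 2$ or $G_i$ is elementary.
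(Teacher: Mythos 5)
Your proposal is correct and follows essentially the same route as the paper: peel off the terminal $G_i$-letter (your $h_k = p_k g_k$ is the paper's $h_k g_{k,n}^{-1}$ step), apply \Cref{lem:AFP_contraction} to the resulting $(i',3-i)$-forms in distinct $J$-cosets, and contradict the conical limit property with a witness point carried into $B_{3-i}$; your case split $g_k \in J$ versus $g_k \in G_i \setminus J$ mirrors the paper's split between $(i,2)$-forms and $(i,1)$-forms, just organized contrapositively. (The one-line claim that $x \notin \Int(B_i)$ needs condition \eqref{item:AFP_technical} of Definition A plus \Cref{prop:AFP_basic_properties}\ref{item:AFP_prop_i} rather than disjointness of interiors alone, but it is also unnecessary since $\Int(B_i)$ is infinite and you may simply choose $w \ne x$.)
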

\begin{proof}
  Without loss of generality take $x \in \Lambda(G_1)$. Let $(h_k)$ be
  a conical limit sequence for $x$. This means that there are distinct
  points $a, b \in M$ such that $h_kx \to a$ and $h_kz \to b$ for any
  $z \in M \setminus \{x\}$.

  If there is some $h \in G$ so that $h_k \in hJ$ for infinitely many
  $k$, then we are done. So we may assume that, after taking a
  subsequence, each $h_k$ is an $(i,j)$-form for $i,j$ fixed, and each
  $h_k$ represents a different left $J$-coset in $G$. There are two
  cases to consider: either every $h_k$ is an $(i,1)$-form or every
  $h_k$ is an $(i,2)$-form.

  First suppose that $h_k$ is an $(i,2)$-form. By
  \Cref{lem:AFP_contraction}, after extraction the sets $(h_kB_2)$
  converge to a singleton. Since $x \in \Lambda(G_1) \subset B_2$, and
  $h_kx \to a$, we must have $h_kB_2 \to \{a\}$. Since $B_2$ is an
  infinite set by \Cref{prop:interactive_pair_infinite}, there is some
  point $z \in B_2 \setminus \{x\}$, which must satisfy
  $h_kz \to a$. But this is impossible if $(h_k)$ is a conical limit
  sequence for $x$.

  We conclude that each $h_k$ must be $(i,1)$-form. If $h_k \in G_1$
  for infinitely many $k$ then we are done, so assume that this is not
  the case. Then after taking a subsequence we have $|h_k| > 1$ for
  every $k$.  We write $h_k$ as an $(i,j)$-form of length $n \ge 2$:
  \[
    h_k = g_{k,1} \cdots g_{k,n}.
  \]
  Note that although $n$ can depend on $k$, we omit this from the
  notation. Since $h_k$ is an $(i,1)$-form, we have $g_{k,n} \in G_1$,
  and since $\Lambda(G_1)$ is $G_1$-invariant, $g_{k,n}x$ lies in
  $\Lambda(G_1) \subset B_2$. Then
  \[
    (h_kg_{k,n}^{-1}) = (g_{k,1} \cdots g_{k,n-1})
  \]
  is a sequence of $(i,2)$-forms. If the elements in this sequence lie
  in infinitely many different left $J$-cosets in $G$, then we extract
  a subsequence and apply \Cref{lem:AFP_contraction} to see that
  $(h_kg_{k,n}^{-1}B_2)$ again converges to a singleton. This
  singleton contains the limit of
  $(h_kg_{k,n}^{-1}g_{k,n}x) = (h_kx)$, so it is again equal to
  $\{a\}$. But then for any $z \in B_1 \setminus \{x\}$, we have
  $g_{k,n}z \in B_2$ and thus $h_kz = h_kg_{k,n}^{-1}g_{k,n}z \to a$,
  again giving a contradiction. We conclude that a subsequence of
  $(h_kg_{k,n}^{-1})$ lies in a single coset $hJ$ for $h \in G$, hence
  $h_k \in hJg_{k,n} \subset hG_1$.
\end{proof}

\begin{proposition}
  If $G$ is geometrically
  finite, then $G_1$ and $G_2$ are geometrically finite.
\end{proposition}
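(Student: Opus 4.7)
The plan is to show, for each $i \in \{1,2\}$ and each $x \in \Lambda(G_i)$, that $x$ is either a conical limit point or a bounded parabolic point for $G_i$. Since $G_i \leq G$, we have $\Lambda(G_i) \subseteq \Lambda(G)$, and since $G$ is geometrically finite by assumption, $x$ is conical or bounded parabolic for $G$. I will handle these two cases separately.

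For the conical case, the work has essentially been done by \Cref{lem:AFP_conical_limit_seq_bounded}. Let $(h_k)$ be a conical limit sequence in $G$ for $x$, with $h_k x \to a$ and $h_k z \to b$ for all $z \neq x$, where $a \neq b$. The lemma says that after subsequence, $h_k = h g_k$ with $h \in G$ fixed and $g_k \in G_i$. The $g_k$ are pairwise distinct since the $h_k$ are, and $g_k z = h^{-1} h_k z$ converges to $h^{-1} b$ for every $z \neq x$ while $g_k x \to h^{-1} a$, with $h^{-1} a \neq h^{-1} b$. Hence $(g_k)$ is a conical limit sequence for $x$ inside $G_i$.

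For the bounded parabolic case, the main observation I would exploit is that the AFP ping-pong forces $\mathrm{Stab}_G(x) = \mathrm{Stab}_{G_i}(x)$ whenever $x \in \Lambda(G_i)$. Indeed, any $p \in \mathrm{Stab}_G(x)$ satisfies $x = px \in p\Lambda(G_i) \cap \Lambda(G_i)$, so \Cref{prop:AFP_basic_properties}\ref{item:AFP_prop_iii} forces $p \in G_i$. Writing $P := \mathrm{Stab}_G(x) = \mathrm{Stab}_{G_i}(x)$, we already know $P$ contains a parabolic element (namely one whose unique fixed point is $x$), and that element now lies in $G_i$, so $x$ is a parabolic point for $G_i$. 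To verify boundedness, let $K \subset \Lambda(G) \setminus \{x\}$ be compact with $P(K) = \Lambda(G) \setminus \{x\}$, and set $K' := K \cap \Lambda(G_i)$, which is compact since $\Lambda(G_i)$ is closed. Any $y \in \Lambda(G_i) \setminus \{x\}$ can be written $y = p k$ with $p \in P \subset G_i$ and $k \in K$; then $k = p^{-1} y \in \Lambda(G_i) \cap K = K'$. Thus $P(K') = \Lambda(G_i) \setminus \{x\}$, so $x$ is a bounded parabolic point for $G_i$.

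I expect no significant obstacle here: the conical case is immediate from \Cref{lem:AFP_conical_limit_seq_bounded}, and the parabolic case hinges on the single clean identification $\mathrm{Stab}_G(x) = \mathrm{Stab}_{G_i}(x)$ coming from \Cref{prop:AFP_basic_properties}\ref{item:AFP_prop_iii}. The only point to handle carefully is noting that Tukia's exclusivity of conical and parabolic points means the two cases really are exhaustive and non-overlapping, and that $\Lambda(G_i)$ is closed in $M$ so that $K'$ above is genuinely compact.
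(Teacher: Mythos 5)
Your proposal is correct and follows essentially the same route as the paper: the conical case is handled via \Cref{lem:AFP_conical_limit_seq_bounded}, and the parabolic case via the identification $\mathrm{Stab}_G(x)\le G_i$ coming from \Cref{prop:AFP_basic_properties}\ref{item:AFP_prop_iii} together with intersecting the compact $K$ with the closed, $P$-invariant set $\Lambda(G_i)$. No gaps to report.
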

\begin{proof}
  Let $x \in \Lambda(G_1)$. We will show that $x$ is either a conical
  limit point or a bounded parabolic point for $G_1$. Since $G$ is
  geometrically finite, we know $x$ is either a conical limit point
  for $G$ or a bounded parabolic point for $G$.

  If $x$ is a conical limit point for $G$, then it has a conical limit
  sequence $(h_k)$ in $G$, i.e. a sequence such that $(h_kx,h_kz)$ lies
  in a compact subset of $(M \times M) \setminus \Delta$ for
  any $z \ne x$ in $M$. By \Cref{lem:AFP_conical_limit_seq_bounded},
  we know that, up to subsequence, $h_k = hg_k$ for $g_k \in G_1$ and
  $h$ fixed. Then $(g_k)$ is a conical limit sequence for $x$ in $G_1$
  and we are done.

  Otherwise, suppose $x$ is a bounded parabolic point for $G$. Let $P$
  be the stabilizer of $x$ in $G$. As we observed in the proof of
  \Cref{lem:AFP_bounded_parabolic_G}, part \ref{item:AFP_parabolics}
  of \Cref{prop:AFP_basic_properties} implies that $P$ is a subgroup
  of $G_1$.

  Since $x$ is a bounded parabolic point for $G$, again applying local
  compactness of $\Lambda(G) \setminus \{x\}$, there is a compact
  $K \subset \Lambda(G) \setminus \{x\}$ so that
  $P(K) = \Lambda(G) \setminus \{x\}$. Let
  $K_1 = K \cap \Lambda(G_1)$. Since $\Lambda(G_1)$ is closed, $K_1$
  is compact, and since $\Lambda(G_1)$ is $G_1$-invariant (hence
  $P$-invariant), we have
  \[
    P(K_1) = P(K \cap \Lambda(G_1)) = P(K) \cap \Lambda(G_1) =
    \Lambda(G_1) \setminus \{x\}.
  \]
  Thus $x$ is bounded parabolic for $G_1$ and we are done.
\end{proof}

\section{Combinatorial group theory: HNN extensions}
\label{sec:combinatorial_group_theory_hnn}

In this section we establish notation and give some basic facts about
\textit{HNN extensions}, in preparation for our second combination
theorem. Our main reference is again \cite{Maskit88}. In this section,
$M$ is again an arbitrary compact metrizable space, but as in
\Cref{sec:combinatorial_group_theory_afp}, these results are purely set-theoretic. We further assume throughout
this section that $G_0, G_1$ are subgroups of $\Homeo(M)$, where
$G_1 = \la f \ra$ is infinite cyclic, and $J_1, J_{-1}$ are subgroups
of $G_0$ with $fJ_{-1}f^{-1} = J_1$. We let $G$ denote
$\la G_0, G_1 \ra$, the subgroup of $\Homeo(M)$ generated by $G_0$ and
$G_1$. Note that conjugation by $f$ induces an abstract isomorphism
$J_{-1} \to J_1$, which we denote $f_*$. The indices are chosen to
make notation more convenient later.

As was the case for amalgamated free products, we can define HNN
extensions using equivalence classes of \emph{normal forms}.
\begin{definition}
  \label{defn:hnn_normal_form}
  A word $g = f^{\al_1}g_1 \cdots f^{\al_n}g_n$ in $f$ and elements
  $g_k$ of $G_0$ is a \emph{normal form} if:
  \begin{enumerate}[label=(\arabic*)]
  \item Each $g_k \in G_0$ is nontrivial for $k < n$;
  \item Each $\al_k$ is an integer, with $\al_k \ne 0$ whenever
    $k > 1$;
  \item\label{item:hnn_normal_technical_1} If $\al_k < 0$ and
    $g_{k-1} \in J_{-1} \setminus \{1\}$, then $\al_{k-1} < 0$;
  \item\label{item:hnn_normal_technical_2} If $\al_k > 0$ and
    $g_{k-1} \in J_1 \setminus \{1\}$, then $\al_{k-1} > 0$.
  \end{enumerate}
\end{definition}
Two words $g = f^{\al_1}g_1 \cdots f^{\al_n}g_n$ and
$h = f^{\beta_1}h_1\cdots f^{\beta_n}h_n$ are \textit{equivalent} if
we can obtain $g$ from $h$ by inserting finitely many conjugates and
inverses of words of the form $fjf^{-1}(f_*(j))^{-1}$ for
$j \in J_{-1}$ (words of this form are the identity in $G$). Every word of the form $f^{\al_1}g_1 \cdots f^{\al_n}g_n$
is equivalent either to a normal form or to the identity, which means
that every word in $f$ and elements of $G_0$ is equivalent to either a
normal form or the identity.

The \emph{length} of a normal form
$g = f^{\al_1}g_1 \cdots f^{\al_n}g_n$ is defined to be
$|g| = \sum_{i=1}^n|\al_i|$. Note that, in contrast to normal forms
for amalgamated free products, the length of a normal form
$f^{\al_1}g_1 \cdots f^{\al_n}g_n$ is \emph{not} necessarily
$n$. Length-0 normal forms correspond by definition to elements of
$G_0$.

If a normal form $g$ has positive length, $i \in \{0, \pm 1\}$ and
$j \in \{\pm 1\}$, then we say $g = f^{\al_1}g_1 \cdots f^{\al_n}g_n$
is an $(i,j)$-form if $\al_1$ is positive (resp. negative, zero) and
$i = 1$ (resp. $-1$, $0$), and $\al_n$ is positive (resp. negative)
and $j = 1$ (resp. $-1$). Our notation differs slightly from Maskit's, which will make some of our later arguments less cumbersome.

We set
\[
  G_0 *_f = \{\id\} \cup \{\text{equivalence classes of normal
    forms}\}.
\]
This set forms a group, with operation given by concatenation followed
by reduction to a normal form. It is called the \textit{HNN extension
  of $G_0$ by $f$}. Note that it is \emph{not} in general true that
the formal inverse of a normal form $g$ is also a normal form (see
\Cref{lem:HNN_inverse} below), but it is a formal product of normal
forms, which tells us that $G_0 *_f$ contains inverses.

We again have a natural surjective homomorphism
\begin{align*}
  \varphi : G_0 *_f &\to G\\
  f^{\al_1}g_1\cdots f^{\al_n}g_n &\mapsto f^{\al_1} \circ g_1 \circ
                                    \cdots \circ f^{\al_n} \circ g_n. 
\end{align*}
The map $\varphi$ may or may not be an isomorphism. As was the case
for amalagamated free products, if $\varphi$ is an isomorphism, we
will abuse notation and say that $G = G_0 *_f$. In this situation, we
implicitly identify elements of $G$ with equivalence classes of normal
forms in $G_0 *_f$.

As in \Cref{sec:combinatorial_group_theory_afp}, we want a
``ping-pong'' condition ensuring that $\varphi$ actually is an
isomorphism.
\begin{definition}\label{def:interactive_triple}
  Let $U_1,U_{-1} \subset M$ be nonempty disjoint sets, with
  $A = M \setminus (U_1 \cup U_{-1})$ nonempty. We call
  $(A,U_1,U_{-1})$ an \textit{interactive triple} for $G_0$ and $G_1$
  if the following hold:
    \begin{enumerate}
    \item The pair $(U_1,U_{-1})$ is precisely invariant under
      $(J_1, J_{-1})$ in $G_0$.
        \item For $i \in \{\pm 1\}$, and for every $g \in G_0$, $gU_i \subset A \cup U_i$.
        \item We have $f(A \cup U_1) \subset U_1$ and
          $f^{-1}(A \cup U_{-1}) \subset U_{-1}$.
    \end{enumerate}
    We say an interactive triple is \textit{proper} if the set
    $A \setminus(G_0(U_1 \cup U_{-1}))$ is nonempty.
\end{definition}

Note that these conditions imply that in particular $gU_i \subset A$
for $g \in G_0 \setminus J_i$. Similarly to \Cref{sec:combinatorial_group_theory_afp}, we can
observe:
\begin{proposition}
  \label{prop:interactive_triple_infinite}
  If $(A, U_1, U_{-1})$ is a proper interactive triple for $G_1$ and
  $G_2$, then $A$, $U$, and $U_{-1}$ are all infinite sets.
\end{proposition}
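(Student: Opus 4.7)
The plan is to exploit the nesting condition (3) of \Cref{def:interactive_triple} to generate infinite sets inside $U_1$ and $U_{-1}$ from the forward and backward $\langle f \rangle$-orbits of a point supplied by the properness hypothesis, and then push infinitely many of these back into $A$ via a suitable coset representative from $G_0$.

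First, by properness, fix $a \in A \setminus G_0(U_1 \cup U_{-1})$; in particular the entire orbit $G_0 a$ lies in $A$. Condition (3) of \Cref{def:interactive_triple} gives $f(A \cup U_1) \subset U_1$, so $fa \in U_1$, and iterating (using $f U_1 \subset U_1$ from the same condition) yields $f^n a \in U_1$ for all $n \geq 1$; symmetrically $f^{-n} a \in U_{-1}$ for all $n \geq 1$. These forward iterates are pairwise distinct: if $f^n a = f^m a$ for some $n > m \geq 1$, then injectivity of $f$ gives $f^{n-m} a = a$, placing the same point in both $A$ and $U_1$ and contradicting $A \cap U_1 = \es$. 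So both $U_1$ and $U_{-1}$ are infinite.

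For infinitude of $A$, pick $g \in G_0 \setminus J_1$. By precise invariance of $U_1$ under $J_1$ in $G_0$ we have $gU_1 \cap U_1 = \es$, while condition (2) of \Cref{def:interactive_triple} gives $gU_1 \subset A \cup U_1$; combining these yields $gU_1 \subset A$. Since $g$ is a bijection and $U_1$ is infinite, $gU_1$ is an infinite subset of $A$. If no such $g$ exists because $J_1 = G_0$, then one can instead pick $g \in G_0 \setminus J_{-1}$ and push $U_{-1}$ into $A$ by the symmetric argument.

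The main obstacle I anticipate is ensuring that such a coset representative actually exists, i.e., ruling out the fully degenerate case $J_1 = J_{-1} = G_0$. In that situation the precise invariance condition degenerates to $gU_i = U_i$ for all $g \in G_0$, so the above pushing argument does not directly supply new points of $A$; one would need to argue separately, most likely by using that $f$ has infinite order and normalizes $G_0$ to iterate the construction of new points in $A$. The other steps I expect to be routine unpackings of the definitions once the distinctness of $\langle f \rangle$-iterates has been established.
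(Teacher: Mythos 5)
Your proof is correct in the case the paper actually treats, and the obstacle you flag is resolved in the paper by hypothesis rather than by argument: the paper's own proof opens with ``Since $J_1$ is a proper subgroup of $G_0$, there is some element $g \in G_0 \setminus J_1$,'' so properness of $J_1$ in $G_0$ is an implicit standing assumption (parallel to the explicit assumption in \Cref{sec:combinatorial_group_theory_afp} that $J$ is proper in both factors). You are right to suspect that the fully degenerate case $J_1 = J_{-1} = G_0$ cannot be handled by a further argument using iterates of $f$: the statement is simply false there. For example, take $M = [-1,1]$, $G_0$ trivial, $f$ an increasing homeomorphism fixing only $\pm 1$ with $f(x) > x$ on $(-1,1)$, $U_1 = (0,1]$, $U_{-1} = [-1,0)$; all conditions of \Cref{def:interactive_triple}, including properness, hold, yet $A = \{0\}$ is finite. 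So the correct ``fix'' is to invoke the intended hypothesis, not to argue separately.

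As for the comparison: your treatment of $A$ is exactly the paper's (for $g \in G_0 \setminus J_1$, precise invariance plus condition (2) give $gU_1 \subset A$, and injectivity of $g$ transports infinitude), while your treatment of $U_{\pm 1}$ is genuinely different and slightly more economical. The paper uses properness of the triple to get the proper nested inclusion $fgU_1 \subsetneq U_1$, so that $fg$ maps $U_1$ injectively onto a proper subset of itself and $U_1$ is infinite; you instead observe that the iterates $f^n a \in U_1$ and $f^{-n} a \in U_{-1}$, $n \ge 1$, of a point $a \in A$ are pairwise distinct. Note that for this step any $a \in A$ suffices (and $A \ne \es$ is built into the definition of an interactive triple), so you do not need the properness point $a \in A_0$ at all; in particular your argument proves the conclusion for interactive triples that need not be proper, as long as $J_1$ or $J_{-1}$ is a proper subgroup of $G_0$.
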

\begin{proof}
  Since $J_1$ is a proper subgroup $G_0$, there is some element
  $g \in G_0 \setminus J_1$, and by precise invariance we have
  $gU_1 \subset A$. By properness of the triple, the inclusion is
  proper, which means that $fgU_1$ is a proper subset of $U_1$. We
  conclude that $U_1$ is infinite. Since $gU_1 \subset A$ and
  $f^{-1}gU_1 \subset U_{-1}$ the other two sets are infinite as well.
\end{proof}

We have a description of the way normal forms in $G_0 *_f$ act on
certain sets in the interactive triple, in analogy to the way normal
forms in an amalgamated free product act on sets in an interactive
pair.

\begin{lemma}[\cite{Maskit88} VII.D.11]\label{lem:interactive_triple_dynamics}
  Suppose there is an interactive triple $(A, U_1, U_{-1})$ for $G_0$
  and $G_1$, and set $A_0 = A \setminus G_0(U_1 \cup U_2)$. Let
  $g = f^{\al_1}g_1 \cdots f^{\al_n}g_n \in G_0 *_f$ be a normal form
  with $|g| > 0$. Then the following hold.
  \begin{enumerate}[label=\roman*)]
  \item if $g$ is an $(i,j)$-form for $i,j \in \{\pm 1\}$, then
    $\varphi(g)(A_0 \cup U_j) \subset U_i$.
  \item If $g$ is a $(0,j)$-form for $j \in \{\pm 1\}$, then there is
    $h \in G_0$ so $\varphi(g)(A_0 \cup U_j) \subset hU \subset A$,
    where $U = U_{-1}$ if $\al_2 < 0$ and $U = U_1$ if $\al_2 > 0$.
  \end{enumerate}
\end{lemma}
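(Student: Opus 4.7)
The plan is to apply $\varphi(g)$ one syllable at a time, from right to left, tracking which of the sets $A_0$, $U_1$, or $U_{-1}$ contains the intermediate image. Set $x_n = x$ and recursively $x_{k-1} = f^{\alpha_k}(g_k(x_k))$, so that $\varphi(g)(x) = x_0$. Two preliminary observations will be used repeatedly: iterating hypothesis (3) of Definition \ref{def:interactive_triple} yields $f^\alpha(A \cup U_\epsilon) \subset U_\epsilon$ for any nonzero integer $\alpha$, where $\epsilon = \mathrm{sgn}(\alpha)$; and $A_0$ is $G_0$-invariant, since $gx \in G_0(U_1 \cup U_{-1})$ with $g \in G_0$ would already force $x \in G_0(U_1 \cup U_{-1})$.

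The main claim, proved by downward induction on $k$, is that $x_k \in U_{\mathrm{sgn}(\alpha_{k+1})}$ for every $0 \le k \le n-1$. For the base case $k = n-1$: if $x \in A_0$, then $g_n x \in A_0 \subset A$; if $x \in U_j$, then precise invariance of $(U_1, U_{-1})$ under $(J_1, J_{-1})$ places $g_n x$ in $U_j$ (when $g_n \in J_j$) or in $A$ (otherwise). In either case, applying $f^{\alpha_n}$ sends the result into $U_{\mathrm{sgn}(\alpha_n)} = U_j$. For the inductive step at $1 \le k \le n-1$ with $\alpha_k \neq 0$, assume $x_k \in U_{\mathrm{sgn}(\alpha_{k+1})}$. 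If $g_k \notin J_{\mathrm{sgn}(\alpha_{k+1})}$, precise invariance forces $g_k x_k \in A$, so $x_{k-1} = f^{\alpha_k}(g_k x_k) \in U_{\mathrm{sgn}(\alpha_k)}$. If $g_k \in J_{\mathrm{sgn}(\alpha_{k+1})}$, then $g_k \neq 1$ because $k < n$, and normal form conditions (3)/(4) force $\mathrm{sgn}(\alpha_k) = \mathrm{sgn}(\alpha_{k+1})$; hence $g_k x_k \in U_{\mathrm{sgn}(\alpha_k)}$, and $f^{\alpha_k}$ preserves this set.

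Part (i) follows directly: since $\alpha_1 \neq 0$, the induction extends down to $k = 0$, giving $x_0 \in U_{\mathrm{sgn}(\alpha_1)} = U_i$. For part (ii), $\alpha_1 = 0$ forces $n \geq 2$ and $g_1 \neq 1$. The induction, applied for $k \geq 1$, yields $x_1 \in U_{\mathrm{sgn}(\alpha_2)}$, so $\varphi(g)(x) = g_1 x_1 \in g_1 U_{\mathrm{sgn}(\alpha_2)}$. Conditions (3)/(4) at index $2$, combined with $\alpha_1 = 0$, exclude $g_1 \in J_{\mathrm{sgn}(\alpha_2)} \setminus \{1\}$, and precise invariance then gives $g_1 U_{\mathrm{sgn}(\alpha_2)} \subset A$. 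Setting $h = g_1$ and $U = U_{\mathrm{sgn}(\alpha_2)}$ completes the proof.

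The only genuine subtlety is the interplay between normal form conditions (3)/(4) and precise invariance in the inductive step; these conditions are precisely what prevents a peripheral letter $g_k \in J_{\pm 1}$ from carrying the running image across to the wrong side when the signs of $\alpha_k$ and $\alpha_{k+1}$ might otherwise disagree. Once that interplay is isolated, the remainder is mechanical case analysis.
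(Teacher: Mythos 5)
Your proof is correct, and it takes essentially the approach the paper intends: the paper defers to Maskit (VII.D.11) and only illustrates the right-to-left ping-pong computation, which your downward induction carries out in full, correctly isolating the two points that matter (precise invariance of $(U_1,U_{-1})$ under $(J_1,J_{-1})$, and the Britton-type normal form conditions forcing $\mathrm{sgn}(\al_k)=\mathrm{sgn}(\al_{k+1})$ when $g_k$ is a nontrivial peripheral letter, resp. excluding $g_1\in J_{\mathrm{sgn}(\al_2)}$ when $\al_1=0$). I see no gaps.
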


The combinatorics in this case are slightly more complicated than for
amalgamated free products, but the basic idea is the same. To
illustrate the idea, consider a $(1,1)$-form of length 2, for example
$g = fg_1fg_2$. Then $g_2(A_0 \cup U_1) \subset A \cup U_1$ by
definition (in fact $A_0$ is $G_0$-invariant by our conditions). Then
we have
\begin{align*}
    g(A_0 \cup U_1) &\subset fg_1f(A \cup U_1)\\
    &\subset fg_1(U_1)\\
    &\subset f(A \cup U_1)\\
    &\subset U_1.
\end{align*}
Conditions \ref{item:hnn_normal_technical_1} and
\ref{item:hnn_normal_technical_2} in \Cref{defn:hnn_normal_form}
ensure that when we iteratively apply a normal form to $A_0 \cup U_i$,
we always can say where each set is mapped to next. We have chosen our
notation so that if $g$ is an $(i,j)$-form with $i \neq 0$, then
$gU_j \subset U_i$. This is consistent with the convention for
amalgamated free products.

The proposition below gives the combinatorial condition we need to
ensure that $\varphi$ is actually an isomorphism.

\begin{proposition}[Ping-pong for HNN extensions; \cite{Maskit88}
  VII.D.12]\label{prop:interactive_triple}
  Suppose $(A, U_1, U_{-1})$ is a proper interactive triple for $G_0$
  and $G_1$. Then $G = G_0 *_f$.
\end{proposition}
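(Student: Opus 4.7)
The plan is to mirror the proof of \Cref{prop:interactive_pair}: since $\varphi : G_0 *_f \to G$ is surjective by construction, it suffices to show its kernel is trivial. Every element of $G_0 *_f$ is either the identity or represented by an equivalence class of normal forms. A length-$0$ normal form lies in $G_0$ and maps to the identity only if it is already trivial, since $G_0$ embeds in $G < \Homeo(M)$. So I only need to show that $\varphi(g) \ne \id$ for any normal form $g$ with $|g| > 0$.

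By properness of the interactive triple, the set $A_0 = A \setminus G_0(U_1 \cup U_{-1})$ is nonempty; I fix a point $x \in A_0$ and show $\varphi(g)x \ne x$ in each of the two cases allowed by \Cref{lem:interactive_triple_dynamics}. If $g$ is an $(i,j)$-form with $i \in \{\pm 1\}$, then $\varphi(g)(A_0 \cup U_j) \subset U_i$, so $\varphi(g)x \in U_i$; since $A \cap U_i = \es$ and $x \in A_0 \subset A$, this forces $\varphi(g)x \ne x$. If instead $g$ is a $(0,j)$-form, then there exist $h \in G_0$ and $U \in \{U_1, U_{-1}\}$ with $\varphi(g)(A_0 \cup U_j) \subset hU$; hence $\varphi(g)x$ lies in $G_0(U_1 \cup U_{-1})$, while $x \in A_0$ is disjoint from $G_0(U_1 \cup U_{-1})$ by definition. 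Either way, $\varphi(g) \ne \id$.

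There is no real technical obstacle here: \Cref{lem:interactive_triple_dynamics} absorbs all the combinatorial ping-pong bookkeeping, and the role of $A_0$ is precisely to provide a point that cannot be in the image of $\varphi(g)$. The only mild subtlety is that $(0,j)$-forms must be handled separately from $(\pm 1, j)$-forms, since in the $(0,j)$ case $\varphi(g)(A_0 \cup U_j)$ is only known to land in a $G_0$-translate of $U_1$ or $U_{-1}$ rather than in $U_1$ or $U_{-1}$ itself; the stronger definition of $A_0$, which avoids the \emph{entire} $G_0$-orbit of $U_1 \cup U_{-1}$, is exactly what is needed to rule this case out.
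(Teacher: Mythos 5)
Your proof is correct and follows essentially the same route as the paper: reduce to showing $\varphi(g)\ne\id$ for normal forms of positive length, and use \Cref{lem:interactive_triple_dynamics} to see that $\varphi(g)$ moves any point of $A_0$, whose nonemptiness is exactly the properness hypothesis. You simply spell out the two cases ($(\pm1,j)$-forms versus $(0,j)$-forms) that the paper leaves implicit in its citation of that lemma.
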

\begin{proof}
  We just need to show that $\varphi : G_0 *_f \to G$ is
  injective. This map is already injective on $G_0$, so suppose
  $g \in G_0 *_f$ has $|g| > 0$. Then by
  \Cref{lem:interactive_triple_dynamics} we have $\varphi(g)x \neq x$
  for any $x \in A_0 = A \setminus G_0(U_1 \cup U_{-1})$, showing that
  $\varphi(g)$ is not the identity.
\end{proof}

\subsection{More combinatorics of normal forms}

Normal forms in an HNN extension are slightly more complicated than
normal forms for an amalgamated free product, so here we collect some
results which will later make working with these normal forms a little
easier.

\subsubsection{Formal inverses}
In several situations later in the paper, we will want to work with
formal inverses of $(i,j)$-forms. These inverses may not themselves be
normal forms, but it is still useful to work with them directly,
rather than with an equivalent normal form. To that end, we prove:
\begin{lemma}\label{lem:HNN_inverse}
  Let $g = f^{\al_1}g_1\cdots f^{\al_n}g_n$ be an $(i,j)$-form for
  $i,j \in \{\pm 1\}$ (so in particular, $g$ has positive length). Then
  the formal inverse
  \[
    g^{-1} = g_n^{-1}f^{-\al_n} \cdots g_1^{-1}f^{-\al_1}
  \]
  is a $(0,-i)$-form if $g_n \in G_0 \setminus (J_1 \cup J_{-1})$. The
  word $f^{-\al_n}\cdots g_1^{-1}f^{-\al_1}$ is a $(-j,-i)$-form,
  regardless of $g_n$.
\end{lemma}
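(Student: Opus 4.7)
The plan is to verify both claims directly from \Cref{defn:hnn_normal_form} by re-parenthesizing the formal inverse into the shape $f^{\beta_1}h_1 \cdots f^{\beta_m}h_m$ required of a normal form, and then checking the four conditions of the definition.

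I would begin with the second claim, since it is the cleaner of the two. Reading the word
\[
  w = f^{-\al_n} g_{n-1}^{-1} f^{-\al_{n-1}} \cdots g_1^{-1} f^{-\al_1}
\]
as a normal form of length $n$ with a trailing $h_n = 1$, the blocks are $\beta_k = -\al_{n-k+1}$ and $h_k = g_{n-k}^{-1}$ for $k = 1, \ldots, n$ (with $g_0 := 1$). Nontriviality of $h_1, \ldots, h_{n-1}$ is inherited directly from the middle letters of $g$, and $\beta_k \neq 0$ for $k > 1$ holds because $g$ is an $(i,j)$-form with $i,j \in \{\pm 1\}$, forcing every $\al_\ell$ to be nonzero. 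Conditions (3) and (4) for $w$, after re-indexing via $\ell = n-k+1$, become exactly the contrapositives of conditions (3) and (4) for $g$ at position $\ell+1$; the one technical point is that these contrapositives only yield $\geq 0$ conclusions, but the strict nonvanishing of the $\al_\ell$'s upgrades them to the strict sign statements required. Reading off $\beta_1 = -\al_n$ (of sign $-j$) and $\beta_n = -\al_1$ (of sign $-i$) identifies $w$ as a $(-j, -i)$-form.

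For the first claim, I would then write $g^{-1} = g_n^{-1} \cdot w$ and adjoin a new leading block $(\beta_1, h_1) = (0, g_n^{-1})$ to $w$, producing a candidate normal form of length $n+1$ for $g^{-1}$. All block-indexed conditions at $k \geq 3$ are already handled by the analysis of $w$, so the genuinely new verifications concern only conditions (3) and (4) at the interface $k = 2$. Condition (3) at $k=2$ would demand that if $\beta_2 = -\al_n < 0$ and $h_1 = g_n^{-1} \in J_{-1} \setminus \{1\}$, then $\beta_1 = 0$ is negative, which is impossible; this forces $g_n \notin J_{-1}$. Condition (4) symmetrically forces $g_n \notin J_1$, and together these are exactly the hypothesis $g_n \in G_0 \setminus (J_1 \cup J_{-1})$. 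With $\beta_1 = 0$ and final exponent $\beta_{n+1} = -\al_1$ of sign $-i$, the inverse $g^{-1}$ is then a $(0, -i)$-form.

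The main obstacle will just be the index bookkeeping: keeping straight how block indices in the reversed word correspond to the indices in $g$, and recognizing that conditions (3) and (4) on the reversed word are precisely the contrapositives of the same conditions on $g$ (upgraded to strict signs using the nonvanishing of the $\al_\ell$'s). Once that dictionary is in hand, the only substantive point is the interface at $k=2$ in $g^{-1}$, which is exactly where the hypothesis $g_n \notin J_1 \cup J_{-1}$ becomes essential, because the new leading block has $\beta_1 = 0$ rather than a strictly signed exponent.
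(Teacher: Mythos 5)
Your proposal is correct and takes essentially the same route as the paper: both write the formal inverse as a candidate normal form, check that conditions (3) and (4) of \Cref{defn:hnn_normal_form} at interior positions are exactly the contrapositives of the corresponding conditions on $g$ (upgraded to strict inequalities because all the $\al_\ell$ are nonzero), and observe that at the interface block with exponent $0$ the conditions are vacuous precisely because $g_n \notin J_1 \cup J_{-1}$, with the sign bookkeeping $\beta_1 = -\al_n$, $\beta_{n+1} = -\al_1$ giving the $(0,-i)$- and $(-j,-i)$-form identifications. If anything, your re-indexing dictionary is stated more carefully than the paper's own (whose ``equivalently'' reformulation contains a minor sign/reference slip), but the substance of the two arguments is identical.
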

\begin{proof}
  We set $\beta_0 = 0$ and $\beta_k = -\alpha_{n + 1 -k}$ for
  $1 \le k < n$, so that $g^{-1}$ is the word
  \[
    f^{\beta_0}g_n^{-1}f^{\beta_1}g_{n-1}^{-1} \cdots f^{\beta_{n-1}}.
  \]
  We need to verify that this word is a normal form. The only
  conditions in \Cref{defn:hnn_normal_form} which could possibly fail
  are the technical requirements \ref{item:hnn_normal_technical_1} and
  \ref{item:hnn_normal_technical_2}.

  For \ref{item:hnn_normal_technical_1}, we must show that, for
  $k \ge 0$, if $\beta_{k+1} < 0$ and
  $g_{n- k}^{-1} \in J_{-1} \setminus \{\id\}$, then $\beta_k <
  0$. Equivalently, we need to show that if $\alpha_{n+1-k} > 0$ and
  $g_{n - k} \in J_{-1}$, then $\alpha_{n - k} < 0$. When $k \ge 1$
  this follows from condition \ref{item:hnn_normal_technical_2} on our
  original normal form $g$, and when $k = 0$ the condition is vacuous
  because we assume $g_n \notin J_{-1}$. The argument for condition
  \ref{item:hnn_normal_technical_2} is nearly identical.

  The same reasoning implies that
  $f^{\beta_1}g_{n-1}^{-1} \cdots f^{\beta_{n-1}}$ is a normal form,
  with $\beta_1 = -\alpha_n$ and $\beta_{n-1} = -\alpha_1$.
\end{proof}

\subsubsection{Ping-pong for normal forms}

When we have an interactive triple $(A, U_1, U_{-1})$ for an HNN
extension $G$, \Cref{lem:interactive_triple_dynamics} above gives us a
way to locate sets of the form $g U_i$ when $g$ is a normal form in
$G$. However, the statement of the lemma is often a little unwieldy to
work with directly, so to simplify some arguments later on, we
introduce some additional terminology.
\begin{definition}
  Let $G = G_0 *_f$ be the HNN extension of $G_0$ along
  $J_1 = f^{-1}J_{-1}f$. We say
  that a normal form
  \[
    g = f^{\alpha_1}g_1 \cdots f^{\alpha_n}g_n
  \]
  is an \textit{HNN ping-pong form of type 1} (or just a \textit{type-1
    form}) if either $g_n \in G_0 \setminus J_1$, or $\alpha_n > 0$. 
    Similarly a normal form is an \textit{HNN ping-pong form of type
    $-$1} if either $\alpha_n < 0$ or $g_n \in G_0 \setminus J_{-1}$. 
\end{definition}
Note that if $|g| = 0$, then $g$ has type $i$ if and only if $g \in G_0 \setminus J_i$. An $(i,j)$-form is always type $j$, and it may or may not also be type
$-j$. If $(A, U_1, U_{-1})$ is an interactive triple for $G_0$,
$\la f \ra$, then a normal form $g$ has type $k$ when the dynamics of
the triple allow us to locate the set $gU_k$. That is, we have the
following immediate consequence of
\Cref{lem:interactive_triple_dynamics}:
\begin{lemma}
  \label{lem:HNN_pingpong_type_mapping}
  Let $(A, U_1, U_{-1})$ be an interactive triple for $G_0$ and
  $\la f \ra$. If $g$ is an $(i,j)$-form of type $k$, and $i \ne 0$,
  then $gU_k \subset U_i$.
\end{lemma}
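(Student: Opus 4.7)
The plan is to induct on the number of syllables $n$ in a normal form $g = f^{\alpha_1}g_1 \cdots f^{\alpha_n}g_n$. By definition, ``type $k$'' splits into two subcases: either $\alpha_n$ has sign $k$, in which case $k = j$; or $g_n \in G_0 \setminus J_k$, which covers $k = -j$ as well. The first subcase ($k = j$) is immediate from part (i) of \Cref{lem:interactive_triple_dynamics}, so the whole argument is really about $k = -j$, where the hypothesis forces $g_n \notin J_{-j}$.

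In the case $k = -j$, I would first show the key set-theoretic inclusion $f^{\alpha_n}g_nU_{-j} \subset U_j$. Since $U_{-j}$ is precisely invariant under $J_{-j}$ in $G_0$ and $g_n \notin J_{-j}$, we have $g_nU_{-j} \cap U_{-j} = \es$; combined with the interactive-triple property $g_nU_{-j} \subset A \cup U_{-j}$, this gives $g_nU_{-j} \subset A$. Iterating $f(A \cup U_1) \subset U_1$ (if $j = 1$) or $f^{-1}(A \cup U_{-1}) \subset U_{-1}$ (if $j = -1$) then yields $f^{\alpha_n}A \subset U_j$, hence $f^{\alpha_n}g_nU_{-j} \subset U_j$. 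When $n = 1$ this already finishes the proof, since $i = j$ and $gU_{-j} = f^{\alpha_1}g_1 U_{-j} \subset U_j = U_i$.

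For $n \geq 2$, let $g' = f^{\alpha_1}g_1 \cdots f^{\alpha_{n-1}}g_{n-1}$, so that $gU_{-j} \subset g'U_j$ and it suffices to prove $g'U_j \subset U_i$. The form $g'$ is an $(i, \textup{sign}(\alpha_{n-1}))$-form with fewer syllables, and the essential point, which I expect to be the main subtlety, is that $g'$ is always of type $j$ even when $\alpha_{n-1}$ and $\alpha_n$ have opposite signs. If $\alpha_{n-1}$ has sign $j$ this is automatic; if $\alpha_{n-1}$ has sign $-j$, then since $\alpha_n$ has sign $j$, the technical normal form conditions \ref{item:hnn_normal_technical_1} and \ref{item:hnn_normal_technical_2} (read contrapositively, noting that $g_{n-1}$ is automatically nontrivial) force $g_{n-1} \notin J_j$, so $g'$ is type $j$ by definition. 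The inductive hypothesis then gives $g'U_j \subset U_i$, closing the induction.
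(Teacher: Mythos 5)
Your proof is correct. The paper itself offers no written argument for this lemma: it is asserted to be an ``immediate consequence'' of \Cref{lem:interactive_triple_dynamics}, which is in turn quoted from Maskit with only an illustrative computation. Your write-up supplies exactly the content hiding behind that assertion, and does so correctly: the case $k=j$ is indeed just part (i) of \Cref{lem:interactive_triple_dynamics}, and the only real work is the case $k=-j$, where type $k$ forces $g_n \in G_0 \setminus J_{-j}$, precise invariance plus condition (2) of the interactive triple give $g_nU_{-j} \subset A$, and iterating $f^{\pm 1}(A \cup U_{\pm 1}) \subset U_{\pm 1}$ gives $f^{\alpha_n}g_nU_{-j} \subset U_j$. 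Your inductive step is also sound where it matters most: the prefix $g' = f^{\alpha_1}g_1\cdots f^{\alpha_{n-1}}g_{n-1}$ is a normal form of positive length (positive because $i \ne 0$ forces $\alpha_1 \ne 0$, a point worth stating explicitly), and when $\alpha_{n-1}$ has sign $-j$ the technical conditions \ref{item:hnn_normal_technical_1}--\ref{item:hnn_normal_technical_2} of \Cref{defn:hnn_normal_form}, together with the nontriviality of $g_{n-1}$ guaranteed by condition (1), force $g_{n-1} \notin J_j$, so $g'$ is of type $j$ and the induction closes. The one structural difference from the paper is that your induction peels off a whole syllable $f^{\alpha_n}g_n$ at each step (inducting on the number of syllables), whereas the closely analogous argument the paper does write down, the proof of \Cref{lem:HNN_prefixes}, peels off a single power of $f$ at a time (inducting on $|g| = \sum|\alpha_m|$); both schemes use the same mechanism (conditions (3)--(4) plus nontriviality of the penultimate $G_0$-letter) to see that the prefix retains the needed type, so your route is a legitimate, slightly coarser-grained variant of the paper's technique rather than a departure from it.
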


Frequently we will want to apply inductive arguments to normal forms,
which means that we want some control over the ping-pong behavior of a
prefix of an $(i,j)$-form. The lemma below gives one way to do this. Here (and elsewhere), a ``prefix'' $h'$ of a normal form
$h$ is a normal form which appears as an initial subword of $h$. That
is, if $h$ is a normal form $f^{\alpha_1}g_1 \cdots f^{\alpha_n}g_n$,
then a prefix $h'$ is a normal form
$f^{\alpha_1}g_1 \cdots f^{\alpha_k}g_k$ for some $1 \le k \le n$.
\begin{lemma}\label{lem:HNN_prefixes}
  Let $(A, U_1, U_{-1})$ be an interactive triple for $G_0$ and
  $\la f \ra$, and let $g$ be a type-$i$ normal form of length
  $m \ge 1$. Then for some $j \in \{-1, 1\}$, there is a
  length-$(m-1)$ prefix $g'$ of $g$ and $g_0 \in G_0$ so that
  $g = g'f^jg_0$ and $f^jg_0U_i \subset U_j$. If $|g'| \ge 1$, then
  $g'$ is type $j$.
\end{lemma}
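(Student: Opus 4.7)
The approach is to strip off the last ``letter'' of the normal form of $g$. Write $g = f^{\alpha_1}g_1 \cdots f^{\alpha_n}g_n$ in normal form; since $|g| = m \geq 1$, we have $\alpha_n \ne 0$, so we may let $j \in \{-1, 1\}$ be the sign of $\alpha_n$. Set $g_0 = g_n$ and split into two cases based on $|\alpha_n|$. If $|\alpha_n| \geq 2$, define $g'$ to be the word obtained from $g$ by replacing the final segment $f^{\alpha_n}g_n$ with $f^{\alpha_n - j}\cdot \id$; this remains a normal form, because $\alpha_n - j$ has the same sign as $\alpha_n$, so the technical conditions \ref{item:hnn_normal_technical_1}--\ref{item:hnn_normal_technical_2} at the last segment are preserved, and all earlier conditions are inherited from $g$. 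If $|\alpha_n| = 1$, simply delete the final segment: set $g' = f^{\alpha_1}g_1 \cdots f^{\alpha_{n-1}}g_{n-1}$ when $n \geq 2$, or $g' = \id$ when $n = 1$. In every case, $g = g' f^j g_0$ and $|g'| = m - 1$ by inspection.

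Next, to verify $f^j g_0 U_i \subset U_j$, I would split into two subcases. If $i = j$, then condition (2) of \Cref{def:interactive_triple} gives $g_0 U_i \subset A \cup U_i = A \cup U_j$, and condition (3) yields $f^j(A \cup U_j) \subset U_j$. If $i = -j$, then $\alpha_n$ has sign $j \ne i$, so the ``sign'' clause in the definition of type $i$ must fail; since $g$ is type $i$ by hypothesis, this forces $g_n \in G_0 \setminus J_i$. Combining $g_0 U_i \subset A \cup U_i$ with the precise invariance of $U_i$ under $J_i$ in $G_0$ gives $g_0 U_i \subset A$, and $f^j A \subset U_j$ by condition (3) once more.

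The last check is that $g'$ is type $j$ whenever $|g'| \geq 1$. In the first case, the final $G_0$-entry of $g'$ is the identity, which lies in $J_1 \cap J_{-1}$, so the type of $g'$ is determined entirely by the sign of its final $f$-exponent; since $\alpha_n - j$ has sign $j$, $g'$ is type $j$. In the second case with $n \geq 2$, the final $G_0$-entry of $g'$ is $g_{n-1}$. Either $g_{n-1} \notin J_j$, and we are done, or $g_{n-1} \in J_j \setminus \{1\}$ (using nontriviality of $g_{n-1}$, since $n-1 < n$ in the normal form for $g$), in which case the technical condition \ref{item:hnn_normal_technical_1} or \ref{item:hnn_normal_technical_2} (applied with $k = n$ to $g$, using that $\alpha_n$ has sign $j$) forces $\alpha_{n-1}$ to have sign $j$, so again $g'$ is type $j$.

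The main subtlety is this last step: verifying in the second case that $g'$ is type $j$ when $g_{n-1}$ lies nontrivially in $J_j$. This is precisely where the technical conditions \ref{item:hnn_normal_technical_1}--\ref{item:hnn_normal_technical_2} in the definition of a normal form are essential; they are exactly the constraints that guarantee stripping off the last letter of a type-$i$ normal form yields a prefix whose type is compatible with the stripped letter. Everything else in the argument is routine bookkeeping with the axioms of an interactive triple.
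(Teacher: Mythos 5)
Your proof is correct and follows essentially the same route as the paper's: you peel $f^{j}g_0$ off the end of the normal form (reducing the final exponent by one when $|\alpha_n|\ge 2$, deleting the final segment when $|\alpha_n|=1$), verify $f^{j}g_0U_i\subset U_j$ by splitting on $i=\pm j$ using the type hypothesis together with precise invariance, and use conditions \ref{item:hnn_normal_technical_1}--\ref{item:hnn_normal_technical_2} of \Cref{defn:hnn_normal_form} to see that the prefix has type $j$. The differences are cosmetic (the paper treats both signs at once by a ``without loss of generality'' and writes the two cases as the single formula $g'=gg_n^{-1}f^{-j}$), and your handling of the borderline cases, such as $\alpha_1=0$, is if anything slightly more explicit.
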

\begin{proof}
  When $m = 1$ we can just take $g' = \id$, so assume $m > 1$. We let
  $g = f^{\al_1}g_1 \cdots f^{\al_n}g_n$ be a type-$i$ normal
  form. Without loss of generality assume $\alpha_n > 0$, and consider
  the normal form
  \[
    g' = gg_n^{-1}f^{-1} = f^{\al_1}g_1 \cdots f^{\al_{n-1}}g_{n-1}f^{\al_n -
      1}.
  \]
  This is a normal form with positive length $m-1$. It is also type 1:
  if $\al_n > 1$ or $g_{n-1} \in G \setminus J_1$, then this follows
  directly from the definition; if $\al_n = 1$ and $g_{n-1} \in J_1$,
  then, since $f^{\al_1}g_1 \cdots f^{\al_{n-1}}g_{n-1}f^{\al_n}g_n$
  is a normal form, we must have $\al_{n-1} > 0$, which again means
  the above form has type 1.

  We need to verify that $fg_nU_i \subset U_1$, which will show the lemma holds with $g_0 = g_n$. If $i = 1$, then
  $fg_nU_i = fg_nU_1 \subset U_1$. On the other hand, if $i = -1$,
  then since $\alpha_n > 0$ and $g$ has type $i$, we must have
  $g_n \in G \setminus J_{-1}$. Then
  $fg_nU_i = fg_nU_{-1} \subset U_{1}$.
\end{proof}

We will also sometimes want to characterize elements in $G$ via their
action on sets in an interactive triple $(A, U_1, U_{-1})$. This can
again be expressed using the ping-pong type of normal forms for these
elements. The lemma below is a precise statement of this form, and
generalizes the fact that in $G_0$, $(U_1, U_{-1})$ is precisely
invariant under $(J_1, J_{-1})$:
\begin{lemma}
  \label{lem:HNN_improved_precise_invariance}
  Let $(A, U_1, U_{-1})$ be an interactive triple for $G_0$ and
  $\la f \ra$. Let $g$ be a ping-pong form of type $i$ and let $h$ be
  a ping-pong form of type $k$.

  Suppose that $|g| = |h|$. Then either $i = k$, $gU_i = hU_i$, and
  $g = hj$ for $j \in J_i$, or $gU_i \cap hU_k = \es$.
\end{lemma}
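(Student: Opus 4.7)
The approach is induction on $m := |g| = |h|$, peeling off the final $f^{\pm 1}$ of each normal form using \Cref{lem:HNN_prefixes}, and ultimately reducing to the precise invariance of $(U_1, U_{-1})$ under $(J_1, J_{-1})$ inside $G_0$.

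For the base case $m = 0$, both $g, h$ lie in $G_0$ with $g \notin J_i$ and $h \notin J_k$. The assumption $gU_i \cap hU_k \neq \es$ translates to $h^{-1}gU_i \cap U_k \neq \es$. Precise invariance of the tuple inside $G_0$ then forces $i = k$ (since no $G_0$-translate of $U_i$ meets $U_k$ when $i \ne k$) and further forces $h^{-1}g \in J_i$. Setting $j = h^{-1}g$ gives $g = hj$ and $gU_i = hU_i$. For $m = 1$, write $g = f^{a}g_0$, $h = f^{b}h_0$ with $a, b \in \{\pm 1\}$; then \Cref{lem:HNN_pingpong_type_mapping} gives $gU_i \subset U_a$ and $hU_k \subset U_b$. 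If $a \ne b$ the sets are automatically disjoint; if $a = b$, cancelling $f^{a}$ reduces $gU_i \cap hU_k \neq \es$ to $g_0 U_i \cap h_0 U_k \neq \es$, which is the $m = 0$ case.

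For the inductive step $m \geq 2$, apply \Cref{lem:HNN_prefixes} to write $g = g' f^{a}g_0$ and $h = h' f^{b}h_0$ with $|g'| = |h'| = m-1 \geq 1$, where $g'$ has type $a$, $h'$ has type $b$, and $f^{a}g_0 U_i \subset U_a$, $f^{b}h_0 U_k \subset U_b$. Consequently $gU_i \subset g'(U_a)$ and $hU_k \subset h'(U_b)$, so $gU_i \cap hU_k \neq \es$ forces $g' U_a \cap h' U_b \neq \es$. By the inductive hypothesis applied to the prefixes, either the desired disjointness already holds, or $a = b$ and $g' = h' j'$ for some $j' \in J_a$. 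In the latter case, the defining relation $fJ_{-1}f^{-1} = J_1$ allows us to set $j'' := f^{-a}j' f^{a} \in J_{-a}$, so that $j' f^{a} = f^{a} j''$. Cancelling $h'$ and then $f^{a}$ from $h' j' f^{a}g_0 U_i \cap h' f^{a}h_0 U_k \neq \es$ yields $j'' g_0 U_i \cap h_0 U_k \neq \es$, and the $m = 0$ argument applied to $j'' g_0, h_0 \in G_0$ forces $i = k$ together with $j := h_0^{-1} j'' g_0 \in J_i$. Substituting back, $g = g' f^{a}g_0 = h' j' f^{a} g_0 = h' f^{a} j'' g_0 = h' f^{a} h_0 j = hj$, and so $gU_i = hU_i$.

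The main technical point is the interplay between $J_a$ and $J_{-a}$ under conjugation by $f^{a}$: the inductive step produces an element $j' \in J_a$ sitting \emph{between} the prefixes $h'$ and $f^{a}$, and to obtain the required $j \in J_i$ on the \emph{right} end of the normal form for $g$, one must use the isomorphism $f_* \colon J_{-1} \to J_1$ to move $j'$ past the final letter $f^{a}$. Aside from this conjugation bookkeeping, everything is a routine application of precise invariance and the interactive-triple axioms.
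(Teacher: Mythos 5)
Your overall route is the same as the paper's (induction on length via \Cref{lem:HNN_prefixes}, reduction to precise invariance of $(U_1,U_{-1})$ under $(J_1,J_{-1})$ in $G_0$, and the conjugation $j'f^{a}=f^{a}j''$ to move the $J$-element past the last $f$-letter), and your inductive step for $m\ge 2$ is correct. The gap is in your base case $m=1$: a length-$1$ normal form need not have the shape $f^{\pm 1}g_0$. Since $\alpha_1=0$ is allowed in \Cref{defn:hnn_normal_form}, a length-$1$ form can be a $(0,\pm 1)$-form $g_1f^{\pm 1}g_2$ with $g_1\in G_0$ nontrivial, and for such a form \Cref{lem:HNN_pingpong_type_mapping} does not give $gU_i\subset U_a$ (one only gets $gU_i\subset g_1U_a$, which may sit inside $A$). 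These forms matter twice: they are within the scope of the lemma at $m=1$, and they arise as the length-$1$ prefixes produced by \Cref{lem:HNN_prefixes} when you run your inductive step at $m=2$, so as written the induction is not grounded.

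The repair is exactly the paper's device: fold $m=1$ into the general step. Peeling off $f^{a}g_0$ from a length-$1$ form leaves a length-$0$ prefix lying in $G_0$ (possibly trivial or in $J$, hence not a ``type'' form, so the $m=0$ case of the lemma does not literally apply), and one replaces the inductive hypothesis by the direct precise-invariance computation --- which is precisely your $m=0$ argument, since that computation never uses the hypotheses $g\notin J_i$, $h\notin J_k$. With that modification (the paper's ``by induction, or by precise invariance if the prefixes lie in $G_0$''), your conjugation bookkeeping goes through verbatim and the proof is complete.
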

\begin{proof}
  We proceed by induction on the length $m$ of $g$ and $h$; the main
  idea is to use the previous lemma to find prefixes of $g$ and $h$
  where we can assume that the statement holds, and then apply precise
  invariance of $(U_1, U_{-1})$ under $(J_1, J_{-1})$ for the
  inductive step.

  First observe that, if $g, h$ are elements of $G_0$, and if
  $gU_i \cap hU_k \ne \es$, then the fact that $(U_1, U_{-1})$ is
  precisely invariant under $(J_1, J_{-1})$ implies $i = k$ and
  $g = hj$ for $j \in J_i$. Now, let $m \ge 1$, let $g$, $h$ be normal
  forms with $|g| = |h| = m$, and suppose that $g$ has type $i$, $h$
  has type $k$, and $gU_i \cap hU_k \ne \es$.

  By \Cref{lem:HNN_prefixes} we can find prefixes $g', h'$ of type
  $i'$, $k'$ respectively, with $|g'| = |h'| = m-1$ and
  $g = g'f^{i'}g_0$, $h = h'f^{k'}h_0$ for $g_0, h_0 \in G_0$
  satisfying $f^{i'}g_0U_i \subset U_{i'}$ and
  $f^{k'}h_0U_k \subset U_{k'}$. Then we know that both
  $gU_i \subset g'U_{i'}$ and $hU_k \subset h'U_{k'}$ hold, which
  means that $h'U_{k'} \cap g'U_{i'} \ne \es$. By induction (or by
  precise invariance if $n = 1$), we know that $i' = k'$ and
  $h' = g'j'$ for $j' \in J_{i'}$. Without loss of generality take
  $i' = k' = 1$, so $j' \in J_1$.

  Then since $gU_i = g'fg_0U_i$ has nonempty intersection with
  $hU_k = h'fh_0U_k = g'j'fh_0U_k$, the intersection
  $fg_0U_i \cap j'fh_0U_k$ is also nonempty. Since $f$ conjugates
  $J_1$ to $J_{-1}$, for some $j'' \in J_{-1}$ we have
  $j'fh_0 = fj''h_0$. Then $fg_0U_i \cap fj''h_0U_k$ is nonempty as
  well, hence $g_0U_i \cap j''h_0U_k \ne \es$. Then by precise
  invariance we know $i = k$ and $g_0 = j''h_0j$ for $j \in J_i$.

  Finally, we see that
  \[
    g = g'fg_0 = g'fj''h_0j = g'j'fh_0j = h'fh_0j = hj,
  \]
  and we are done.
\end{proof}

\section{Theorem B}
\label{sec:theorem_b}

In this section we prove Theorem B. The proof is very similar in
spirit and structure to the proof of Theorem A, but the details are
different. Where possible, we have tried to imitate the structure of
\Cref{sec:theorem_a}, and have indicated the analogies between the
proofs.

We start (as in \Cref{sec:theorem_a}) by setting up the general
ping-pong framework.

\begin{definition B}[HNN Ping-Pong Position]\label{def:HNN_position}
  Let $G_0$ be a discrete convergence group acting on a compact
  metrizable space $M$, and suppose that $J_1, J_{-1} < G_0$ are both
  geometrically finite. Let $G_1 = \la f \ra$ be an infinite discrete
  convergence group also acting on $M$, where $fJ_{-1}f^{-1} = J_1$ in
  $\Homeo(M)$. We will say $G_0$ is in \textit{HNN ping-pong position}
  (with respect to $f, J_1$ and $J_{-1}$) if there exists closed sets
  $B_1, B_{-1} \subset M$ with nonempty disjoint interiors satisfying
  the following:
    \begin{enumerate}[label=(\arabic*)]
        \item \label{item:HNN_pingpong} $(B_1, B_{-1})$ is precisely
          invariant under $(J_1,J_{-1})$ in $G_0$ (recall
          \Cref{def:precisely_invariant}). 
        \item \label{item:HNN_f} If $A = M \setminus (B_1 \cup
          B_{-1})$, then $f(A \cup B_1) = \Int(B_1)$. 
        \item \label{item:HNN_technical} For $i \in \{\pm 1\}$, we
          have $\Lambda(G_0) \cap B_i = \Lambda(J_i)$.
        \item \label{item:HNN_proper} The set
          $A_0 = M \setminus G_0(B_1 \cup B_{-1})$ is nonempty.
    \end{enumerate}
\end{definition B}

\begin{remark}\label{rmk:HNN_disjointness}
    Note that our precise invariance assumption forces $B_1 \cap B_{-1} = \es$.
\end{remark}
We restate Theorem B here for reference.

\theoremB*

\begin{remark}
  As was the case for amalgamated free products, when $M = \del \H^3_\R$,
  this theorem is not strong enough to recover Maskit's full result,
  since we ask for stronger hypotheses on our ping-pong
  configuration. Specifically, we do not allow $B_1$ and $B_{-1}$ to
  intersect, and consequently $f$ cannot be parabolic. This condition
  ensures that our subgroups are fully quasi-convex, and allows us to
  apply \Cref{prop:quasiconvex_subgroups_nest}.
\end{remark}

Before proving the first three parts of Theorem B, we give the
following slightly stronger version of
\Cref{lem:HNN_pingpong_type_mapping}, which will be useful throughout
this section.
\begin{lemma}
  \label{lem:HNN_pingpong_strong_nesting}
  Suppose that $g$ is an $(i,j)$-form of type $k$. If $i \ne 0$, then
  $gB_k \subsetneq \Int(B_i)$, and if $i = 0$, then
  $gB_k \subsetneq A$.
\end{lemma}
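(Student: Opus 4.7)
I will argue by induction on $m = |g|$, paralleling the structure of \Cref{lem:HNN_pingpong_type_mapping} but upgrading each open-set inclusion to a strict nesting of closed sets. The key tool is \Cref{lem:HNN_prefixes}, which decomposes $g$ as a prefix $g'$ of length $m-1$ followed by a single $f^{\pm 1}$ step; I will handle that final step directly using the closed-set ping-pong identity $f(A \cup B_1) = \Int(B_1)$ from Definition B (together with its $f^{-1}$ counterpart $f^{-1}(A \cup B_{-1}) = \Int(B_{-1})$), and then apply the inductive hypothesis to $g'$ to propagate the strict nesting outward.

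For the inductive step $m \ge 2$: \Cref{lem:HNN_prefixes} writes $g = g' f^{j'} g_0$ with $|g'| = m - 1 \ge 1$ and $f^{j'} g_0 \Int(B_k) \subset \Int(B_{j'})$. I will upgrade this to the closed-set inclusion $f^{j'} g_0 B_k \subset \Int(B_{j'})$ by retracing the proof of the prefix lemma on closed sets: precise invariance of $(B_1, B_{-1})$ forces $g_0 B_k \subset A \cup B_{j'}$ (either $g_0 \in J_{j'}$ stabilizes $B_{j'}$, or $g_0$ sends $B_k$ into $A$), and then the closed-set ping-pong identity yields $f^{j'}(A \cup B_{j'}) = \Int(B_{j'})$. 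Since $g'$ shares its leading $\alpha_1$ with $g$, it is an $(i, *)$-form of length $m - 1 \ge 1$ and type $j'$ (by \Cref{lem:HNN_prefixes}), so the IH gives $g' B_{j'} \subsetneq \Int(B_i)$ if $i \ne 0$, and $g' B_{j'} \subsetneq A$ if $i = 0$. Chaining $gB_k \subset g' \Int(B_{j'}) \subset g' B_{j'}$ with the strict nesting from the IH completes the step.

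For the base case $m = 1$: a length-$1$ normal form is either $f^{\pm 1} g_0$ (a $(\pm 1, \pm 1)$-form) or $g_1 f^{\pm 1} g_2$ with $g_1 \in G_0 \setminus \{1\}$ (a $(0, \pm 1)$-form). The first case is the $m = 1$ version of the inductive argument above, with strictness coming from the disjoint union $\Int(B_{\pm 1}) = f^{\pm 1}(A) \sqcup f^{\pm 1}(B_{\pm 1})$, both summands nonempty since $A \supset A_0 \ne \es$ by condition \eqref{item:HNN_proper} of Definition B and $\Int(B_{\pm 1}) \ne \es$ by hypothesis. For the $(0, \pm 1)$ case, conditions \ref{item:hnn_normal_technical_1} and \ref{item:hnn_normal_technical_2} of \Cref{defn:hnn_normal_form}, applied at $k = 2$ with $\alpha_1 = 0$, force $g_1 \in G_0 \setminus J_{\pm 1}$; then precise invariance of $(B_1, B_{-1})$ gives $g_1 B_{\pm 1} \subset A$, and combined with $f^{\pm 1} g_2 B_k \subset \Int(B_{\pm 1})$ from the first case, this yields $gB_k \subset g_1 \Int(B_{\pm 1}) \subsetneq A$, with strictness coming from $g_1(\partial B_{\pm 1}) \subset A \setminus gB_k$ (nonempty since $B_{\pm 1}$ is a proper closed subset of $M$).

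I anticipate the main obstacle to be purely combinatorial bookkeeping: confirming in each case that the prefix $g'$ has length at least one so that the IH applies, checking that its leading $\alpha_1$ matches that of $g$, and verifying the symmetric $f^{-1}$ version of the ping-pong identity from Definition B that is needed to handle forms beginning with $f^{-1}$.
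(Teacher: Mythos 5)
Your strategy is essentially the paper's: induct on $|g|$ via \Cref{lem:HNN_prefixes}, upgrade the open-set ping-pong to closed sets using precise invariance together with $f(A \cup B_1) = \Int(B_1)$, and chain with the inductive hypothesis. The only structural difference is that the paper removes the $i=0$ case from the induction by peeling off the leading $G_0$-letter (which lies in $G_0 \setminus J_{i'}$ by the normal-form conditions) and quoting the $i \neq 0$ case, whereas you keep $i=0$ inside the induction; both work.

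There is, however, one genuine gap, in the strictness argument for your length-one $(0,\pm 1)$ base case. You get properness of $gB_k \subset A$ from $g_1(\partial B_{\pm 1}) \subset A \setminus gB_k$, asserting $\partial B_{\pm 1} \neq \varnothing$ ``since $B_{\pm 1}$ is a proper closed subset of $M$.'' That implication requires $M$ to be connected, which is nowhere assumed: $M$ is merely compact metrizable, and in natural instances of Definition B the sets $B_{\pm 1}$ are clopen with empty boundary (for example $M$ the Gromov boundary of a free group $\langle a, f\rangle$, a Cantor set, with $G_0 = \langle a \rangle$, $J_{\pm 1}$ trivial, and $B_{\pm 1}$ the cylinder sets of words beginning with $f^{\pm 1}$; all conditions of Definition B hold there). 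So that step fails as written. The conclusion is still true, and the fix is exactly the mechanism the paper uses for strictness throughout, namely condition \ref{item:HNN_proper} of Definition B: any point $a_0 \in A_0 = M \setminus G_0(B_1 \cup B_{-1})$ lies in $A$ but in no $G_0$-translate of $B_{\pm 1}$, and since $gB_k \subset g_1\Int(B_{\pm 1}) \subset g_1 B_{\pm 1}$, the point $a_0$ witnesses $gB_k \subsetneq A$. (Your disjoint-union strictness argument in the $(\pm 1,\pm 1)$ base case is fine as written, since there you only need $A \neq \varnothing$ and $B_{\pm 1} \neq \varnothing$.) One cosmetic point: when $n=1$ and $|\alpha_1| \ge 2$, the prefix $g'$ does not literally ``share its leading $\alpha_1$'' with $g$ --- the exponent drops by one --- but its sign is unchanged, which is all you need for $g'$ to be an $(i,\ast)$-form.
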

\begin{proof}
  First suppose that $i \ne 0$. For concreteness, assume $g$ is a
  $(1,j)$-form. We first suppose that $|g| = 1$, so that $g = fg_1$
  for $g_1 \in G_0$. If $k = 1$, then $g_1B_1$ is a subset of
  $M \setminus B_{-1} = \Int(A \cup B_1)$ by precise invariance. In
  fact it is a proper subset by properness of the interactive triple,
  so $fg_1B_1 \subsetneq \Int(B_1)$ by condition \ref{item:HNN_f} in
  Definition B. If $k = -1$, then since $g$ has type $k$, we must have
  $g_1 \in G_0 \setminus J_{-1}$ and
  $g_1B_{-1} \subset M \setminus B_{-1} = \Int(A \cup B_1)$. Again,
  the inclusion is proper by properness of the interactive triple, so
  again we have $fg_1B_{-1} \subsetneq \Int(B_1)$.

  When $|g| > 1$, we can apply \Cref{lem:HNN_prefixes} and induction:
  we write $g = g'f^jg_n$, where $g'$ is a type-$j$ normal form with
  length $|g| - 1$, and $f^jg_nB_k \subset B_j$. Via induction we know
  that $g'B_j \subsetneq \Int(B_1)$, which means
  $gB_k \subsetneq \Int(B_1)$.

  The case $i = 0$ follows from the first case and precise invariance
  of $B_i$ under $J_i$, since any $(0, j)$-form $g$ can be written
  $g = g_1g'$, where $g'$ is an $(i,j)$-form and
  $g_1 \in G_0 \setminus J_i$.
\end{proof}

We now prove the first three parts of Theorem B. Again, the arguments
are standard.

\begin{proof}[Proof of \ref{item:HNN_extension} -
  \ref{item:HNN_parabolics} in Theorem B]
  \ref{item:HNN_extension} Let $B_1$ and $B_{-1}$ be the sets given by
  our conditions, and set $A = M \setminus (B_1 \cup B_{-1})$. Note
  that condition \ref{item:HNN_f} of Definition B implies
  $f^{-1}(A \cup B_{-1}) = \Int(B_{-1})$. The result now follows from
  \Cref{prop:interactive_triple} since $(A, \Int(B_1), \Int(B_{-1}))$ form an
  interactive triple which is proper by condition
  \ref{item:HNN_proper} of Definition B.
  
  \ref{item:HNN_discrete} It suffices to show no sequence $(g_k)$ in
  $G$ can accumulate at the identity. If $|g_k| = 0$, then $g_k$ lies
  in the discrete group $G_0$, so assume $|g_k| \ge 1$ for all $k$. We
  can consider several cases. If a normal form for $g_k$ ends in a
  power of $f$, then $g_kA_0 \subset B_1 \cup B_{-1}$. Otherwise,
  $g_k$ is either a $(0,1)$ or a $(0,-1)$-form. In the former case,
  $g_kB_1 \subset A$, and in the latter case $g_kB_{-1} \subset A$. In
  each of these cases, $g_k$ takes a fixed set with nonempty interior
  into another fixed disjoint set, which means $g_k$ cannot accumulate
  on the identity.
    
  \ref{item:HNN_parabolics} Let $g = f^{\al_1}g_1\cdots f^{\al_n}g_n$
  be a normal form not conjugate into $G_0$. Conjugating and replacing
  $g$ with $g^{-1}$ if necessary, we can assume that $\al_1 > 0$ (so
  $g$ is a $(1, j)$-form) and that $|g|$ is minimal in its conjugacy
  class. Note that if $\al_n < 0$ and $g_n \in J_1$, then
  $f^{-1}gf = f^{\al_1-1}g_1\cdots f^{\al_n+1}f^{-1}g_n f$ has a
  strictly smaller length than $g$ since $f^{-1}g_nf \in J_{-1}$, so
  we know that either $\al_n > 0$ or $g_n \in G_0 \setminus J_1$. That
  is, $g$ is a $(1,j)$-form of type 1, so by
  \Cref{lem:HNN_pingpong_strong_nesting}, $gB_1$ is a proper subset of
  $\Int(B_1)$. Then the same argument as in Theorem A part
  \ref{item:AFP_parabolics} implies that $g$ has infinite order, and a
  fixed point in $\Int(B_1)$.

  On the other hand if $gB_1$ is a proper subset of $B_1$, then
  $g^{-1}(M \setminus gB_1)$ is a proper subset of $M \setminus B_1$,
  so the same argument again shows that $g^{-1}$ has a fixed point in
  the closure of $M \setminus B_1$. Thus $g$ has two distinct fixed
  points and is loxodromic.
\end{proof}

\subsection{Limit sets of HNN extensions}
The remainder of the section is meant to prove part \ref{item:HNN_gf}
of Theorem B, so for the rest of the paper we fix the space $M$ and
groups $G_0, J, \la f \ra, G$ in $\Homeo(M)$ satisfying the
conditions of Definition B. As for Theorem A, we start by establishing
some properties of the limit points of $G$ under these assumptions.

\begin{proposition}\label{prop:HNN_basic_properties}
    With the above conditions and notation, each of the following holds.
    \begin{enumerate}[label=(\roman*)]
        \item\label{item:HNN_prop_i} $B_1 \cap B_{-1} = \es$, and $f$ is loxodromic with attracting fixed point in $\Int(B_1)$ and repelling fixed point in $\Int(B_{-1})$.
        \item\label{item:HNN_prop_ii} $\Lambda(J_{\pm 1}) \subset \del B_{\pm 1}$.
        \item\label{item:HNN_prop_iii} $\Lambda(G_0) \setminus G_0(\Lambda(J_1) \cup \Lambda(J_{-1})) \subset A_0$.
    \end{enumerate}
\end{proposition}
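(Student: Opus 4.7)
For part \ref{item:HNN_prop_i}, the disjointness $B_1 \cap B_{-1} = \es$ is already noted in \Cref{rmk:HNN_disjointness}, so the substance is showing that $f$ is loxodromic with fixed points in the interiors of $B_{\pm 1}$. My plan is to use condition \ref{item:HNN_f} of Definition B: since $f(A \cup B_1) = \Int(B_1)$ and $A$ is nonempty and open (being the complement of the closed set $B_1 \cup B_{-1}$), the image $fA$ is a nonempty open subset of $\Int(B_1)$ disjoint from $fB_1$, which forces $fB_1 \subsetneq \Int(B_1)$. Iterating, the sets $f^n B_1$ form a nested decreasing chain contained in $fB_1$. Pick any $x \in \Int(B_1)$; then $f^n x \in fB_1$ for all $n \ge 1$, so any subsequential limit lies in $fB_1 \subset \Int(B_1)$. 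Since $\la f \ra$ is infinite cyclic the sequence $(f^n)$ is divergent, and \Cref{rmk:discrete_char} lets me extract a convergence subsequence with attracting point $z_+$ and repelling point $z_-$; the above forces $z_+ \in \Int(B_1)$. Applying $f^{-1}$ to condition \ref{item:HNN_f} (and using that $f$ is a bijection of $M$ with the partition $M = (A \cup B_1) \sqcup B_{-1}$) gives $f^{-1}(A \cup B_{-1}) = \Int(B_{-1})$, and the symmetric argument places $z_- \in \Int(B_{-1})$. Since $z_+ \neq z_-$, \Cref{prop:convergence_classification_of_isometries} declares $f$ loxodromic.

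For part \ref{item:HNN_prop_ii}, I would start with the observation that since $B_1$ is closed and $J_1$-invariant (by \ref{item:HNN_pingpong}), we immediately get $\Lambda(J_1) \subset B_1$. To improve this to $\del B_1$, I would use the relation $J_1 = f J_{-1} f^{-1}$. The same reasoning gives $\Lambda(J_{-1}) \subset B_{-1}$, and then $\Lambda(J_1) = f\Lambda(J_{-1}) \subset fB_{-1}$. Here is where the crucial computation enters: applying $f$ to the partition $M = (A \cup B_1) \sqcup B_{-1}$ and using condition \ref{item:HNN_f} yields
\[
    fB_{-1} = M \setminus f(A \cup B_1) = M \setminus \Int(B_1).
\]
Combining $\Lambda(J_1) \subset B_1$ with $\Lambda(J_1) \subset M \setminus \Int(B_1)$ gives $\Lambda(J_1) \subset \del B_1$. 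The inclusion $\Lambda(J_{-1}) \subset \del B_{-1}$ is proved symmetrically, using $J_{-1} = f^{-1} J_1 f$ and the analogous identity $f^{-1} B_1 = M \setminus \Int(B_{-1})$.

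For part \ref{item:HNN_prop_iii}, I would argue contrapositively: if $x \in \Lambda(G_0)$ is not in $A_0$, then by definition of $A_0$ we have $x \in g B_i$ for some $g \in G_0$ and some $i \in \{\pm 1\}$. Since $\Lambda(G_0)$ is $G_0$-invariant, $g^{-1} x \in \Lambda(G_0) \cap B_i$, which equals $\Lambda(J_i)$ by condition \ref{item:HNN_technical} of Definition B. Therefore $x \in g\Lambda(J_i) \subset G_0(\Lambda(J_1) \cup \Lambda(J_{-1}))$, as required.

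The proposition is fairly routine given all the structure set up in Definition B; the only subtle point is that condition \ref{item:HNN_f} alone implicitly controls how $f$ maps each of $B_{-1}$, $A$, and $B_1$, and one needs to make this explicit (via the bijection argument above) before parts (i) and (ii) fall out. Once that computation is in hand, the remaining steps are purely formal.
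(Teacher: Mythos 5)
Your proposal is correct and follows essentially the same route as the paper: parts (ii) and (iii) are argued identically (via $fB_{-1}=M\setminus\Int(B_1)$ and condition \ref{item:HNN_technical}), and part (i) just spells out the same nesting-plus-convergence-dynamics argument that the paper imports from its proof of Theorem A\,\ref{item:AFP_parabolics}. The only nitpick is in (i): ``pick any $x\in\Int(B_1)$'' should be ``pick some $x\in\Int(B_1)\setminus\{z_-\}$'' (possible since $\Int(B_1)$ is infinite by \Cref{prop:interactive_triple_infinite}), so that $f^nx\to z_+$ is actually guaranteed.
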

\begin{proof}
  \ref{item:HNN_prop_i} The fact that $B_1 \cap B_{-1} = \es$ follows
  from precise invariance of $(B_1, B_{-1})$ under $(J_1, J_{-1})$ in
  $G_0$. Now, since $f(A \cup B_1) = \Int(B_1)$, we have
  $f(\del B_{-1}) = \del B_1$, and so $fB_1 \subset
  \Int(B_1)$. Arguing as in the proof of Theorem A part
  \ref{item:AFP_parabolics}, we know this implies $f$ has a fixed
  point in $\Int(B_1)$ which is necessarily attracting. The same
  argument applied to $f^{-1}$ gives a fixed point in $\Int(B_{-1})$
  which is necessarily a repelling fixed point for $f$.

  \ref{item:HNN_prop_ii} Since $B_i$ is closed and $J_1$-invariant,
  and $\Int(B_i)$ is infinite by
  \Cref{prop:interactive_triple_infinite}, it follows that
  $\Lambda(J_i) \subset B_i$. Further, since $f$ conjugates $J_{-1}$
  to $J_1$, $f$ maps $\Lambda(J_{-1})$ bijectively onto
  $\Lambda(J_1)$. Hence
  \[
    \Lambda(J_1) = f\Lambda(J_{-1}) \subset fB_{-1} = B_{-1} \cup A
    \cup \del B_1.
  \]
  So we conclude
  $\Lambda(J_1) \subset (B_{-1} \cup A \cup \del B_1) \cap B_1 = \del
  B_1$ as desired. Applying an identical argument using $f^{-1}$ gives
  $\Lambda(J_{-1}) \subset \del B_{-1}$.
    
  \ref{item:HNN_prop_iii} Fix $x \in \Lambda(G_0)$, and suppose that
  $x \notin A_0$, i.e. that $x = gy$ for $g \in G_0$ and $y \in
  B_i$. Then $y = g^{-1}x \in \Lambda(G_0) \cap B_i$, which means
  $y \in \Lambda(J_i)$ by condition \ref{item:HNN_technical} in
  Definition B, and therefore $x \in G_0(\Lambda(J_i))$. It follows
  that $\Lambda(G_0) \setminus A_0$ is contained in
  $G_0(\Lambda(J_1) \cup \Lambda(J_{-1}))$, which is equivalent to the
  desired claim.
\end{proof}

\subsection{HNN ping-pong and contraction}

Next, we will establish a contraction lemma for HNN ping-pong
sequences, similar to \Cref{lem:AFP_contraction} for amalgamated free
products. As in the earlier case, the key tool is
\Cref{prop:quasiconvex_subgroups_nest}, so we start by establishing
that the subgroups $J_1$ and $J_{-1}$ are fully quasi-convex in some
ambient geometrically finite group.

First, we show:
\begin{lemma}
  \label{lem:HNN_boundary_invariance}
  Fix $i,j \in \{\pm 1\}$ and $g \in G$. Then
  $g \del B_i \cap \del B_j \ne \es$ if and only if either:
  \begin{enumerate}
  \item $i = j$ and $g \in J_i$, or
  \item $i = -j$ and $g = f^jh$ for $h \in J_i$.
  \end{enumerate}
\end{lemma}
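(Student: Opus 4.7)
The plan is to prove the two directions separately. The ``if'' direction is essentially a calculation: precise invariance of the pair $(B_1,B_{-1})$ under $(J_1,J_{-1})$ in $G_0$ gives $h\del B_i = \del B_i$ for $h \in J_i$, and condition \ref{item:HNN_f} of Definition B (as shown in the proof of \Cref{prop:HNN_basic_properties}\ref{item:HNN_prop_i}) gives $f\del B_{-1} = \del B_1$ and $f^{-1}\del B_1 = \del B_{-1}$. So in the case $i=j$, $g \in J_i$ we get $g\del B_i = \del B_i = \del B_j$; and in the case $i=-j$, $g = f^jh$ with $h \in J_i = J_{-j}$, we get $g\del B_i = f^j\del B_{-j} = \del B_j$.

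For the ``only if'' direction I would induct on $|g|$. The base case $|g|=0$ is just precise invariance: $g \in G_0$ and $gB_i \cap B_j \ne \es$ force $i = j$ and $g \in J_i$.

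For the inductive step, fix a normal form $g = f^{\al_1}g_1\cdots f^{\al_m}g_m$ with $|g| \ge 1$. The central dichotomy is whether $g$ has ping-pong type $i$. If it does, \Cref{lem:HNN_pingpong_strong_nesting} places $gB_i$ strictly inside one of the open sets $\Int(B_1)$, $\Int(B_{-1})$, $A$, each disjoint from both $\del B_1$ and $\del B_{-1}$, contradicting the hypothesis. Otherwise $g_m \in J_i$ and $\al_m$ has sign opposite to $i$; by the symmetry $(i,j,f) \leftrightarrow (-i,-j,f^{-1})$ I may assume $i=1$, whence $g\del B_1 = g''f^{\al_m}\del B_1$ where $g'' = f^{\al_1}g_1\cdots f^{\al_{m-1}}g_{m-1}$.

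I would now split on $\al_m$. If $\al_m \le -2$, the word $\tilde g = g''f^{\al_m+1}$ (appended with $\id$ to make it end in a $G_0$-element) is a normal form of length $|g|-1$ and type $-1$, so \Cref{lem:HNN_pingpong_strong_nesting} again places $\tilde g B_{-1}$ strictly inside one of $\Int(B_1)$, $\Int(B_{-1})$, $A$; since $g\del B_1 = \tilde g\del B_{-1}$, this again contradicts the hypothesis. So $\al_m = -1$, and using $f^{-1}\del B_1 = \del B_{-1}$ the problem reduces to $g''\del B_{-1} \cap \del B_j \ne \es$ with $|g''| = |g|-1$. The inductive hypothesis returns either $j=-1$ with $g'' \in J_{-1}$, or $j=1$ with $g'' = fh$ for some $h \in J_{-1}$. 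Using $fJ_{-1}f^{-1} = J_1$, the first case rewrites as $g = g''f^{-1}g_m = f^{-1}\cdot(fg''f^{-1})g_m \in f^{-1}J_1$, matching $g = f^{-1}h'$ with $h' \in J_1 = J_i$ and $-1 = -i$; the second case gives $g = fhf^{-1}g_m \in J_1 = J_i$, matching $i=j$. The main obstacle is bookkeeping: verifying that $\tilde g$ really satisfies the technical conditions \ref{item:hnn_normal_technical_1} and \ref{item:hnn_normal_technical_2} of \Cref{defn:hnn_normal_form}, handling the edge case $g'' = \id$ (when $m=1$, which collapses into the base case), and carefully tracking signs and coset memberships through the rewrites involving conjugation by $f$.
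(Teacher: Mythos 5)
Your proposal is correct and follows essentially the same route as the paper's proof: induct on $|g|$, use the strong nesting lemma (\Cref{lem:HNN_pingpong_strong_nesting}) to rule out the case where $g$ has type $i$, and otherwise peel off the final block (forced to be $f^{\al_m}g_m$ with $g_m \in J_i$ and $\al_m$ of sign $-i$) and apply the inductive hypothesis, rewriting via $fJ_{-1}f^{-1}=J_1$. The only cosmetic difference is that you split the cases $\al_m \le -2$ (handled by nesting applied to $\tilde g = g''f^{\al_m+1}$) and $\al_m = -1$, whereas the paper treats both uniformly by passing to $g' = gg_m^{-1}f$ of length $|g|-1$; your bookkeeping concerns (normal-form conditions for $\tilde g$, the $g''=\id$ edge case) all check out.
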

\begin{proof}
  We induct on the length of $g$. If $|g| = 0$, then the claim follows
  from precise invariance of $(B_1, B_{-1})$ under $(J_1, J_{-1})$ in
  $G_0$, so suppose $|g| \ge 1$, and for concreteness, assume $i =
  1$. We write a normal form $f^{\al_1}g_1 \cdots f^{\al_n}g_n$ for
  $g$. If this normal form has type $1$, then
  \Cref{lem:HNN_pingpong_strong_nesting} implies that
  $g B_1 \subset \Int(B_1) \cup \Int(B_{-1}) \cup A$, hence
  $g \del B_1 \cap \del B_j = \es$. So we can assume that this normal
  form does \emph{not} have type $1$, which means that $\al_n < 0$ and
  $g_n \in J_1$. In this case, $f^{-1}g_n \del B_1 = \del B_{-1}$.

  The group element $g' = gg_n^{-1}f$ has strictly smaller length than
  $g$, so if $g'\del B_{-1} \cap \del B_j \ne \es$ then by induction
  we know that either $j = -1$ and $g' \in J_{-1}$, or $j = 1$ and
  $g' = fh$ for $h \in J_{-1}$. In the former case we can rewrite
  $g = g'f^{-1}g_n = f^{-1}g''g_n$ for $g'' \in J_1$, and in the
  latter case we can rewrite $g = g'f^{-1}g_n = fhf^{-1}g_n = g''g_n$
  for $g'' \in J_1$. Since $g_n \in J_1$ the conclusion follows.
\end{proof}

The lemma above implies in particular that $\del B_i$ is precisely
invariant under $J_i$ in both $G$ and $G_0$. Then, after applying part
\ref{item:HNN_prop_ii} of \Cref{prop:HNN_basic_properties}, we see:
\begin{corollary}
  Let $H$ be one of $G$ or $G_0$. If $H$ is geometrically finite, then
  $J_1$ and $J_{-1}$ are fully quasi-convex subgroups of $H$.
\end{corollary}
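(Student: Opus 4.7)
The plan is to verify the two clauses of the full quasi-convexity definition (Definition \ref{defn:quasiconvex}) directly, using the preceding boundary-invariance lemma as the main input. Virtually all the work has already been done; the corollary really is just a short deduction.

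First I would verify that $J_i$ is relatively quasi-convex in $H$. The standing hypothesis of Definition B guarantees that $J_i$ is geometrically finite as a convergence group acting on $M$, so every point of $\Lambda(J_i)$ is either a conical limit point or a bounded parabolic point for the $J_i$-action on $M$. These properties transfer verbatim to the $J_i$-action on the closed invariant subspace $\Lambda(H) \subset M$: the conical condition in \Cref{def:conical_limit_points} only becomes easier to satisfy when test points $z$ are restricted to $\Lambda(H) \setminus \{x\}$, and the bounded parabolic condition depends only on $\Lambda(J_i)$ and $\mathrm{Stab}_{J_i}(x)$, both of which are unchanged. Hence $J_i$ acts geometrically finitely on $\Lambda(H)$, which is the definition of relatively quasi-convex.

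Second, I would verify the nesting clause: for all but finitely many cosets $gJ_i$ in $H$, $gJ_i(\Lambda(J_i)) \cap \Lambda(J_i) = \varnothing$. In fact I expect to prove the stronger statement that $g\Lambda(J_i) \cap \Lambda(J_i) = \varnothing$ for \emph{every} $g \in H \setminus J_i$. Part \ref{item:HNN_prop_ii} of \Cref{prop:HNN_basic_properties} gives $\Lambda(J_i) \subset \partial B_i$. The preceding \Cref{lem:HNN_boundary_invariance} tells us that for $g \in G$, the intersection $g\partial B_i \cap \partial B_i$ is nonempty precisely when $g \in J_i$. Since $H \subset G$, for any $g \in H \setminus J_i$ we conclude
\[
  g\Lambda(J_i) \cap \Lambda(J_i) \subset g\partial B_i \cap \partial B_i = \varnothing.
\]

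There is no real obstacle here: the two ingredients, namely the geometric finiteness of $J_i$ on $M$ (hypothesis) and the precise-invariance behavior of $\partial B_i$ (just proved), combine directly. The only mild subtlety is checking that geometric finiteness of $J_i$ on $M$ implies geometric finiteness on $\Lambda(H)$, and as noted this follows immediately from the definitions.
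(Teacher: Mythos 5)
Your proposal is correct and follows essentially the same route as the paper: the paper deduces the corollary from \Cref{lem:HNN_boundary_invariance} (which gives precise invariance of $\del B_i$ under $J_i$ in $G$ and $G_0$) together with $\Lambda(J_{\pm 1}) \subset \del B_{\pm 1}$ from \Cref{prop:HNN_basic_properties}, with relative quasi-convexity of $J_i$ taken as immediate from the hypothesis that $J_i$ is geometrically finite. Your explicit check that geometric finiteness on $M$ transfers to the action on $\Lambda(H)$ is just a spelled-out version of what the paper leaves implicit (as in \Cref{lem:fully_quasiconvex}).
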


Now, we can apply \Cref{prop:quasiconvex_subgroups_nest} to the
present setting:
\begin{lemma}\label{lem:HNN_technical}
  Suppose that either $G$ or $G_0$ is geometrically finite. For
  $i \in \{\pm 1\}$, we can find a compact $K \subset A \cup B_{-i}$
  so that both of the following hold:

    \begin{enumerate}[label=(\roman*)]
    \item For any $g \in G_0 \setminus J_i$, we have $j \in J_i$ so
      $jgB_i \subset K$.
    \item For any $g \in G_0$, we have $j \in J_i$ so that
      $jgB_{-i} \subset K$.
    \end{enumerate}
\end{lemma}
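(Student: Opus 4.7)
The plan is to deduce both parts of the lemma from two applications of Proposition~\ref{prop:quasiconvex_subgroups_nest}, in direct analogy with the proof of Lemma~\ref{lem:AFP_technical}. Let $\widetilde G$ denote whichever of $G$ or $G_0$ is geometrically finite; by the corollary immediately preceding the statement, both $J_1$ and $J_{-1}$ are fully quasi-convex subgroups of $\widetilde G$, so we may apply the proposition with ambient group $\widetilde G$ and $H = G_0$.

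\smallskip

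For part (i), I would invoke the proposition with $J_1 = J_2 = J_i$ and $U_1 = U_2 = M \setminus B_i$. This set is open and $J_i$-invariant since $B_i$ is closed and $J_i$-invariant, and condition~\ref{item:HNN_technical} of Definition~B yields $\Lambda(G_0) \setminus \Lambda(J_i) \subset M \setminus B_i$. The ping-pong hypothesis becomes $gB_i \cap B_i = \es$ for every $g \in G_0 \setminus J_i$, which is exactly the precise invariance of $B_i$ under $J_i$ in $G_0$. The proposition then produces a compact $K_1 \subset M \setminus B_i = A \cup B_{-i}$ of the form required by (i).

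\smallskip

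For part (ii), I would apply the proposition with $J_1 = J_i$, $J_2 = J_{-i}$, $U_1 = M \setminus B_i$, and $U_2 = M \setminus B_{-i}$, verifying the invariance and limit-set hypotheses as before. The ping-pong condition now requires $gB_{-i} \cap B_i = \es$ for every $g \in G_0 \setminus J_{-i}$, which is an instance of the disjointness clause in the precise invariance of $(B_1, B_{-1})$ under $(J_1, J_{-1})$ in $G_0$; this clause in fact gives $g^{-1}B_i \cap B_{-i} = \es$ for \emph{every} $g \in G_0$. The main (minor) technical obstacle is that the proposition only yields a compact $K_2$ controlling the cosets $gJ_{-i}$ with $g \notin J_{-i}$, whereas (ii) demands a conclusion for all $g \in G_0$. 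This is handled by enlarging the compact set: when $g \in J_{-i}$ we have $gB_{-i} = B_{-i}$, so the choice $j = \id$ works provided $B_{-i}$ is contained in $K$. Setting $K = K_1 \cup K_2 \cup B_{-i}$ then gives a compact subset of $A \cup B_{-i}$ simultaneously satisfying (i) and (ii).
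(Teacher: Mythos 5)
Your proposal is correct and takes essentially the same route as the paper: two applications of \Cref{prop:quasiconvex_subgroups_nest} with ambient group whichever of $G$, $G_0$ is geometrically finite and $H = G_0$, one ``diagonal'' application with both subgroups equal to $J_i$ and both open sets $M \setminus B_i$, and one mixed application with $J_i$, $J_{-i}$ and the complements of $B_i$, $B_{-i}$, using precise invariance of $(B_1,B_{-1})$ and condition \ref{item:HNN_technical} of Definition B to verify the hypotheses. Your handling of the leftover case $g \in J_{-i}$ in (ii) by adjoining $B_{-i}$ to the compact set is exactly the paper's device (and your write-up assigns the roles in the two applications more transparently than the paper's own wording).
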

\begin{proof}
  Take $i = 1$ to simplify notation. We can find a compact for each
  claim separately and take their union. First, we focus on (i). We
  can assume $B_1 \subset K$, so the statement follows immediately for
  $g \in J_1$ by taking $j$ to be the identity. Otherwise, we apply
  \Cref{prop:quasiconvex_subgroups_nest} with the ambient
  geometrically finite group as $G$ or $G_0$ (depending on which one
  is geometrically finite) and $H = G_0$ in both cases, and our two
  fully quasi-convex subgroups $J_1$ and $J_{-1}$ with corresponding
  invariant open sets $U_1 = M \setminus B_1$ and
  $U_{-1} = M \setminus B_{-1}$. Then if $g \in G_0 \setminus J_1$, we
  have $g(M \setminus U_1) = g(B_1) \subset A \subset U_{-1}$, and so
  the proposition gives our desired compact subset
  $K \subset U_{-1} = A \cup B_1$.

  For (ii), the proof is identical with $J_1$ playing the role of both
  fully quasi-convex subgroups in the statement of
  \Cref{prop:quasiconvex_subgroups_nest}, and both open sets being
  $M \setminus B_1 = A \cup B_{-1}$.
\end{proof}

We can now establish the HNN contraction property:
\begin{lemma}[Contraction for HNN
  extensions]\label{lem:HNN_contracting}
  Suppose that either $G$ or $G_0$ is geometrically finite, and let
  $(h_k)$ be a sequence of type-$i$ forms such that the left cosets
  $h_kJ_i$ are all distinct. Then up to subsequence, $(h_kB_i)$
  converges to a singleton $\{x\}$.
\end{lemma}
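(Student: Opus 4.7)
My plan is to mirror the proof of \Cref{lem:AFP_contraction}. The key claim is: after passing to a subsequence, there exist a compact $K \subset M \setminus B_i$, a sequence $(j_k)$ in $J_i$, and some $i^* \in \{\pm 1\}$ such that $j_k h_k^{-1} B_{-i^*} \subset K$ for every $k$ (in the length-zero case below the ``source'' is $B_i$ instead, but this change is cosmetic). Granted this, set $h_k' = h_k j_k^{-1}$; since $j_k^{-1} \in J_i$ preserves $B_i$, we have $h_k' B_i = h_k B_i$. Distinctness of the cosets $h_k J_i$ makes the $h_k'$ pairwise distinct, so some subsequence is a convergence sequence with attracting and repelling points $z_+, z_-$. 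The set $B_{-i^*}$ is infinite by \Cref{prop:interactive_triple_infinite}, so it contains some $y \ne z_+$, and $(h_k')^{-1} y = j_k h_k^{-1} y \in K$ converges to $z_-$, giving $z_- \in K \subset M \setminus B_i$. Then $B_i \subset M \setminus \{z_-\}$ is compact, and so $h_k B_i = h_k' B_i$ converges to $\{z_+\}$.

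We prove the claim by splitting on $|h_k|$. If $|h_k| = 0$ infinitely often, then $h_k \in G_0 \setminus J_i$ by type-$i$, and \Cref{lem:HNN_technical}(i) applied to $h_k^{-1}$ yields $j_k \in J_i$ with $j_k h_k^{-1} B_i \subset K$ for a fixed compact $K \subset A \cup B_{-i} \subset M \setminus B_i$. Otherwise, pass to a subsequence with $|h_k| \ge 1$ and write the normal form $h_k = f^{\alpha_1} g_1 \cdots f^{\alpha_n} g_n$ (all data depending on $k$). Extracting further, we may assume the signs $j^* := \text{sign}(\alpha_n)$ and $i^* := \text{sign}(\alpha_r)$ (with $r=1$ if $\alpha_1 \ne 0$, else $r=2$) are constant, and that whether $g_n \in J_i$ is constant in $k$. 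The word $p_k := g_n h_k^{-1} = f^{-\alpha_n} g_{n-1}^{-1} \cdots g_1^{-1} f^{-\alpha_1}$ (with the trailing $f^{-\alpha_1}$ absent when $\alpha_1 = 0$) is a valid normal form---the normal-form conditions follow by taking contrapositives of those defining $h_k$, extending \Cref{lem:HNN_inverse} to cover the $\alpha_1 = 0$ case---and $p_k$ is a $(-j^*, *)$-form whose last nontrivial $f$-exponent has sign $-i^*$, so $p_k$ is type $-i^*$. \Cref{lem:HNN_pingpong_strong_nesting} then gives $p_k B_{-i^*} \subset \Int(B_{-j^*})$.

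To finish, we absorb $g_n$ using $h_k^{-1} = g_n^{-1} p_k$. If $g_n \in J_i$, then type-$i$ forces $j^* = i$, and $j_k = g_n \in J_i$ yields $j_k h_k^{-1} B_{-i^*} = p_k B_{-i^*} \subset \Int(B_{-i}) \subset M \setminus B_i$, so we take $K = B_{-i}$. If $g_n \notin J_i$, then $g_n^{-1} \in G_0 \setminus J_i$; we apply \Cref{lem:HNN_technical}(ii) when $j^* = i$ (absorbing $B_{-j^*} = B_{-i}$) or \Cref{lem:HNN_technical}(i) when $j^* = -i$ (absorbing $B_{-j^*} = B_i$, which uses the hypothesis $g_n^{-1} \in G_0 \setminus J_i$), obtaining $j_k \in J_i$ and a fixed compact $K \subset M \setminus B_i$ with $j_k g_n^{-1} B_{-j^*} \subset K$. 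Combining with the strong-nesting inclusion gives $j_k h_k^{-1} B_{-i^*} \subset j_k g_n^{-1} B_{-j^*} \subset K$. The main obstacle is the bookkeeping of ping-pong types and $f$-signs in the case analysis for absorbing $g_n$, together with the adaptation of \Cref{lem:HNN_inverse} to the $\alpha_1 = 0$ case, where $p_k$ ends in the nontrivial $G_0$-element $g_1^{-1}$ rather than in an $f$-power.
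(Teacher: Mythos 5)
Your proof is correct and takes essentially the same route as the paper's: strip off the last $G_0$-letter $g_n$, check that the inverse word is a normal form which nests $B_{-i^*}$ into $B_{-j^*}$, use \Cref{lem:HNN_technical} to absorb $g_n^{-1}$ into a fixed compact $K \subset M \setminus B_i$ after adjusting by $J_i$, and conclude with the convergence dynamics of the modified sequence. The only deviations are cosmetic bookkeeping (your case split on $g_n \in J_i$ versus the paper's split on the sign of $\alpha_n$, and your explicit treatment of the $\alpha_1 = 0$ case of \Cref{lem:HNN_inverse}, which the paper's citation glosses over).
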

It follows from \Cref{lem:HNN_boundary_invariance} that any group
element $g \in G$ satisfying $gB_i = B_i$ must lie in $J_i$. So,
asking for the left cosets $(h_kJ_i)$ to be distinct is the same as
asking for the translates $(h_kB_i)$ to be distinct.
\begin{proof}
  To simplify notation, assume $i = 1$. The proof is very similar to
  the proof of \Cref{lem:AFP_contraction}. The first step is to show
  the following:
  \begin{claim*}
    After extracting a subsequence, there is a fixed $\ell = \pm 1$, a
    compact set $K \subset A \cup B_{-1}$, and a sequence $(j_k)$ in
    $J_1$ so that $j_kh_k^{-1}B_\ell \subset K$ for all $k$.
  \end{claim*}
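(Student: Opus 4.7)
My plan is to follow the template of \Cref{lem:AFP_contraction}: produce $(j_k)$ by applying \Cref{lem:HNN_technical} to a $G_0$-element extracted from $h_k^{-1}$, locating the corresponding translate of $B_\ell$ via \Cref{lem:HNN_pingpong_strong_nesting}. The key combinatorial step is an extension of \Cref{lem:HNN_inverse}: for any normal form $h = f^{\alpha_1}g_1 \cdots f^{\alpha_n}g_n$ with $|h| \ge 1$, the product $g_n h^{-1} = f^{-\alpha_n}g_{n-1}^{-1} \cdots g_1^{-1} f^{-\alpha_1}$ is itself a normal form, and in fact a $(-\epsilon,-i^*)$-form of type $-i^*$, where $\epsilon = \mathrm{sign}(\alpha_n)$ and $i^*$ is the sign of the first nonzero exponent of $h$ (equal to $\mathrm{sign}(\alpha_1)$ if $\alpha_1 \ne 0$, and $\mathrm{sign}(\alpha_2)$ otherwise). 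The normal-form conditions \ref{item:hnn_normal_technical_1}, \ref{item:hnn_normal_technical_2} for $g_n h^{-1}$ are precisely the contrapositives of the corresponding conditions for $h$, and the type is determined by the sign of the trailing $f$-power.

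With this in hand I would split into two cases. If $|h_k| = 0$ for infinitely many $k$, the type-1 hypothesis gives $h_k \in G_0 \setminus J_1$, and \Cref{lem:HNN_technical}(i) with $i = 1$ applied to $h_k^{-1}$ produces $(j_k) \subset J_1$ and a fixed compact $K \subset A \cup B_{-1}$ with $j_k h_k^{-1} B_1 \subset K$; take $\ell = 1$. Otherwise, after a subsequence $|h_k| \ge 1$ for all $k$ and the signs $\epsilon_k, i^*_k$ are constant, equal to $\epsilon, i^* \in \{\pm 1\}$. Applying \Cref{lem:HNN_pingpong_strong_nesting} to the type-$(-i^*)$ form $g_n h_k^{-1}$ yields
\[
    g_n h_k^{-1}(B_{-i^*}) \subset \Int(B_{-\epsilon}), \qquad \text{hence} \qquad h_k^{-1}(B_{-i^*}) \subset g_n^{-1}(B_{-\epsilon}).
\]

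To absorb $g_n^{-1} \in G_0$ into a fixed compact set using $J_1$, I apply \Cref{lem:HNN_technical} with $i = 1$ once more. If $\epsilon = 1$, then $-\epsilon = -1$ and part (ii) provides $j_k \in J_1$ with $j_k g_n^{-1} B_{-1} \subset K$ for a fixed compact $K \subset A \cup B_{-1}$. If $\epsilon = -1$, then $\alpha_n < 0$, and the type-1 hypothesis on $h_k$ forces $g_n \in G_0 \setminus J_1$; then part (i) provides $j_k \in J_1$ with $j_k g_n^{-1} B_1 \subset K$. Either way, $j_k h_k^{-1} B_\ell \subset K$ with $\ell = -i^* \in \{\pm 1\}$, proving the claim.

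The main obstacle I anticipate is the preliminary combinatorial step verifying that $g_n h_k^{-1}$ is a normal form of the asserted type; special care is needed in the edge case $\alpha_1 = 0$, where the trailing $f$-power of $g_n h_k^{-1}$ is absent and the form ends with the nontrivial $G_0$-element $g_1^{-1}$ rather than with a bare $f$-power. Once this bookkeeping is settled, the remainder of the argument is a direct translation of the AFP contraction proof, with the role of the ``final letter'' $g_{k,n}$ in that argument played here by the $G_0$-element $g_n$ of the final HNN block.
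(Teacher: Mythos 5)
Your proposal is correct and follows essentially the same route as the paper: peel off the final $G_0$-letter $g_{k,n}$, observe via (the extension of) \Cref{lem:HNN_inverse} that $g_{k,n}h_k^{-1}$ is a normal form whose ping-pong type locates $g_{k,n}h_k^{-1}B_\ell$ in $B_{\pm 1}$ according to the sign of $\alpha_{k,n}$, and then apply \Cref{lem:HNN_technical}(i) or (ii) to $g_{k,n}^{-1}$, using the type-$1$ hypothesis to force $g_{k,n} \in G_0 \setminus J_1$ when $\alpha_{k,n} < 0$. The only difference is cosmetic: you pin down the type explicitly as $-i^*$ (and verify the $\alpha_1 = 0$ edge case), whereas the paper simply extracts a subsequence on which the type and the sign of $\alpha_{k,n}$ are constant.
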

  To prove the claim, we first suppose that $|h_k| = 0$. Then, since
  $h_k$ has type 1, we know $h_k \in G_0 \setminus J_1$. Then we take
  $\ell = 1$, and apply \Cref{lem:HNN_technical} to find the required
  set $K$ and elements $j_k$. Otherwise, suppose that $|h_k| \ge 1$,
  and write out a normal form for $h_k$:
  \[
    h_k = f^{\alpha_{k,1}}g_{k,1} \cdots f^{\alpha_{k,n}}g_{k,n}.
  \]
  We consider the inverse word
  \[
    g_{k,n}^{-1}f^{-\alpha_{k,n}} \cdots
    g_{k,1}^{-1}f^{-\alpha_{k,1}}.
  \]
  By \Cref{lem:HNN_inverse}, the sub-word
  $g_{k,n}h_k^{-1} = f^{-\alpha_{k,n}} \cdots
  g_{k,1}^{-1}f^{-\alpha_{k,1}}$ is a normal form, which must have
  length at least 1. Up to subsequence, for every $k$ this normal form
  is type $\ell$ for some fixed $\ell = \pm 1$, meaning that
  $g_{k,n}h_k^{-1}B_\ell \subset B_1$ if $-\alpha_{k,n} > 0$ and
  $g_{k,n}h_k^{-1}B_\ell \subset B_{-1}$ if $-\alpha_{k,n} < 0$. After
  extracting another subsequence we can assume one of these conditions
  holds for every $k$.

  In the case where $-\alpha_{k,n} < 0$ for every $k$, we can use
  \Cref{lem:HNN_technical} to find elements $j_k \in J_1$ and a
  compact $K \subset A \cup B_{-1}$ so that
  $j_kg_{k,n}^{-1}B_{-1} \subset K$ for every $k$. Then, we know that
  for every $k$, we have
  \[
    j_kh_k^{-1}B_\ell = j_kg_{k,n}^{-1}g_{k,n}h_k^{-1}B_\ell \subset
    j_kg_{k,n}^{-1}B_{-1} \subset K.
  \]
  On the other hand, if $-\alpha_{k,n} > 0$, then since $h_k$ has type
  1 we know that $g_{k,n} \in G_0 \setminus J_1$. Then again by
  \Cref{lem:HNN_technical} we can find a compact
  $K \subset A \cup B_{-1}$ and $j_k \in J_1$ so that
  $j_kg_{k,n}^{-1}B_1 \subset K$. Thus, we have
  \[
    j_kh_k^{-1}B_\ell = j_kg_{k,n}^{-1}g_{k,n}h_k^{-1}B_\ell \subset
    j_kg_{k,n}^{-1}B_1 \subset K.
  \]
  We have shown the claim above, so now consider the sequence
  $(h_kj_k^{-1})$. Since all the translates $h_kB_1$ are distinct, the
  group elements $h_k$ lie in infinitely many left $J_1$-cosets, hence
  so do the group elements $h_kj_k^{-1}$. In particular, the sequence
  $h_kj_k^{-1}$ is divergent in $G$, so we can extract a convergence
  subsequence and assume that there are points $z_+, z_- \in M$ so
  that $(h_kj_k^{-1}y)$ converges to $z_+$ whenever $y \ne
  z_-$. Equivalently, $(j_kh_k^{-1}y)$ converges to $z_-$ whenever
  $y \ne z_+$.

  \Cref{prop:interactive_triple_infinite} tells us that the set $B_1$
  is infinite, so in particular there must be some
  $y \in B_1 \setminus \{z_+\}$. Then, since
  $j_kh_k^{-1}B_1 \subset K$ we conclude that $z_- \in K$. Finally,
  since $B_1$ is a compact set in the complement of $K$, we see that
  $(h_kj_k^{-1}B_1) = (h_kB_1)$ must converge to $\{z_+\}$.
\end{proof}

\subsection{Geometrical finiteness of the extension}
We now prove that ($G_0$ geometrically finite) $\implies$ ($G$ geometrically finite). This gives one of the directions
of Theorem B part \ref{item:HNN_gf}.

As in the proof of the analogous direction of Theorem A, the key for
this direction of theorem is to show that limit points in
$\Lambda(G) \setminus \Lambda(G_0)$ can be ``coded'' by sequences of
$(i,j)$-forms in $G$. The precise statement is:
\begin{proposition}[HNN coding for $G$-limit
  points]\label{prop:HNN_limit_points_have_codings}
  Suppose that either $G$ or $G_0$ is geometrically finite, and let
  $x \in B_1 \cup B_{-1}$ be a point in
  $\Lambda(G) \setminus G(\Lambda(G_0))$. Then for fixed $\ell$, there
  is a sequence of type-$\ell$ forms $(h_k)$ in $G$ so that
  $|h_k| \to \infty$, each $h_k$ is a prefix of $h_{k+1}$, and
  $x \in h_kB_\ell$ for every $k$.
\end{proposition}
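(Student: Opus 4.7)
The proof mirrors that of \Cref{prop:AFP_limit_points_have_codings}, with \Cref{lem:HNN_prefixes} and \Cref{lem:HNN_improved_precise_invariance} replacing the elementary normal-form manipulations available in the amalgamated case, and \Cref{lem:HNN_contracting} playing the role of \Cref{lem:AFP_contraction}. My plan is to first set up Maskit-style ping-pong sets: for each $n \ge 1$, let
\[
T_n = \bigcup_{\ell \in \{\pm 1\}} \bigcup_{h} h B_\ell,
\]
where the inner union ranges over type-$\ell$ normal forms $h$ of length $n$, together with $T_0 = B_1 \cup B_{-1}$. \Cref{lem:HNN_prefixes} factors any type-$\ell$ form of length $n+1$ as $h' f^j g_0$ with $h'$ a type-$j$ prefix of length $n$ satisfying $f^j g_0 B_\ell \subset B_j$, so $T_{n+1} \subset T_n$; setting $T = \bigcap_n T_n$ and arguing as in \Cref{lem:AFP_T_G_invariant}, one checks that $gT \subset T$ for every $g \in G$.

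The heart of the argument is the HNN analog of \Cref{lem:AFP_first_pingpong_inclusion}: any $y \in \Lambda(G) \cap (B_1 \cup B_{-1})$ with $y \notin G\Lambda(G_0)$ lies in $T_1$. To prove this, I would take a divergent $(g_k) \subset G$ with $g_k w \to y$. A standard discrete-convergence argument inside a fixed length stratum shows that bounded $|g_k|$ forces some $G_0$-component of the normal form to be divergent in $G_0$, placing $y$ in a $G$-translate of $\Lambda(G_0)$ and contradicting the hypothesis; so $|g_k| \to \infty$, and after extraction the type of every $g_k$ is a fixed $\ell$. Choosing $w \in B_\ell$ (possible by \Cref{prop:interactive_triple_infinite}) and iteratively applying \Cref{lem:HNN_prefixes}, I decompose $g_k = h_k s_k$ with $h_k$ a length-1 type-$\mu_k$ form and $s_k B_\ell \subset B_{\mu_k}$, stabilizing $\mu_k = \mu$. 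Distinct translates $h_k B_\mu$ are pairwise disjoint by \Cref{lem:HNN_improved_precise_invariance}: if only finitely many distinct translates occur, the $g_k w$ lie in a compact subset of $T_1$, and $y \in T_1$ follows; otherwise \Cref{lem:HNN_contracting} collapses the translates to $\{y\}$, and a short case analysis on the two possible shapes $f^{\pm 1} g_{0,k}$ or $g_{0,k} f^{\pm 1}$ of $h_k$, combined with discrete convergence of $(g_{0,k})$ in $G_0$, places $y$ in a single $f^{\pm 1}$-translate of $\Lambda(G_0) \subset G\Lambda(G_0)$, a contradiction.

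Given the base case, the bootstrap is the same as in the AFP proof: if $x \notin T$ then $x \in T_{n-1} \setminus T_n$ for some $n$, and writing $x \in h B_j$ for a length-$(n-1)$ type-$j$ form $h$, the point $h^{-1} x$ lies in $B_j \cap (\Lambda(G) \setminus G\Lambda(G_0))$, hence in $T_1$; chasing back through $h$ places $x \in T_n$, a contradiction, so $x \in T$. This produces type-$\ell_n$ forms $h_n$ of length $n$ with $x \in h_n B_{\ell_n}$. To assemble these into a prefix-nested sequence of uniform type, I iteratively apply \Cref{lem:HNN_prefixes} together with \Cref{lem:HNN_improved_precise_invariance} (which forces the length-$n$ prefix of $h_{n+1}$ to lie in the same $J_{\ell_n}$-coset as $h_n$): at each step I modify $h_{n+1}$ by a right $J_{\ell_{n+1}}$-translate so that its length-$n$ prefix is literally $h_n$, and a final extraction along the two possible values of $\ell_n$ picks out an infinite subsequence of constant type, yielding the desired $(h_k)$.

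The hard part will be the HNN analog of the AFP base case: in the amalgamated case the first letter of an $(i,j)$-form lies in the factor $G_i$, so contraction of first-letter translates immediately placed the limit in $\Lambda(G_i)$; here the length-1 prefix produced by \Cref{lem:HNN_prefixes} involves an $f^{\pm 1}$, and the singleton produced by \Cref{lem:HNN_contracting} must be tracked through this action of $f^{\pm 1}$ before it lands in a $G$-translate of $\Lambda(G_0)$.
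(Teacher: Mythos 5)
Your overall architecture matches the paper's: nested Maskit sets $T_m$, the prefix lemma (\Cref{lem:HNN_prefixes}) and \Cref{lem:HNN_improved_precise_invariance} to splice the length-$k$ codings into a single prefix-nested sequence, and \Cref{lem:HNN_contracting} as the engine. Your assembly step at the end is essentially verbatim the paper's (note only that no actual right $J$-translation is needed: $h_k j f^{i} g_0 = h_k f^{i} j' g_0$ is the \emph{same} group element rewritten). Where you genuinely diverge is in how membership in $T$ is established: the paper first proves $\Lambda(G)\cap\del B_{\pm 1}=\Lambda(J_{\pm 1})$ (\Cref{lem:HNN_boundary_limit_points}) and $\Lambda(G)\setminus\Lambda(G_0)\subset G_0(B_1\cup B_{-1})$ (\Cref{lem:HNN_first_pingpong_inclusion}), and then pushes a putative point of $T_{m-1}\setminus T_m$ out with $f$; you instead propose a single ``base case'' ($y\in\Lambda(G)\cap(B_1\cup B_{-1})$, $y\notin G(\Lambda(G_0))$ implies $y\in T_1$) and induct. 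The induction itself is fine (the concatenation $hh'$ really is a length-$n$ type-$\ell$ form, using that $h'$ must be a $(j,\cdot)$-form because $h'B_{\ell}$ meets $B_j$, and that $h$ has type $j$), and the $G$-invariance of $T$ you mention is never actually needed.

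The gap is in the two places where you conclude ``$y\in G(\Lambda(G_0))$'' by composing convergence dynamics of $G_0$-letters with the fixed map $f^{\pm 1}$. First, your bounded-length claim: having one divergent $G_0$-letter does not by itself place $y$ in $G(\Lambda(G_0))$, because the letters to its \emph{left} may also diverge, and then you cannot pass to the limit through them; the degenerate case is when the limit fed into the outer divergent sequence coincides with its repelling point. (This dichotomy is also unnecessary: only $|g_k|=0$ needs separate treatment; your prefix/contraction argument works for all $|g_k|\ge 1$ and never uses $|g_k|\to\infty$.) Second, and more seriously, the same degeneracy sits in the last step of your base case. Your length-$1$ prefixes have the general shape $h_k=a_kf^{\pm1}b_k$ with $a_k,b_k\in G_0$ (the two-sided shape occurs, e.g.\ as the length-$1$ prefix of $g_1f^{-1}g_2fg_3$), and when $(a_k)$ is divergent the naive argument ``$b_kz\to p\in\Lambda(G_0)$, hence $h_kz\to a\text{-limit}$'' breaks exactly when $f^{\pm1}p$ equals the repelling point of $(a_k)$ --- and this coincidence cannot be excluded a priori, since $f$ carries $\Lambda(J_{-1})$ onto $\Lambda(J_1)\subset\Lambda(G_0)$, so $f^{\pm1}p$ can land back in $\Lambda(G_0)$. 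The repair is to avoid composing dynamics and instead run the coset dichotomy on the outer letters: either the $a_k$ occupy finitely many left $J_1$-cosets, in which case after translating by a fixed $a^{-1}$ you reduce to the shape $f b_k'$ and conclude $y\in af(\Lambda(G_0))$ from divergence of $(b_k')$, or they occupy infinitely many cosets, in which case \Cref{lem:HNN_contracting} applied to $(a_k)$ collapses $a_kB_1\supset h_kB_\mu$ to $\{y\}$ and gives $y\in\Lambda(G_0)$ directly. With that fix your base lemma holds and the rest of your plan goes through; alternatively, the paper's route through \Cref{lem:HNN_boundary_limit_points} and \Cref{lem:HNN_first_pingpong_inclusion} is designed precisely to sidestep this difficulty, which is the ``hard part'' you correctly flagged.
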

We can think of this proposition as a less explicit version of
\Cref{prop:AFP_limit_points_have_codings} in the amalgamated free
product case. The construction in this case is slightly more involved, and we need a little more
information about the location of certain points in $\Lambda(G)$. So,
we start by showing the following:
\begin{lemma}\label{lem:HNN_boundary_limit_points}
  Suppose that either $G$ or $G_0$ is geometrically finite. Then the
  only limit points of $G$ in $\del B_{\pm 1}$ are limit points of
  $J_{\pm 1}$. That is,
  $\Lambda(G) \cap \del B_{\pm 1} = \Lambda(J_{\pm 1})$.
\end{lemma}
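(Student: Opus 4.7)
The easy inclusion $\Lambda(J_{\pm 1}) \subset \Lambda(G) \cap \del B_{\pm 1}$ will follow immediately from $J_{\pm 1} < G$ together with part (ii) of \Cref{prop:HNN_basic_properties}. For the nontrivial direction, my plan is to fix $x \in \Lambda(G) \cap \del B_1$ (the $\del B_{-1}$ case being symmetric) and split into two cases depending on whether $x$ lies in $G \cdot \Lambda(G_0)$.

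In Case 1, I write $x = gy$ with $g \in G$ and $y \in \Lambda(G_0)$. If $g \in G_0$, then $x \in \Lambda(G_0) \cap B_1 = \Lambda(J_1)$ by condition \ref{item:HNN_technical} of Definition B, and we are done. Otherwise $g$ has positive normal-form length, and my next step is to rule out the possibility $y \in A_0$: by \Cref{lem:interactive_triple_dynamics}, such a $gy$ would lie in $\Int(B_{\pm 1})$ or in $A$, all of which are disjoint from $\del B_1$. So $y \in \Lambda(G_0) \setminus A_0$, and \Cref{prop:HNN_basic_properties}(iii) lets me write $y = g_0 y'$ with $g_0 \in G_0$ and $y' \in \Lambda(J_i)$ for some $i \in \{\pm 1\}$. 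Setting $g' = gg_0 \in G$ and using \Cref{prop:HNN_basic_properties}(ii), the point $x = g'y'$ lies in $g' \del B_i \cap \del B_1$, so \Cref{lem:HNN_boundary_invariance} will give either $i = 1$ with $g' \in J_1$ (hence $x \in J_1 \Lambda(J_1) \subset \Lambda(J_1)$), or $i = -1$ with $g' = fh$ for some $h \in J_{-1}$ (hence $x = f(hy') \in f\Lambda(J_{-1}) = \Lambda(J_1)$).

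In Case 2, $x \notin G \cdot \Lambda(G_0)$. Since $x \in B_1 \cup B_{-1}$, \Cref{prop:HNN_limit_points_have_codings} will supply a sequence $(h_k)$ of type-$\ell$ forms with $|h_k| \to \infty$ and $x \in h_k B_\ell$ for every $k$. Because each $h_k$ is a prefix of $h_{k+1}$, the first letter of $h_k$ is the same for all sufficiently large $k$, so all such $h_k$ are $(i_0, j_k)$-forms for a fixed $i_0 \in \{-1, 0, 1\}$. By \Cref{lem:HNN_pingpong_strong_nesting}, $h_k B_\ell$ then lies in $\Int(B_{i_0})$ (when $i_0 \ne 0$) or in $A$ (when $i_0 = 0$); both are disjoint from $\del B_1$, contradicting $x \in h_k B_\ell$. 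The main bookkeeping hurdle will be tracking the normal-form combinatorics in Case 1, but no step is conceptually deep, since every contradiction ultimately arises from the fact that $\Int(B_1)$, $\Int(B_{-1})$, and $A$ are pairwise disjoint and each is disjoint from $\del B_1$.
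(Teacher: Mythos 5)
Your first case is sound: the inclusion $\Lambda(J_{\pm 1}) \subset \Lambda(G) \cap \del B_{\pm 1}$ and the treatment of points $x = gy$ with $y \in \Lambda(G_0)$ work, and in fact your Case~1 is a purely combinatorial argument (via \Cref{lem:interactive_triple_dynamics}, \Cref{prop:HNN_basic_properties}, and \Cref{lem:HNN_boundary_invariance}) that never needs geometrical finiteness. The genuine gap is in Case~2: you invoke \Cref{prop:HNN_limit_points_have_codings} to code a point of $\Lambda(G) \setminus G(\Lambda(G_0))$, but in the paper that proposition comes \emph{after} the present lemma and its proof uses it essentially --- in the step where $z \in T_{m-1}\setminus T_m$, $z \in gB_{-1}$ and $g^{-1}z \in \del B_{-1}$, the paper concludes $g^{-1}z \in \Lambda(G) \cap \del B_{-1} = \Lambda(J_{-1})$ by citing exactly \Cref{lem:HNN_boundary_limit_points}. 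So as written your argument is circular, and it is not clear how to prove the coding proposition independently: handling limit points sitting on $\del B_{\pm 1}$ is precisely where that proof needs the boundary lemma, i.e. Case~2 of your proposal is deferring the only hard part of the statement to a result that presupposes it.

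What your proposal is missing is therefore the actual content of the lemma: showing directly that a point $x \in \Lambda(G) \cap \del B_1$ is a limit point of $G_0$ (hence of $J_1$ by condition \ref{item:HNN_technical} of Definition~B) without assuming any coding. The paper does this by taking a sequence $(g_k)$ in $G$ with $g_k z \to x$ away from one point, splitting on the $(i,j)$-type of $g_k$, and using the contraction lemma \Cref{lem:HNN_contracting} --- this is where the hypothesis that $G$ or $G_0$ is geometrically finite enters --- together with the constraint $x \in \del B_1$ to rule out $i = -1$, handle $i = 0$ by contraction of $G_0$-translates, and in the case $i = 1$ force $\alpha_{k,1} = 1$ and reduce to $f^{-1}x \in \Lambda(G_0) \cap \del B_{-1}$. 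To repair your proof, keep Case~1 as is, but replace Case~2 with an argument of this kind (a direct convergence-sequence/contraction argument); alternatively you would have to give a proof of \Cref{prop:HNN_limit_points_have_codings} that avoids the boundary lemma, which the paper's development suggests is not straightforward.
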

\begin{proof}
  We will show that the intersection
  $\Lambda(G) \cap (\del B_1 \cup \del B_{-1})$ is a subset of
  $\Lambda(G_0)$; then we will be done by condition
  \ref{item:HNN_technical} in Definition B. So, let
  $x \in \Lambda(G) \cap \del B_1$. We can find a sequence $(g_k)$ in
  $G$ so that $g_kz \to x$ for all but perhaps a single $z \in M$. Now, if
  $g_k \in G_0$ for infinitely many $k$, the conclusion immediately
  follows. So we may assume $|g_k| \geq 1$ for every $k$.

  Up to subsequence, the $g_k$ are all $(i,j)$-forms for fixed
  $i \in \{0,\pm 1\}$ and $j \in \{\pm 1\}$. If $i = -1$, then $g_kB_j \subset \Int(B_{-1})$. But for some $z \in B_j$, the sequence $(g_kz)$ converges to $x \in \del B_1$. So, we know that either $i = 0$ or $i = 1$.

  If $i = 0$, then by \Cref{lem:interactive_triple_dynamics}, for some
  $\ell = \pm 1$ and some $h_k \in G_0 \setminus J_i$, we have
  $g_kB_j \subset h_kB_\ell \subset A$. There must be infinitely many
  distinct translates $h_kB_\ell$, since otherwise each $g_kz$ would
  lie in a fixed compact subset of $A$, and $(g_kz)$ could not
  converge to $x \in B_1$. So, the left cosets $h_kJ_\ell$ are all
  distinct. Then by \Cref{lem:HNN_contracting}, the sequence of sets
  $(h_kB_\ell)$ converges to a singleton, which must be $x$. But since
  $h_k \in G_0$ this again implies that $x \in \Lambda(G_0)$.

  Finally, we consider the case $i = 1$. We write out a normal form
  for $g_k$:
  \[
    g_k = f^{\al_{k,1}}g_{k,1} \cdots f^{\al_{k,n}}g_{k,n}.
  \]
  First observe that if $\al_{k,1} > 1$ for infinitely many $k$, then
  the word
  \[
    g_k' = f^{-1}g_k = f^{\al_{k,1} - 1}g_{k,1} \cdots
    f^{\al_{k,n}}g_{k,n}
  \]
  is still a $(1,j)$-form, which means that $f^{-1}g_kB_j \subset B_1$
  for infinitely many $k$. But then for infinitely many $k$, the point
  $g_kz$ lies in the compact subset $fB_1 \subset \Int(B_1)$, which is
  impossible if $g_kz \to x \in \del B_1$.

  We conclude that after extraction, we have $\al_{k,1} = 1$ for every
  $k$. After further extraction, we can assume that one of the the
  three conditions below holds for every $k$:
  \begin{enumerate}[label=(\alph*)]
  \item\label{item:length_zero} The length of $g_k'$ is zero;
  \item\label{item:0j_form} $g_k'$ is a $(0,j)$-form;
  \item\label{item:not_0j_form} $g_k'$ is not a normal form, hence $g_{k,1} \in J_1$ and $\al_{k,2} > 0$.
  \end{enumerate}
  If either \ref{item:length_zero} or \ref{item:0j_form} holds, we can
  use the first two cases of this proof to see that that $f^{-1}x$
  lies in $\Lambda(G_0) \cap \del B_{-1} = \Lambda(J_{-1})$, and thus
  $x \in \Lambda(J_1)$. And, \ref{item:not_0j_form} cannot occur: if
  $\al_{k,2} > 0$, then the word
  $f^{\al_{k,2}}g_{k,2} \cdots f^{\al_{k,n}}g_{k,n}$ is a
  $(1,j)$-form, which means $g_k'z \in g_{k,1}B_1$, and if
  $g_{k,1} \in J_1$ then $g_kz = fg_k'z \in fB_1$ and again $(g_kz)$
  cannot converge to $x \in \del B_1$.
\end{proof}

We now set about proving \Cref{prop:HNN_limit_points_have_codings}. As
in the analogous situation in the amalgamated free product case, we
follow Maskit's strategy by defining certain ``ping-pong'' sets in
$M$. Let $T_{0,i} = G_0(B_i)$, and
$T_0 = T_{0,1} \cup T_{0,-1} = G_0(B_1 \cup B_{-1})$, the union of all
$G_0$ translates of $B_1$ and $B_{-1}$.

More generally, let
\[
  T_{m,-1} = \bigcup gB_{-1},
\]
where the union is taken over length-$m$ normal forms $g$ of type
$-1$.  Similarly, let
\[
    T_{m,1} = \bigcup gB_1,
\]
where the union is taken over length-$m$ normal forms of type $1$. Let
\[
  T_m = T_{m,1} \cup T_{m,-1}.
\]

\Cref{lem:HNN_prefixes} implies that the sets $T_m$ are decreasing:
for any length-$m$ normal form $g$ with type $i$, we can use the lemma
to find a length-$(m-1)$ normal form $g'$ with type $j$, and
$g_0 \in G_0$ so that $gB_i = g'f^jg_0B_i \subset g'B_j$. So, we can
now consider the set
\[
  T = \bigcap_{m=0}^\infty T_m.
\]

\begin{figure*}[ht!]
    \centering
    \def\svgwidth{14.7 cm}
\begingroup%
  \makeatletter%
  \providecommand\color[2][]{%
    \errmessage{(Inkscape) Color is used for the text in Inkscape, but the package 'color.sty' is not loaded}%
    \renewcommand\color[2][]{}%
  }%
  \providecommand\transparent[1]{%
    \errmessage{(Inkscape) Transparency is used (non-zero) for the text in Inkscape, but the package 'transparent.sty' is not loaded}%
    \renewcommand\transparent[1]{}%
  }%
  \providecommand\rotatebox[2]{#2}%
  \newcommand*\fsize{\dimexpr\f@size pt\relax}%
  \newcommand*\lineheight[1]{\fontsize{\fsize}{#1\fsize}\selectfont}%
  \ifx\svgwidth\undefined%
    \setlength{\unitlength}{566.92913386bp}%
    \ifx\svgscale\undefined%
      \relax%
    \else%
      \setlength{\unitlength}{\unitlength * \real{\svgscale}}%
    \fi%
  \else%
    \setlength{\unitlength}{\svgwidth}%
  \fi%
  \global\let\svgwidth\undefined%
  \global\let\svgscale\undefined%
  \makeatother%
  \begin{picture}(1,0.5)%
    \lineheight{1}%
    \setlength\tabcolsep{0pt}%
    \put(0,0){\includegraphics[width=\unitlength,page=1]{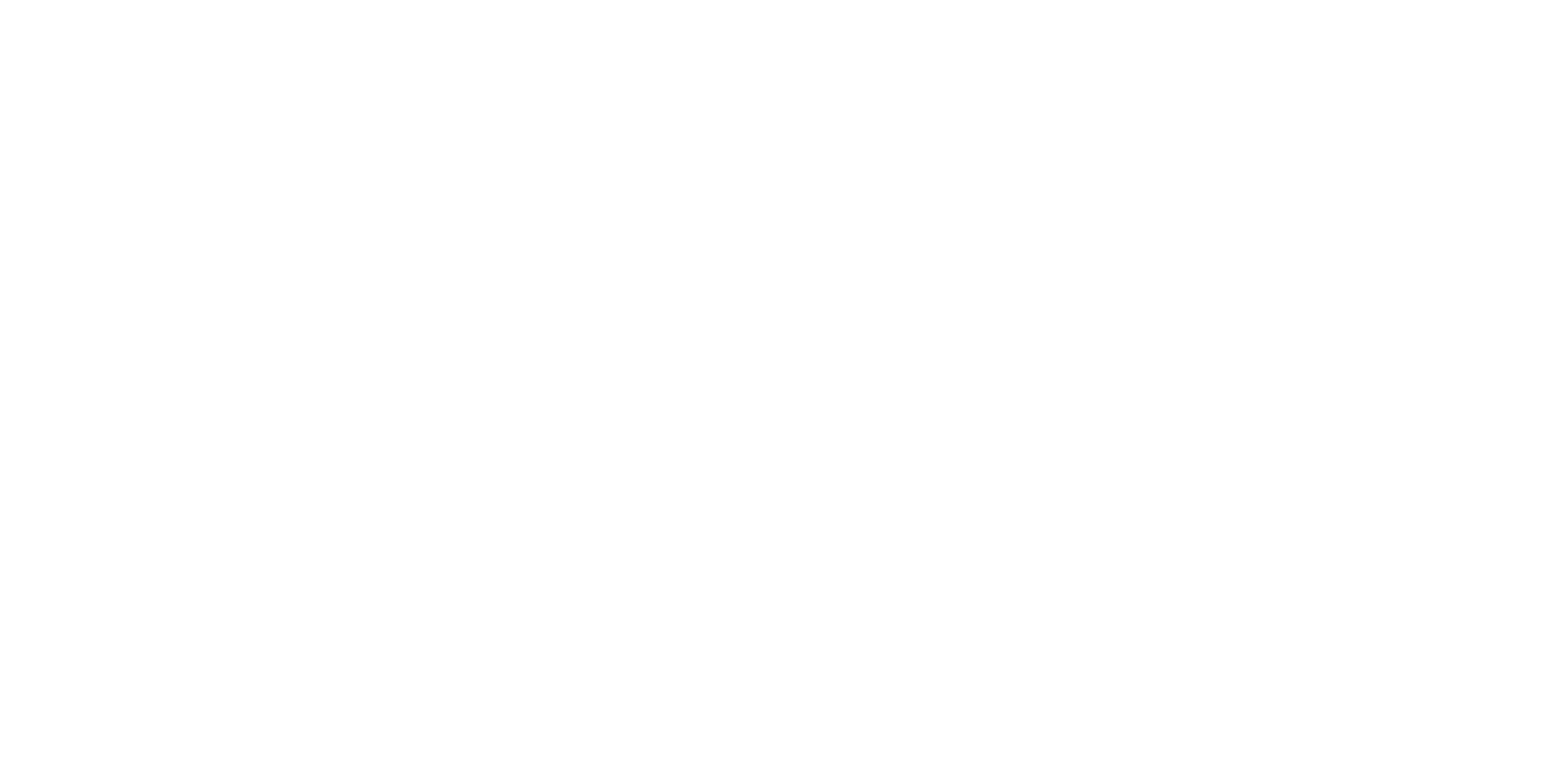}}%
    \put(0.19008497,0.36642756){\color[rgb]{1,0,0}\makebox(0,0)[lt]{\lineheight{1.25}\smash{\begin{tabular}[t]{l}$B_1$\end{tabular}}}}%
    \put(0.77546082,0.36642756){\color[rgb]{0,0.23529412,1}\makebox(0,0)[lt]{\lineheight{1.25}\smash{\begin{tabular}[t]{l}$B_{-1}$\end{tabular}}}}%
    \put(0,0){\includegraphics[width=\unitlength,page=2]{HNN_T.pdf}}%
    \put(0.48954853,0.46203369){\color[rgb]{0,0,0}\makebox(0,0)[lt]{\lineheight{1.25}\smash{\begin{tabular}[t]{l}$A$\end{tabular}}}}%
    \put(0,0){\includegraphics[width=\unitlength,page=3]{HNN_T.pdf}}%
  \end{picture}%
\endgroup%

    \caption{Part of the sets $T_0$ and $T_1$.}
    \label{fig:HNN_T}
\end{figure*}

The proof of \Cref{prop:HNN_limit_points_have_codings} mainly involves
showing that $\Lambda(G) \setminus G(\Lambda(G_0)) \subset T$. Then,
we construct the desired sequence using the definition for $T$. The
first step is:
\begin{lemma}\label{lem:HNN_first_pingpong_inclusion}
  Suppose either $G$ or $G_0$ is geometrically finite. Then
  $\Lambda(G) \setminus \Lambda(G_0) \subset T_0$.
\end{lemma}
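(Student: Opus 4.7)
The plan is to take an arbitrary $x \in \Lambda(G) \setminus \Lambda(G_0)$ and exhibit a single element $h \in G_0$ so that $x \in hB_{\pm 1}$, arguing by case analysis on the ``type'' of a limit sequence for $x$. The strategy mirrors the AFP argument in \Cref{lem:AFP_first_pingpong_inclusion} and leans on the HNN contraction lemma to rule out one degenerate case.

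Pick a sequence $(g_k)$ in $G$ with $g_k z \to x$ for all but at most one $z \in M$. Since $x \notin \Lambda(G_0)$ we may assume $g_k \in G \setminus G_0$, so $|g_k| \ge 1$, and after passing to a subsequence we may assume each $g_k$ is an $(i,j)$-form for fixed $i \in \{0, \pm 1\}$ and $j \in \{\pm 1\}$. By \Cref{prop:interactive_triple_infinite} the set $B_j$ is infinite, so we can choose $z \in B_j$ different from the one exceptional point; then $g_k z \to x$.

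If $i \in \{\pm 1\}$, then by \Cref{lem:HNN_pingpong_strong_nesting} (or just by the interactive triple property) we have $g_k z \in g_k B_j \subset B_i$. Since $B_i$ is closed this forces $x \in B_i \subset T_0$, and we are done.

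The remaining case $i = 0$ is the main obstacle, and requires the contraction lemma. When $i = 0$, \Cref{lem:interactive_triple_dynamics} provides, for each $k$, an element $h_k \in G_0$ and an index $\ell_k \in \{\pm 1\}$ with $g_k B_j \subset h_k B_{\ell_k} \subset A$; after extracting we may assume $\ell_k$ is a fixed $\ell$. If infinitely many of the translates $h_k B_\ell$ coincide with a single $h B_\ell$, then $g_k z$ lies in the compact set $h B_\ell$ infinitely often, whence $x \in h B_\ell \subset T_0$. Otherwise, by \Cref{lem:HNN_boundary_invariance} the condition $h_k B_\ell = h_{k'} B_\ell$ is equivalent to $h_k J_\ell = h_{k'} J_\ell$, so after extraction we may assume the left cosets $h_k J_\ell$ are pairwise distinct, which also lets us assume $h_k \in G_0 \setminus J_\ell$ (each $h_k$ is thus a type-$\ell$ form of length $0$). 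Now \Cref{lem:HNN_contracting} applies: some subsequence of $(h_k B_\ell)$ converges to a singleton $\{z_+\}$. Since $g_k z \in h_k B_\ell$ and $g_k z \to x$, we conclude $x = z_+$. But the proof of \Cref{lem:HNN_contracting} identifies $z_+$ as the attracting point of a convergence subsequence of $(h_k j_k^{-1})$ for some $j_k \in J_\ell \subset G_0$, hence $z_+ \in \Lambda(G_0)$. This contradicts $x \notin \Lambda(G_0)$, finishing the proof.
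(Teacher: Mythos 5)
Your argument is correct and follows essentially the same route as the paper's proof: reduce to positive-length $(i,j)$-forms, push $B_j$ into $G_0$-translates of $B_{\pm 1}$, split on whether finitely or infinitely many distinct translates occur, and invoke \Cref{lem:HNN_contracting}; the paper merely phrases it contrapositively ($y \notin T_0 \Rightarrow y \in \Lambda(G_0)$) and treats the cases $i=0$ and $i=\pm 1$ uniformly. Your final appeal to the \emph{proof} of \Cref{lem:HNN_contracting} can be replaced by the statement-level observation that the $h_k$ are pairwise distinct elements of $G_0$ with $h_kB_\ell \to \{x\}$, so choosing $z \in B_\ell$ away from the repelling point of a convergence subsequence gives $x \in \Lambda(G_0)$ directly, exactly as in the paper.
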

\begin{proof}
  This argument is similar to the proof of
  \Cref{lem:HNN_boundary_limit_points}. Suppose $y \in \Lambda(G)$
  does not lie in $T_0$ (that is, $y \in A_0$). We must show
  $y \in \Lambda(G_0)$. Since $y \in \Lambda(G)$, we can find a
  sequence $(g_k)$ in $G$ so that $g_kw \to y$ for all but a single
  $w \in M$. If $g_k \in G_0$ for infinitely many $k$ we will have
  $y \in \Lambda(G_0)$ as desired, so now suppose that $|g_k| \ge 1$
  for infinitely many $k$. After extracting a subsequence we can
  assume that each $g_k$ is an $(i,j)$-form for $i,j$ fixed. Since
  $B_j$ is an infinite set, we can fix some $w \in B_j$ so that
  $(g_kw)$ converges to $y$.

  By definition, we know that $g_kw \in T_n$, so in particular
  $g_kw \in T_0$ for every $k$. Then, we can extract a further
  subsequence so that $g_kw \in G_0(B_i)$ for fixed $i$ and write
  $g_kw = g_k'z_k$ for $g_k' \in G_0$ and $z_k \in B_i$.

  As $y \notin T_0$, there must be infinitely many distinct translates
  $g_k'B_i$, because otherwise every $g_k'z_k$ would lie in a fixed
  compact subset of $T_0$. Thus there are infinitely many distinct
  cosets $g_k'J_i$, and \Cref{lem:HNN_contracting} tells us that after
  extraction, $(g_k'B_i)$ must converge to a singleton. Since
  $g_k'z_k \to y$, it follows that this singleton is $y$. Hence for
  any choice of $z \in B_i$ we have $g_k'z \to y$, so
  $y \in \Lambda(G_0)$.
\end{proof}

\begin{proof}[Proof of \Cref{prop:HNN_limit_points_have_codings}]
  We first prove that
  $\Lambda(G) \setminus G(\Lambda(G_0)) \subset T$. So, fix
  $z \in \Lambda(G)$, and suppose $z \notin T$. We will show
  $z \in G(\Lambda(G_0))$.

  If $z \in \Lambda(G_0)$ we are done, hence by
  \Cref{lem:HNN_first_pingpong_inclusion} we can assume $z \in
  T_0$. Then we can find $m > 0$ so that $z \in T_{m-1} \setminus T_m$
  since these sets are decreasing. Without loss of generality, we have
  $z \in gB_{-1}$ for $g = f^{\al_1}g_1 \cdots f^{\al_n}g_n$ a normal
  form with length $m-1$ and type $-1$. If $g^{-1}z \in \del B_{-1}$,
  then since $\Lambda(G)$ is $G$-invariant we have
  $g^{-1}z \in \Lambda(G) \cap \del B_1 = \Lambda(J_{-1})$ by
  \Cref{lem:HNN_boundary_limit_points} and we are done. So suppose
  $g^{-1}z \in \Int(B_{-1})$.
    
  Since $z \notin T_m$, we have $fg^{-1}z \notin B_{-1}$ since
  $gf^{-1}$ has length $m$. Also, $fg^{-1}z \notin B_1$ since $f$ does
  not map any points of $\Int(B_{-1})$ into $B_1$. It follows that
  $fg^{-1}z \notin B_1 \cup B_2$, but also, $fg^{-1}z$ cannot be in a
  translate of $B_1$ nor $B_2$. Indeed, if $fg^{-1}z = hy$ for
  $y \in B_i$ and $h \in G_0$, then $h \in G_0 \setminus J_i$ since
  $hy \not\in B_i$ and $B_i$ is $J_i$-invariant. Hence
  $z = gf^{-1}hy \in T_m$, a contradiction. Hence
  $fg^{-1}z \notin T_0$, and so by
  \Cref{lem:HNN_first_pingpong_inclusion} we have
  $fg^{-1}z \in \Lambda(G_0)$ and $z \in G(\Lambda(G_0))$ as desired.

  We have now shown that
  $\Lambda(G) \setminus G(\Lambda(G_0)) \subset T$, so consider
  $z \in T$. We will construct our sequence $(h_k)$ of normal forms
  inductively. We know that $z \in T_1$, so we can find some normal
  form $h_1$ with type $i_1$ so that $z \in h_1B_{i_1}$. Now, assume
  that we have constructed a normal form $h_k$ of type $i$ so that
  $z \in h_kB_i$. Since $z \in T_{k+1}$, we can find a normal form
  $h_{k+1}'$ with length $k+1$ and type $\ell$ so that
  $x \in h_{k+1}'B_{\ell}$. Then, by \Cref{lem:HNN_prefixes}, there is
  a type-$i'$ form $h_k'$ with length $k$ and $g_0 \in G_0$ so that
  $h_{k+1}' = h_k'f^{i'} g_0$ and
  $h_{k+1}'B_{\ell} \subset h_k'B_{i'}$. Then $h_k'B_{i'}$ has
  nonempty intersection with $h_kB_i$, so by
  \Cref{lem:HNN_improved_precise_invariance} we have $i = i'$ and
  $h_kj = h_k'$ for $j \in J_i$. We can write $jf^i = f^ij'$ for
  $j' \in J_{-i}$. Then since $h_k$ has type $i$, $h_k$ is a prefix of
  the type-$\ell$ form $h_{k+1} = h_kf^ij'g_0$. This form is
  equivalent in $G$ to the type-$\ell$ form $h_{k+1}'$, hence
  $z \in h_{k+1}B_\ell$.

  Finally, by taking a subsequence, we can assume that each $h_k$ is a
  form of type $\ell$ for $\ell$ fixed, and we are done.
\end{proof}

As for amalgamated free products, we can use the coding given by
\Cref{prop:HNN_limit_points_have_codings} to construct conical limit
sequences for points in $\Lambda(G) \setminus G(\Lambda(G_0))$:
\begin{lemma}\label{lem:HNN_conical}
  If $G_0$ is geometrically finite, then every point of
  $\Lambda(G) \setminus G(\Lambda(G_0))$ is a conical limit point for
  $G$.
\end{lemma}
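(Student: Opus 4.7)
The plan is to follow the structure of the AFP analog \Cref{lem:AFP_conical}: use \Cref{prop:HNN_limit_points_have_codings} to code $x$ by nested ping-pong translates, use \Cref{lem:HNN_technical} to strengthen the coding into honest nesting in a fixed compact, and conclude with the abstract conical-point criterion \Cref{lem:conical_characterization}. First we use the $G$-invariance of $\Lambda(G) \setminus G(\Lambda(G_0))$ together with \Cref{lem:HNN_first_pingpong_inclusion} (which gives $\Lambda(G) \setminus \Lambda(G_0) \subset G_0(B_1 \cup B_{-1})$) to replace $x$ by a $G_0$-translate and assume $x \in B_1 \cup B_{-1}$; by the $B_1 \leftrightarrow B_{-1}$, $f \leftrightarrow f^{-1}$ symmetry we may further assume $x \in B_{-1}$. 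Since $x \notin G(\Lambda(G_0))$ and \Cref{lem:HNN_boundary_limit_points} gives $\Lambda(G) \cap \del B_{-1} = \Lambda(J_{-1}) \subset \Lambda(G_0)$, the point $x$ must actually lie in $\Int(B_{-1})$.

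Next we apply \Cref{prop:HNN_limit_points_have_codings} to obtain a fixed $\ell \in \{\pm 1\}$ and a sequence of type-$\ell$ normal forms $(h_k)$ with $|h_k| \to \infty$, each $h_k$ a prefix of $h_{k+1}$, and $x \in h_k B_\ell$. Once $|h_k| \ge 1$ the first letter of $h_k$ stabilizes, so for $k$ large each $h_k$ is an $(i, j_k)$-form for a fixed $i \in \{0, \pm 1\}$. \Cref{lem:HNN_pingpong_strong_nesting} rules out $i = 1$ (which would force $h_k B_\ell \subset \Int(B_1)$, contradicting $x \in B_{-1}$), and $i = 0$ would force $h_k^{-1} x \in \del B_\ell \cap \Lambda(G) = \Lambda(J_\ell) \subset \Lambda(G_0)$ via \Cref{lem:interactive_triple_dynamics} and \Cref{lem:HNN_boundary_limit_points}, contradicting $x \notin G(\Lambda(G_0))$. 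Hence $i = -1$, and we may pass to a subsequence so that $j_k = j$ is constant.

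Let $g_n = g_n^{(k)}$ denote the terminal $G_0$-element in the normal form of $h_k$. By \Cref{lem:HNN_inverse} the word $w_k := g_n h_k^{-1}$ is a $(-j, 1)$-form whose terminal $f$-power is $f^{-\alpha_1}$ with $\alpha_1 < 0$, so $w_k$ has type $1$, and \Cref{lem:HNN_pingpong_strong_nesting} gives $w_k B_1 \subset \Int(B_{-j})$. We now multiply $h_k^{-1}$ on the left by a suitable $\eta_k \in J_\ell$ to push $h_k^{-1} B_1$ into a fixed compact $K \subset A \cup B_{-\ell}$, splitting on two cases. If $j = \ell$, we apply part (ii) of \Cref{lem:HNN_technical} with $i = \ell$ to the element $g_n^{-1} \in G_0$ to get $\eta_k g_n^{-1} B_{-\ell} \subset K$, and hence $\eta_k h_k^{-1} B_1 = \eta_k g_n^{-1} w_k B_1 \subset K$. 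If $j = -\ell$, then the type-$\ell$ condition forces $g_n \notin J_\ell$, and part (i) of \Cref{lem:HNN_technical} applied to $g_n^{-1} \in G_0 \setminus J_\ell$ yields $\eta_k g_n^{-1} B_\ell \subset K$ and the same conclusion.

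Finally, set $F_k = \eta_k h_k^{-1}$. Because $|h_k| \to \infty$, passing to a subsequence makes the lengths $|h_k|$ strictly increasing; since right-multiplication by an element of $J_\ell \subset G_0$ only alters the terminal $G_0$-element of a normal form and hence preserves length, any identity of the form $h_k = h_{k'} j$ with $j \in J_\ell$ forces $|h_k| = |h_{k'}|$, and we conclude the $F_k$ are pairwise distinct. Applying \Cref{lem:conical_characterization} with $Y = \Int(B_1)$, $K_1 = K$, $K_2 = B_\ell$ finishes the proof: $F_k x \in \eta_k B_\ell = B_\ell$ since $B_\ell$ is $J_\ell$-invariant, $F_k Y \subset K$ by construction, $Y$ is infinite by \Cref{prop:interactive_triple_infinite}, and $K \cap B_\ell = \es$ since $K \subset A \cup B_{-\ell}$. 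The main obstacle will be the two-subcase analysis identifying the right $\eta_k$ from the two parts of \Cref{lem:HNN_technical}; once the type computation for $w_k = g_n h_k^{-1}$ is in place, the rest is essentially bookkeeping parallel to \Cref{lem:AFP_conical}.
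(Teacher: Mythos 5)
Your argument is correct and follows essentially the same route as the paper's own proof: code $x$ via \Cref{prop:HNN_limit_points_have_codings}, read off the type of the inverse word $g_nh_k^{-1}$ via \Cref{lem:HNN_inverse}, use the two parts of \Cref{lem:HNN_technical} (case split on the last letter) to push a fixed ball into a compact $K$, and finish with \Cref{lem:conical_characterization}; the paper simply mirrors your normalization, arranging $x \in B_1$ with coding type $1$ and pushing $B_{-1}$, where you arrange $x \in B_{-1}$ and push $B_1$. One small repair: your stated reason for excluding the first index $i = 0$ (that $h_k^{-1}x$ would lie in $\del B_\ell \cap \Lambda(G)$) does not follow, but the exclusion is immediate from \Cref{lem:HNN_pingpong_strong_nesting}, which for $i=0$ gives $h_kB_\ell \subset A$, disjoint from $B_{-1} \ni x$.
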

\begin{proof}
  Let $x \in \Lambda(G) \setminus
  G(\Lambda(G_0))$. \Cref{prop:HNN_limit_points_have_codings} says
  that for $i$ fixed, we can find a sequence $(h_k)$ of ping-pong
  forms of type $i$, with $|h_k| \to \infty$, so that each $h_k$ is a
  prefix of $h_{k+1}$, and $x \in h_kB_i$ for all $k$. Possibly after
  relabeling we may assume $i = 1$.

  We write $h_k$ in a normal form:
  \[
    f^{\alpha_1}g_1 \cdots f^{\alpha_{n_k}}g_{n_k}.
  \]
  If $\alpha_1 = 0$, then $\alpha_2 \ne 0$, in which case
  $h_k' = f^{\alpha_2}g_2 \cdots f^{\alpha_{n_k}}g_{n_k}$ is a
  ping-pong form of type $1$ such that $h_k'B_1$ contains
  $x' = g_1^{-1}x$. Since $x'$ is a conical limit point
  if and only if $x$ is, if necessary we can replace $x$ with $x'$ and
  $h_k$ with $h_k'$, and assume that $\alpha_1 \ne 0$. That is, $h_k$
  is an $(\ell,j)$-form for $\ell \ne 0$, so $h_kB_1 \subset B_\ell$.

  Further, since $x \in h_1B_1$, by replacing $x$ with $h_1^{-1}x$ and
  $h_k$ with $h_1^{-1}h_k$, we can assume that also $x \in B_1$, hence
  $h_kB_1 \cap B_1 \ne \es$. Since $h_kB_1 \subset B_\ell$ we have
  $\ell = 1$, meaning $\alpha_1 > 0$.

  Now, consider the sequence of sets
  \[
    (h_k^{-1}B_{-1}) = (g_{n_k}^{-1}f^{-\alpha_{n_k}} \cdots
    g_1^{-1}f^{-\alpha_1}B_{-1}).
  \]
  By \Cref{lem:HNN_inverse}, the word
  $f^{-\alpha_{n_k}} \cdots g_1^{-1}f^{-\alpha_1}$ is a normal form;
  since $\alpha_1 > 0$ it is a form of type $-1$, implying that
  $f^{-\alpha_{n_k}} \cdots g_1^{-1}f^{-\alpha_1}B_{-1}$ is a subset
  of $B_1$ if $\alpha_{n_k} < 0$ and a subset of $B_{-1}$ if
  $\alpha_{n_k} > 0$. And, since $h_k$ is a form of type $1$, we know
  that either $g_{n_k} \in G_0 \setminus J_1$ or $\alpha_{n_k} > 0$.

  To prove that $x$ is conical, we want to apply
  \Cref{lem:conical_characterization}, which means we need to produce
  distinct elements $g_k$, a set $Y$ with at least two points, and
  disjoint compact sets $K_1$ and $K_2$ so that $g_kx \in K_2$ and
  $g_kY \subset K_1$. Let $K \subset A \cup B_{-1} = M \setminus B_1$
  be the compact from \Cref{lem:HNN_technical}, and take
  $Y = B_{-1}, K_1 = K$ and $K_2 = B_1$. We know $Y$ contains at least
  two points from \Cref{prop:interactive_triple_infinite}, so we just
  need to produce the sequence $(g_k)$ by modifying $h_k^{-1}$.

  For each fixed $k$, we already have $h_k^{-1}x \in B_1$ as
  desired. If $\al_{n_k} > 0$, then
  $h_k^{-1}B_{-1} \subset g_{n_k}^{-1}B_{-1}$. From the definition of
  $K$, we can find $j_k \in J_1$ so that
  $j_kg_{n_k}^{-1}B_{-1} \subset K$, hence
  $j_kh_k^{-1}B_{-1} \subset K$.

  On the other hand, if $\al_{n_k} < 0$, then we necessarily have
  $g_{n_k} \in G_0 \setminus J_1$, and
  $h_k^{-1}B_{-1} \subset g_{n_k}^{-1}B_1 \subset A$. Again using the
  definition of $K$, we can find $j_k \in J_1$ so that
  $j_kg_{n_k}^{-1}B_1 \subset K$, hence $j_kh_k^{-1}B_{-1} \subset K$.

  In either of these cases, we have $j_kh_k^{-1}B_{-1} \subset K$ and
  $j_kh_k^{-1}x \in j_kB_1 = B_1$, which means we can take
  $g_k = j_kh_k^{-1}$ to complete the proof.
\end{proof}

We next consider parabolic points.

\begin{lemma}\label{lem:HNN_bounded_parabolic_G}
  Suppose that $G_0$ is geometrically finite. If $p \in \Lambda(G_0)$
  is a parabolic point for the action of $G_0$ on $\Lambda(G_0)$, then
  $p$ is a bounded parabolic point for the action of $G$ on
  $\Lambda(G)$.
\end{lemma}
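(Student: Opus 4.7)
The argument closely parallels the proof of the AFP analog \Cref{lem:AFP_bounded_parabolic_G}. Set $P_0 = \mathrm{Stab}_{G_0}(p)$; since $P_0 \subset \mathrm{Stab}_G(p)$, it suffices to find a compact $K \subset \Lambda(G) \setminus \{p\}$ with $P_0(K) \supset \Lambda(G) \setminus \{p\}$. Using \Cref{prop:HNN_basic_properties}(iii) and $G_0$-equivariance of the bounded parabolic property, I would reduce to two cases: (Case 1) $p \in A_0$, or (Case 2) $p \in \Lambda(J_i)$ for some $i \in \{\pm 1\}$, which after relabeling we may take to be $i = 1$.

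In both cases the overall scheme is to decompose $\Lambda(G) \setminus \{p\}$ via \Cref{lem:HNN_first_pingpong_inclusion} into $\Lambda(G_0) \setminus \{p\}$ (covered by $P_0$-translates of a compact $K_1$ using the bounded parabolicity of $p$ in $G_0$) together with pieces contained in $G_0(B_{\pm 1})$. For the latter I would apply \Cref{prop:quasiconvex_subgroups_nest} with $H = G_0$, taking the fully quasi-convex pair $(P_0, J_i)$ together with $U_1 = M \setminus \{p\}$ and $U_2 = M \setminus B_i$. The hypothesis $gB_i \subset M \setminus \{p\}$ for $g \in G_0 \setminus J_i$ holds: in Case 1 directly from $p \in A_0$; in Case 2 for $i = 1$ by full quasi-convexity of $J_1$, and for $i = -1$ by combining $\Lambda(J_1) = f\Lambda(J_{-1})$ with \Cref{lem:HNN_boundary_invariance} to force $g \in fJ_{-1}$, contradicting $f \notin G_0$ (the latter because the fixed points of $f$ lie in $\Int(B_{\pm 1})$, disjoint from $\Lambda(G_0)$).

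In Case 2 an additional observation is that full quasi-convexity of $J_1$ forces $P_0 \subset J_1$ (if $g \in P_0$ then $p \in g\Lambda(J_1) \cap \Lambda(J_1)$), so Tukia's conical/parabolic dichotomy makes $p$ a bounded parabolic point for $J_1$ with stabilizer $P_0$; this yields a compact $K_{J_1} \subset \Lambda(J_1) \setminus \{p\}$ covered by $P_0$-translates. The hard part will be the remaining set $\Lambda(G) \cap \Int(B_1)$, since the $J_1$-orbit of $B_1$ is $B_1$ itself (which contains $p$), so the nesting proposition does not apply directly. To handle this I would use \Cref{prop:HNN_limit_points_have_codings}: any $y \in \Lambda(G) \cap \Int(B_1) \setminus G(\Lambda(G_0))$ admits a coding by type-$\ell$ forms, and the condition $y \in \Int(B_1) = f(A \cup B_1)$ forces the leading letter of each $h_k$ to be $f$ (the only length-one type-$\ell$ forms $h$ with $hB_\ell \subset \Int(B_1)$ begin with $f$). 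Combined with the conjugation identity $P_0 = fP_{-1}f^{-1}$, where $P_{-1} = f^{-1}P_0 f = \mathrm{Stab}_{G_0}(f^{-1}p) \subset J_{-1}$ is a parabolic subgroup of $G_0$, the leading $f$ factor can be absorbed into a change of parabolic: a $P_0$-translate of $y = fg_0 b$ becomes $f(jg_0 b)$ for some $j \in P_{-1}$, and a further application of \Cref{prop:quasiconvex_subgroups_nest} with ambient $G_0$, subgroups $(P_{-1}, J_\ell)$, and $U_1 = M \setminus \{f^{-1}p\}$ places $jg_0 b$ in a fixed compact of $M \setminus \{f^{-1}p\}$; its $f$-image is then a compact of $M \setminus \{p\}$, which we add to $K$. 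Points of $\Lambda(G) \cap \Int(B_1)$ that happen to lie in $G(\Lambda(G_0))$ are handled by a parallel argument exploiting that $f^{-1}p$ is itself a bounded parabolic point for $G_0$.
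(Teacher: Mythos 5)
Your proposal is correct in substance, and its engine is the same as the paper's: reduce by equivariance to the two cases $p \in A_0$ and $p \in \Lambda(J_1)$, cover $\Lambda(G_0)\setminus\{p\}$ using bounded parabolicity of $p$ for $G_0$, cover the translates $(G_0\setminus J_{\pm 1})(B_{\pm 1})$ by \Cref{prop:quasiconvex_subgroups_nest} applied to the pair (stabilizer of $p$, $J_{\pm 1}$) inside $H = G_0$, and, for the part of $\Lambda(G)$ inside $\Int(B_1)$ in Case 2, absorb the leading $f$ by conjugating the stabilizer to $P_{-1} = f^{-1}P_0f = \mathrm{Stab}_{G_0}(f^{-1}p)$ and reuse the same two tools at $f^{-1}p$. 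Where you diverge is in how you see that every point of $\Lambda(G)\cap\Int(B_1)$ can be written as $fg_0b$ with $g_0 \in G_0$, $b \in B_{\pm 1}$, or as $fz$ with $z \in \Lambda(G_0)$: you route this through \Cref{prop:HNN_limit_points_have_codings} together with a split on membership in $G(\Lambda(G_0))$, whereas the paper simply uses $\Int(B_1) = fB_1 \cup fA$, discards $fB_1$ as a compact set missing $p$, and applies \Cref{lem:HNN_first_pingpong_inclusion} to $f^{-1}y \in A$. The paper's route is lighter and also closes the two loose ends in yours: first, when the leading exponent is at least $2$, or when $g_0 \in J_\ell$, your nesting step does not literally apply (the proposition only treats $g \in G_0\setminus J_\ell$), but then $y \in fB_1$, which is already compact and misses $p$; second, your ``parallel argument'' for points of $G(\Lambda(G_0)) \cap \Int(B_1)$ cannot rest on bounded parabolicity of $f^{-1}p$ alone, since $f^{-1}y$ need not lie in $\Lambda(G_0)$ (e.g.\ $y = fg_1fw$ with $w \in \Lambda(G_0)$, $g_1 \in G_0 \setminus J_1$) --- such points need \Cref{lem:HNN_first_pingpong_inclusion} and the same nesting application at $f^{-1}p$ as the coded points, so the coded/uncoded dichotomy is an unnecessary detour rather than a genuine case distinction. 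One citation nit: the facts $\mathrm{Stab}_G(p) \le J_1$ and $p \notin gB_{\pm 1}$ for the relevant $g \in G_0$ should be justified by precise invariance of $(B_1, B_{-1})$ under $(J_1, J_{-1})$ (or \Cref{lem:HNN_boundary_invariance}), not by full quasi-convexity, which only controls all but finitely many cosets.
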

\begin{proof}
  Let $p \in \Lambda(G_0)$ be a parabolic point for $G$, and let $P$
  be the stabilizer of $p$ in $G$. Since $p$ is a bounded parabolic
  point, and $P$ contains the stabilizer of $p$ in $G_0$, we know that
  there is a compact $\widehat{K} \subset \Lambda(G_0) \setminus \{p\}$ so
  that $P(\widehat{K}) = \Lambda(G_0) \setminus \{p\}$. We want to find a
  compact $K \subset \Lambda(G) \setminus \{p\}$ so that
  $P(K) = \Lambda(G) \setminus \{p\}$.

  As in the proof of \Cref{lem:AFP_bounded_parabolic_G}, our strategy
  is to show that $\Lambda(G) \setminus \{p\}$ can be decomposed into
  several pieces, such that each piece is either far away from $p$ to
  begin with, or can be pushed uniformly far away from $p$ using
  either the boundedness of $p$ in $\Lambda(G_0)$ or an application of
  \Cref{prop:quasiconvex_subgroups_nest}. We consider two cases:
  \subsubsection*{Case 1:
    $p \in \Lambda(G_0) \setminus G_0(\Lambda(J_1) \cup
    \Lambda(J_{-1}))$}
  In this case, \Cref{lem:HNN_first_pingpong_inclusion} tells us that
  each point in $\Lambda(G) \setminus \{p\}$ lies in the union
  $\Lambda(G_0) \cup T_0$. We can further decompose $T_0$ by
  intersecting it with $(B_1 \cup B_{-1})$ and its complement $A$,
  meaning we decompose $\Lambda(G) \setminus \{p\}$ into three pieces
  lying in
  \[
    L_1 = \Lambda(G_0), \quad L_2 = (B_1 \cup B_{-1}), \quad L_3 = T_0
    \cap A.
  \]
  For each $L_i$, we need to find a compact set
  $K_i \subset M \setminus \{p\}$ so that if
  $y \in \Lambda(G) \cap L_i$, then we can find $h \in P$ so that
  $hy \in K_i$. Then we can take $K = K_1 \cup K_2 \cup K_3$.

  We know we can take $K_1 = \widehat{K}$ from the boundedness of $p$
  in $\Lambda(G_0)$, and from part \ref{item:HNN_technical} of
  Definition B we know that $B_1 \cup B_{-1}$ is already a compact
  subset of $M \setminus \{p\}$. So, we just need to find the compact
  set $K_3$.

  We apply \Cref{prop:quasiconvex_subgroups_nest}, taking $H = G_0$,
  $J_1 = P$, $U_1 = M \setminus \{p\}$, $J_2 = J_{\pm 1}$, and
  $U_2 = M \setminus B_{\pm 1}$, to see that there are sets
  $K_+, K_- \subset M \setminus \{p\}$ such that for any
  $g \in G_0 \setminus J_{\pm 1}$, we can find $h \in P$ so that
  $hgB_{\pm 1} \subset K_{\pm}$. To justify the application of the
  proposition, we need to check that for every
  $g \in G_0 \setminus J_{\pm 1}$, we have
  $gB_{\pm 1} \subset M \setminus \{p\}$, but this follows from part
  \ref{item:HNN_prop_iii} of \Cref{prop:HNN_basic_properties}. Then,
  we take $K_3 = K_+ \cup K_-$.

  Now, if $y \in T_0 \cap A$, then by definition we know that
  $y \in (G_0 \setminus J_1)(B_1) \cup (G_0 \setminus
  J_{-1})(B_{-1})$. But then by definition of $K_{\pm}$ we know that
  we can find $h \in P$ so that $hy \in K_+ \cup K_{-} = K_3$ and we
  are done.

  \subsubsection*{Case 2:
    $p \in G_0(\Lambda(J_1) \cup \Lambda(J_{-1}))$} Since $G$ acts by
  homeomorphisms it suffices to consider
  $p \in \Lambda(J_1) \cup \Lambda(J_{-1})$. Without loss of
  generality take $p \in \Lambda(J_1)$. As in the previous case, we
  decompose $\Lambda(G) \setminus \{p\}$ into several different
  pieces, by writing $M$ as the union
  \[
    M = fB_1 \cup fA \cup \del B_1 \cup A \cup B_{-1}.
  \]
  Since $p \in \del B_1$, the sets $fB_1 \subset \Int(B_1)$ and
  $B_{-1}$ are compact sets in the complement of $p$.

  So, we only need to consider the three pieces of
  $\Lambda(G) \setminus \{p\}$ contained in the three sets
  \[
    \del B_1, \quad A, \quad fA.
  \]
  We can further decompose these pieces by intersecting each of them
  with the sets $\Lambda(G_0)$, $f\Lambda(G_0)$ and their complements
  in $M$. By \Cref{lem:HNN_boundary_limit_points}, we know that
  $\del B_1 \cap \Lambda(G) \subset \Lambda(G_0)$. Also, from
  \Cref{lem:HNN_first_pingpong_inclusion}, we know that
  $\Lambda(G) \setminus \Lambda(G_0)$ lies in $T_0$, which means we
  now only need to consider the pieces of $\Lambda(G) \setminus \{p\}$
  contained in the four sets
  \[
    L_1 = \Lambda(G_0), \quad L_2 = T_0 \cap A, \quad L_3 =
    f\Lambda(G_0), \quad L_4 = f(T_0 \cap A).
  \]
  We want to find compact sets
  $K_1, K_2, K_3, K_4 \subset M \setminus \{p\}$ so that for each
  $y \in L_i \cap (\Lambda(G) \setminus \{p\})$, we can find $h \in P$
  so that $hy \in K_i$.

  We already know that we can take $K_1 = \widehat{K}$, and to find $K_2$,
  we can use the exact same construction we used for $K_3$ in Case
  1. To justify the application of
  \Cref{prop:quasiconvex_subgroups_nest} in this situtation, we again
  need to check that for any $g \in G_0 \setminus J_{\pm 1}$, we have
  $gB_{\pm 1} \subset M \setminus \{p\}$. This time, the desired
  inclusion follows from precise invariance of $(B_1, B_{-1})$ under
  $(J_1, J_{-1})$ and the fact that $p \in B_1$.

  \begin{figure}[h]
    \centering
    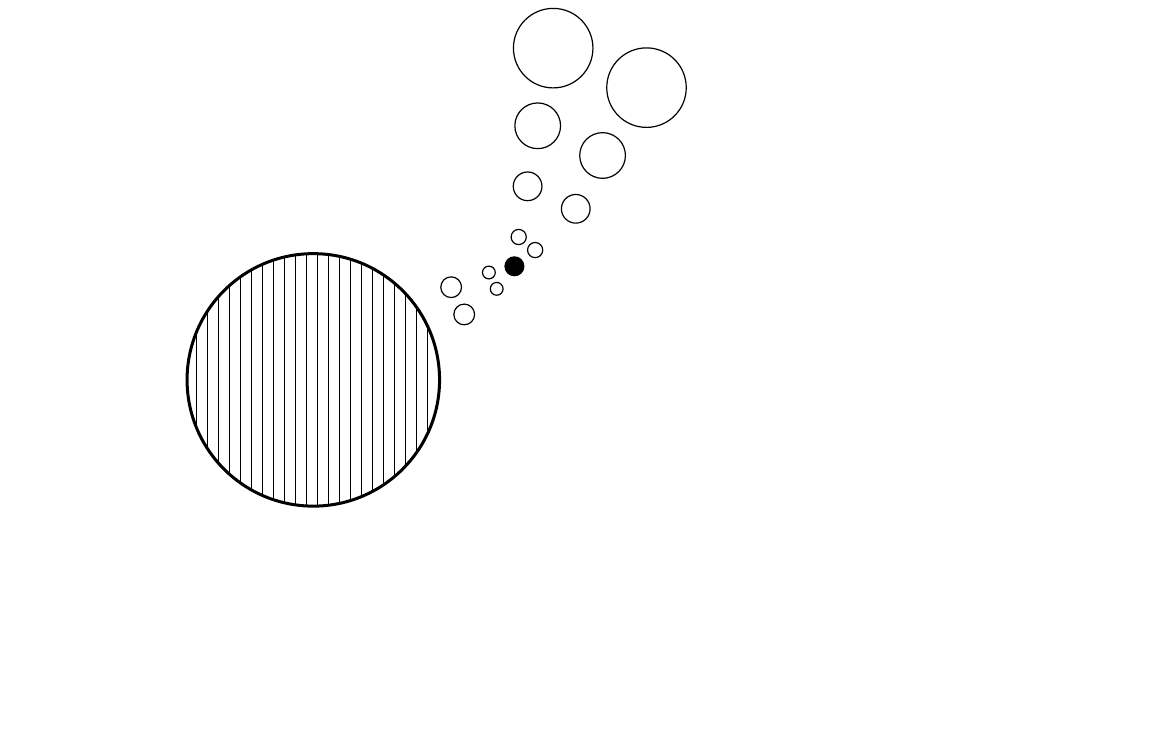
    \caption{Illustration for Case 2 of
      \Cref{lem:HNN_bounded_parabolic_G}. The sets $B_{-1}$ and
      $f(B_1)$ are already compact subsets of $M \setminus \{p\}$, so
      we need to divide the rest of $\Lambda(G)$ into pieces. The sets
      $K_1$ and $K_4$ (not pictured) lie in
      $\Lambda(G_0) \setminus \{p\}$ and
      $f(\Lambda(G_0)) \setminus \{p\}$.}
    \label{fig:hnn_bounded_parabolic_2}
  \end{figure}

  Finally, to find $K_3$ and $K_4$, we just apply the same exact
  arguments to the parabolic point $f^{-1}p \in \Lambda(J_{-1})$ and
  its stabilizer $f^{-1}Pf$, to obtain a pair of compact sets
  $K_3', K_4' \subset M \setminus \{f^{-1}p\}$ such that for any
  $z \in (\Lambda(G) \setminus \{f^{-1}p\}) \cap (L_1 \cup L_2)$, we
  can find $h \in P$ so that $f^{-1}hfz \in K_3' \cup K_4'$. We can
  take $K_3 = fK_3'$ and $K_4 = fK_4'$ (see
  \Cref{fig:hnn_bounded_parabolic_2}). Then if
  $y \in (\Lambda(G) \setminus \{p\}) \cap (L_3 \cup L_4) =
  (\Lambda(G) \setminus \{p\}) \cap (fL_1 \cup fL_2)$, we have
  $y = fz$ for
  $z \in (\Lambda(G) \setminus \{f^{-1}p\}) \cap (L_1 \cup L_2)$, and
  we can find $h \in P$ so that
  $f^{-1}hfz \in f^{-1}K_3 \cup f^{-1}K_4$, hence
  $hy \in K_3 \cup K_4$.
\end{proof}

Finally, we complete the proof of this direction of Theorem B part
\ref{item:HNN_gf}:
\begin{proposition}
  If $G_0$ is geometrically finite, then $G$ is geometrically finite.
\end{proposition}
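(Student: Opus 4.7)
The plan is to imitate, almost verbatim, the closing argument of the previous section that established geometrical finiteness of $G$ in the amalgamated free product setting. All the substantive work has already been packaged into \Cref{lem:HNN_conical} (conicality of points outside $G(\Lambda(G_0))$) and \Cref{lem:HNN_bounded_parabolic_G} (boundedness of parabolic points coming from $G_0$), so the proof reduces to a short case analysis on a point $x \in \Lambda(G)$.

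Concretely, I would proceed as follows. Fix $x \in \Lambda(G)$. The first split is on whether $x$ lies in the $G$-orbit of $\Lambda(G_0)$. If $x \notin G(\Lambda(G_0))$, then \Cref{lem:HNN_conical} immediately yields that $x$ is a conical limit point for $G$, and we are done. If instead $x \in G(\Lambda(G_0))$, observe that the properties ``conical limit point of $G$'' and ``bounded parabolic point of $G$'' are each invariant under the $G$-action on $\Lambda(G)$, since $G$ acts on $M$ by homeomorphisms. So I may replace $x$ by a suitable $G$-translate and assume without loss of generality that $x \in \Lambda(G_0)$.

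With $x \in \Lambda(G_0)$, I would now invoke the hypothesis that $G_0$ is geometrically finite: by \Cref{defn:geometrically_finite}, the point $x$ is either a conical limit point or a bounded parabolic point for the action of $G_0$ on $\Lambda(G_0)$. In the parabolic case, $x$ is a parabolic point of $G_0$, and then \Cref{lem:HNN_bounded_parabolic_G} promotes it to a bounded parabolic point for $G$ on $\Lambda(G)$. In the conical case, any conical limiting sequence for $x$ in $G_0$ (in the sense of \Cref{def:conical_limit_points}) is automatically a conical limiting sequence for $x$ in $G$, since the defining condition only concerns the dynamics of the sequence on $M$ and makes no reference to the ambient convergence group; hence $x$ is a conical limit point for $G$ as well.

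Since every step is either a direct appeal to an already-established lemma or a trivial observation about the definitions, there is essentially no obstacle to overcome. The only minor point worth flagging explicitly is that conicality and bounded parabolicity transfer upward from $G_0$ to the larger group $G$, which is immediate from the purely dynamical phrasing of both conditions.
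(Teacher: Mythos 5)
Your proof is correct and follows essentially the same route as the paper: reduce to $x \in \Lambda(G_0)$ via \Cref{lem:HNN_conical} and $G$-equivariance, then split into the conical case (which transfers directly since the defining condition is purely dynamical on $M$) and the parabolic case handled by \Cref{lem:HNN_bounded_parabolic_G}. No gaps.
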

\begin{proof}
  We must show that any $x \in \Lambda(G)$ is a conical limit point or
  bounded parabolic point. By \Cref{lem:HNN_conical}, we may assume
  $x \in G(\Lambda(G_0))$. Since $G$ acts by homeomorphisms, in fact we
  can assume that $x \in \Lambda(G_0)$. Since $G_0$ is geometrically
  finite, $x$ is either a conical limit point or bounded parabolic
  point for the action of $G_0$ on $\Lambda(G_0)$. In the former case,
  $x$ is also a conical limit point for $G$ acting on $\Lambda(G)$,
  and in the latter case $x$ is a bounded parabolic point for $G$
  acting on $\Lambda(G)$ by \Cref{lem:HNN_bounded_parabolic_G}. Hence
  $G$ is geometrically finite.
\end{proof}

\subsection{Geometrical finiteness of $G_0$}
Finally, we prove the other direction of Theorem B part
\ref{item:HNN_gf}, and show that if $G$ is geometrically finite, then
so is $G_0$. As for the amalgamated free product case, the first step
is the following:

\begin{lemma}\label{lem:HNN_conical_limit_seq_bounded}
  Assume that $G$ is geometrically finite. Let
  $x \in \Lambda(G_0) \setminus G_0(\Lambda(J_1) \cup
  \Lambda(J_{-1}))$, and suppose that $h_k \in G$ is a conical limit
  sequence for $x$. Then, after extracting a subsequence, we can find
  some $h \in G$ so that $h_k \in hG_0$ for every $k$.
\end{lemma}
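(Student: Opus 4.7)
The plan is to imitate the structure of \Cref{lem:AFP_conical_limit_seq_bounded}. First I would dispose of the easy case where $h_k \in G_0$ for infinitely many $k$, taking $h = e$. Otherwise I may assume $|h_k| \geq 1$ and, after extracting a subsequence, that every $h_k$ is an $(i,j)$-form for fixed $i \in \{0, \pm 1\}$ and $j \in \{\pm 1\}$. The main combinatorial step of the AFP argument---that $x$ lies in one of the ping-pong sets, so the contraction lemma pins down the attracting point---has no direct analog here, because by part \ref{item:HNN_prop_iii} of \Cref{prop:HNN_basic_properties}, $x$ lies in $A_0 = M \setminus G_0(B_1 \cup B_{-1})$, hence in no $g B_{\pm 1}$ for $g \in G_0$.

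To circumvent this, my key observation concerns the inverse sequence: $(h_k^{-1})$ is a convergence sequence with attractor $x \in A_0$, so it cannot be an $(i',j')$-form with $i' \in \{\pm 1\}$. Indeed, if it were, \Cref{lem:interactive_triple_dynamics} would give $h_k^{-1}(A_0) \subset \Int(B_{i'})$; evaluating at $y = x$ yields $h_k^{-1} x \in \Int(B_{i'})$ for every $k$ while $h_k^{-1} x \to x \in A_0$, contradicting $\Int(B_{i'}) \cap A_0 = \es$. So $h_k^{-1}$ is either trivial (already handled) or a $(0, j')$-form, and by \Cref{lem:HNN_inverse} this translates to the constraint $g_{k,n} \notin J_{-j}$ on the last letter of the normal form of $h_k$.

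I would then write $h_k = \tilde h_k \, g_{k,n}$, where $\tilde h_k = h_k g_{k,n}^{-1}$ is a normal form of type $j$ ending in $f^{\alpha_{k,n}}$. If the sequence $(\tilde h_k)$ takes only finitely many values in $G$, I extract a subsequence on which $\tilde h_k = \tilde h$ is constant; then $h_k \in \tilde h G_0$ for every $k$, and we are done. Otherwise the $\tilde h_k$ are pairwise distinct and, after a further extraction, the left cosets $\tilde h_k J_j$ are all distinct, so \Cref{lem:HNN_contracting} gives $\tilde h_k B_j \to \{\tilde z_+\}$ in Hausdorff distance, where $\tilde z_+$ is the attracting point of the convergence sequence $(\tilde h_k)$.

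The hard step will be to derive a contradiction in this last case, and it is the main obstacle. The subtlety is that, since $g_{k,n} x \in A_0$ is disjoint from $B_j$, the contraction of $\tilde h_k B_j$ cannot be applied directly at $y = x$. I would instead view $(g_{k,n})$ as a sequence in the discrete convergence group $G_0$ and extract a further subsequence on which it is either constant or a convergence sequence with attractor $p$ and repeller $q$ in $\Lambda(G_0)$. By tracking $h_k y = \tilde h_k(g_{k,n} y)$ for $y = x$ and for generic $y \ne x$, one then argues that the limits of both $h_k x$ and $h_k y$ coincide with $\tilde z_+$, forcing $a = b$ against the conical hypothesis. The trickiest configurations---where $p$ or $q$ coincides with a distinguished point such as the repeller of $(\tilde h_k)$, the point $x$, or $b$---should be controllable by combining the $G_0$-invariance of $A_0$ with the fact that $\Lambda(G_0) \cap A_0 = \Lambda(G_0) \setminus G_0(\Lambda(J_1) \cup \Lambda(J_{-1}))$ avoids all parabolic points of $J_1$ and $J_{-1}$, which should force any such coincidences back into a subcase already handled.
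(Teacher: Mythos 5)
Your reductions at the start are fine (and the observation that the reduced normal form of $h_k^{-1}$ cannot begin with a power of $f$ is correct, modulo the small point that $h_k^{-1}x \to x$ requires the attracting point $b$ of $(h_k)$ to differ from $x$, or else you must test $h_k^{-1}$ on some other point of $A_0$). But the step you yourself flag as ``the hard step'' is a genuine gap, and the route you sketch for it cannot work. After writing $h_k=\tilde h_k g_{k,n}$ with $\tilde h_k$ ending in $f^{\alpha_{k,n}}$, you aim to derive a contradiction whenever the cosets $\tilde h_kJ_j$ are eventually distinct. That case is \emph{not} contradictory: for instance $\tilde h_k=\tilde h c_k$ with $c_k\in J_{-1}$ pairwise distinct modulo $J_1$ all lie in the single coset $\tilde h G_0$ (so the lemma's conclusion holds, e.g.\ with $(c_kg_{k,n})$ a conical sequence for $x$ in $G_0$), yet they occupy infinitely many $J_1$-cosets; whether your $\tilde h_k$ comes out constant or coset-distinct here depends only on which normal form of $h_k$ you chose. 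Concretely, the mechanism you propose fails because $g_{k,n}x\in A_0$ is disjoint from $B_j$, so the contraction $\tilde h_kB_j\to\{\tilde z_+\}$ says nothing about $h_kx=\tilde h_k(g_{k,n}x)$; the configuration in which $g_{k,n}x$ tends to the repelling point of $(\tilde h_k)$ is consistent with everything you have, and then $a=b$ simply does not follow. (A smaller but real error: pairwise distinct $\tilde h_k$ do not allow you to extract pairwise distinct cosets $\tilde h_kJ_j$; infinitely many $\tilde h_k$ in one coset is a separate — though harmless — case.)

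The paper resolves exactly this difficulty by a different decomposition: peel off the last $f^{\pm1}$ \emph{together with} the last $G_0$-letter, writing $h_k=h_k'f^{\pm1}g_k$ with $|h_k'|=|h_k|-1$. Since $A_0$ is $G_0$-invariant, $g_kx\in A_0\subset A$, and condition \ref{item:HNN_f} of Definition B gives $f(A)\subset B_1$, so $h_kx\in h_k'(B_1)$ — now the relevant point \emph{does} lie in a ping-pong set. If the $h_k'$ occupied infinitely many left $J_1$-cosets, \Cref{lem:HNN_contracting} would force $h_k'(B_1)\to\{a\}$, and placing a second point $h_kz\in h_k'f(A)\subset h_k'(B_1)$ contradicts conicality; hence the $h_k'$ lie in finitely many $J_1$-cosets, and the conclusion follows from the absorption $h_k\in h'J_1fg_k=h'fJ_{-1}g_k\subset h'fG_0$. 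Note that this last identity $J_1f=fJ_{-1}$ is also what reconciles ``prefixes in one $J_1$-coset'' with ``$h_k$ in one $G_0$-coset''; your plan has no analogue of it, which is another symptom of the decomposition being the wrong one. To complete your write-up you would essentially have to switch to this suffix $f^{\pm1}g_k$ and argue as above.
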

\begin{proof}
  By \Cref{prop:HNN_basic_properties} part \ref{item:HNN_prop_iii}, we
  know $x \in A_0 \subset A$. As $x$ is a conical limit point, we can
  find a conical limit sequence $(h_k)$ for $x$, so that for distinct
  points $a, b \in M$, we have $h_kx \to a$ and $h_kz \to b$ for any
  $z \in M \setminus \{x\}$.
    
  If $h_k \in G_0$ for infinitely many $k$ then we are done, so we may
  assume $|h_k| \geq 1$ for every $k$. Suppose we can write
  $h_k = h_k'fg_k$ where $|h_k'| = |h_k|-1$ (the case where
  $h_k = h_kf^{-1}g_k$ is similar). We note that
  $g_kx \in A_0 \subset A$ still since $A_0$ is $G_0$-invariant.
    
  Consider the sequence $(h_kg_k^{-1}) = (h_k'f)$. We know that
  $fg_kx \in fA \subset B_1$, so $h_kx$ lies in $h_k'B_1$ for every
  $k$. If the $h_k'$ are all in distinct left $J_1$-cosets in $G$, the
  sequence $(h_k'B_1)$ converges to a singleton by
  \Cref{lem:HNN_contracting}. The limit of $(h_kx)$ is contained in
  this singleton, so the singleton is $\{a\}$. On the other hand,
  since $A$ is infinite, the set $g_k^{-1}A$ is also infinite, so
  there is at least one point $z$ in $g_k^{-1}A \setminus \{x\}$. But
  then $h_kz \in h_kg_k^{-1}A$, so we have
  \[
    h_kz \in h_k'fg_kg_k^{-1}A \subset h_k'fA \subset h_k'B_1.
  \]
  This means that $(h_kz)$ converges to $a$, which contradicts the
  fact that $(h_k)$ is a conical limit sequence for $x$.

  So, after taking a subsequence, we must have $h_k' \in h'J_1$ for
  some fixed $h' \in G$. Then for every $k$, we have
  $h_k \in h'J_1fg_k = h'fJ_{-1}g_k \subset h'fG_0$, and we are done.
\end{proof}

\begin{proposition}
    If $G$ is geometrically finite, then $G_0$ is geometrically finite.
\end{proposition}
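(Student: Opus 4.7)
The plan is to follow the strategy of the corresponding proposition for Theorem A, using \Cref{lem:HNN_conical_limit_seq_bounded} as the HNN counterpart of \Cref{lem:AFP_conical_limit_seq_bounded}. Fix $x \in \Lambda(G_0) \subset \Lambda(G)$. Since $G$ is geometrically finite, $x$ is either a conical limit point or a bounded parabolic point for $G$, and I would treat these two cases separately, with the conical case requiring a further subdivision based on whether $x$ lies in $G_0(\Lambda(J_1) \cup \Lambda(J_{-1}))$.

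Suppose first that $x$ is conical for $G$. If $x \in G_0(\Lambda(J_1) \cup \Lambda(J_{-1}))$, then after acting by an element of $G_0$ (which preserves both ``conical for $G$'' and ``conical for $G_0$'') we may assume $x \in \Lambda(J_i)$ for some $i \in \{\pm 1\}$. Since $J_i$ is geometrically finite, $x$ is either conical or bounded parabolic for $J_i$. The latter would produce a parabolic element of $J_i \le G$ fixing $x$, making $x$ a parabolic point for $G$ and contradicting the mutual exclusivity recorded in \Cref{rmk:conical_limit_seq_char}. So $x$ is conical for $J_i$, hence for $G_0$. Otherwise $x \in \Lambda(G_0) \setminus G_0(\Lambda(J_1) \cup \Lambda(J_{-1}))$, and \Cref{lem:HNN_conical_limit_seq_bounded} lets us replace any conical limit sequence $(h_k)$ for $x$ in $G$ by one of the form $(hg_k)$ with $g_k \in G_0$ and $h \in G$ fixed; then $(g_k) = (h^{-1}h_k)$ is a conical limit sequence for $x$ in $G_0$.

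Now suppose $x$ is bounded parabolic for $G$, with stabilizer $P = \text{Stab}_G(x)$. The key step is to show $P \subset G_0$. By \Cref{prop:HNN_basic_properties}\ref{item:HNN_prop_iii}, $\Lambda(G_0)$ is the disjoint union of $\Lambda(G_0) \cap A_0$ and $\Lambda(G_0) \cap G_0(\Lambda(J_1) \cup \Lambda(J_{-1}))$. If $x \in A_0$, then for any $g \in G \setminus G_0$, \Cref{lem:interactive_triple_dynamics} forces $gA_0$ to land either in some $\Int(B_{\pm 1})$ or in a $G_0$-translate of $\Int(B_{\pm 1})$; in either case, $gA_0$ is disjoint from $A_0$, so no such $g$ fixes $x$. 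If instead $x \in G_0(\Lambda(J_i))$, then by a $G_0$-conjugation we reduce to $x \in \Lambda(J_i) \subset \del B_i$, and any $g \in G$ fixing $x$ satisfies $g\del B_i \cap \del B_i \ne \es$, so \Cref{lem:HNN_boundary_invariance} gives $g \in J_i \subset G_0$. Once $P \subset G_0$ is established, $\Lambda(G_0)$ is $P$-invariant; intersecting a compact set witnessing boundedness of $x$ in $\Lambda(G)$ with $\Lambda(G_0)$ then yields a compact set in $\Lambda(G_0) \setminus \{x\}$ whose $P$-orbit exhausts $\Lambda(G_0) \setminus \{x\}$, exhibiting $x$ as bounded parabolic for $G_0$.

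The main obstacle is the bounded parabolic case, specifically the containment $P \subset G_0$. The AFP argument used the clean statement $g\Lambda(G_1) \cap \Lambda(G_1) = \es$ for $g \in G \setminus G_1$ from \Cref{prop:AFP_basic_properties}, but no such global statement is available for $\Lambda(G_0)$ in the HNN setting, because $\Lambda(G_0)$ straddles several regions of the ping-pong decomposition. The workaround is to exploit the finer dichotomy in \Cref{prop:HNN_basic_properties}\ref{item:HNN_prop_iii}, invoking the normal-form dynamics of \Cref{lem:interactive_triple_dynamics} on the portion of $\Lambda(G_0)$ inside $A_0$ and the boundary-control statement \Cref{lem:HNN_boundary_invariance} on the portion lying in $\del B_{\pm 1}$.
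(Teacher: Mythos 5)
Your proof is correct and follows essentially the same route as the paper: the conical case is handled via \Cref{lem:HNN_conical_limit_seq_bounded}, and the bounded parabolic case by proving $P \subset G_0$ through exactly the same trichotomy ($x \in \Lambda(J_{\pm 1}) \subset \del B_{\pm 1}$ via \Cref{lem:HNN_boundary_invariance}, a $G_0$-conjugation, or $x \in A_0$ via \Cref{lem:interactive_triple_dynamics}) and then intersecting the compact set from boundedness in $\Lambda(G)$ with $\Lambda(G_0)$. Your additional subcase in the conical argument for $x \in G_0(\Lambda(J_1) \cup \Lambda(J_{-1}))$ (using geometric finiteness of $J_i$ and the fact that a point cannot be both conical and parabolic for $G$) is a valid and welcome refinement, since \Cref{lem:HNN_conical_limit_seq_bounded} is stated only for $x \in \Lambda(G_0) \setminus G_0(\Lambda(J_1) \cup \Lambda(J_{-1}))$, a restriction the paper's own proof passes over silently.
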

\begin{proof}
  We must show that any $x \in \Lambda(G_0)$ is a conical limit point
  or bounded parabolic point for the $G_0$-action. Since $G$ is
  geometrically finite, $x$ is either a conical limit point or bounded
  parabolic point for the $G$-action. In the former case, by
  \Cref{lem:HNN_conical_limit_seq_bounded} we conclude that there is a
  conical limit sequence of the form $(hg_k)$ for $x$, where $h \in G$
  and $g_k \in G_0$. Then $(g_k)$ is a conical limit sequence for $x$
  in $G_0$ and we are done.
  
  In the latter case, let $P < G$ be the stabilizer of $x$, a
  parabolic subgroup of $G$. We claim that in fact $P$ is a subgroup
  of $G_0$. If $x \in \Lambda(J_1) \cup \Lambda(J_{-1})$, then $x$
  lies in either $\del B_1$ or $\del B_{-1}$, and then this follows
  from \Cref{lem:HNN_boundary_invariance}. And, if $x = gy$ for
  $y \in \Lambda(J_1) \cup \Lambda(J_{-1})$ and $g \in G_0$, then the
  stabilizer of $x$ lies in $gG_0g^{-1} = G_0$. Finally, if
  $x \in \Lambda(G_0) \setminus G_0(\Lambda(J_1) \cup
  \Lambda(J_{-1}))$, then part \ref{item:HNN_prop_iii} of
  \Cref{prop:HNN_basic_properties} says that $x \in A_0$, and
  \Cref{lem:interactive_triple_dynamics} implies that no element of
  $G$ with positive length can fix a point in $A_0$.

  Now, since $x$ is a bounded parabolic point for the $G$-action on
  $\Lambda(G)$, local compactness of $\Lambda(G) \setminus \{x\}$
  implies that there is some compact $K \subset \Lambda(G)$ so that
  $P(K) = \Lambda(G) \setminus \{x\}$. We let
  $K_0 = K \cap \Lambda(G_0)$, which is a compact in
  $\Lambda(G_0) \setminus \{x\}$.
    
  Using $G_0$-invariance (and hence $P$-invariance) of $\Lambda(G_0)$,
  we now have that
  \[
    P(K_0) = P(K \cap \Lambda(G_0)) = P(K) \cap \Lambda(G_0) =
    \Lambda(G_0) \setminus \{x\}
  \]
  as desired.
\end{proof}
 
\bibliographystyle{alpha}
\bibliography{references}

\end{document}